\newtheorem{theorem}{Theorem}[section]
\newtheorem{lemma}[theorem]{Lemma}
\newtheorem*{rtheorem}{Main Theorem}
\newtheorem{result}[theorem]{Result}
\theoremstyle{definition}
\newtheorem{definition}[theorem]{Definition}
\newtheorem{remark}[theorem]{Remark}
\def\PG{\mathrm{PG}} \def\AG{\mathrm{AG}}
\def\PGammaL{\mathrm{P}\Gamma\mathrm{L}}
\def\PGL{\mathrm{PGL}}
  \def\C{\mathcal{C}}
\def\D{\mathcal{D}}
 \def\S{\mathcal{S}}
\def\F{\mathbb{F}}
\def\la{\lambda}
\def\a{\alpha}
\DeclareMathOperator{\rk}{rk}
\newcommand{\comments}[1]{}
\newcommand{\versie}{arxiv} 
\title{The weight distributions of linear sets in $\PG(1,q^5)$}
\author{Maarten De Boeck \thanks{University of Rijeka, Faculty of Mathematics, Radmile Matej\v{c}i\'c 2, 51000 Rijeka, Croatia\newline Ghent University, Department of Mathematics: Algebra and Geometry, Gent, Flanders, Belgium\newline Email: maarten.deboeck@telenet.be} \and Geertrui Van de Voorde \thanks{Postdoctoral fellow of the Research Foundation Flanders (FWO -- Vlaanderen) and supported by the Marsden Fund Council administered by the Royal Society of New Zealand\newline University of Canterbury, School of Mathematics and Statistics, Private Bag 4800, 8140 Christchurch, New Zealand \newline Email: geertrui.vandevoorde@canterbury.ac.nz}}
\begin{document}
\maketitle
\begin{abstract}
In this paper, we study the weight distributions of $\F_q$-linear sets in $\PG(1,q^5)$. Our Main Theorem proves that a linear set $S$ of rank $5$, which is not scattered has the following weight distribution for its points with weight larger than 1: (i) one point of weight 4 or 5, (ii) one point of weight 3 and 0, $q$, or $q^2$ points of weight 2, (iii) $s$ points of weight 2 where $s\in [q-2\sqrt{q}+1,q+2\sqrt{q}+1]\cup\{2q,2q+1,2q+2,3q,3q+1,q^2+1\}$. In particular, there are no $2$-clubs in $\PG(1,q^5)$.
 \end{abstract}
 
\textbf{Keywords:} Linear set, weight distribution, club

\textbf{MSC 2020 codes:} 51E20

\section{Introduction}

\subsection{Linear sets and their weight distribution}

Linear sets are particular subsets of a projective space and form a natural generalisation of subgeometries. Apart from being an interesting combinatorial subject in their own right, linear sets have been used in the construction and characterisation of several combinatorial objects, e.g. blocking sets, translation ovoids and semifields (see e.g. \cite{micheloverview,olga} for an overview of these applications). More recently, the study of linear sets regained traction through its connection with rank-metric codes (see \cite{polzul,zulzin} and Remark \ref{remark:RD}). We will review some of those links in Subsection \ref{connections}.

Despite these intensive investigations in the last decades, many questions about linear sets remain open. For example, their possible weight distributions, which also determines their possible sizes, remains an open problem in general. 

Linear sets can be defined as follows. Let $\F_q$ denote the finite field of order $q$, where $q$ is a prime power, and let $\PG(n,q)=\PG(V)$ denote the projective space of dimension $n$ over $\F_q$, where $V$ is an $(n+1)$-dimensional vector space over $\F_q$. 

A set $S$ is said to be an {\em $\F_q$-linear set} of {\em rank $k$} in $\PG(r-1,q^t)=\PG(W)$ if $S=L_U$, with
\[
	L_U=\left\{ \langle v\rangle_{q^t} \mid v\in U\setminus\{0\}\right\},
\]
where $U$ is a $k$-dimensional $\F_q$-vector subspace of $W=\F_{q^t}^{r}$ and $\langle v\rangle_{q^t}$ denotes the projective point determined by the vector $v$. 		

In this paper, we will study the possible {\em weight distributions} of $\F_q$-linear sets of $\PG(1,q^5)$. Let $P=\langle v\rangle_{q^t}$ be a point of a linear set $L_U$. The $t$-dimensional $\F_q$-vector space defining $P$ intersects $U$ in an $i$-dimensional $\F_q$-vector space for some $i>0$. The integer $i$ is called the {\em weight} of the point $P$ (see \cite{olga}).

It is clear that the number of points in a linear set is entirely determined if we know its weight distribution, but linear sets of the same size can have different weight distributions. An $\F_q$-linear set of rank $k$ which has one point of weight $k_0$ and whose other points have weight one is called a {\em $k_0$-club} and an $\F_q$-linear set of rank $k$ with precisely $\frac{q^{k+1}-1}{q-1}$ points (all of which are necessarily of weight one) is called {\em scattered}.

In this paper, we will prove the following theorem. 
\begin{rtheorem}
	Let $S$ be an $\F_q$-linear set of rank $5$ in $\PG(1,q^5)$ with $|S|>1$. If $S$ is not scattered, then either:
	\begin{enumerate}[(a)]
		\item $S$ contains exactly one point of weight $4$ (and hence, is a $4$-club), or
		\item $S$ contains exactly one point of weight $3$, and exactly $0$, $q$ or $q^2$ points of weight $2$, or
		\item $S$ contains exactly $s$ points of weight $2$, where
		\[
			s\in [q-2\sqrt{q}+1,q+2\sqrt{q}+1] \cup \{2q,2q+1,2q+2,3q,3q+1,q^2+1\},
		\]
		and no points of weight higher than $2$.
	\end{enumerate}
\end{rtheorem}
\begin{proof}
	This statement follows from the results of Subsection \ref{trace}, Theorem \ref{weight3}, Theorem \ref{eriseensecant} and Theorem \ref{arcs}.
\end{proof}
The cases $q=2,3,4$ are treated in more detail in Theorems \ref{qistwo},\ref{qisthree}, \ref{qisfour}. Note that linear sets with the same weight distribution are not necessarily equivalent. We do not address the equivalence problem in this paper.

%
%

\subsection{Explicit constructions}\label{constructions}
The Main Theorem does not say that all possibilities necessarily occur. In this subsection, we will describe the situation in more detail. 
\begin{itemize}
	\item	It is well-known that scattered linear sets of rank $n$ exist for all $n$, the standard example being $$\{\langle(x,x^q)\rangle_{q^n}\mid x\in \F_{q^n}^*\}.$$
	\item An $(n-1)$-club of rank $n$, which has one point of weight $n-1$ (and then necessarily all others of weight $1$), can always be constructed by taking $$\{\langle(x,\mathrm{Tr}_{q^n\mapsto q}(x))\rangle_{q^n}\mid x\in \F_{q^n}^*\}.$$
	\item	The construction of an $\F_q$-linear set of rank $5$ with one point of weight $3$ and exactly $q^2$ points of weight $2$ essentially follows from Theorem \ref{weight3}; we can take the set of points in $\PG(1,q^5)$ with coordinates given by
	$$\{\langle(\mu_1+\mu_2\alpha+\mu_3\alpha^2,\mu_4+\mu_5\alpha)\rangle_{q^5}\mid (\mu_1,\ldots,\mu_5)\in \F_q^5\setminus (0,0,0,0,0)\},$$ where $\alpha$ is a primitive element of $\F_{q^5}$. Here $\langle(1,0)\rangle$ has weight $3$, and the $q^{2}$ points $\langle(\lambda_1+\lambda_2\alpha,1)\rangle$, with $\lambda_1,\lambda_2\in\F_{q}$ have weight $2$. This example was generalised in \cite[Theorem 2.12]{preprint} to give a large class of examples of linear sets of rank $k$ of size $q^{k-1}+1$ which are not clubs (see Remark \ref{remjena}). 
		\item The construction of an $\F_q$-linear set of rank $5$ with one point of weight $3$ and exactly $q$ points of weight $2$ also follows from Theorem \ref{weight3}. For this, consider a primitive element $\alpha$ in $\F_{q^5}$ and an element $\beta\in \F_{q^5}$ which is not of the form $\frac{a\alpha+b}{c\alpha+d}$ with $a,b,c,d\in \F_q$ and take the set of points in $\PG(1,q^5)$ with coordinates given by
	\[
	\{\langle(\mu_1+\mu_2\alpha+\mu_3\alpha^2,\mu_4+\mu_5\beta)\rangle_{q^5}\mid (\mu_1,\ldots,\mu_5)\in \F_q^5\setminus (0,0,0,0,0)\}\;.
	\]
	\item 	A $3$-club of rank $5$ in $\PG(1,q^5)$, that is, a linear set with one point of weight $3$ and all others of weight $1$, was already constructed in \cite[Lemma 2.12]{linearsetview} as follows:
	\[
	\{\langle(\mu_1\alpha+\mu_2\alpha^2+\mu_3\alpha^3+\mu_4\alpha^4,\mu_4+\mu_5\alpha)\rangle_{q^5}\mid (\mu_1,\ldots,\mu_5)\in \F_q^5\setminus (0,0,0,0,0)\}\;.
	\]
	\item Our computer results (see Section \ref{kleineq}) show that for linear sets without points of weight $3$ or $4$, almost all of the possibilities for $s$ as described in the theorem will occur. The only exception is the case that $s=3q$ or $s=3q+1$ of which we conjecture that they never occur provided that there is no other value in the set $\{2q+2,q^2+1\}$ which equals $3q$ or $3q+1$, respectively. For example, for $q=2$, the possibility $s=6=3q$ does occur because $s=2q+2$, and for $q=3$, $s=10=3q+1$ occurs since it equals $q^2+1$.
	While it is hard to give an explicit construction for each of the possibilities for $s$, we can work our way backwards through the arguments of Theorem \ref{eriseensecant} to provide a construction of linear sets with exactly $2q, 2q+1,2q+2$ points of weight $2$. All of the linear sets constructed in Theorem \ref{eriseensecant} will be of the form 	$$L_{\gamma,\delta_1,\delta_2}=\{\langle(\mu_1+\mu_2\gamma+\mu_3\gamma\delta_1,\mu_4+\mu_5\gamma+\mu_3\gamma\delta_2)\rangle_{q^5}\mid (\mu_1,\ldots,\mu_5)\in \F_q^5\setminus (0,0,0,0,0)\},$$
	for some fixed $\gamma,\delta_1,\delta_2\in \F_{q^5}$. The exact number of points of weight $2$ is $2q,2q+1$ or $2q+2$, depending on the relation between $\gamma,\delta_1,\delta_2$.  For example, looking at subcase B.2.2 in the proof of Theorem \ref{eriseensecant} shows that if $\gamma\delta_2\in\langle 1,\gamma,\gamma^2,\gamma\delta_1\rangle$, $\gamma\delta_1\notin\langle 1,\gamma,\gamma^2,\gamma\delta_2\rangle$, and $\dim(\langle 1,\gamma,\gamma^2,\gamma\delta_1,\gamma^2\delta_1\rangle)\neq 5$, then the number of points of weight $2$ on $L_{\gamma,\delta_1,\delta_2}$ is $2q$.
	\par Similarly, all linear sets constructed in Theorem \ref{arcs} are of the form 
	\begin{multline*}
	L_{\gamma_0,\gamma_1,\gamma_2,\gamma_1',\gamma_2'}=\\\{\langle(\mu_1+\mu_2\gamma_0+\mu_3\gamma_0\gamma_1+\mu_4\gamma_2,\mu_5+\mu_3\gamma_1'+\mu_4\gamma_2')\rangle_{q^5}\mid (\mu_1,\ldots,\mu_5)\in \F_q^5\setminus (0,0,0,0,0)\}\;,
	\end{multline*}
	where $\gamma_0,\gamma_1,\gamma_2,\gamma_1',\gamma_2'$ are in $\F_{q^5}$ and the number of points on $L_{\gamma_0,\gamma_1,\gamma_2,\gamma_1',\gamma_2'}$ depends on the relation between the parameters, as considered in the several subcases. For example, looking at Case A.6 shows that if $\dim\langle 1,\gamma_0,\gamma_1',\gamma_0\gamma_1'\rangle=4$ and $\gamma_2',\gamma_0\gamma_2',\gamma_0\gamma_1,\gamma_2,\delta\gamma_2'+\gamma_1'\gamma_2\in \langle 1,\gamma_0,\gamma_1',\gamma_0\gamma_1'\rangle$, then there are precisely $q^2+1$ points of weight $2$ in $L_{\gamma_0,\gamma_1,\gamma_2,\gamma_1',\gamma_2'}$.
	However, in other subcases, the only conclusion we can draw is that the number of points of weight $2$ lies in between $[q-2\sqrt{q}+1,q+2\sqrt{q}+1]$; in order to deduce the precise value, we would need to have an explicit expression for the number of points on a cubic curve in function of its coefficients, which is not possible (see also Remark \ref{refinement} for a slight refinement  and the connection with cubic curves).
	\item In \cite{zanella}, an explicit construction of a wide class of $\F_q$-linear sets of rank $5$ in $\PG(1,q^5)$ is given. We discuss this in more detail in Remark \ref{remarkMontanucci}.
\end{itemize}

\begin{remark}\label{remjena} In \cite{preprint}, the authors construct a large family of $\F_q$-linear sets of rank $k$ in $\PG(1,q^{k})$ of size $q^{k-1}+1$ (which is the smallest possible size under the hypothesis that there is a point of weight one in the set). An easy counting argument shows that a linear set of rank 5 in $\PG(1,q^{5})$ of size $q^4+1$ has either:
\begin{enumerate}[(a)]
\item one point of weight $4$ and the other points of weight $1$ (a $4$-club),
\item one point of weight $3$, $q^2$ points of weight $2$ and $q^4-q^2$ points of weight one,
\item $q^2+q+1$ points of weight $2$ and $q^4-q^2-q$ points of weight one.
\end{enumerate}
They show that all the examples of type (b) can be obtained from their construction. Moreover, in $\PG(1,q^5)$ all $4$-clubs are equivalent (see \cite[Theorem 2.3]{linearsetview} and \cite[Theorem 3.7]{classes}), and hence, also arise from their construction. Our Main Theorem shows that possibility (c) does not occur. In other words, we see that all linear sets of size $q^4+1$ in $\PG(1,q^5)$ arise from the construction of \cite{preprint}.
\end{remark}

Very recently, in \cite[Theorem 4.4]{napolitano}, the authors study linear sets with complementary weights. In particular, they show that if a linear set of rank $n$ in $\PG(1,q^n)$ has exactly two points of weight greater than one then both must have weight at most $n/2$. In particular, this theorem says that there are no $\F_q$-linear sets of rank $5$ in $\PG(1,q^5)$ with exactly one point of weight $3$ and exactly one point of weight $2$.


\subsection{Connections with other research problems}\label{connections}
\subsubsection{Desarguesian spreads and field reduction}\label{opmerkingspread}

Every point of a linear set $S$ of rank $k+1$ in $\PG(1,q^5)$ can be represented as an element of a Desarguesian $4$-spread $\D$ in $\PG(9,q)$ which meets a certain $k$-dimensional subspace $\pi$ of $\PG(9,q)$. The {\em weight} of a point is then one more than the (projective) dimension of the intersection of the corresponding spread element with $\pi$. Hence, the possible weight distributions of linear sets are determined by the possible ways in which a $k$-space can intersect a Desarguesian $4$-spread $\D$. In general, the weight of a point in a linear set $S$ is only defined if we specify the vector space $U$ defining $S=L_U$. However, if a linear set of rank $5$ in $\PG(1,q^5)$ is defined by two different subspaces, say $\pi_1$ and $\pi_2$, then each spread element of $\D$ will meet $\pi_1$ and $\pi_2$ in a subspace of the same dimension (see \cite[Theorem 5.5]{Carlitz}). 

\subsubsection{Linearised polynomials}\label{remark:polynomials}

It is well-known that every $\F_q$-linear set $L$ of rank $5$ in $\PG(1,q^n)$, disjoint from the point $\langle(0,1)\rangle_{q^5}$ can be written as
$$L=\{\langle (x,f(x))\rangle_{q^5}\mid x \in \F_{q^5}\},$$
for some {\em $q$-polynomial} $f$ defined over $\F_{q^5}$. A $q$-polynomial, or {\em linearised} polynomial, is a polynomial of the form $f(x)=\sum_{i=0}^{4}a_ix^{q^i}$ for some $a_i\in \F_{q^5}$. Every $q$-polynomial defines an $\F_q$-linear map on $\F_{q^5}$ and conversely, every $\F_q$-linear map is defined by a $q$-polynomial. The weight of a point $\langle (1,\gamma)\rangle$ in $L$ is precisely the dimension of the $\F_q$-vector space of solutions $x$ to the equation $\frac{f(x)}{x}=\gamma$, which means that the weight distribution of the linear set $L$ is given by the multiset
$$\{\dim(\ker(f(x)-\gamma x))\mid \gamma\in \F_{q^5}\}\;.$$
Hence, our Main Theorem will describe all possibilities for $\{\dim(\ker(f(x)-\gamma x))\mid \gamma\in \F_{q^5}\},$ where $f$ is an arbitrary $q$-polynomial over $\F_{q^{5}}$.

\subsubsection{Rank distance codes}\label{remark:RD} 
Linear sets of rank $n$ in $\PG(1,q^n)$ and their weight distributions give rise to $\F_q$-linear rank distance codes of $n\times n$-matrices. A {\em rank distance code} (or RD-code) $\C$ is a subset  of the set of $m\times n$-matrices over $\F_q$, endowed with the metric
$$d(A,B)=rk(A-B)$$ for $A,B$ in $\F_q^{m\times n}$. If $\C$ forms a vector subspace, we see that the minimum rank of a non-zero element in $\C$ determines the minimum distance of the code, and more generally, the rank distribution determines the possible distances between code words in $\C$.
If $m=n$, then every matrix in $\F_q^{n\times n}$ defines an $\F_q$-linear map from $\F_{q^n}$ to $\F_{q^n}$, and alternatively, we can describe each such $\F_q$-linear map by a $q$-polynomial of degree at most $q^{n-1}$ as in the previous subsection. So, if we consider a set $U_f=\{(x,f(x))\mid x\in \F_{q^n}\}$, where $f$ is a $q$-polynomial, then $\C_f=\{ax+bf(x)\mid a,b \in \F_{q^n}\}$ determines a set of $q$-polynomials which forms a $\F_{q^n}$-subspace, so the code $\C_f$ is $\F_{q^n}$-linear and the dimension over $\F_{q^n}$ is two.

It is precisely this correspondence between the linear set $L_{U_f}$ in $\PG(1,q^n)$ and the RD-code $\C_f$ that has been exploited in recent years to construct new {\em maximum} rank distance (MRD) codes from scattered linear sets \cite{polzul,zulzin}. 
The weight distribution of the linear set $L_{U_f}$ in $\PG(1,q^n)$, determined by the vector subspace $U_f$ and the rank distribution of the code $C_f$ are related as follows (see \cite[Proposition 5.5]{withjohn}, adapted here for $\PG(1,q^n)$). Let $P$ be a point with coordinates $\langle(x_0,y_0)\rangle$ satisfying $ax_0+by_0=0$, then
\[
wt_{L_{f}}(P)=n-rk(ax+bf(x)).
\]
For example, if the linear set $L_f=\{\langle(x,f(x))\rangle_{q^5}\mid x\in \F_{q^5}^*\}$ has $1$ point of weight $3$ and exactly $q$ points of weight $2$, we find that the corresponding RD code $\C_f$ has $(q^5-1)$ code words of rank $2$, $q(q^5-1)$ code words of rank $3$ and $(q^4+q^3-q^2-q)(q^5-1)$ code words of rank $4$. The remaining $(q^5-q^4-q^3+q^2-1)(q^5-1)$ code words have rank $5$. Our Main Theorem will describe all possibilities for the rank distributions of code words in $\F_{q^5}$-linear rank distance codes of $\F_{q^5}$-dimension two.

\subsubsection{KM-arcs}\label{clubs}

The work in this paper was partially motivated by the existence problem of {\em $2$-clubs} (which have one point of weight $2$ and all others of weight one). $\F_2$-linear $t$-clubs have been shown to be equivalent to translation KM-arcs of type $2^t$ (see \cite {linearsetview}). A computer search from \cite{limb} (phrased in a coding theoretical setting) already showed that there are no $2$-clubs in $\PG(1,2^5)$. The Main Theorem of this paper, together with Theorems \ref{qistwo}, \ref{qisthree}, and  \ref{qisfour} show that this result holds for all $q$, i.e. there are no $\F_q$-linear $2$-clubs in $\PG(1,q^5)$.

\subsection{The weight distribution of \texorpdfstring{$\F_q$}{Fq}-linear sets of rank 5 in \texorpdfstring{$\PG(1,q^5)$}{PG(1,q5)}, \texorpdfstring{$q=2,3,4$}{q=2,3,4}.}\label{kleineq}
While our proof works for $q=2,3,4$, it is worth investigating which possibilities actually occur in these cases. In particular, for $q\in \{2,3,4\}$, the lower bound $q-2\sqrt{q}+1$ in the third bullet point in the Main Theorem still allows the possibility that there is a linear set with exactly one point of weight $2$, a $2$-club. However, such linear sets do not exist (see Remark \ref{clubs}). We also see that the possibilities $3q$ and $3q+1$ in the third bullet point do not occur for $q=2,3,4$ (recall that for $q=2$, the possibilities $2q+2$ and $3q$ coincide and for $q=3$, the possibilities $3q+1$ and $q^2+1$ coincide), which confirms our conjecture made in Subsection \ref{constructions}  for these small values of $q$.

It is not too hard to determine all possibilities for the weight distribution in $\PG(1,2^5)$ and $\PG(1,3^5)$ by computer. Using the GAP package FinInG \cite{fining}, we found the following.

\begin{theorem}\label{qistwo}Let $S$ be an $\F_2$-linear set of rank $5$ in $\PG(1,2^5)$ with $|S|>1$. Then either:
\begin{enumerate}[(a)]
\item $S$ contains one point of weight $4$ and $16$ of weight $1$ (and hence, is a $4$-club), or
\item $S$ contains one point of weight $3$, and $0$, $2$ or $4$ of weight $2$, and all others of weight $1$, or
\item $S$ contains $s$ points of weight $2$, where $s\in \{2,3,4,5,6\}$, and all others of weight $1$, or
\item $S$ contains $31$ points of weight $1$ (and hence, is scattered).
\end{enumerate}
We find that the possible sizes for an $\F_2$-linear set of rank $5$ in $\PG(1,2^5)$ are $17$, $19$, $21$, $23$, $25$, $27$ and $31$ and all these possibilities occur.
\end{theorem}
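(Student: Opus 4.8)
The plan is to classify all weight distributions for $q=2$ by reducing to a finite computation and then verifying completeness against the Main Theorem. The underlying idea is that over $\F_2$ the field reduction picture is entirely finite: every $\F_2$-linear set of rank $5$ in $\PG(1,2^5)$ corresponds, via the correspondence of Subsection \ref{opmerkingspread}, to a $4$-dimensional projective subspace $\pi$ of $\PG(9,2)$, and the weight distribution is determined by the multiset of dimensions of intersections of $\pi$ with the elements of a fixed Desarguesian $4$-spread $\D$. Since $\PG(9,2)$ is finite, I would enumerate the orbits of $4$-spaces under the stabiliser of $\D$ (which is $\mathrm{P\Gamma L}(2,2^5)$ acting on $\PG(9,2)$), or equivalently enumerate all $\F_2$-linear sets up to the relevant equivalence, and record the weight distribution of each.

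First I would set up the computation in the projective rather than the linearised-polynomial model, since over $\F_2$ the number of $4$-subspaces of $\PG(9,2)$, while large, is reduced dramatically once we quotient by the group action; the GAP package FinInG \cite{fining} provides both the Desarguesian spread and the group $\mathrm{P\Gamma L}(2,2^5)$, so orbit computations on subspaces are directly available. For each orbit representative $\pi$ I would compute, for every spread element $E\in\D$, the projective dimension of $\pi\cap E$, thereby reading off the weight of the corresponding point, and then tabulate the full weight distribution. Collecting these distributions over all orbits yields the finite list of possibilities, from which the sizes are immediately computed via $|S|=\sum_{w\geq 1}(2^{w}-1)\cdot(\text{number of points of weight }w)$, noting $\tfrac{2^{w}-1}{2-1}=2^{w}-1$.

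The verification step then has two directions. On the one hand, the Main Theorem restricts the admissible distributions: cases (a), (b), (c) of the theorem, specialised to $q=2$, give exactly the weights listed here, with $s\in[2-2\sqrt2+1,\,2+2\sqrt2+1]\cup\{4,5,6,6,7,5\}$, and after rounding to integers and discarding $s=1$ (excluded because no $2$-clubs exist, by Remark \ref{clubs}) this narrows $s$ to $\{2,3,4,5,6\}$; one also checks that $3q=6$ coincides with $2q+2$ and $3q+1=7$ is excluded, matching the conjecture. On the other hand, the computer enumeration confirms that each surviving possibility genuinely occurs and that no further possibility appears, so the list in (a)--(d) is both necessary and exhaustive. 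The sizes $17,19,21,23,25,27,31$ are then the distinct values produced, the scattered case contributing $31$.

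I expect the main obstacle to be organisational rather than conceptual: one must guarantee that the orbit enumeration is genuinely \emph{complete}, i.e.\ that every $\F_2$-linear set of rank $5$ arises from some $4$-space in the search space and that the chosen group is the full stabiliser of $\D$, so that no weight distribution is missed. A subtle point is that the same weight distribution can arise from inequivalent linear sets (as remarked after the Main Theorem), so the claim to be verified by computer is about the \emph{multiset of weights}, not about equivalence classes; the enumeration must therefore record distributions and not collapse distinct linear sets having equal distributions. Matching the finite output against the algebraically derived constraints from the Main Theorem provides an independent cross-check that the computation is correct and complete.
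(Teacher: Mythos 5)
Your proposal is correct and takes essentially the same route as the paper: Theorem \ref{qistwo} is established there by an exhaustive computer enumeration with the GAP package FinInG, exactly the finite field-reduction computation you describe, with the Main Theorem serving only as a consistency check. Your observations about ensuring completeness of the enumeration and recording weight distributions rather than equivalence classes are the right cautions for such a computation.
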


\begin{theorem} \label{qisthree} Let $S$ be an $\F_3$-linear set of rank $5$ in $\PG(1,3^5)$ with $|S|>1$. Then either:
\begin{enumerate}[(a)]
\item $S$ contains one point of weight $4$ and $81$ of weight $1$ (and hence, is a $4$-club), or
\item $S$ contains one point of weight $3$, and $0$, $3$ or $9$ of weight $2$, and all others of weight $1$, or
\item $S$ contains $s$ points of weight $2$, where $s\in \{2,3,4,5,6,7,8,10\}$, and all others of weight $1$, or
\item $S$ contains $121$ points of weight $1$ (and hence, is scattered).
\end{enumerate}
We find that the possible sizes for an $\F_3$-linear set of rank $5$ in $\PG(1,3^5)$ are $82$, $91$, $97$, $100$, $103$, $106$, $109$, $112$, $115$ and $121$ and all these possibilities occur.
\end{theorem}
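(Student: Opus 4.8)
The plan is to combine the Main Theorem, which already constrains the admissible weight distributions, with an exhaustive but symmetry-reduced computation that decides, for $q=3$, exactly which of the surviving candidates actually occur. Substituting $q=3$ into the Main Theorem, case (b) becomes ``one point of weight $3$ and $0$, $3$ or $9$ points of weight $2$'', which is verbatim case (b) here, while case (c) restricts $s$ to the integers in $[4-2\sqrt3,\,4+2\sqrt3]\cup\{6,7,8,9,10\}=\{1,2,\dots,10\}$. Thus, relative to the Main Theorem, the statement reduces to three extra claims: (1) every value listed in cases (a)--(d) is realised, (2) the value $s=1$ (a $2$-club) does not occur, and (3) the value $s=9=3q$ does not occur. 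Throughout I would exploit that the weight distribution is a $\PGammaL(2,3^5)$-invariant, so it suffices to inspect one representative per orbit of $5$-dimensional $\F_3$-subspaces $U$ of $\F_{3^5}^2$, equivalently one $4$-space per orbit meeting a Desarguesian $4$-spread of $\PG(9,3)$ under the spread stabiliser.

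For the realisation claims (1) I would exhibit an explicit linear set for each weight distribution. The scattered set, the $4$-club, the $3$-club, and the weight-$3$ sets with $q^2=9$ and with $q=3$ points of weight $2$ are all given in Subsection \ref{constructions}; the remaining values of $s$ in case (c) I would realise by computer-found representatives, verifying the weight distribution directly from the linearised-polynomial description. Writing the set, after moving a point outside it to $\langle(0,1)\rangle_{3^5}$, as $L_f=\{\langle(x,f(x))\rangle_{3^5}\}$ for a $3$-polynomial $f(x)=\sum_{i=0}^4 a_ix^{3^i}$, the weight distribution is simply the multiset $\{\dim_{\F_3}\ker(f(x)-\gamma x)\mid \gamma\in\F_{3^5}\}$, which is cheap to evaluate. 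As a bookkeeping device I would use the identity $\sum_P(3^{\,\mathrm{wt}(P)}-1)=3^5-1$, which turns each admissible distribution into one of the sizes $82,91,97,100,103,106,109,112,115,121$ and simultaneously confirms the size list at the end of the statement.

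The non-occurrence claims (2) and (3) are the crux, since they are universal statements that no example can settle: they require an exhaustive search over orbit representatives that terminates with no counterexample. The main obstacle is the sheer size of the search space, as the number of $5$-subspaces of $\F_3^{10}$ is far too large for naive enumeration, so the computation must be driven by the group. I would first normalise using $\PGammaL(2,3^5)$: pass to the $q$-polynomial form above, then use the transformations $f(x)\mapsto \lambda f(\mu x)$, $f(x)\mapsto f(x)+cx$ and the Frobenius twist to kill several coefficients (e.g. setting $a_0=0$ and normalising a leading coefficient), bringing the number of representatives into a feasible range. One then computes the weight distribution of each representative and checks that none has exactly one, or exactly nine, points of weight $2$ with no point of higher weight. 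This is the step I expect to be most delicate to get right, and it is where I would employ the GAP package FinInG, constructing the Desarguesian spread and iterating over $4$-spaces modulo the spread stabiliser.

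Finally, I would use the structural analysis of the weight-$2$-only case in Theorems \ref{eriseensecant} and \ref{arcs} both to organise the search and to interpret the outcome: there the number of points of weight $2$ is expressed through the points of an associated cubic curve, and the exclusion of $s=3q$ is precisely the content of the conjecture in Subsection \ref{constructions}. For $q=3$ the value $3q=9$ does not coincide with $2q+2=8$ or $q^2+1=10$, so there is no alternative mechanism forcing it to appear, and the exhaustive computation above is exactly what establishes that $s=9$ is not attained while $s=10$ is (via the $q^2+1$ configuration of Case A.6). The same search simultaneously rules out $s=1$, yielding the non-existence of $2$-clubs in $\PG(1,3^5)$ recorded in Remark \ref{clubs}.
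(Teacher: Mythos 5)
Your proposal is correct and matches the paper's approach: the paper establishes Theorem \ref{qisthree} by an exhaustive computer search with the GAP package FinInG (the structural constraints coming from the Main Theorem), which is exactly the combination of symmetry-reduced enumeration and explicit realisation you describe. Your reduction of the candidate set to $\{1,\dots,10\}$ for $q=3$ and the size bookkeeping via $\sum_P(3^{\mathrm{wt}(P)}-1)=3^5-1$ are both correct and consistent with the stated size list.
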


\begin{theorem} \label{qisfour}
	Let $S$ be an $\F_4$-linear set of rank $5$ in $\PG(1,4^5)$ with $|S|>1$. Then either:
	\begin{enumerate}[(a)]
		\item $S$ contains one point of weight $4$ and $256$ of weight $1$ (and hence, is a $4$-club), or
		\item $S$ contains one point of weight $3$, and $0$, $4$ or $16$ of weight $2$, and all others of weight $1$, or
		\item $S$ contains $s$ points of weight $2$, where $s\in \{2,3,4,5,6,7,8,9,10,17\}$, and all others of weight $1$, or
		\item $S$ contains $341$ points of weight $1$ (and hence, is scattered).
	\end{enumerate}
	We find that the possible sizes for an $\F_3$-linear set of rank $5$ in $\PG(1,4^5)$ are $257$, $273$, $301$, $305$,  $309$, $313$, $317$, $321$, $325$, $329$, $333$ and $341$ and all these possibilities occur. 
\end{theorem}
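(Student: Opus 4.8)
The plan is to refine the Main Theorem at $q=4$ rather than to re-enumerate all linear sets from scratch. Substituting $q=4$ gives $q-2\sqrt q+1=1$, $q+2\sqrt q+1=9$, $2q=8$, $2q+1=9$, $2q+2=10$, $3q=12$, $3q+1=13$ and $q^2+1=17$, so the Main Theorem already forces every non-scattered $\F_4$-linear set $S$ of rank $5$ with $|S|>1$ into one of: (a) a $4$-club; (b) one point of weight $3$ together with $0$, $4$ or $16$ points of weight $2$; or (c) $s$ points of weight $2$ and nothing heavier, with $s\in\{1,2,\dots,10\}\cup\{12,13,17\}$. Comparing with the statement, cases (a), (b) and the scattered case need no further restriction, so three tasks remain: discard $s=1,12,13$ in case (c); confirm that every surviving distribution is attained; and compute the sizes. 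The size of a rank-$5$ linear set is determined by its weight multiset through $\sum_i n_i(q^i-1)=q^5-1$; in case (c) this yields $|S|=\frac{q^5-1}{q-1}-qs=341-4s$, giving $333,329,\dots,301$ for $s=2,\dots,10$ and $273$ for $s=17$, while case (b) gives $|S|=321-4s'$ for $s'\in\{0,4,16\}$, i.e.\ $321,305,257$, case (a) gives $257$ and the scattered case $341$. The union of these values is precisely the list in the statement.

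For existence I would instantiate the constructions of Subsection~\ref{constructions} at $q=4$. The standard scattered set, the $4$-club, the $3$-club and the two families producing one point of weight $3$ with $q=4$ or $q^2=16$ points of weight $2$ dispose of cases (a), (b) and the scattered case; the families $L_{\gamma,\delta_1,\delta_2}$ and $L_{\gamma_0,\gamma_1,\gamma_2,\gamma_1',\gamma_2'}$ realise $s=8,9,10$ and $s=17$. The value $s=1$ is a $2$-club, which does not exist in $\PG(1,q^5)$ (see Subsection~\ref{clubs}), so it is discarded rather than constructed. For the remaining values $s\in\{2,3,4,5,6,7\}$, which lie in the interval $[1,9]$ and correspond to cubic curves of varying point count, I would exhibit explicit $\F_4$-linear sets found by computer, each represented as a graph $L_f=\{\langle(x,f(x))\rangle_{q^5}\mid x\in\F_{q^5}\}$ of a $q$-polynomial and verified through Subsection~\ref{remark:polynomials}, namely that its weight distribution equals the multiset $\{\dim\ker(f(x)-\gamma x)\mid\gamma\in\F_{q^5}\}$.

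The hard part is excluding $s=12=3q$ and $s=13=3q+1$. These are exactly the values that the Main Theorem leaves open and that Subsection~\ref{constructions} conjectures never to occur when, as here for $q=4$, they differ from $2q+2=10$ and $q^2+1=17$. The difficulty is intrinsic: in the borderline subcases of Theorem~\ref{eriseensecant} and Theorem~\ref{arcs} the number of weight-$2$ points equals the number of $\F_{q^5}$-rational points of an associated cubic curve, for which no closed formula is available, so the general argument cannot settle these two values. My plan is to isolate precisely the subcases whose parameter conditions could force $s\in\{12,13\}$ and then to verify, after reducing the defining parameters $\gamma,\delta_1,\delta_2$ and $\gamma_0,\dots,\gamma_2'$ modulo the action of $\PGammaL(2,q^5)$ that preserves the weight distribution, that the required point counts never occur over $\F_{4^5}$. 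This targeted finite check over orbit representatives, together with the algebraic constraints already imposed in those subcases, replaces the infeasible enumeration of all $q^{25}$ graphs (let alone all $5$-dimensional $\F_4$-subspaces of $\F_4^{10}$) and is what makes the case $q=4$ tractable; combined with the existence constructions and the size computation above, it yields exactly the distributions and sizes claimed.
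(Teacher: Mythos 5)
Your overall shape — use the Main Theorem at $q=4$ to cut the candidates down to $s\in\{1,\dots,10,12,13,17\}$, derive the sizes from $\sum_i n_i(q^i-1)=q^5-1$ (your arithmetic $|S|=341-4s$, resp.\ $321-4s'$, is correct and reproduces the stated list), realise most values by the constructions of Subsection \ref{constructions}, and finish the rest by computer — is the same strategy the paper follows. The substantive difference is in how the residual computation is made feasible. The paper invokes \cite[Theorem 2.3]{bencecorrado} to show that every non-scattered linear set with only weights $1$ and $2$ (and not of pseudoregulus type) arises from a plane $\Pi$ with $\Pi\cap\Pi^\sigma$ a single point $P$, and then splits on $\dim\langle P,P^\sigma,\dots,P^{\sigma^4}\rangle$: the value $4$ gives the two-parameter family $L_{\alpha,\beta}$ of \eqref{zan}, and the value $3$ gives a family parametrised by $(\lambda_1,\dots,\lambda_4,\gamma)\in\F_4^4\times(\F_{4^5}\setminus\F_4)$. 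Both searches are of order $10^6$ and exhaust \emph{all} relevant linear sets, so existence and non-existence drop out simultaneously. Your proposed reduction — isolating the subcases of Theorems \ref{eriseensecant} and \ref{arcs} that could yield $s\in\{12,13\}$ and quotienting the five $\F_{4^5}$-parameters by a group action — is only sketched, and its feasibility is asserted rather than demonstrated; the cases A.1 and A.2 of Theorem \ref{arcs}, which are the ones where a genuinely general cubic (hence possibly three rational lines, $N=3q$ or $3q+1$) appears, still carry several free parameters in $\F_{4^5}$ after normalisation.

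There is also one concrete gap: the exclusion of $s=1$. You dispose of it by citing Subsection \ref{clubs}, but for $q=4$ that subsection derives the non-existence of $2$-clubs \emph{from} Theorem \ref{qisfour}; the only independent input there, \cite{limb}, covers $q=2$. Nothing theoretical rules out $s=1$ at $q=4$: the Hasse interval is $[1,9]$, and elliptic curves over $\F_4$ with exactly one rational point exist (Waterhouse allows $t=2\sqrt q$ here), so the Main Theorem genuinely leaves $s=1$ open and the computation must close it. Since your targeted verification is scoped only to $s\in\{12,13\}$, the value $s=1$ is left unexcluded except by a circular reference. The fix is to include $s=1$ in the same exhaustive or targeted check — which is exactly what the paper's full enumeration over the Csajbók--Zanella parameter families accomplishes automatically.
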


\begin{remark}\label{remarkMontanucci}
	In \cite{zanella}, the authors study linear sets of rank $5$ in $\PG(1,q^5)$ of the form \begin{align} L_{\alpha,\beta}=\{\langle (x-\alpha x^{q^2},x^q-\beta x^{q^2})\rangle_{q^5}\vert x\in \F_{q^5}\},\label{zan}\end{align} where $\alpha^{q}\neq \beta^{q+1}$, aiming to find conditions on $\alpha,\beta\in \F_{q^h}$ to ensure that the resulting linear set is scattered. Let $\Sigma\cong \PG(4,q)$ be a canonical subgeometry of $\PG(4,q^5)$ and $\sigma$ the collineation of $\PG(4,q^5)$ whose fixed points are precisely those of $\Sigma$. Then the linear sets of the form \eqref{zan} are precisely those arising from the projection of $\Sigma$ from a plane $\Pi$ with $\Pi\cap \Pi^\sigma=\{P\}$ where $P$ is a point with $\dim\langle P,P^\sigma,P^{\sigma^2},P^{\sigma^{q^3}},P^{\sigma^{q^4}}\rangle=4$. Linear sets as projections of subgeometries are explained in more detail in Section \ref{Pi}. 	\par The weight distribution of $L_{\alpha,\beta}$ is unknown in general but it can be argued that no clubs (which have size $q^4+1$) will have this form. Using GAP \cite{fining}, we checked the sizes of all linear sets of the form $L_{\alpha,\beta}$, where $\alpha^{q}\neq \beta^{q+1}$ for $q=2,3,4$. 	
	For $q=2$, we found that these possible sizes are $19,21,23,25$, which also means that not all linear sets in $\PG(1,2^5)$, different from a club, are of the form $L_{\alpha,\beta}$: in particular, no scattered linear set or linear set with exactly two points of weight $2$ is of the form $L_{\alpha,\beta}$. For $q=3,4$, we found that for all sizes mentioned in Theorems \ref{qisthree} and \ref{qisfour} respectively, except for $q^{4}+1$, there is a linear set of the form $L_{\alpha,\beta}$. 
\end{remark}

\begin{remark} In order to make the computation of the different weight distributions of linear sets of rank $5$ in $\PG(1,4^5)$ feasible, we used the following strategy. Note that scattered linear sets always exist and that the possible weight distributions when there is a point of weight at least $3$ follow from our Main Theorem (which is valid for all $q\geq 2$).
It follows from \cite[Theorem 2.3]{bencecorrado}, which is valid for $q>2$, that an $\F_q$-linear set which is not scattered of pseudoregulus type and does not contain a point of weight $3$ or $4$ necessarily arises from the projection of $\Sigma$ from a plane $\Pi$ with $\Pi\cap \Pi^{\sigma}=\{P\}$ where $P$ is a point. If the point $P$ has $\dim\langle P,P^\sigma,P^{\sigma^2},P^{\sigma^{3}},P^{\sigma^{4}}\rangle=4$ then the corresponding linear set is described by \eqref{zan}, and if the point $P$ has $\dim\langle P,P^\sigma,P^{\sigma^2},P^{\sigma^{3}},P^{\sigma^{4}}\rangle<3$ then the corresponding linear set would have a point of weight $3$ or $4$. Hence, the only case we still need to consider is when $\dim\langle P,P^\sigma,P^{\sigma^2},P^{\sigma^{3}},P^{\sigma^{4}}\rangle=3$. A reasoning, completely analogous to the one in \cite{zanella}, shows that in this case, we may assume that the plane $\Pi$ is spanned by the points $\langle (1,\alpha,\alpha^2,\alpha^3,0)\rangle,\langle(1,\alpha^q,\alpha^{2q},\alpha^{3q},0)\rangle$, where $\alpha$ is a generator of $\F_{q^5}$, and a point of the form $\langle (\lambda_1,\lambda_2,\lambda_3,\lambda_4,\gamma)\rangle$, where $\lambda_1,\lambda_2,\lambda_3,\lambda_4$ are arbitrary elements of $\F_q$, not all zero, and $\gamma$ is an arbitrary element of $ \F_{q^5}\setminus \F_{q}$. We could then use GAP to run through all possibilities for $\lambda_1,\lambda_2,\lambda_3,\lambda_4,\gamma$ and calculate the size of the corresponding linear sets, leading to Theorem \ref{qisfour}. 
\end{remark}

\subsection{Linear sets of rank at most 4 in \texorpdfstring{$\PG(1,q^5)$}{PG(1,q5)}}

The possible weight distributions for linear sets of rank $2,3,4$ are easy to determine when taking Subsection \ref{opmerkingspread} into account. We provide the list here for completeness. 
Note that a linear set of rank at least $t+1$ in $\PG(1,q^t)$ is necessarily the full line. We will not deal with that case in the paper.

\begin{result}
	Let $S$ be a linear set of rank $k$ in $\PG(1,q^t)$, $k\leq t$.
	\begin{enumerate} [(a)]
		\item If $k=1$, then $S$ contains a unique point of weight $1$, so $|S|=1$;
		\item if $k=2$, then $S$ contains:
		\begin{enumerate}[(i)]
			\item a unique point of weight $2$, $|S|=1$, or
			\item $q+1$ points of weight $1$, so $|S|=q+1$;
		\end{enumerate}
		\item if $k=3$, then $S$ contains:
		\begin{enumerate}[(i)]
			\item a unique point of weight $3$, so $|S|=1$, or
			\item a unique point of weight $2$ and $q^2$ points of weight $1$, so $|S|=q^2+1$, or 
			\item $q^2+q+1$ points of weight $1$, so $|S|=q^2+q+1$;
		\end{enumerate}
		\item (See \cite[Lemma 10]{michelik}) if $k=4$, then $S$ contains:
		\begin{enumerate}[(i)]
			\item a unique point of weight $4$, so $|S|=1$, or
			\item a unique point of weight $3$ and $q^3$ points of weight $1$, so $|S|=q^3+1$ or
			\item one point of weight $2$ and $q^3+q^2$ points of weight $1$, so $|S|=q^3+q^2+1$, or
			\item two points of weight $2$ and $q^3+q^2-q-1$ points of weight $1$, so $|S|=q^3+q^2-q+1$, or
			\item $q+1$ points of weight $2$ and $q^3-q$ points of weight $1$, so $|S|=q^3+1$, or
			\item $q^2+1$ points of weight $2$ and no other points. In this case, $S\cong \PG(1,q^2)$ and $t$ is even.
		\end{enumerate}
	\end{enumerate}
	Moreover, all of the above cases always occur, except for $(d)(vi)$ which occurs if and only if $t$ is even.
\end{result}

\begin{remark}
	This result describes the possibilities for the weight distributions, but does not address the equivalence. The equivalence problem for linear sets is in general a difficult problem, and has mostly been studied for linear sets of rank $n$ in $\PG(1,q^n)$ (see e.g. \cite{classes}). In $\PG(1,q^3)$ all linear sets of size $q^2+1$ are $\PGammaL$-equivalent, and all linear sets of size $q^2+q+1$ are $\PGammaL$-equivalent (see e.g. \cite{michelik}). However, linear sets of the same size are not necessarily equivalent (consider for example a subline $\PG(1,q^2)$ in $\PG(1,q^4)$ and a $2$-club of rank $3$ which does not form a subline, see also \cite[Subsection 2.2.2]{preprint}). Recently, in \cite[Corollary 5.4]{rfat} (which only appeared on the arXiv after submission of this paper), the authors provided a full list of the equivalence classes of $\F_q$-linear sets of rank $4$ in $\PG(1,q^4)$(see also \cite{bonoli}).
\end{remark}

\subsection{Strategy for the proof of the Main Theorem}

\subsubsection{Linear sets as projections of subgeometries and the set \texorpdfstring{$\Omega_2$}{Omega2}} \label{Pi}
A well-known result of \cite{lunardon} states that every linear set can be obtained as the projection of a suitable subgeometry. Applied to the main case of interest for this paper, the result says the following:
\begin{result} \label{linearsetprojection} Let $S$ be an $\F_q$-linear set of rank $5$ on the line $L\cong \PG(1,q^5)$ and suppose that $S$ spans $L$. Embed $L$ in $\PG(4,q^5)$. Then there exists a subgeometry $\Sigma\cong \PG(4,q)$ of $\PG(4,q^5)$ and a plane $\Pi$ of $\PG(4,q^5)$, disjoint from $\Sigma$ and disjoint from $L$, such that $S$ is obtained as the projection of $\Sigma$ from $\Pi$ onto $L$. 

\end{result}
In what follows, by a {\em line $\ell$ of $\Sigma$}, we mean a set of $q+1$ points of $\Sigma$ that lie on a line, say $L$ of $\PG(4,q^5)$; we say that the line $\ell$ {\em extends} to the line $L$ and that $L$ is the extension of the line $\ell$. Similarly, by a {\em plane of $\Sigma$}, we mean a set of $q^2+q+1$ points of $\Sigma$ that lie on a plane of $\PG(4,q^5)$. 
In order to help the reader, subspaces of the subgeometry $\Sigma$ will be denoted by small letters and subspaces of the space $\PG(4,q^5)$ will be denoted by capital letters.

Furthermore, the {\em weight $w$} of a point $P$ of $S$ can also be defined as $w=d+1$ where $d$ is the projective dimension of the intersection of the hyperplane $\langle P, \Pi\rangle$ with the subgeometry $\Sigma$. In other words, the preimage of a point of weight $w$ under the projection map defined above is a $(w-1)$-dimensional subspace of $\Sigma$.

A proof of the equivalence between this definition and the classical definition of the weight of a point (as in \cite{olga}) can be found in \cite[Proposition 2.7]{withjohn}.

\begin{definition} A point of $\PG(4,q^5)$ is said to have {\em rank 2} if it lies on the extension of a line of $\Sigma$, but does not lie on $\Sigma$ itself. We denote the set of points of rank $2$ by $\Omega_2$.
\end{definition}

It is clear that for a point of rank $2$, there is a unique such line of $\Sigma$ since if two concurrent lines meet $\Sigma$ in $q+1$ points, their common point is in $\Sigma$. 

Using the notation from Result \ref{linearsetprojection}, a point $P$ of weight $2$ in a linear set arises from the projection of a line $m$ of $\Sigma$, i.e. the $3$-space $\langle P,\Pi\rangle$ meets $\Sigma$ in a line $m$. This implies that the extension of $m$ intersects $\Pi$ in a point, say $R$ of rank $2$. Hence, for every point $P$ of weight $2$, we find a unique point $R$ of rank $2$. It is not too hard to see that if there are only points of weight $1$ and $2$ in the linear set, this correspondence is one-to-one:

\begin{result} \cite[Corollary 6.7]{withjohn} Let $S$ be a linear set, obtained as the projection from a subgeometry $\Sigma$ from a subspace $\Pi$ onto a subspace $L$. Suppose that $S$ only contains points of weight $1$ and $2$, then the number of points of weight $2$ equals $|\Omega_2\cap \Pi|$, i.e. the number of points of rank $2$ in $\Pi$.
\end{result}
It also follows that if we find a point of rank $2$ in $\Pi$, then either it is in one-to-one correspondence with a point of weight $2$ in the linear set $S$, or it gives rise to a point of weight at least $3$. This observation will allow us to determine the possible weight distributions for linear sets in $\PG(1,q^5)$.

\subsubsection{Overview of this paper}

In Section \ref{intersections}, we will develop the framework to investigate the intersection of $\Pi$ with $\Omega_2$, where $\Pi$ and $\Omega_2$ are as in Subsection \ref{Pi}. We will first determine the possible sizes of the intersection of a line with $\Omega_2$ (Theorem \ref{intersectionwithline}). We introduce a way of representing rank $2$ points (Lemma \ref{gammatype}) which allows us to efficiently describe when the line through two points of $\Omega_2$ is a $(q+1)$-secant (see Theorem \ref{qplusonesecant}). We then use the same tools to study the $(q^2+q+1)$-secants to $\Omega_2$ in Theorem \ref{allesiser}. 

These ideas are essentially enough to describe the possible weight distributions of linear sets of rank $5$ containing a point of weight $3$ in Theorem \ref{weight3}. The case where the linear set contains a point of weight $4$ is easy; it is included here for completeness (see Subsection \ref{trace}).

The rest of this paper is devoted to the case when the linear set only contains points of weight one and two. We will first show that in this case, it is impossible for $\Pi$ to contain two $(q+1)$ secants to $\Omega_2$. When there is exactly one $(q+1)$-secant to $\Omega_2$ in $\Pi$, we will show that there are $q-1,q$ or $q+1$ additional points of rank $2$ in $\Pi$ (Theorem \ref{eriseensecant}).

Finally, we turn to the most difficult case, where there is no $(q+1)$-secant to the set $\Omega_2$ in $\Pi$. In that case (see Theorem \ref{arcs}), we show that the set of points in $\Pi\cap\Omega_2$ is either empty or forms an arc of size $s$ where $s\in [q-2\sqrt{q}+1,q+2\sqrt{q}+1]$ or $s\in \{2q,2q+1,2q+2,3q,3q+1,q^2+1\}$. To this end, we show that the number of points in $\Omega_2$ is given by the number of points on a certain cubic curve. 

\begin{remark}\label{refinement} We can somewhat refine the possible values in the interval $[q-2\sqrt{q}+1,q+2\sqrt{q}+1]$ that can occur: a result of Waterhouse \cite[Theorem 4.1]{waterhouse} gives necessary and sufficient conditions on $q$ and $t$ such that an elliptic curve (i.e. a non-singular non-empty cubic curve) with $q+1-t$ $\F_q$-rational points exist. In particular, such a curve exists for all values with $\gcd(t,q)=1$. But for example, it also follows from this theorem that, if $q=p^3$, $p=5$, there is no elliptic curve with exactly $q+1-p=q-4$ distinct $\F_q$-rational points, and hence, there will also not be an $\F_q$-linear set in $\PG(1,q^5)$, $q=5^3$ with exactly $q-4$ points of weight $2$.
\end{remark}

The proofs of Theorems \ref{eriseensecant} and \ref{arcs} contain many different subcases, depending on the parameters defining the points spanning the subspace $\Pi$. 
For both proofs, the subcases are all treated in a similar way but require care in the actual computation. In particular, in the proof of Theorem \ref{arcs}, we cannot simply stop when we reduced the problem to finding the points on a cubic curve: in each case we need to exclude the possibility that this cubic curve consists of $1$ single point (as that would lead to a $2$-club, which we show not to exist). 

For both Theorems, we have included a few cases to demonstrate the methods occurring in the proof in the paper; the details of the remaining cases have been included in an appendix\ifthenelse{\equal{\versie}{arxiv}}{}{ which is only available in the arXiv version of this paper}.

\section{The intersection of a subspace with the set \texorpdfstring{$\Omega_2$}{Omega2}}\label{intersections}

\subsection{The intersection of a line with \texorpdfstring{$\Omega_2$}{Omega2}}

\begin{theorem}\label{intersectionwithline}
	If $L$ is a line of $\PG(4,q^5)$, disjoint from the subgeometry $\Sigma\cong \PG(4,q)$, then $L$ meets $\Omega_2$ in $0,1,2,q+1$ or $q^2+q+1$ points. 
	\par Furthermore, if $L$ is a line containing exactly $q+1$ points of $\Omega_2$, then the span of the lines of $\Sigma$ whose extensions meet $L$ is a hyperplane of $\Sigma$
	. If $L$ is a line containing exactly $q^2+q+1$ points of $\Omega_2$, then the span of the lines of $\Sigma$ whose extensions meet $L$ is a plane of $\Sigma$.
\end{theorem}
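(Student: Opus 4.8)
The plan is to work with the collineation $\sigma$ of $\PG(4,q^5)$ whose fixed points are exactly the points of $\Sigma$ (the $q$-power Frobenius), and to recast $\Omega_2$ in terms of $\sigma$-orbits. First I would observe that a point $P$ with $P\neq P^\sigma$ lies in $\Omega_2$ if and only if $P,P^\sigma,P^{\sigma^2}$ are collinear: if they are, then $M_P:=\langle P,P^\sigma\rangle$ satisfies $M_P^\sigma=M_P$, so (being Frobenius-invariant, hence defined over $\F_q$) it meets $\Sigma$ in a subline $\ell_P=M_P\cap\Sigma$ of $q+1$ points and is its extension; conversely the extension of a line of $\Sigma$ is $\sigma$-invariant and carries the whole orbit of any of its points. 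Since $L\cap\Sigma=\emptyset$, no extension of a line of $\Sigma$ equals $L$, so two points of $L\cap\Omega_2$ cannot share the same $\ell_P$, and conversely a line $m$ of $\Sigma$ contributes a point of $L\cap\Omega_2$ exactly when its extension meets $L$. This gives a bijection between $L\cap\Omega_2$, the set of $\sigma$-invariant transversals $M_P$ of $L$, and the set of lines $m$ of $\Sigma$ whose extension meets $L$.

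Next I would fix $N:=\langle L,L^\sigma,L^{\sigma^2},L^{\sigma^3},L^{\sigma^4}\rangle$, a $\sigma$-invariant subspace; being defined over $\F_q$, it meets $\Sigma$ in a subgeometry $n:=N\cap\Sigma\cong\PG(d,q)$ with $d=\dim N$. Every $M_P$ lies in $N$, so every $\ell_P$ lies in $n$; moreover, if $|L\cap\Omega_2|\geq 2$ then the span of the lines $\ell_P$ contains two points of $L$, hence $L$, hence, being $\sigma$-invariant, all of $N$, so this span equals $n$. This already reduces the two ``span'' assertions to pinning down $d$. The case $d=2$ is then immediate: here $L$ lies in the plane $N$, which meets $\Sigma$ in the subplane $n\cong\PG(2,q)$, and each of the $q^2+q+1$ lines of $n$ extends to a line of $N$ meeting $L$ (two lines of a projective plane always meet) in a point off $\Sigma$, giving exactly $q^2+q+1$ distinct points and span equal to the plane $n$.

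The heart of the argument is to show that if $d\geq 3$ and $|L\cap\Omega_2|\geq 3$, then $|L\cap\Omega_2|=q+1$. Pick three points $P_1,P_2,P_3$ with transversals $M_1,M_2,M_3$. If two of them, say $M_i,M_j$, met in a point $Z$, then $\langle M_i,M_j\rangle$ would be a $\sigma$-invariant plane (the $M_i$ are $\sigma$-invariant) containing $P_i,P_j\in L$ and $P_i^\sigma,P_j^\sigma\in L^\sigma$, forcing $L,L^\sigma$ and hence the whole orbit into this plane, i.e.\ $d\leq 2$, a contradiction. So $M_1,M_2,M_3$ are pairwise skew; since each contains a point of $L$ and of $L^\sigma$, we get $L,L^\sigma\subseteq\langle M_1,M_2\rangle$, whence $N=\langle M_1,M_2\rangle\cong\PG(3,q^5)$, so $d=3$ and $M_3\subseteq N$. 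Thus $M_1,M_2,M_3$ generate a regulus $\R$ in $N$ whose opposite regulus contains the transversals $L$ and $L^\sigma$. As $\sigma$ fixes $M_1,M_2,M_3$, it acts on $\R\cong\PG(1,q^5)$ as a semilinear collineation with companion the $q$-power map and with at least three fixed points; normalising these to $0,1,\infty$ shows the action is the standard Frobenius $t\mapsto t^{q}$, whose fixed set is the subline $\PG(1,q)$ of exactly $q+1$ points. Since $L$ is a transversal meeting every member of $\R$, the points of $L\cap\Omega_2$ are precisely the $q+1$ $\sigma$-invariant members of $\R$, and the lines $\ell_P$ form a regulus of $n\cong\PG(3,q)$ spanning this hyperplane. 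Combining the three regimes, the count is forced into $\{0,1,2\}\cup\{q+1\}\cup\{q^2+q+1\}$, with $q+1$ occurring only when $d=3$ and $q^2+q+1$ only when $d=2$.

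The hard part will be the supporting facts underlying this clean skeleton. The key lemma is that a semilinear collineation of $\PG(1,q^5)$ with companion the $q$-power map and at least three fixed points fixes exactly the subline $\PG(1,q)$; this must be applied to the induced action of $\sigma$ on the regulus $\R$, so I also need that a regulus through three $\sigma$-fixed lines is itself $\sigma$-invariant and that $\sigma$ induces a genuine semilinear collineation on its parameter line with the correct companion automorphism. The other delicate point is the position analysis used to rule out the transversals $M_i$ being concurrent or coplanar, i.e.\ the repeated ``span-collapse'' arguments showing that any such coincidence drags $L$ and $L^\sigma$ into a common $\sigma$-invariant plane and hence collapses $d$ to $2$. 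Once these are in place the small cases ($|L\cap\Omega_2|\leq 2$) and the boundary between $d=2$ and $d\geq 3$ are automatic, and the stated list together with the two span statements follows.
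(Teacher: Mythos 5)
Your Galois-theoretic framework is sound and genuinely different from the paper's argument, which never invokes the Frobenius collineation $\sigma$: the descent fact that a $\sigma$-invariant subspace of $\PG(4,q^5)$ is the extension of a subspace of $\Sigma$, the reduction of both span statements to computing $d=\dim\langle L,L^\sigma,\dots,L^{\sigma^4}\rangle$, the plane case $d=2$, and the pairwise skewness of the transversals when $d\geq 3$ are all correct. The genuine gap is in the sentence ``Since $L$ is a transversal meeting every member of $\R$, the points of $L\cap\Omega_2$ are precisely the $q+1$ $\sigma$-invariant members of $\R$.'' What you have justified there is only the lower bound $|L\cap\Omega_2|\geq q+1$: each $\sigma$-invariant member of $\R$ meets $L$ in a point of $\Omega_2$. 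You have not shown that the transversal $M_Q$ of an \emph{arbitrary} point $Q\in L\cap\Omega_2$ belongs to $\R$, and pairwise skewness of the $M_Q$'s does not place them in a common regulus. This upper bound $|L\cap\Omega_2|\leq q+1$ is the entire content of the hard direction; it is precisely the step on which the paper spends its most elaborate argument (iterating reguli to produce a regular spread of $q^2+1$ lines whose extensions meet $L$, which would force an $\F_{q^2}$-subline on $L$, impossible since $\F_{q^5}$ has no subfield $\F_{q^2}$).

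The good news is that your own setup closes the gap cleanly. Since $\sigma$ fixes $M_1,M_2,M_3$ it preserves $\R$, hence the quadric, hence $\R^{opp}$; and because $L\cap\Sigma=\emptyset$ the lines $L$, $L^\sigma$, $L^{\sigma^2}$ are pairwise distinct (for $L=L^{\sigma^2}$ note that $\sigma^2$ generates $\langle\sigma\rangle$ since $\gcd(2,5)=1$, so $L$ would be $\sigma$-invariant and would meet $\Sigma$), hence pairwise skew members of $\R^{opp}$. Any $M_Q$ with $Q\in L\cap\Omega_2$ contains the three distinct points $Q,Q^\sigma,Q^{\sigma^2}$ lying on $L,L^\sigma,L^{\sigma^2}$ respectively, so it is a transversal of three skew members of $\R^{opp}$ and therefore lies in $\R$; this gives the bijection you asserted. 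Observe that this is exactly where the oddness of the extension degree enters your proof, playing the role of the paper's ``no $\F_{q^2}$ inside $\F_{q^5}$.'' With that paragraph added, your proof is complete and arguably cleaner than the paper's; the remaining unproved ingredients (Galois descent for $\sigma$-invariant subspaces, and the fact that a collineation of $\PG(1,q^5)$ with companion automorphism $x\mapsto x^q$ and at least three fixed points has exactly $q+1$ of them) are standard and correctly stated.
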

\begin{proof}
Let $L$ be a line, disjoint from $\Sigma$ and meeting $\Omega_2$ in at least $3$ points, say $P_1,P_2,P_3$. Denote the line of $\Sigma$ whose extension contains $P_i$ by $\ell_i$, $i=1,2,3$. 

\textbf{Case 1: Suppose  that the three lines $\ell_1,\ell_2,\ell_3$ are not mutually disjoint.} Without loss of generality we suppose that $\ell_1$ and $\ell_2$ have a point $r$ in common. All the extended lines of the lines in the plane $\langle \ell_1,\ell_2\rangle$ of $\Sigma$ then meet $L$ in a point of $\Omega_2$, and all these $q^2+q+1$ points are distinct. 

If $\ell_3$ is disjoint from the plane $\langle \ell_1,\ell_2\rangle$, then $L$ lies in $\Xi_1$, the extension of the plane $\langle \ell_1,\ell_2\rangle$ and in $\Xi_2$, the extension of the $3$-space $\langle \ell_1,\ell_3\rangle$. Since $\ell_3$ is disjoint from $\langle \ell_1,\ell_2\rangle$, $\Xi_1$ is not contained in $\Xi_2$.  But this forces $L$ to be the intersection line of $\Xi_1$ and $\Xi_2$, which in turn means that $L$ meets $\Sigma$ in $\ell_1$, a contradiction. This implies that $\ell_3$ and $\langle \ell_1,\ell_2\rangle$ have at least one point in common, say $s$. It follows that the plane $\langle r,L\rangle$ is the same plane as the plane $\langle s,L\rangle$, and hence, that $\ell_3$ lies in the plane $\langle \ell_1,\ell_2\rangle$. This shows that there are no points of $\Omega_2$ on $L$ that do not lie on an extended line of the plane $\langle \ell_1,\ell_2\rangle$.

\textbf{Case 2: Suppose  that the three lines $\ell_1,\ell_2,\ell_3$ are mutually disjoint.} Suppose first that $\langle \ell_1,\ell_2\rangle \neq \langle \ell_1,\ell_3\rangle$. Then, the $3$-spaces $\langle \ell_1,\ell_2\rangle$ and $ \langle \ell_1,\ell_3\rangle$ meet in a plane, say $\pi$. It follows that $L$ is contained in the extension of $\pi$, and consequently, that each of the $q^2+q+1$ lines of $\pi$ extends to a line meeting $L$ in a rank $2$ point. This is a contradiction as we have seen in Case 1 that it is impossible that $L$ lies in the extension of a plane and contains points of rank $2$ not arising from extended lines of that plane.

This implies that $\langle \ell_1,\ell_2\rangle = \langle \ell_1,\ell_3\rangle$. Let $L_i$ be the extension of $\ell_i$. Consider the $q+1$ lines of $\Sigma$ intersecting each of the lines $\ell_1,\ell_2$ and $\ell_3$ in a point (this is the opposite regulus defined by these three lines of $\Sigma$). These $q+1$ lines extend to $q+1$ lines of $\PG(4,q^5)$. Let $M_1,M_2,M_3$ be three of these extended lines and consider the regulus in $\PG(4,q^5)$ defined by $M_1,M_2,M_3$, say $\mathcal{R}$. The lines $L_1,L_2,L_3$ are then contained in the opposite regulus $\mathcal{R}^{opp}$. Since $L$ meets $L_1,L_2,L_3$ each in a point, we know that $L$ belongs to $\mathcal{R}$. Let $\ell_4$ be a line of the regulus defined by $\ell_1,\ell_2,\ell_3$ in $\Sigma$ and let $L_4$ be its extension line. Since $L_4$ meets $M_1,M_2,M_3$, we find that $L_4$ belongs to $\mathcal{R}^{opp}$. Since $L$ is a line of  $\mathcal{R}$, we find that $L$ meets with $L_4$. This intersection point is a rank $2$ point, so we find $q+1$ rank $2$ points on $L$ arising from extended lines of the regulus in $\Sigma$ through $\ell_1,\ell_2,\ell_3$.

Suppose now to the contrary that there is an additional point, say $P_{q+2}$, of rank $2$ on $L$, lying on the extension of a line $m$. Repeating the argument above, where the points $P_1,P_2,P_3$ are replaced by the points $P_1,P_2,P_{q+2}$, shows that the line $m$ needs to be contained in the space $\langle \ell_1,\ell_2\rangle$. It also follows that $m$ does not meet any of the $q+1$ lines of the regulus determined in $\Sigma$ by $\ell_1,\ell_2,\ell_3$: if $\langle m,\ell_j\rangle$ is a plane, then $L$ would be contained in the extension of this plane, and we have seen in Case 1 that it is impossible for $L$ to contain points of rank $2$ that do not arise from extended lines of $\langle m,\ell_j\rangle$; but any point $P_i$ with $i\notin\{j,q+2\}$ is a rank $2$ point not arising from an extended line in $\langle m,\ell_j\rangle$.

Repeating the reasoning above by replacing $\ell_1,\ell_2,\ell_3$ with the lines $\ell_1,\ell_2,m$ we find $q-2$ extra points of rank $2$ on the extension of $q-2$ lines in $\langle \ell_1,\ell_2\rangle$. It is clear that we can repeat this process for $m$ together with any two lines of the regulus through $\ell_1,\ell_2,\ell_3$ until we have found a spread of $q^2+1$ lines each of which extends to a line meeting $L$ in a point of rank $2$. The spread is closed under taking reguli, and hence, is a regular spread. This implies that the set of $q^2+1$ points on $L$ form an $\F_{q^2}$-subline (see e.g. \cite[Theorem 1.5]{sara}), which is impossible since $\F_{q^5}$ does not have $\F_{q^2}$ as a subfield.
\end{proof}

\subsection{The type of a point of rank 2}
\subsubsection{The notation \texorpdfstring{$P=Q_1+\gamma Q_2$}{P=Q1+gQ2}}


Let $P$ be a point of $\Omega_2$, where, as before, $\Omega_2$ is the set of points lying on an extended line of a subgeometry $\Sigma\cong \PG(4,q)$ in $\PG(4,q^5)$. Then $P$ lies on a unique extended line $L$ of $\Sigma$. Let $Q_1$ and $Q_2$ be two points of $L\cap \Sigma$. Let $\Sigma$ be the canonical subgeometry defined by the points whose homogeneous coordinates belong to $\F_q^{5}$ up to an $\F_{q^{5}}$-multiple. Hence, coordinates of a point of $\Sigma$ are of the form $\alpha(x_0,\ldots,x_4)$ where $x_i\in \F_q$ and $\alpha\in \F_{q^5}^*$. 

\textbf{From now on, when we take coordinates for a point of $\Sigma$, we choose $\alpha\in \F_q$, i.e. take a vector with entries in $\F_q$}.

The coordinates of $P$ can be written as a linear combination of the coordinates of the points $Q_1$ and $Q_2$, where we can take the coefficient of this linear combination in $Q_1$ to be $1$. We abuse notation to write this as $P=Q_1+\gamma Q_2$ for some $\gamma\in \F_{q^5}$. 

It should be clear that, given points $Q_1$ and $Q_2$, the value of $\gamma$ is only  determined up to $\F_q$-multiple. Moreover, given $P$ on an extended line $L$, the points $Q_1$ and $Q_2$ are not uniquely determined.
But we will show in the next lemma that the set of $\gamma'$s for which $P=Q'_1+\gamma' Q'_2$, where $Q'_1$ and $Q'_2$ are coordinates of points of $\Sigma$ from $\F_q$,  can easily be determined.

\begin{lemma}\label{gammatype}
	Let $P$ be a point of rank $2$ such that $P$ has coordinates $Q_1+\gamma Q_2$ and $Q'_1+\gamma' Q'_2$, where $Q_1,Q_2,Q'_1$ and $Q'_2$ are coordinates of points of $\Sigma$, taken in $\F_q^5$. 
	Then $\gamma,\gamma'\in \F_{q^5}\setminus \F_q$ and there exist $a,b,c,d\in \F_q$ with $ad-bc\neq0$ such that 
	$$ \gamma'=\frac{a\gamma +b}{c\gamma +d}\;.$$

	Vice versa, if $P=Q'_1+\gamma' Q'_2$ with $ \gamma'=\frac{a\gamma +b}{c\gamma +d}$ and $ad-bc\neq 0$, then $P=Q_1+\gamma Q_2$ for some coordinates $Q_1,Q_2$ of points in $\Sigma$.
\end{lemma}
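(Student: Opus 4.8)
The plan is to reduce everything to the geometry of the unique extended line $L$ of $\Sigma$ through $P$ and of the $\F_q$-subline $\ell:=L\cap\Sigma$ sitting inside it. Both frames $(Q_1,Q_2)$ and $(Q_1',Q_2')$ are, by hypothesis, built from points of this \emph{same} subline $\ell$, and the statement is really the assertion that changing between two such frames is exactly a Möbius transformation over $\F_q$. First I would dispose of the degenerate cases: if $\gamma\in\F_q$ (in particular $\gamma=0$), then $Q_1+\gamma Q_2$ is an $\F_q$-combination of vectors of $\F_q^{5}$, so $P$ would be a point of $\Sigma$, contradicting that $P$ has rank $2$. Hence $\gamma\in\F_{q^5}\setminus\F_q$, and by the identical argument $\gamma'\in\F_{q^5}\setminus\F_q$.

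The crucial structural step identifies $\ell$ concretely. Since $L$ is a line of $\PG(4,q^5)$ meeting $\Sigma$ in exactly $q+1$ points, $\ell$ is an $\F_q$-subline of $L$; and since the $q+1$ points whose $\F_q$-coordinate representatives lie in the $\F_q$-span $\langle Q_1,Q_2\rangle_q$ are already $q+1$ distinct points of $\Sigma$ on $L$, they constitute all of $\ell$. Consequently $Q_1'$ and $Q_2'$, being points of $\ell$, have $\F_q$-representatives lying in $\langle Q_1,Q_2\rangle_q$, so I may write $Q_1'=dQ_1+cQ_2$ and $Q_2'=bQ_1+aQ_2$ with $a,b,c,d\in\F_q$. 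As $Q_1'$ and $Q_2'$ are distinct points spanning $L$, this $\F_q$-matrix is invertible, i.e.\ $ad-bc\neq 0$. I expect this descent from a priori $\F_{q^5}$-coefficients down to $\F_q$-coefficients to be the only genuine subtlety of the proof; everything after it is formal.

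The computation is then routine. As $Q_1+\gamma Q_2$ and $Q_1'+\gamma'Q_2'$ are coordinate representatives of the same projective point $P$, they are proportional, so for some $\rho\in\F_{q^5}^{*}$,
\[
Q_1+\gamma Q_2=\rho\,(Q_1'+\gamma'Q_2')=\rho(d+b\gamma')\,Q_1+\rho(c+a\gamma')\,Q_2 .
\]
Comparing the coefficients of the $\F_{q^5}$-independent vectors $Q_1,Q_2$ gives $1=\rho(d+b\gamma')$ and $\gamma=\rho(c+a\gamma')$; dividing eliminates $\rho$ and yields $\gamma=\frac{a\gamma'+c}{b\gamma'+d}$, which I would invert to obtain $\gamma'=\frac{d\gamma-c}{-b\gamma+a}$, a Möbius transformation with coefficients in $\F_q$ and determinant $ad-bc\neq 0$. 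Relabelling the four coefficients puts this in the stated form $\gamma'=\frac{a\gamma+b}{c\gamma+d}$ with $ad-bc\neq0$.

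For the converse I would run this argument in reverse, using that $\PGL(2,q)$ acts on the subline $\ell\cong\PG(1,q)$. Given $P=Q_1+\gamma Q_2$ and a prescribed $\gamma'=\frac{a\gamma+b}{c\gamma+d}$ with $ad-bc\neq0$, the corresponding $\F_q$-change of frame defines two points $Q_1',Q_2'$ of $\ell$ (hence of $\Sigma$, with $\F_q$-coordinates), and substituting back shows that $Q_1'+\gamma'Q_2'$ is an $\F_{q^5}$-multiple of $Q_1+\gamma Q_2$, i.e.\ a coordinate representative of $P$. The only point needing a little care throughout is the reading of the equalities as projective: the scalar $\rho$ above is harmless and is absorbed into the choice of coordinate representative of $P$, so the normalisation "coefficient of $Q_1'$ equal to $1$" can always be achieved.
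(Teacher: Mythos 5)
Your proposal is correct and follows essentially the same route as the paper's proof: both reduce to the fact that the two frames lie on the unique extended line through $P$, deduce that $Q_1',Q_2'$ are $\F_q$-linear combinations of $Q_1,Q_2$, and then read off the M\"obius relation by comparing coefficients. Your subline-counting justification for the $\F_q$-coefficients and your explicit proportionality factor $\rho$ are only cosmetic variations on the paper's argument.
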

\begin{proof} Since $P$ has rank $2$, it does not lie in $\Sigma$ by definition. Hence, $\gamma$ and $\gamma'$ do not belong to $\F_q$. Furthermore, if $P$ would lie on two different extended lines of $\Sigma$, then $P$ would be the intersection point of those two lines, which lies in $\Sigma$. Hence, $Q'_1$ and $Q'_2$ are a linear combination of $Q_1$ and $Q_2$. Since $Q_1,Q_2,Q_1',Q_2'$ are vectors over $\F_q$, this linear combination has coefficients in $\F_q$. So we can write $Q_1=dQ_1'+bQ_2'$ and $Q_2=cQ_1'+aQ_2'$ for some $a,b,c,d$ in $\F_q$ with  $ad-bc\neq 0$.
From $Q_1+\gamma Q_2=Q_1'+\gamma' Q_2'$ then follows that $dQ_1'+bQ_2'+\gamma(cQ_1'+aQ_2')=Q_1'+\gamma' Q_2'$. This implies that $(d+c\gamma)Q_1'+(b+a\gamma)Q_2'$ are coordinates for $P$. Dividing by $(d+c\gamma)$ then yields that $\gamma'=\frac{b+a\gamma}{d+c\gamma}$ as required.
Vice versa, if $P=Q'_1+\gamma' Q'_2$ with $ \gamma'=\frac{a\gamma +b}{c\gamma +d}$, then $P=Q_1+\gamma Q_2$ with $Q_1=dQ_1'+bQ_2'$ and $Q_2=cQ_1'+aQ_2'$.
\end{proof}

Consider a matrix $A=\begin{pmatrix} d&c\\b&a\end{pmatrix}$ with $\begin{vmatrix} d&c\\b&a\end{vmatrix}\neq 0$. Then $A$ induces an element of $\PGL(2,q)$ acting by left multiplication on the points of $\PG(1,q^5)$ whose homogeneous coordinates are taken as column vectors. More specifically, $A$ induces a mapping from the point $\langle(1,\gamma)\rangle$ to $\langle(1,\gamma')\rangle$, where $\gamma'=\frac{a\gamma +b}{c\gamma +d}$. We see that the different values for $\gamma'$ found in the previous lemma correspond to the orbit of the action of $\PGL(2,q)$ on the point $\langle(1,\gamma)\rangle$. Furthermore, there are $q^5-q$ elements $\gamma\in \F_{q^5}\setminus \F_q$ and each such element determines an orbit of length $|\PGL(2,q)|=q(q^2-1)$. This means that there are precisely $q^2+1$ different orbits on the elements of $\F_{q^5}\setminus \F_q$. We call these orbits on field elements {\em $G$-orbits}.

If there is a $\gamma$ such that $P=Q_1+\gamma Q_2$, then we say that $P$ has {\em type $G(\gamma)$}, where $G(\gamma)$ is the $G$-orbit containing $\gamma$. By the lemma above, the type of $P$ is well-defined.
From Theorem \ref{qplusonesecant}, together with Theorem \ref{intersectionwithline}, it will follow that a line,
disjoint from $\Sigma$ through two rank $2$ points of the same type is either a $(q+1)$-secant or a $(q^2+q+1)$-secant. 

We start by giving an alternative interpretation to the elements of $\F_{q^5}\setminus \F_q$ lying in the same $G$-orbit.

\begin{lemma} \label{gammas}
	Let $\gamma,\gamma'$ be two elements in $\F_{q^5}\setminus\F_q$. Then $G(\gamma)=G(\gamma')$ if and only if $$\dim\langle 1,\gamma,\gamma',\gamma\gamma'\rangle_q\leq 3\;.$$
	Here, $\dim\langle 1,\gamma,\gamma',\gamma\gamma'\rangle_q$ denotes the (vector) dimension of the $\F_q$-vector space spanned by the elements $1,\gamma,\gamma',\gamma\gamma'$ in $\F_{q^5}$ seen as a $5$-dimensional vector space over $\F_q$.
\end{lemma}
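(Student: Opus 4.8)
The plan is to prove both implications by passing through the single bridge identity obtained from clearing denominators: the Möbius relation $\gamma'=\frac{a\gamma+b}{c\gamma+d}$ is equivalent (once we know $c\gamma+d\neq 0$) to the $\F_q$-linear relation
\[
	c\,\gamma\gamma'+d\,\gamma'-a\,\gamma-b=0 .
\]
Thus $G(\gamma)=G(\gamma')$ is nothing but the assertion that $1,\gamma,\gamma',\gamma\gamma'$ satisfy a nontrivial $\F_q$-linear dependence of a \emph{special shape} (with the additional requirement $ad-bc\neq 0$), while $\dim_q\langle 1,\gamma,\gamma',\gamma\gamma'\rangle\leq 3$ is merely the assertion that \emph{some} nontrivial dependence exists. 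The whole content of the lemma is that these two conditions coincide, and the work lies in showing that an arbitrary dependence can be turned into an admissible (invertible, nondegenerate) one.

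For the forward direction I would argue as follows. If $G(\gamma)=G(\gamma')$, then by the definition of the $G$-orbits there exist $a,b,c,d\in\F_q$ with $ad-bc\neq 0$ and $\gamma'=\frac{a\gamma+b}{c\gamma+d}$ (this is exactly the description of the $\PGL(2,q)$-orbit given just before Lemma~\ref{gammas}). Multiplying out produces the displayed relation $c\,\gamma\gamma'+d\,\gamma'-a\,\gamma-b=0$; since $(a,b,c,d)\neq(0,0,0,0)$ this is a nontrivial $\F_q$-linear relation among $1,\gamma,\gamma',\gamma\gamma'$, so these four elements span a space of dimension at most $3$.

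For the converse, suppose $\dim_q\langle 1,\gamma,\gamma',\gamma\gamma'\rangle\leq 3$, so there is a nontrivial relation $\alpha\,\gamma\gamma'+\beta\,\gamma'+\delta\,\gamma+\epsilon=0$ with $(\alpha,\beta,\delta,\epsilon)\in\F_q^4\setminus\{0\}$. Rearranging gives $\gamma'(\alpha\gamma+\beta)=-(\delta\gamma+\epsilon)$, which suggests setting $c=\alpha$, $d=\beta$, $a=-\delta$, $b=-\epsilon$ so that $\gamma'=\frac{a\gamma+b}{c\gamma+d}$. The two things to verify — and this is the main obstacle — are that the denominator $\alpha\gamma+\beta$ is nonzero and that the matrix is invertible, i.e. $ad-bc\neq 0$. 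Both follow from $\gamma,\gamma'\notin\F_q$. Indeed, if $\alpha\gamma+\beta=0$ then $\gamma\notin\F_q$ forces $\alpha=\beta=0$, whence the relation collapses to $\delta\gamma+\epsilon=0$ and, again since $\gamma\notin\F_q$, to $\delta=\epsilon=0$, contradicting nontriviality; so the denominator is nonzero and $\gamma'$ is genuinely the stated Möbius image of $\gamma$. Finally, if $ad-bc=0$ then the rows $(a,b)$ and $(c,d)$ of the matrix are proportional, and since $(c,d)=(\alpha,\beta)\neq(0,0)$ (the denominator being nonzero) we get $(a,b)=\lambda(c,d)$ for some $\lambda\in\F_q$, giving $\gamma'=\frac{\lambda(c\gamma+d)}{c\gamma+d}=\lambda\in\F_q$, contradicting $\gamma'\notin\F_q$. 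Hence $ad-bc\neq 0$, the transformation lies in $\PGL(2,q)$, and $G(\gamma)=G(\gamma')$, completing the equivalence.
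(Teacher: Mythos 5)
Your proof is correct and follows essentially the same route as the paper's: clear denominators to get the linear relation $c\gamma\gamma'+d\gamma'-a\gamma-b=0$ in one direction, and in the other direction read an arbitrary dependence as a Möbius relation, using $\gamma,\gamma'\notin\F_q$ to rule out a vanishing denominator and to force $ad-bc\neq 0$. Your write-up is in fact slightly more explicit than the paper's on the two nondegeneracy checks in the converse, which the paper dispatches in one line.
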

\begin{proof} Suppose that $G(\gamma)=G(\gamma')$, then $\gamma'=\frac{a\gamma+b}{c\gamma+d}$ for some $a,b,c,d\in \F_q$ with $ad\neq bc$. It follows that $$(c\gamma+d)\gamma'=a\gamma+b,$$ and hence that 
$$b+a\gamma-d\gamma'-c\gamma\gamma'=0.$$
We see that $\{1,\gamma,\gamma',\gamma\gamma'\}$ is a dependent set over $\F_q$, and hence, $\dim\langle 1,\gamma,\gamma',\gamma\gamma'\rangle_q\leq 3$.

Vice versa, if $\dim\langle 1,\gamma,\gamma',\gamma\gamma'\rangle_q\leq 3$, there is a non-trivial linear combination say $b+a\gamma-d\gamma'-c\gamma\gamma'=0$ for some $a,b,c,d\in \F_q$, not all zero.

If follows that $\gamma'=\frac{a\gamma+b}{c\gamma+d}$. Now $ad\neq bc$ since $\gamma'\notin \F_q$. It follows that $G(\gamma)=G(\gamma')$.
\end{proof}

\begin{theorem}\label{qplusonesecant} Let $L$ be a line of $\PG(4,q^5)$ that contains two points of rank $2$, say $P_1=Q_1+\gamma Q_2$ and $P_2=Q_3+\gamma' Q_4$ and suppose that $\dim\langle Q_1,Q_2,Q_3,Q_4\rangle=3$. Then
\begin{enumerate}
\item  $L$ is disjoint from $\Sigma$,
\item $L$ is a $(q+1)$-secant to $\Omega_2$ if and only if $G(\gamma)=G(\gamma')$,
\item if $L$ is a $(q+1)$-secant to $\Omega_2$, then the $q+1$ points of $\Omega\cap L$ form an $\F_q$-subline.
\end{enumerate}
\end{theorem}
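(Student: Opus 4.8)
The plan is to coordinatise the line $L$ and to detect its rank-$2$ points by a single Frobenius/Moore-determinant computation, reading off all three assertions from one linear-algebra count. Since $\dim\langle Q_1,Q_2,Q_3,Q_4\rangle=3$, the four points are independent, so $\ell_1=\langle Q_1,Q_2\rangle$ and $\ell_2=\langle Q_3,Q_4\rangle$ are skew and I may take $(Q_1,Q_2,Q_3,Q_4)$ as a basis of the underlying $4$-dimensional $\F_{q^5}$-vector space. Coordinatising $L\cong\PG(1,q^5)$ by $\langle(s,t)\rangle\leftrightarrow sP_1+tP_2$, every point of $L$ is either $X_\lambda=P_1+\lambda P_2$ for some $\lambda\in\F_{q^5}$, with coordinate vector $v_\lambda=(1,\gamma,\lambda,\lambda\gamma')$, or it is $P_2$, with coordinate vector $(0,0,1,\gamma')$. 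For part $(1)$, let $\sigma$ be the coordinatewise $q$-power map, whose fixed points are exactly $\Sigma$; then $\langle v\rangle\in\Sigma$ iff $v^\sigma$ is a scalar multiple of $v$. Comparing the first nonzero coordinate in $v^\sigma=c\,v$ forces $c=1$, and then $\gamma^q=\gamma$ (respectively $\gamma'^q=\gamma'$ for $P_2$), contradicting $\gamma,\gamma'\notin\F_q$. Hence $L\cap\Sigma=\emptyset$.

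For parts $(2)$ and $(3)$ I would invoke the standard criterion that $\langle v\rangle$ has rank at most $2$ iff $v,v^\sigma,v^{\sigma^2}$ are $\F_{q^5}$-dependent, so that $\langle v,v^\sigma\rangle$ is a $\sigma$-invariant, hence $\F_q$-rational, line. By part $(1)$ every point of $L$ already has rank $\ge 2$, so $X_\lambda\in\Omega_2$ iff the $4\times 3$ matrix with columns $v_\lambda,v_\lambda^\sigma,v_\lambda^{\sigma^2}$ has rank $\le 2$, i.e.\ all four of its $3\times3$ minors vanish. Each such minor is a Moore determinant of three of the elements $1,\gamma,\lambda,\lambda\gamma'$, namely $\Delta(1,\gamma,\lambda)$, $\Delta(1,\gamma,\lambda\gamma')$, $\Delta(1,\lambda,\lambda\gamma')$, $\Delta(\gamma,\lambda,\lambda\gamma')$, and a Moore determinant vanishes exactly when its arguments are $\F_q$-linearly dependent. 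Thus the first two minors vanish iff $\lambda\in\langle 1,\gamma\rangle_q$ and $\lambda\gamma'\in\langle 1,\gamma\rangle_q$, and once both hold the remaining two triples lie in the $2$-dimensional space $\langle 1,\gamma\rangle_q$ and are automatically dependent. Consequently
\[
	L\cap\Omega_2=\{X_\lambda:\lambda\in\langle 1,\gamma\rangle_q,\ \lambda\gamma'\in\langle 1,\gamma\rangle_q\}\cup\{P_2\}.
\]

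The count is then governed by the $\F_q$-linear map $\phi\colon\langle 1,\gamma\rangle_q\to\F_{q^5}/\langle 1,\gamma\rangle_q$, $\lambda\mapsto\lambda\gamma'+\langle1,\gamma\rangle_q$, whose kernel indexes the finite rank-$2$ points. Since $\dim\langle1,\gamma,\gamma',\gamma\gamma'\rangle_q=2+\rk\phi$, Lemma \ref{gammas} translates $G(\gamma)=G(\gamma')$ into $\rk\phi\le1$. The key point is that $\rk\phi\ge1$ always: if $\phi\equiv0$ then $\gamma'\in\langle1,\gamma\rangle_q$, say $\gamma'=p+r\gamma$ with $r\neq0$, whence $\gamma\gamma'=p\gamma+r\gamma^2\notin\langle1,\gamma\rangle_q$ because $1,\gamma,\gamma^2$ are independent ($5$ is prime, so $\gamma$ has degree $5$ over $\F_q$), contradicting $\phi(\gamma)=0$. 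Hence $\rk\phi\in\{1,2\}$, so $\dim\ker\phi\in\{1,0\}$ and $|L\cap\Omega_2|\in\{q+1,2\}$. If $G(\gamma)=G(\gamma')$ then $\rk\phi=1$, $\ker\phi=\langle\lambda_0\rangle_q$ is $1$-dimensional, and $L\cap\Omega_2=\{X_{c\lambda_0}:c\in\F_q\}\cup\{P_2\}$ has $q+1$ points; in the coordinates above this is the image of $\PG(1,q)=\{\langle(x,y)\rangle:x,y\in\F_q\}$ under the projectivity $\langle(s,t)\rangle\mapsto\langle(s,\lambda_0 t)\rangle$, hence an $\F_q$-subline, which is $(3)$. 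If $G(\gamma)\neq G(\gamma')$ then $\rk\phi=2$, $\ker\phi=0$ and $L\cap\Omega_2=\{P_1,P_2\}$ is a $2$-secant; this is $(2)$.

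The main obstacle is the rank-$2$ step: recognising the four $3\times3$ minors as Moore determinants and then showing that the two genuine conditions $\lambda,\lambda\gamma'\in\langle 1,\gamma\rangle_q$ already force the remaining two minors to vanish. After that the count is routine, the only delicate point being the use of the primality of $5$ to guarantee $\rk\phi\ge1$; this is exactly what excludes a spurious $(q^2+1)$-secant and makes the dichotomy ``$2$ or $q+1$'' fall out, matching and slightly sharpening what Theorem \ref{intersectionwithline} allows. Alternatively, one could first invoke Theorem \ref{intersectionwithline} (the $(q^2+q+1)$ case being excluded because $\ell_1,\ell_2$ are skew) to reduce a priori to $2$ or $q+1$ points and then merely detect whether a third rank-$2$ point exists; but the Moore-determinant computation delivers the exact count and the subline structure simultaneously.
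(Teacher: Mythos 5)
Your proof is correct, and it takes a genuinely different route from the paper's. The paper proves the forward implication of (2) by first invoking Theorem \ref{intersectionwithline} to force the defining lines of all rank-$2$ points of a $(q+1)$-secant into the solid $\langle Q_1,Q_2,Q_3,Q_4\rangle$, then extracting the fractional-linear relation between $\gamma$ and $\gamma'$ from a $4\times 4$ linear system; the converse and (3) are obtained by explicitly exhibiting the $q+1$ points $R_\mu=(Q_1+\mu Q_3')+\gamma(Q_2+\mu Q_4')$. You instead compute $L\cap\Omega_2$ in a single pass: the Frobenius criterion (rank $\le 2$ iff $v,v^\sigma,v^{\sigma^2}$ are dependent) turns membership in $\Omega_2$ into the vanishing of Moore determinants, the two genuine conditions $\lambda,\lambda\gamma'\in\langle 1,\gamma\rangle_q$ define the kernel of your map $\phi$, and Lemma \ref{gammas} plus the primality of $5$ pin $\rk\phi$ to $1$ or $2$. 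This buys you the exact dichotomy $|L\cap\Omega_2|\in\{2,q+1\}$ with no appeal to Theorem \ref{intersectionwithline}, and the subline structure falls out of the linearity of $\ker\phi$; the cost is importing a criterion the paper never states or proves. Two small points worth tightening: (i) the Frobenius/Moore criterion, while standard, deserves either a citation or a one-line justification (the $\sigma$-invariance of $\langle v,v^\sigma\rangle$ plus the fact that a $\sigma$-invariant line meets $\Sigma$ in a subline); in fact for the direction you need to produce the $q+1$ points you can bypass it entirely, since $\lambda=a+b\gamma$ and $\lambda\gamma'=c+d\gamma$ with $a,b,c,d\in\F_q$ give $X_\lambda=(Q_1+aQ_3+cQ_4)+\gamma(Q_2+bQ_3+dQ_4)$, visibly a point of $\Omega_2$; (ii) your coordinatewise $\sigma$ is legitimate precisely because the $Q_i$ are taken with coordinate vectors in $\F_q^5$, so the ambient Frobenius fixing $\Sigma$ acts as the $q$-power map on coordinates with respect to this partial basis --- this should be said, as it is the hinge of the whole computation.
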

\begin{proof} 
Let $P_1=Q_1+\gamma Q_2$ and $P_2=Q_3+\gamma' Q_4$, where $Q_i$ are points of $\Sigma$, normalised as before to have coordinates in $\F_q$, and where $\gamma,\gamma'\in\F_{q^{5}}\setminus\F_{q}$.  If a point $\xi_1(Q_1+\gamma Q_2)+\xi_2(Q_3+\gamma'Q_4)$ with $\xi_1,\xi_2$ in $\F_{q^5}^*$ would be in $\Sigma$, then both $\xi_1$ and $\xi_1\gamma$ are in $\F_{q}$, a contradiction. So $L$ is disjoint from $\Sigma$.

From Theorem \ref{intersectionwithline}, we know that $L$ is either a $(q+1)$-secant or a $2$-secant. So suppose that $L$ contains a point $R$ of $\Omega_2$, different from $P_1$ and $P_2$. Then $R$ can be written as $\xi_1(Q_1+\gamma Q_2)+\xi_2(Q_3+\gamma'Q_4)$ for some $\xi_1,\xi_2$ in $\F_{q^5}^*$ since it lies on the line through $P_1$ and $P_2$. Since $R$ is in $\Omega_2$, by Theorem \ref{intersectionwithline}, the extended lines through points of rank $2$ on a $(q+1)$-secant all lie in a $3$-space. So we can write $R$ as 
\[
	(\la_1Q_1+\la_2Q_2+\la_3Q_3+\la_4Q_4)+\xi_3(\mu_1Q_1+\mu_2Q_2+\mu_3Q_3+\mu_4Q_4)
\]
for some $\xi_3\in \F_{q^5}^*$, $(\la_1,\la_2,\la_3,\la_4),(\mu_1,\mu_2,\mu_3,\mu_4) \in (\F_q^4)^*$, and with $(\la_1,\la_2,\la_3,\la_4)\neq s(\mu_1,\mu_2,\mu_3,\mu_4)$ for every $s\in\F_q$. It follows that the following system of equations in $\xi_{1}$, $\xi_{2}$ and $\xi_{3}$ must have a solution:
\begin{align}\label{eq:notatie}
	\begin{cases}
		\xi_1=\la_1+\mu_1\xi_3\\
		\gamma\xi_1=\la_2+\mu_2\xi_3\\
		\xi_2=\la_3+\mu_3\xi_3\\
		\gamma'\xi_2=\la_4+\mu_4\xi_3
	\end{cases}.
\end{align}
Eliminating $\xi_1$ from the first two equations, $\xi_2$ from the final two and $\xi_3$ from the two remaining equations, we see that, if there is a solution, we have that $$\gamma'=\frac{(\la_1\mu_4-\la_4\mu_1)\gamma+(\mu_2\la_4-\la_2\mu_4)}{(\la_1\mu_3-\la_3\mu_1)\gamma+(\la_3\mu_2-\la_2\mu_3)}\;.$$
We will now check that, if we have an admissible a solution, then
\[
	D=\begin{vmatrix} \la_1\mu_4-\la_4\mu_1&\mu_2\la_4-\la_2\mu_4\\\la_1\mu_3-\la_3\mu_1&\la_3\mu_2-\la_2\mu_3\end{vmatrix}\neq 0\;,
\]
which then shows that $\gamma$ and $\gamma'$ are of the same type. We see that $D=(\la_1\mu_2-\la_2\mu_1)(\la_3\mu_4-\la_4\mu_3)$. If the first factor would be zero, then the first two equations in \eqref{eq:notatie} would force $\gamma$ to be in $\F_q$. Similarly, the second factor is non-zero since $\gamma'\notin \F_q$.

Conversely, suppose that $G(\gamma)=G(\gamma')$, i.e. $\gamma'=\frac{a\gamma+b}{c\gamma+d}$ for some $a,b,c,d\in\F_q$ with $ad-bc\neq 0$. Consider    $Q_3'=dQ_3+bQ_4$ and $Q_4'=cQ_3+aQ_4$ which are points of $\Sigma$ on the line $Q_3Q_4$. Since $P_2=Q_3+\frac{a\gamma+b}{c\gamma+d}Q_4$, $P_2$ also is the point $(c\gamma+d)Q_3+(a\gamma+b)Q_4=Q_3'+\gamma Q_4'$, and hence, $P_2=Q_3'+\gamma Q_4'$.

Now consider, for $\mu\in \F_q$, the point $R_\mu$ given by $Q_1+\gamma Q_2+\mu (Q_3'+ \gamma Q_4')$. The point $R_\mu$ is clearly a point of $L$ as its coordinates are a linear combination of the coordinates of $P_1$ and $P_2$. Now $Q_1+\gamma Q_2+\mu (Q_3'+ \gamma Q_4')=(Q_1+\mu Q_3')+\gamma (Q_2+\mu  Q_4')$. Since $\mu\in \F_q$, we see that $(Q_1+\mu Q_3')$ determines a point of $\Sigma$, and similarly,  $Q_2+\mu  Q_4'$ determines a point of $\Sigma$. Hence, $R_\mu$ is a point of rank $2$ for all $\mu\in \F_q$. Furthermore, since $\mu\in \F_q$, the $q$ points $\{R_{\mu}\mid\mu\in\F_q\}$ form together with $P_2$ an $\F_q$-subline. So, we have found $(q+1)$ points of rank $2$ on $L$. 
\end{proof}

\begin{theorem} \label{allesiser}
	Let $L$ be a $(q^2+q+1)$-secant to $\Omega_2$, where $L$ is disjoint from $\Sigma$. Then there are $q+1$ points of $\Omega_2\cap L$ that have the same type, say $t_0$, and the other $q^2$ points of $\Omega_2\cap L$ have each a mutually different type, different from $t_0$.
\end{theorem}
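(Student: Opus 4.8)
The plan is to localise the problem to a plane, turn it into a clean statement about $2$-dimensional subspaces of $\F_{q^5}$, and settle that statement by a polynomial factorisation. First I would use Theorem~\ref{intersectionwithline}: a $(q^2+q+1)$-secant $L$ lies in the extension $\Xi\cong\PG(2,q^5)$ of a plane $\pi\cong\PG(2,q)$ of $\Sigma$, and the $q^2+q+1$ points of $\Omega_2\cap L$ are exactly the intersections of $L$ with the $q^2+q+1$ extended lines of $\pi$, one for each line of $\pi$. Choosing coordinates so that $\pi$ is canonical and writing $L$ as $a_0x_0+a_1x_1+a_2x_2=0$, disjointness of $L$ from $\pi$ is equivalent to $a_0,a_1,a_2$ being $\F_q$-linearly independent; set $A=\langle a_0,a_1,a_2\rangle_q$, a $3$-dimensional $\F_q$-subspace of $\F_{q^5}$. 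For the line $\ell$ of $\pi$ with equation $\sum m_ix_i=0$ (all $m_i\in\F_q$), the point $P_\ell=L\cap\overline{\ell}$ has coordinate vector $a\times m$, whose entries span over $\F_q$ the $2$-dimensional subspace $W_\ell=\{\sum c_ia_i : \sum c_im_i=0\}\le A$; as $\ell$ varies, $W_\ell$ runs bijectively through all $q^2+q+1$ two-dimensional subspaces of $A$. Reading off the type from the defining vector (as in Lemma~\ref{gammatype} and the definition following it) shows that $P_\ell$ and $P_{\ell'}$ have the same type precisely when $W_{\ell'}=\lambda W_\ell$ for some $\lambda\in\F_{q^5}^*$. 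Thus the theorem reduces to: the two-dimensional subspaces of $A$ fall into $\F_{q^5}^*$-scaling classes consisting of one class of size $q+1$ and $q^2$ singletons.

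Next I would produce the special subspace by counting. There are only $q^2+1$ types, so among the $q^2+q+1$ subspaces $W_\ell$ two distinct ones $W,W'=\lambda W$ with $\lambda\notin\F_q$ must be scaling-equivalent. Writing $\langle w\rangle=W\cap W'$ (a line, since distinct planes of the $3$-space $A$ meet in a line), one gets $w,\lambda^{-1}w\in W\le A$ and $w,\lambda w\in W'\le A$, so $\lambda^{-1}w,w,\lambda w\in A$; these are $\F_q$-independent since $1,\lambda,\lambda^2$ are (as $\lambda\notin\F_q$ has degree $5$ over $\F_q$), whence $A=w\lambda^{-1}\langle1,\lambda,\lambda^2\rangle_q$. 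Since multiplying $A$ by a fixed field element maps its $2$-dimensional subspaces bijectively to those of the multiple and preserves scaling-equivalence, I may assume $A=\langle1,\mu,\mu^2\rangle_q$ for some $\mu$ of degree $5$ over $\F_q$.

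For $A=\langle1,\mu,\mu^2\rangle_q$ one scaling class is immediate: for $a,b\in\F_q$, $(a+b\mu)\langle1,\mu\rangle=\langle a+b\mu,\ a\mu+b\mu^2\rangle\le A$, giving $q+1$ distinct subspaces (distinct because the stabiliser of $\langle1,\mu\rangle$ in $\F_{q^5}^*$ is $\F_q^*$), and conversely $\lambda\langle1,\mu\rangle\le A$ gives $\lambda,\lambda\mu\in A$, which forces $\lambda\in\langle1,\mu\rangle$ since $\mu^3\notin A$; so this class has size exactly $q+1$. The crux is uniqueness: no other class can have size larger than $1$. By the forcing step, any non-trivial class yields a spanning geometric triple $c,c\lambda,c\lambda^2\in A$, and with $g=c\lambda$, $h=c\lambda^2$ this gives the field identity $g^2=ch$ with $c,g,h\in A$. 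Representing $c,g,h$ by polynomials of degree at most $2$ in $\mu$ and using $[\F_{q^5}:\F_q]=5$, both $g^2$ and $ch$ have degree at most $4<5$, so the congruence modulo the minimal polynomial of $\mu$ is an honest equality $g(x)^2=c(x)h(x)$ in $\F_q[x]$. Writing $\lambda=g/c=G/C$ in lowest terms yields $\lambda^2=h/c=G^2/C^2$ in lowest terms, and comparing with $\deg h,\deg c\le2$ forces $\deg G,\deg C\le1$. Hence $\lambda=G(\mu)/C(\mu)$ is an $\F_q$-Möbius image of $\mu$, so $G(\lambda)=G(\mu)$ by Lemma~\ref{gammas}, and the triple equals $\kappa\,(C^2,CG,G^2)$, so its subspace $\langle c,g\rangle=\kappa\,C(\mu)\langle1,\mu\rangle$ lies in the class of $\langle1,\mu\rangle$.

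Therefore the only non-trivial class is the class of $\langle1,\mu\rangle$, of size $q+1$ and type $t_0=G(\mu)$; the remaining $q^2$ subspaces are singletons, and being singletons they have pairwise distinct types, all distinct from $t_0$. Translated back through the first paragraph this is exactly the asserted weight distribution on $L$. I expect the uniqueness step to be the main obstacle: the whole argument hinges on converting the field relation $g^2=ch$ into a polynomial identity, which works only because elements of the $3$-dimensional space $A$ have ``degree at most $2$'' and $2+2<5=[\F_{q^5}:\F_q]$, so that their products never reach the minimal polynomial; this is precisely where the arithmetic of the exponent $5$ enters.
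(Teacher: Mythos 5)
Your proof is correct, and it takes a genuinely different route from the paper's. The paper also begins with the pigeonhole observation that two of the $q^2+q+1$ rank-$2$ points on $L$ must share a type, but from there it works in explicit coordinates: it writes the two points as $Q_0+\gamma_1 Q_1$ and $Q_0+\gamma_2 Q_2$ with $\gamma_2=\frac{a\gamma_1+b}{c\gamma_1+d}$, solves a linear system to express the type of an arbitrary further rank-$2$ point of $L$ as a rational function of $\gamma_1$ and four $\F_q$-parameters, counts (with a case split on $a=0$ versus $a\neq 0$) the parameter choices giving type $G(\gamma_1)$, and finally excludes a repeated type among the remaining $q^2$ points by forcing a polynomial identity in $\gamma_1$ whose coefficients must all vanish. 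You instead convert the whole statement into the classification of $\F_{q^5}^*$-scaling classes of $2$-dimensional $\F_q$-subspaces of the $3$-dimensional space $A=\langle a_0,a_1,a_2\rangle_q$; the dictionary is sound, since a coordinate vector of $P_\ell$ equals $\rho(Q_1+\gamma Q_2)$ with $Q_1,Q_2\in\F_q^3$ independent, so its entries span exactly $\rho\langle 1,\gamma\rangle_q$, and $\langle 1,\gamma'\rangle_q=\nu\langle 1,\gamma\rangle_q$ for some $\nu$ is equivalent to $G(\gamma)=G(\gamma')$ (this one-line verification is worth writing out). After normalising $A=\langle 1,\mu,\mu^2\rangle_q$ via the forcing step, your uniqueness argument rests on the single factorisation $g(x)^2=c(x)h(x)$ of the degree-at-most-$2$ representatives in $\F_q[x]$, legitimate precisely because $2+2<5=[\F_{q^5}:\F_q]$, followed by the lowest-terms degree count. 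This is shorter and more structural than the paper's computation --- it even shows, as a by-product, that every $3$-dimensional $\F_q$-subspace of $\F_{q^5}$ is a scalar multiple of some $\langle 1,\mu,\mu^2\rangle_q$ --- whereas the paper's version stays inside the coordinate machinery it reuses throughout the later sections and therefore needs no additional set-up.
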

\begin{proof}
Recall from Theorem \ref{intersectionwithline} that all points of rank $2$ on the line $L$ arise from extended lines that lie in a plane $\pi$ in $\Sigma$. We have seen that there are precisely $q^2+1$ possible types for a point of rank $2$ on $L$. By the pigeonhole principle, there are two points, say $R_1$ and $R_2$ that have the same type. This implies that there are points $Q_0$, $Q_1$ and $Q_2$ in $\pi$, such that $R_1=Q_0+\gamma_1 Q_1$ and $R_2=Q_0+\gamma_2Q_2$ where $\gamma_1$ and $\gamma_2$ are in the same $G$-orbit. This implies that $\gamma_2= \frac{a\gamma_1+b}{c\gamma_1+d}$ for some $a,b,c,d\in \F_q$ with $ad-bc\neq 0$.  If $b=0$, then
\[
	-\frac{d}{\gamma_{1}}(Q_0+\gamma_1Q_1)+\frac{a}{\gamma_{2}}(Q_0+\gamma_2Q_2)=cQ_0-dQ_1+aQ_2\;,	
\]
hence $L$ would not be disjoint from $\Sigma$. We conclude that $b\neq 0$. Write $Q_3=(dQ_0+bQ_2)$ and $Q_4=(cQ_0+aQ_2)$, then $R_2=Q_3+\gamma_1 Q_4$.

Consider a point $R$ of rank $2$ on $L$, different from $R_1,R_2$.  Then $R$ can be written as
$\xi_1(Q_0+\gamma_1 Q_1)+\xi_2(Q_0+(a\gamma_1+b)/(c\gamma_1+d) Q_2)$ for some $\xi_1,\xi_2\in \F_{q^5}^*$. Moreover, since $R$ is a point of rank $2$, it can be written as $(\la_0Q_0+\la_1Q_1)+\gamma(\mu_0Q_0+\mu_1Q_2)$ for some $\gamma\in\F_{q^{5}}\setminus\F_{q}$ and some $\la_0,\la_1,\mu_{0},\mu_{1}\in\F_{q}$. Note that $\lambda_1
\neq 0$ since $R\neq R_2$ and $\mu_1
\neq 0$ since $R\neq R_1$. This gives rise to the following system in $\xi_{1}$, $\xi_{2}$ and $\gamma$:
\begin{align*}
	\begin{cases}
		\xi_1+\xi_2=\la_0+\gamma\mu_0\\
		\gamma_1\xi_1=\la_1\\
		\xi_2\left(\frac{a\gamma_1+b}{c\gamma_1+d}\right)=\gamma\mu_1
	\end{cases}.
\end{align*}
It follows that $$\gamma=\frac{(a\gamma_1+b)(\la_0\gamma_1-\la_1)}{\gamma_1((\mu_1c-\mu_0a)\gamma_1+\mu_1d-\mu_0b)}\;.$$

Now suppose that $\gamma$ lies in the $G$-orbit of $\gamma_1$. This implies that (as quadratic functions of $\gamma_1$) the denominator and numerator have a common root. If $\gamma_1=0$ is a common root, then either $\lambda_1=0$ or $b=0$, a contradiction.

First suppose that $a=0$. If $\lambda_0=0$, then, in order for $G(\gamma)$ to be $G(\gamma_1)$, we need to have that $\mu_1c=0$. Since $ad-bc\neq 0$, we see that $c\neq 0$ and hence, $\mu_1=0$,  a contradiction. So $\lambda_0\neq 0$.  Expressing that $\lambda_1/\lambda_0$ is a root of $\mu_1c\gamma_{1}+\mu_1d-\mu_0b$ yields that $\frac{\mu_0}{\mu_1}=\frac{c\la_1+d\lambda_0}{b\lambda_0}$. So for each of the $q-1$ values of $\lambda_1/\lambda_0\in \F_q^*$, we find one value for $\mu_0/\mu_1$ such that $\lambda_1/\lambda_0$ is a root of $\mu_1c\gamma+\mu_1d-\mu_0b$, and hence, $G(\gamma)=G(\gamma_1)$. This shows that, together with $R_1$ and $R_2$, there are precisely $q+1$ points of rank $2$ on $R_1R_2$ with the same type as $R_1$.

Now suppose that $a\neq 0$. If $\gamma_{1}=-b/a$ is the common root of the denominator and the numerator, then $(\mu_1c-\mu_0a)(-b/a)+\mu_1d-\mu_0b=0$, and hence, $\mu_1=0$, or $ad-bc=0$, a contradiction. This implies that, if $G(\gamma_1)=G(\gamma)$, then $\lambda_0\neq0$.
Now, as before, let $\lambda_1/\lambda_0$ be a fixed element in $\F_{q}^{*}$, different from $-b/a$. Then, expressing that $\lambda_1/\lambda_0$ is a root of $(\mu_1c-\mu_0a)\gamma+\mu_1d-\mu_0b$ yields that $\frac{\mu_0}{\mu_1}=\frac{\la_1c+d\la_0}{b\la_0+a\la_1}$. So for each of the $q-1$ values of $\lambda_1/\lambda_0\in \F_q^*$, we find one value for $\mu_0/\mu_1$ such that $\lambda_1/\lambda_0$ is a root of $(\mu_1c-\mu_0a)\gamma+\mu_1d-\mu_0b$, and hence, $G(\gamma)=G(\gamma_1)$. This shows that, together with $R_1$ and $R_2$, there are precisely $q+1$ points of rank $2$ on $R_1R_2$ with the same type as $R_1$.

Recall that there are $q^2$ other points of rank $2$ on $L$, and that there are precisely $q^2+1$ different types of rank $2$ points. We claim that all $q^2$ types that are different from $G(\gamma_{1})$, appear exactly once amongst these $q^2$ points of rank $2$. 

We have seen above that all the rank $2$ points on $R_1R_2$ that do not have type $G(\gamma_1)$ have type $\gamma$ of the form

$$\frac{(a\gamma_1+b)(\la_0\gamma_1-\la_1)}{\gamma_1((\mu_1c-\mu_0a)\gamma_1+\mu_1d-\mu_0b)}$$ 

for some $(\la_0,\la_1,\mu_0,\mu_1)\in \F_q^4$ such that the denominator and numerator do not have a common factor. Now suppose that two points with type $\gamma$ as above are in the same orbit, then there are $A,B,C,D\in \F_q$ with $AD-BC\neq 0$ and $(\la'_0,\la'_1,\mu'_0,\mu'_1)\in \F_q^4$ such that the denominator and numerator do not have a common factor and
$$\frac{(a\gamma_1+b)(\la_0\gamma_1-\la_1)}{\gamma_1((\mu_1c-\mu_0a)\gamma_1+\mu_1d-\mu_0b)}=\frac{A\frac{(a\gamma_1+b)(\la_0'\gamma_1-\la'_1)}{\gamma_1((\mu'_1c-\mu'_0a)\gamma_1+\mu'_1d-\mu'_0b)}-B}{C\frac{(a\gamma_1+b)(\la'_0\gamma_1-\la'_1)}{\gamma_1((\mu'_1c-\mu'_0a)\gamma_1+\mu'_1d-\mu'_0b)}-D}\;.$$
It follows that
\begin{multline*}
	\frac{(a\gamma_1+b)(\la_0\gamma_1-\la_1) }{\gamma_1((\mu_1c-\mu_0a)\gamma_1+\mu_1d-\mu_0b)}\\=\frac{A(a\gamma_1+b)(\la'_0\gamma_1-\la'_1)-B\gamma_1((\mu'_1c-\mu'_0a)\gamma_1+\mu'_1d-\mu_0'b)}{C(a\gamma_1+b)(\la_0'\gamma_1-\la'_1)-D\gamma_1((\mu_1'c-\mu'_0a)\gamma_1+\mu_1'd-\mu'_0b)}\;,
\end{multline*}
and
\begin{multline*}
(a\gamma_1+b)(\la_0\gamma_1-\la_1) (C(a\gamma_1+b)(\la_0'\gamma_1-\la'_1)-D\gamma_1((\mu_1'c-\mu'_0a)\gamma_1+\mu_1'd-\mu'_0b))=\\ (A(a\gamma_1+b)(\la'_0\gamma_1-\la'_1)-B\gamma_1((\mu'_1c-\mu'_0a)\gamma_1+\mu'_1d-\mu_0'b))\gamma_1((\mu_1c-\mu_0a)\gamma_1+\mu_1d-\mu_0b)\;.
\end{multline*}

Since $\gamma_1\in \F_{q^5}\setminus \F_q$, it follows that the expressions on the right and left hand side, as polynomials in $\gamma_1$ with coefficients in $\F_q$ have to coincide. In particular, we see that $(a\gamma_1+b)$ divides the left hand side, and since $\gamma_1((\mu_1c-\mu_0a)\gamma_1+\mu_1d-\mu_0b)$ is not a multiple of $(a\gamma_1+b)$, and $\gamma_1((\mu'_1c-\mu'_0a)\gamma_1+\mu'_1d-\mu'_0b)$ is not a multiple of $(a\gamma_1+b)$ we find that $B=0$. 
Similarly, we see that $\gamma_1$ divides the right hand side, and hence, also the left hand side. Since $\gamma_1$ is not a divisor of $(a\gamma_1+b)(\la_0'\gamma_1-\la'_1)$, we have that $C=0$.

This implies that if two rank $2$ points, say $R_3,R_4$, on $R_1R_2$ having a type different from $\gamma_1$, are in the same $G$-orbit, then they have exactly the same expression for $\gamma$ (up to an $\F_q$-scalar multiple). Going back to our expression for $\gamma$, we find that there are $\la_0,\la_1,\mu_0,\mu_1,\la_0',\la_1',\mu_0',\mu_1'$ such that $R_3=(\la_0Q_0+\la_1Q_1)+\gamma(\mu_0Q_0+\mu_1Q_2)$ and $R_4=(\la'_0Q_0+\la'_1Q_1)+\gamma(\mu'_0Q_0+\mu'_1Q_2)$ with
$$\gamma=\frac{(a\gamma_1+b)(\la_0\gamma_1-\la_1)}{\gamma_1((\mu_1c-\mu_0a)\gamma_1+\mu_1d-\mu_0b)}=\frac{(a\gamma_1+b)(\la_0'\gamma_1-\la_1')}{\gamma_1((\mu_1'c-\mu_0'a)\gamma_1+\mu_1'd-\mu_0'b)}\;,$$
and hence,
$$\frac{\la_0\gamma_1-\la_1}{(\mu_1c-\mu_0a)\gamma_1+\mu_1d-\mu_0b}=\frac{\la_0'\gamma_1-\la_1'}{(\mu_1'c-\mu_0'a)\gamma_1+\mu_1'd-\mu_0'b}\;.$$

We claim that $\la_0=z\la_0'$ and $\la_1=z\la_1'$, $\mu_0=z\mu_0$ and $\mu_1=z\mu_1'$ for some $z$, and hence, $R_3=R_4$ which finishes the proof. Suppose that $\la_0=z\la_0'$ and $ \la_1=z\la_1'$, then it follows that $c(\mu_1-\mu_1'z)=a(\mu_0-z\mu_0')$ and $d(\mu_1-\mu_1'z)=b(\mu_0-z\mu_0')$. If $(\mu_0-z\mu_0')\neq 0$, then it follows from $b\neq 0$ that $(\mu_1-\mu_1'z)\neq 0$, and it follows that $ad-bc=0$, a contradiction. Hence, we have that if $\la_0=z\la_0'$ and $\la_1=z\la_1'$, then $\mu_0=z\mu_0'$ and $\mu_1=z\mu_1'$.

We see that for all $\phi_1,\phi_2\in \F_q$ the point
\begin{align*}
	\phi_1R_3+\phi_2 R_4 &=\phi_1((\la_0Q_0+\la_1Q_1)+\gamma(\mu_0Q_0+\mu_1Q_2))\\&\qquad+\phi_2((\la'_0Q_0+\la'_1Q_1)+\gamma(\mu'_0Q_0+\mu'_1Q_2))\\
	&=(\phi_1\la_0+\phi_2\la_0')Q_0+(\phi_1\la_1+\phi_2\la_1')Q_1\\&\qquad+\gamma((\phi_1\mu_0+\phi_2\mu_0')Q_0+(\phi_1\mu_1+\phi_2\mu_1')Q_2)\;	
\end{align*}
is a point of rank $2$ with type $\gamma$. Now if it is not true that $\la_0=z\la_0'$ and $\la_1=z\la_1'$ for some $z$, then we can choose $\phi_1,\phi_2$ such that  $\phi_1\la_1+\phi_2\la_1'=0$ and $\phi_1\la_0+\phi_2\la_0'\neq 0$, but then we find that the point $\phi_1R_3+\phi_2 R_4$ lies on $Q_0Q_2$, so $\phi_1R_3+\phi_2 R_4=R_2$ and has type $G(\gamma)$, a contradiction since $R_2$ has type $G(\gamma_1)\neq G(\gamma)$.
\end{proof}

\subsection{The intersection of a plane with \texorpdfstring{$\Omega_2$}{Omega2}}

\begin{lemma}\label{tweelange}
	Let $\Pi$ be a plane, disjoint from $\Sigma$ and suppose that there are two distinct $(q^2+q+1)$-secants, say $L_1$ and $L_2$ to $\Omega_2$ in $\Pi$. Then there is a $3$-space of $\Sigma$ whose extension contains $\Pi$. Furthermore, $\Pi$ contains exactly $(q^2+1)(q^2+q+1)$ points of rank $2$.
\end{lemma}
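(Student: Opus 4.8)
The plan is to attach to each long secant a plane of $\Sigma$ via Theorem \ref{intersectionwithline}, pin down the mutual position of these two planes, and then count through a bijection with the lines of a solid.

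First I would record the local structure of the two secants. Since $L_i$ ($i=1,2$) is a $(q^2+q+1)$-secant, Theorem \ref{intersectionwithline} gives a plane $\pi_i$ of $\Sigma$ spanned by the lines of $\Sigma$ whose extensions meet $L_i$; each of the $q^2+q+1$ rank-$2$ points of $L_i$ then lies on the extension of a line of $\pi_i$, hence in the extension $\Xi_i:=\langle\pi_i\rangle$. As $L_i$ meets the plane $\Xi_i$ in at least two points, $L_i\subseteq\Xi_i$. Next I would exclude $\pi_1=\pi_2$: if $\pi_1=\pi_2=\pi$, then $L_1,L_2\subseteq\langle\pi\rangle$, so $\Pi=\langle L_1,L_2\rangle=\langle\pi\rangle$ and $\Pi\cap\Sigma\supseteq\pi\neq\emptyset$, contradicting that $\Pi$ is disjoint from $\Sigma$.

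Two distinct planes of $\Sigma\cong\PG(4,q)$ span either a $3$-space or all of $\Sigma$, and the heart of the argument—the step I expect to be the only real subtlety—is excluding the second possibility. Suppose $\langle\pi_1,\pi_2\rangle=\Sigma$. Then $\pi_1\cap\pi_2$ is a single point $r$, and taking extensions gives $\langle\Xi_1,\Xi_2\rangle=\PG(4,q^5)$, so Grassmann yields $\dim(\Xi_1\cap\Xi_2)=2+2-4=0$; since $\{r\}\subseteq\Xi_1\cap\Xi_2$, in fact $\Xi_1\cap\Xi_2=\{r\}\subseteq\Sigma$. But $L_1,L_2$ are distinct lines of the plane $\Pi$, so they meet in a point $T\in\Pi$, and $T\in L_1\subseteq\Xi_1$, $T\in L_2\subseteq\Xi_2$ force $T\in\Xi_1\cap\Xi_2=\{r\}\subseteq\Sigma$, contradicting $\Pi\cap\Sigma=\emptyset$. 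Hence $w:=\langle\pi_1,\pi_2\rangle$ is a $3$-space of $\Sigma$, and $\Pi=\langle L_1,L_2\rangle\subseteq\langle\Xi_1,\Xi_2\rangle=\langle w\rangle$, so the extension of $w$ contains $\Pi$; this is the first assertion.

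For the count I would set $W:=\langle w\rangle\cong\PG(3,q^5)$, which contains the subgeometry $w\cong\PG(3,q)$ and the plane $\Pi$, with $\Pi$ disjoint from $w$. The claim is that $\ell\mapsto P_\ell:=\langle\ell\rangle\cap\Pi$ is a bijection from the lines of $w$ onto $\Pi\cap\Omega_2$. For a line $\ell$ of $w$, the extension $\langle\ell\rangle$ is a line of $W$ meeting the plane $\Pi$; it is not contained in $\Pi$ (else $\ell\subseteq\Pi\cap\Sigma$), so $\langle\ell\rangle\cap\Pi$ is a single point $P_\ell$, and $P_\ell\notin\Sigma$ makes it a rank-$2$ point. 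Conversely, any rank-$2$ point $P$ of $\Pi$ lies on a unique extended line $\langle\ell\rangle$ of $\Sigma$, and from $P\in\langle\ell\rangle\cap\Pi\subseteq\langle\ell\rangle\cap W=\langle\ell\cap w\rangle$ together with $P\notin\Sigma$ one deduces that $\ell\cap w$ is neither empty nor a single point, i.e. $\ell\subseteq w$, whence $P=P_\ell$. Injectivity is immediate from the uniqueness of the extended line through a rank-$2$ point. Therefore $|\Pi\cap\Omega_2|$ equals the number of lines of $\PG(3,q)$, namely $\binom{4}{2}_q=(q^2+1)(q^2+q+1)$, as required. Throughout I would use only the standard subgeometry facts that extension commutes with span and with intersection of $\F_q$-rational subspaces.
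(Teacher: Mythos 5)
Your proof is correct and follows essentially the same route as the paper: Theorem \ref{intersectionwithline} supplies the two planes $\pi_1,\pi_2$ of $\Sigma$, the intersection point $L_1\cap L_2\notin\Sigma$ forces $\langle\pi_1,\pi_2\rangle$ to be a solid whose extension contains $\Pi$, and the rank-$2$ points of $\Pi$ are counted by the lines of that solid. The only cosmetic differences are that you rule out $\langle\pi_1,\pi_2\rangle=\Sigma$ by contradiction where the paper directly exhibits a line in the intersection of the two extended planes, and you package the count as an explicit bijection with the lines of $\PG(3,q)$ rather than as a lower bound of $(q^2+1)(q^2+q+1)$ followed by an exclusion of further rank-$2$ points.
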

\begin{proof}
	From Theorem \ref{intersectionwithline} we know that there are two planes, say $\pi_1$ and $\pi_2$ of $\Sigma$ whose extensions, say $\Pi_1$ and $\Pi_2$ meet $\Pi$ in $L_1$ and $L_2$ respectively. Now $L_1,L_2$ intersect in a point $P$ of $\Pi$ and since $\Sigma$ is $4$-dimensional, $\pi_1$ and $\pi_2$ have (at least) a point of $\Sigma$ in common, necessarily different from $P$. It follows that $\Pi_1$ and $\Pi_2$ have a line in common, and hence, they span a $3$-dimensional space $\Xi$. It follows that $\pi_1$ and $\pi_2$ intersect in a line, and that $\Pi$ is contained in $\Xi$ which is the extension of the $3$-space $\langle \pi_1,\pi_2\rangle$. It follows that every line of $\langle \pi_1,\pi_2\rangle$ intersects $\Pi$ which shows that there are at least $(q^2+1)(q^2+q+1)$ points of rank $2$ in $\Pi$. Suppose to the contrary that there is a point $R$ of rank $2$ in $\Pi$ that lies on the extension of the line $\ell$ in $\Sigma$ such that $\ell$ is not contained in $\langle \pi_1,\pi_2\rangle$. Since $\ell$ intersects $\langle \pi_1,\pi_2\rangle$ it follows that $\Sigma$ is contained in the $3$-dimensional space $\Xi$, a contradiction since $\Sigma$ is a $4$-dimensional space.
\end{proof}
\begin{remark}
	Suppose that $\Pi$ is a plane as in Lemma \ref{tweelange} above. We will see in Subsection \ref{trace} that the linear set obtained from projecting $\Sigma$ from $\Pi$ has a point of weight $4$, and hence, is a $4$-club.
\end{remark}

\begin{lemma}\label{langmeetskort} Let $\Pi$ be a plane, disjoint from $\Sigma$ and suppose that there is a unique $(q^2+q+1)$-secant $L$ to $\Omega_2$ in $\Pi$. Then the following hold. 
\begin{enumerate}
	\item If there is a $(q+1)$-secant $M$ to $\Omega_2$ in $\Pi$, then $L$ and $M$ meet in a point of $\Omega_2$.
	\item If there is a $(q+1)$-secant $M$ to $\Omega_2$ in $\Pi$, then every point of rank $2$ in $\Pi$ that is not on $L$ has the same type as the point $L\cap M$.
\end{enumerate}
\end{lemma}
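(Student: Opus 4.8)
The plan is to transport both statements into the subgeometry $\Sigma$ and use that, by Theorem~\ref{intersectionwithline}, the $(q^2+q+1)$-secant $L$ is controlled by a \emph{plane} $\pi$ of $\Sigma$ while the $(q+1)$-secant $M$ is controlled by a \emph{$3$-space} $\sigma$ of $\Sigma$. Write $\Pi_\pi$ and $\Sigma_\sigma$ for the extensions to $\PG(4,q^5)$ of such a plane $\pi$ and such a $3$-space $\sigma$.

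For part (1), I would first record that the $q^2+q+1$ points of $\Omega_2$ on $L$ lie on extended lines spanning a plane $\pi$ of $\Sigma$, so $L\subseteq\Pi_\pi$, and that the $q+1$ points of $\Omega_2$ on $M$ lie on extended lines spanning a $3$-space $\sigma$. Since these $q+1\geq 2$ points already lie in $\Sigma_\sigma$, the whole line $M$ lies in $\Sigma_\sigma$. Hence $T:=L\cap M$ lies in $\Pi_\pi\cap\Sigma_\sigma$, which is exactly the extension of $\pi\cap\sigma$. As $\dim\pi+\dim\sigma=2+3\geq 4$, the spaces $\pi$ and $\sigma$ meet in at least a line $m_0$; in the generic case $\pi\cap\sigma=m_0$ is a single line, so $T$ lies on the extension of $m_0$, i.e.\ on an extended line of $\Sigma$. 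Because $\Pi$ is disjoint from $\Sigma$ we have $T\notin\Sigma$, whence $T\in\Omega_2$, which is (1). The main obstacle for (1) is the degenerate case $\pi\subseteq\sigma$, where $\Pi_\pi\cap\Sigma_\sigma$ collapses to all of $\Pi_\pi$ and this argument gives nothing. There $\Pi=\langle L,M\rangle\subseteq\Sigma_\sigma$ is a plane disjoint from $\Sigma$ inside the extension of a $3$-space. I expect to handle it in two steps: if $\pi$ contains one of the $q+1$ regulus lines in $\sigma$ producing $M$, then the extension of that line lies in $\Pi_\pi$ and meets $M$ in a point of $\Pi_\pi\cap M=\{T\}$, forcing $T\in\Omega_2$; in the complementary subcase one argues, in the spirit of Lemma~\ref{tweelange}, that $\Pi\subseteq\Sigma_\sigma$ produces a \emph{second} $(q^2+q+1)$-secant in $\Pi$, contradicting the uniqueness of $L$. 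Making this last exclusion clean, rather than by a coordinate computation inside $\Sigma_\sigma\cong\PG(3,q^5)$, is the delicate point.

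For part (2), I would use (1) to note that every $(q+1)$-secant of $\Pi$ meets $L$ in a point of $\Omega_2$, and that, being monochromatic (its $q+1$ points share a single type by Theorem~\ref{qplusonesecant}), such a secant meets $L$ in a point of its own type; in particular $T=L\cap M$ carries the common type $t_M$ of $M$. Now take any $X\in\Omega_2\cap\Pi$ off $L$, of type $\tau$. Since all $q^2+1$ types occur on $L$ (Theorem~\ref{allesiser}), there is a point $P\in\Omega_2\cap L$ of type $\tau$, and the line $\langle X,P\rangle$ joins two equal-type points and differs from $L$; since a line through two rank-$2$ points of equal type is a $(q+1)$- or $(q^2+q+1)$-secant (Theorems~\ref{qplusonesecant} and~\ref{intersectionwithline}), uniqueness of $L$ forces it to be a $(q+1)$-secant. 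Iterating, the type-$\tau$ points meet every line in $0$, $1$ or $q+1$ points and therefore form an $\F_q$-subline or a $\PG(2,q)$-subplane of $\Pi$ (the subplane case forcing $\tau=t_0$, the repeated type of Theorem~\ref{allesiser}, with $L$ one of its lines). This reduces (2) to showing that \emph{only one} type can occur off $L$, namely $t_M$.

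The hard part is exactly this last reduction, and I expect it to be where the real work lies. A purely combinatorial argument cannot succeed: the uniqueness of $L$ together with the subline/subplane structure is consistent with two distinct off-$L$ type classes (already for $q=2$ two disjoint $t_j$- and $t_k$-sublines violate no counting identity), so one must use the concrete geometry of $\Sigma$. Accordingly I would fix coordinates with $\pi$ a standard plane, parametrise $L$, $M$ and an arbitrary off-$L$ point $X$ as in Lemma~\ref{gammatype}, express $T$ through $m_0=\pi\cap\sigma$ and $X$ through a line of $\Sigma$ disjoint from $\pi$, and then verify the criterion of Lemma~\ref{gammas}, namely $\dim\langle 1,\gamma_X,\gamma_T,\gamma_X\gamma_T\rangle_q\leq 3$, so that $G(\gamma_X)=G(\gamma_T)=t_M$. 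The explicit relations among the parameters defining $\Pi$, forced by the presence of the single $(q+1)$-secant $M$, are what drive this computation; it is the same mechanism that later unfolds into the subcases of Theorem~\ref{eriseensecant}.
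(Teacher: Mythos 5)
Your part (1) is essentially the paper's argument. The intersection $\pi\cap\sigma$ (the paper's $\pi\cap\mu$) is either a line or all of $\pi$; in the first case $L\cap M$ lies on the extension of that line and hence in $\Omega_2$, and the degenerate case $\pi\subseteq\sigma$ is killed exactly as you suggest in your second subcase: then $\Pi=\langle L,M\rangle$ sits inside the extension of the $3$-space $\sigma$, so \emph{every} plane of $\sigma$ extends to a plane meeting $\Pi$ in a line, producing many $(q^2+q+1)$-secants and contradicting uniqueness of $L$. This exclusion is a one-liner, not a delicate point, and your first subcase (whether $\pi$ contains a regulus line) is not needed.

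Part (2) has a genuine gap: you correctly reduce the statement to showing that only the type $t_M$ of $L\cap M$ can occur off $L$, and then you explicitly stop, saying this is ``where the real work lies'' and sketching only a strategy (fix coordinates, verify the criterion of Lemma~\ref{gammas}). That key step is the entire content of part (2) and is left unproven; moreover your intermediate structural claim that the type-$\tau$ points off $L$ form an $\F_q$-subline or subplane is itself unjustified and, as it turns out, unnecessary. The computation you anticipate is also much shorter than you fear and does not ``unfold into the subcases of Theorem~\ref{eriseensecant}''. The paper's argument: let $P_1=L\cap M$ and pick a second rank-$2$ point $P_3$ on $M$; by Theorem~\ref{qplusonesecant} and Lemma~\ref{gammatype} one may choose points $Q_1,Q_2,Q_4,Q_5$ spanning the $3$-space $\mu$ with $P_1=Q_1+\gamma Q_2$ and $P_3=Q_4+\gamma Q_5$ (same $\gamma$), and a point $Q_3\in\pi\setminus Q_1Q_2$ with $P_2=Q_1+\gamma'Q_3$ on $L$, so that $Q_1,\dots,Q_5$ is a basis of $\Sigma$. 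A rank-$2$ point $P_4\in\Pi\setminus L$ is then both $\xi_1P_1+\xi_2P_2+\xi_3P_3$ with $\xi_3\neq0$ and $(\la_1Q_1+\la_2Q_2+\la_3Q_3+\la_4Q_4)+\gamma''(\mu_1Q_1+\mu_2Q_2+\mu_3Q_3+\mu_5Q_5)$; comparing only the $Q_4$- and $Q_5$-coordinates gives $\xi_3=\la_4$ and $\xi_3\gamma=\mu_5\gamma''$, hence $\gamma''=\frac{\la_4}{\mu_5}\gamma$ and $G(\gamma'')=G(\gamma)$ immediately. No case analysis on the parameters of $\Pi$ is required; the whole point is that the adapted basis lets one read off $\gamma''$ as an $\F_q$-multiple of $\gamma$ from two coordinates.
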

\begin{proof}
	We know from Theorem \ref{intersectionwithline} that the points of rank $2$ on $L$ arise from extended lines in a plane $\pi$ of $\Sigma$, and that the points of rank $2$ on $M$ arise from $q+1$ lines of $\Sigma$, contained in a $3$-space, say $\mu$, of $\Sigma$. Since $\Sigma$ is $4$-dimensional, $\pi\cap \mu$ meet either in the plane $\pi$ or in a line of the plane $\pi$. Suppose that $\mu$ contains $\pi$. Then $\Pi$ is contained in the extension of the $3$-space $\mu$. But then every plane of $\mu$ extends to a plane meeting $\Pi$ in a $(q^2+q+1)$-secant, contradicting our assumption that there is a unique $(q^2+q+1)$-secant to $\Omega_2$ in $\Pi$. Hence, $\pi\cap \mu$ is a line $\ell$. The extension of the line $\ell$ meets $L$ in a point of rank $2$ that also lies on the line $M$, proving our first statement.
	\par Now suppose that there is a $(q+1)$-secant, say $M$, to $\Omega_2$, then by the first part, we know that $L$ and $M$ meet in a point of rank $2$, say $P_1$. The points of rank $2$ on $M$ all have the same type by Theorem \ref{qplusonesecant}, say $G(\gamma)$.  Let $P_{3}$ be a rank 2 point on $M$ different from $P_{1}$. As before, let $\pi$ be the plane of $\Sigma$ whose extension meets $\Pi$ in the line $L$, and $\mu$ be the 3-space generated by the lines of $\Sigma$ giving rise to the points on $M$. Using Lemma \ref{gammatype} and Theorem \ref{qplusonesecant} we can take four points $Q_1,Q_2,Q_4,Q_{5}$ spanning $\mu$ such that $P_1=Q_1+\gamma Q_2$ and $P_3=Q_4+\gamma Q_5$. Let $Q_3$ be a point in $\pi\setminus Q_{1}Q_{2}$. Using Theorem \ref{intersectionwithline} there is a $\gamma'\in\F_{q^{5}}$ such that $P_2=Q_1+\gamma'Q_3$ is a point on $L$. Now consider a point $P_4$ of rank $2$ in $\Pi$ not on $L$. This point can be written as $\xi_1P_1+\xi_2P_2+\xi_3P_3$ and as $(\la_1Q_1+\la_2Q_2+\la_3Q_3+\la_4Q_4)+\gamma''(\mu_1Q_1+\mu_2Q_2+\mu_3Q_3+\mu_5Q_5)$  for some $\xi_1,\xi_2,\xi_3\in\F_{q^{5}}$ and $\gamma''\in\F_{q^{5}}\setminus\F_{q}$. Note that $\xi_3\neq0$ since $P_{4}$ is not on $L$. It follows that $\xi_3=\la_4$ and $\xi_3\gamma=\mu_5\gamma''$.  Note that $\mu_{5}\neq0$ since $\xi_3\neq0$. Hence, $\gamma''=\frac{\la_4}{\mu_5}\gamma$, so $G(\gamma)=G(\gamma'')$.
\end{proof}

\section{Linear sets containing a point of weight at least 3}
\subsection{Linear sets containing a point of weight 4}\label{trace}

Let $S$ be a linear set of rank $5$ containing a point of weight $4$. We have seen in Subsection \ref{constructions} that this is a $4$-club and always exists. In terms of projections, we see that by Result \ref{linearsetprojection}, $S$ corresponds to the projection of $\Sigma$ from a plane $\Pi$ such that there is a $3$-space $\mu$ of $\Sigma$ whose extension contains $\Pi$. 


\subsection{Linear sets containing a point of weight 3}
It is easy to see (e.g. using Subsection \ref{opmerkingspread}) that if an $\F_q$-linear set $S$ of rank $5$ in $\PG(1,q^5)$ contains a point of weight $3$, it cannot contain any point of weight $4$ nor any additional point of weight $3$. We now show that if there is a point of weight $3$, the number of points of weight $2$ can only take $3$ different values.

\begin{theorem} \label{weight3}Consider an $\F_q$-linear set $\S$ of rank $5$ in $\PG(1,q^5)$. If $S$ contains a point of weight $3$, then there are exactly $0$, $q$ or $q^2$ points in $\S$ of weight $2$.
\end{theorem}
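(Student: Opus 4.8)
The plan is to work with the linearised polynomial description of Subsection~\ref{remark:polynomials} rather than the projection model, since it makes the count transparent. After applying a projectivity of $\PG(1,q^5)$ (which preserves the weight distribution), I would place the unique point of weight $3$ at $\langle(1,0)\rangle$ and some point of weight $1$ at $\langle(0,1)\rangle$; such a weight-$1$ point exists because $S$ has rank $5$ and contains a point of weight $3$. Then $S=\{\langle(x,f(x))\rangle_{q^5}\mid x\in\F_{q^5}\}$ for a $q$-polynomial $f$, and this normalisation gives $\dim_{\F_q}\ker f=3$ while $\mathrm{wt}(\langle(0,1)\rangle)=1$. Writing $K=\ker f$ and $I=\im f$, we have $\dim K=3$, $\dim I=2$, and the number of points of weight $2$ equals the number of $\gamma\in\F_{q^5}^{*}$ with $\dim_{\F_q}\ker(f-\gamma\,\mathrm{id})=2$.

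The first key step is that for $\gamma\neq0$ one has $\ker(f-\gamma\,\mathrm{id})\subseteq\gamma^{-1}I$, since $f(x)=\gamma x$ forces $\gamma x=f(x)\in I$. As $\dim\gamma^{-1}I=2$, every such point has weight at most $2$, and it has weight exactly $2$ precisely when $\ker(f-\gamma\,\mathrm{id})=\gamma^{-1}I$, i.e.\ when $f(\gamma^{-1}y)=y$ for all $y\in I$. Setting $\delta=\gamma^{-1}$, fixing a basis $u_1,u_2$ of $I$ and choosing $w_1,w_2$ with $f(w_i)=u_i$, this condition is equivalent to the system $\delta u_i-w_i\in K$ ($i=1,2$), which is $\F_q$-affine-linear in $\delta$. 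Hence its solution set is either empty or a coset of the $\F_q$-subspace $W=\{\delta\in\F_{q^5}\mid \delta I\subseteq K\}$; moreover $\delta=0$ is never a solution (as $w_i\notin K$), so the number of points of weight $2$ equals $0$ or $|W|$.

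It therefore remains to show $\dim_{\F_q}W\in\{1,2\}$. For the lower bound, the $\F_q$-linear map $\Phi\colon\F_{q^5}\to\mathrm{Hom}_{\F_q}(I,\F_{q^5}/K)$, $\delta\mapsto(y\mapsto \delta y+K)$, has target of $\F_q$-dimension $(\dim I)(\dim \F_{q^5}/K)=4<5$, so $W=\ker\Phi\neq\{0\}$. For the upper bound, $WI\subseteq K$ gives $\dim(WI)\le3$; on the other hand, since $[\F_{q^5}:\F_q]=5$ is prime there are no proper intermediate fields, and the linear analogue of the Cauchy--Davenport/Kneser inequality yields $\dim(WI)\ge\min(5,\dim W+\dim I-1)=\min(5,\dim W+1)$, whence $\dim W\le2$. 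Thus $|W|\in\{q,q^2\}$ and the number of points of weight $2$ lies in $\{0,q,q^2\}$.

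The main obstacle is the upper bound $\dim W\le2$: the inequality $\dim(WI)\ge\dim W+\dim I-1$ is exactly where the hypothesis that the degree $5$ is prime (so $\F_{q^5}$ has no proper subfield over $\F_q$) is used, and without it $W$ could be larger. The only remaining points are bookkeeping: justifying the normalisation that moves the weight-$3$ point to $\langle(1,0)\rangle$ and a weight-$1$ point to $\langle(0,1)\rangle$, and checking that $\delta=0$ is excluded so that the coset of size $|W|$ really consists of admissible $\gamma$. This agrees with the geometric picture of Subsection~\ref{Pi}: the weight-$3$ point yields a unique $(q^2+q+1)$-secant $M$ to $\Omega_2$ in $\Pi$ (unique by Lemma~\ref{tweelange}, since a second long secant would force a point of weight $4$), and the points of weight $2$ correspond to the rank-$2$ points of $\Pi$ off $M$, whose number the argument above pins down to $0$, $q$ or $q^2$.
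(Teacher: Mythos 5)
Your argument is correct in substance and takes a genuinely different route from the paper. The paper works entirely in the projection model: the weight-$3$ point gives a unique $(q^2+q+1)$-secant $L$ to $\Omega_2$ in $\Pi$, Theorem \ref{allesiser} splits $L\cap\Omega_2$ into $q+1$ points of one type and $q^2$ points of pairwise distinct types, and Lemma \ref{langmeetskort} together with Theorem \ref{qplusonesecant} forces any rank-$2$ point off $L$ to lie on either one or $q+1$ concurrent $(q+1)$-secants, yielding exactly $q$ or $q^2$ further rank-$2$ points. You instead stay in the linearised-polynomial model: the inclusion $\ker(f-\gamma\,\mathrm{id})\subseteq\gamma^{-1}\im f$ immediately bounds all other weights by $2$, the weight-$2$ condition becomes the affine-linear system $\delta u_i-w_i\in K$ in $\delta=\gamma^{-1}$, and the count reduces to $q^{\dim_{\F_q}W}$ for $W=\{\delta\mid\delta I\subseteq K\}$, which is pinned to $1$ or $2$ by a dimension count and the linear Kneser (Hou--Leung--Xiang) inequality — this is exactly where primality of $5$ enters, as the absence of intermediate subfields. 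I checked the steps and they hold; in fact the upper bound $\dim W\le2$ can be had even more cheaply by writing $W=u_1^{-1}K\cap u_2^{-1}K$ and noting that $u_1^{-1}K=u_2^{-1}K$ would put $u_2/u_1$ in the multiplicative stabiliser of $K$, a subfield of $\F_{q^5}$ containing $\F_q$ and proper (as $K$ is not an $\F_{q^5}$-subspace), hence equal to $\F_q$, contradicting the $\F_q$-independence of $u_1,u_2$. Your approach buys brevity, independence from the $\Omega_2$ machinery, and the extra structural fact that the weight-$2$ points, read through $\gamma\mapsto\gamma^{-1}$, form a coset of an $\F_q$-subspace; the paper's approach buys uniformity with the rest of the argument and directly yields the explicit examples of Remark \ref{remarkconstruction}.

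One slip in the set-up: you cannot simultaneously place a weight-$1$ point of $S$ at $\langle(0,1)\rangle$ and write $S=\{\langle(x,f(x))\rangle_{q^5}\mid x\in\F_{q^5}\}$ — that parametrisation exists precisely when $\langle(0,1)\rangle\notin S$ (Subsection \ref{remark:polynomials}). The fix is to send a point \emph{off} $S$ to $\langle(0,1)\rangle$, which is possible since $|S|<q^5+1$ and $\PGL(2,q^5)$ is $2$-transitive. Nothing downstream is affected: you only use $\dim\ker f=3$ and $\dim\im f=2$, and the latter comes from rank--nullity, not from any weight-$1$ claim.
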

\begin{proof}
Since $\S$ contains a point of weight $3$, by Result \ref{linearsetprojection}, we know that $\S$ corresponds to the projection of $\Sigma$ from a plane $\Pi$ such that there is a plane of $\Sigma$ whose extension meets $\Pi$ in a line, which then is a $(q^2+q+1)$-secant $L$ to $\Omega_2$. It is clear that there is a unique such plane, and hence, that $L$ is the only $(q^2+q+1)$-secant in $\Pi$. By Theorem \ref{allesiser}, we know that there are $q+1$ points of $L\cap \Omega_2$ with the same type, say $t_0$ and all $q^2$ other points of $L\cap \Omega_2$ have a mutually different type. Recall that there are precisely $q^2+1$ different types. 

Now assume that there is a rank $2$ point $P$ in $\Pi$, not on $L$. Let $t$ be the type of $P$. For every point $Q$ of $L\cap \Omega_2$ with type $t$, by Theorem \ref{qplusonesecant}, the line through $Q$ and $P$ is a $(q+1)$-secant to $\Omega_2$, all of whose points have type $t$. 

Suppose first that $Q$ is the unique point of type $t$ on $L$ and that there is a point $R$ of rank $2$ in $\Pi$ that does not lie on $L$ nor on the line $PQ$. By Lemma \ref{langmeetskort}, $R$ has the same type as $P$, which implies that $PR$ is a $(q+1)$-secant which then would meet $L$ in a point of type $t$, different from $Q$, a contradiction. We conclude that if $Q$ is the unique point of type $t$ on $L$, $S$ has one point of weight $3$ and precisely $q$ of weight $2$. 

Suppose now that there are $q+1$ points of type $t$ on $L$, then we find $q+1$ concurrent $(q+1)$-secants to $\Omega_2$ through $P$. Using the same reasoning as above we find that there cannot be additional points of rank $2$ in $\Pi$ because they would give rise to an extra $(q+1)$-secant meeting $L$ in a rank $2$ point of type $t$. So we find that $S$ has one point of weight $3$ and precisely $q^2$ of weight $2$.
\end{proof}

\begin{remark}\label{remarkconstruction}
	The proof of Theorem \ref{weight3} shows us how to construct linear sets of rank $5$ with one point of weight $3$ and precisely $q$ or $q^2$ points of weight $2$. Let $Q_0,Q_1,Q_2,Q_3,Q_4$ be five points spanning $\Sigma$ and consider the points $P_1=Q_0+\alpha Q_1, P_2=Q_0+\frac{1}{\alpha} Q_2$, $\alpha\in  \F_{q^5}\setminus \F_q$. If $P_3=Q_3+\alpha Q_4$, then the projection of $\Sigma$ from $\Pi=\langle P_1,P_2,P_3\rangle$ is a linear set of rank $5$ with one point of weight $3$ and $q^2$ points of weight $2$. If $P_3'=Q_3+\beta Q_4$, with $G(\beta)\neq G(\alpha)$, then the projection of $\Sigma$ from $\Pi=\langle P_1,P_2,P_3'\rangle$ is a linear set of rank $5$ with one point of weight $3$ and $q$ points of weight $2$. When we take $Q_i$ to be the point determined by the $i$-th standard vector, these point sets are precisely those given in Subsection \ref{constructions}. 
\end{remark}

\section{Linear sets with only points of weight 1 and 2}

\subsection{When there is a \texorpdfstring{$(q+1)$}{(q+1)}-secant to \texorpdfstring{$\Omega_2$}{Omega2}}

\begin{lemma} \label{notwointersectingsublines}
	Let $\Pi$ be a plane, disjoint from $\Sigma$. Suppose that there are no $(q^2+q+1)$-secants to $\Omega_2$ in $\Pi$. Then it is impossible for $\Pi$ to contain two $(q+1)$-secants to $\Omega_{2}$ meeting in a point of $\Omega_2$.
\end{lemma}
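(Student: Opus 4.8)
The plan is to argue by contradiction: from two $(q+1)$-secants $M_1,M_2$ meeting in a point $P\in\Omega_2$, I will manufacture a $(q^2+q+1)$-secant in $\Pi$, contradicting the hypothesis. Since all rank-$2$ points of a $(q+1)$-secant share a common type (Theorem \ref{qplusonesecant}) and $P$ lies on both secants, $M_1$ and $M_2$ have the same type $G(\gamma)$. I fix the representation $P=Q_1+\gamma Q_2$ and, using Lemma \ref{gammatype} to retain the same $\gamma$, choose further rank-$2$ points $P_1'=Q_3+\gamma Q_4$ on $M_1$ and $P_2'=Q_5+\gamma Q_6$ on $M_2$, where all $Q_i$ are points of $\Sigma$. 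As $M_1\neq M_2$, the points $P,P_1',P_2'$ span $\Pi$.

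Next I build a subgeometry of rank-$2$ points. For $(\xi_0,\xi_1,\xi_2)\in\F_q^3\setminus\{0\}$ I set $A(\xi)=\xi_0Q_1+\xi_1Q_3+\xi_2Q_5$ and $B(\xi)=\xi_0Q_2+\xi_1Q_4+\xi_2Q_6$, so that the point $X(\xi)=\xi_0P+\xi_1P_1'+\xi_2P_2'$ equals $A(\xi)+\gamma B(\xi)$. Both $A(\xi)$ and $B(\xi)$ are points of $\Sigma$; since $\Pi\cap\Sigma=\emptyset$ the point $X(\xi)$ cannot lie in $\Sigma$, so $A(\xi),B(\xi)$ are $\F_q$-independent and $X(\xi)$ is a rank-$2$ point whose unique extended line is $\langle A(\xi),B(\xi)\rangle$.

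The decisive ingredient is that $Q_1,\dots,Q_6$ are six vectors in the $5$-dimensional space $\F_q^5$ underlying $\Sigma$, hence linearly dependent: there is some $0\neq(\alpha_1,\dots,\alpha_6)\in\F_q^6$ with $\sum_i\alpha_iQ_i=0$. I put $a=(\alpha_1,\alpha_3,\alpha_5)$ and $b=(\alpha_2,\alpha_4,\alpha_6)$, which are not both zero, and choose a $2$-dimensional subspace $U\subseteq\F_q^3$ containing both $a$ and $b$ (possible since $\dim\langle a,b\rangle\leq 2$), with basis $\xi,\xi'$. Writing $a=s\xi+s'\xi'$ and $b=t\xi+t'\xi'$, a one-line substitution gives $sA(\xi)+s'A(\xi')+tB(\xi)+t'B(\xi')=\sum_i\alpha_iQ_i=0$ with $(s,s',t,t')\neq 0$, so the four vectors $A(\xi),B(\xi),A(\xi'),B(\xi')$ are linearly dependent.

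The main obstacle, and the heart of the argument, is to turn this dependence into a genuine $(q^2+q+1)$-secant; this is precisely the configuration (the reguli of $M_1$ and $M_2$ possibly spanning all of $\Sigma$) that the naive approach of forcing $\Pi$ into the extension of a $3$-space cannot handle, and it is resolved by the single relation above. The two extended lines $\langle A(\xi),B(\xi)\rangle$ and $\langle A(\xi'),B(\xi')\rangle$ are distinct, for otherwise this extended line would meet $\Pi$ in the two distinct points $X(\xi),X(\xi')$ and hence lie inside $\Pi$, contradicting $\Pi\cap\Sigma=\emptyset$. The dependence confines their four spanning points to a plane $\rho''$ of $\Sigma$, and two distinct lines of the projective plane $\rho''$ meet in a point of $\rho''\subseteq\Sigma$. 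Therefore the line $M'=\langle X(\xi),X(\xi')\rangle$ lies in the extension of the plane $\rho''$ while being disjoint from $\Sigma$; it meets each of the $q^2+q+1$ extended lines of $\rho''$, and these intersection points are pairwise distinct (two extended lines of $\rho''$ meet only inside $\Sigma$). Hence $M'$ carries $q^2+q+1$ points of $\Omega_2$, that is, it is a $(q^2+q+1)$-secant, which is the desired contradiction. I emphasise that this runs uniformly, with no case distinction on the relative position of the two reguli.
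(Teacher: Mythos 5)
Your proof is correct, and while it pursues the same overall strategy as the paper's -- derive a contradiction by manufacturing a $(q^2+q+1)$-secant in $\Pi$, which in both cases reduces to producing two concurrent lines of $\Sigma$ whose extensions meet $\Pi$ -- the mechanism by which you find that concurrent pair is genuinely different. The paper fixes four points spanning the solid associated with one secant, adjoins a fifth point coming from the other secant, expands a rank-$2$ point of the second secant in these coordinates, and then splits into two cases according to whether $(\lambda_2+\lambda_1\lambda_5,\lambda_4+\lambda_3\lambda_5)$ vanishes, exhibiting in each case an explicit point of $\Sigma$ on two suitable lines. You instead note that the entire $\F_q$-subplane $\{X(\xi)\}$ generated by $P,P_1',P_2'$ consists of rank-$2$ points, and that the unavoidable linear dependence among the six points $Q_1,\dots,Q_6$ of $\PG(4,q)$ can be concentrated on a single pencil $U=\langle\xi,\xi'\rangle$, forcing the lines of $\Sigma$ beneath $X(\xi)$ and $X(\xi')$ to be coplanar, hence concurrent. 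This buys a uniform, coordinate-free argument with no case distinction, at the cost of nothing I can see: all the supporting facts you invoke (that every rank-$2$ point on a $(q+1)$-secant has the same type, that a common type allows a common $\gamma$ in the representations, that two extended lines of $\Sigma$ meet only inside $\Sigma$) are exactly the ones the paper itself establishes in Theorems \ref{intersectionwithline} and \ref{qplusonesecant} and Lemma \ref{gammatype}. Two minor points to make explicit in a polished write-up: choose $P_1'\neq P$ and $P_2'\neq P$ so that $P,P_1',P_2'$ really span $\Pi$, and note that applying Theorem \ref{qplusonesecant} to conclude equality of types uses that the two lines of $\Sigma$ under distinct rank-$2$ points of a $(q+1)$-secant are disjoint (otherwise Case 1 of Theorem \ref{intersectionwithline} would already make the secant a $(q^2+q+1)$-secant), which is the same reading the paper relies on.
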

\begin{proof} Suppose that there are two $(q+1)$-secants, say $L$ and $M$, intersecting in a point $R$ of rank $2$. It follows from Theorems \ref{intersectionwithline} and \ref{qplusonesecant} that all points of $\Omega_2$ on these two $(q+1)$-secants have the same type, and both of them correspond to a hyperplane, say $\pi_L,\pi_M$ of $\Sigma$ containing $q+1$ lines extending to the points of $\Omega_2$ on $L$ and $M$. By Theorem \ref{intersectionwithline} and Lemma \ref{gammatype}, it follows that we can find four points, say $P_1,P_2,P_3,P_4$ spanning $\pi_L$, and a point $P_5\in \pi_M\setminus\pi_L$, and elements $\la_1,\la_2,\la_3,\la_4,\la_5\in \F_q$ and $\gamma\in\F_{q^{5}}\setminus\F_{q}$ such that $R=P_3+\gamma P_4$, $R'=P_1+\gamma P_2\in L$, and $Q=\gamma P_5+(\la_1P_1+\la_2P_2+\la_3P_3+\la_4P_4+\la_5P_5)\in M$.

We claim that we can always find a plane of $\Sigma$ extending to a plane meeting $\Pi$ in a line and hence, we have a $(q^2+q+1)$-secant to $\Omega_2$ in $\Pi$, contradicting the assumption in the lemma. For this it is sufficient to find a point in $\Sigma$ on two different lines that extend to lines meeting $\Pi$. We distinguish between two cases.
\par If $(\la_2+\lambda_{1}\la_5,\la_4+\la_3\la_5)=(0,0)$, then the point
\begin{align*}
	P&=-\la_5P_5+(\la_1P_1+\la_2P_2+\la_3P_3+\la_4P_4+\la_5P_5)\\
	&=\la_1(P_1-\la_5P_2)+\la_3(P_3-\la_5P_4)\\
	&=(\la_1P_1+\la_3P_3)-\la_5(\la_1P_2+\la_3P_4)
\end{align*}
lies on the line through  $\la_1P_1+\la_3P_3$ and $\la_1P_2+\la_3P_4$ containing the point $\lambda_{1}R'+\lambda_{3}R$ of $\Omega_2$. Furthermore, $P$ also lies on the line through $\la_1P_1+\la_2P_2+\la_3P_3+\la_4P_4+\la_5P_5$ and $P_5$ containing the point $Q$ of $\Omega_2$.
\par If $(\la_2+\lambda_{1}\la_5,\la_4+\la_3\la_5)\neq(0,0)$, then the point 
\[
	P'=(\la_2+\lambda_{1}\la_5)P_{1}+(\la_4+\la_3\la_5)P_{3}
\]
lies on the line through $P'$ and $(\la_2+\lambda_{1}\la_5)P_{2}+(\la_4+\la_3\la_5)P_{4}$ containing the point $(\la_2+\lambda_{1}\la_5)R'+(\la_4+\la_3\la_5)R$ of $\Omega_2$. Furthermore, it is easy to check that $P'$ also lies on the line through $P'$ and $P_5-\la_1P_{2}-\la_3P_{4}$ containing the point
\[
P'+\gamma(\gamma+\la_5)(\la_1P_{2}+\la_3P_{4}-P_5)=(\la_2+\la_1(\gamma+\la_5))R'+(\la_4+\la_3(\gamma+\la_5))R-\gamma Q\in\Pi\cap \Omega_2\;. \qedhere
\]
\end{proof}

\begin{lemma}\label{geentweekortesecanten}
	If $\Pi$ is a plane disjoint from $\Sigma$ that does not contain a $(q^{2}+q+1)$-secant to $\Omega_{2}$, then it contains at most one $(q+1)$-secant to $\Omega_{2}$. 
\end{lemma}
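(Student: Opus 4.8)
The plan is to assume, for contradiction, that $\Pi$ contains two distinct $(q+1)$-secants $L$ and $M$ to $\Omega_2$. Being coplanar they meet in a unique point $R$; if $R\in\Omega_2$ then Lemma \ref{notwointersectingsublines} applies directly, so the entire difficulty is the case $R\notin\Omega_2$. First I would recall from Theorem \ref{intersectionwithline} that the defining lines of $L$ form a regulus spanning a $3$-space $\pi_L$ of $\Sigma$, and those of $M$ a regulus spanning a $3$-space $\pi_M$; write $\Xi_L,\Xi_M$ for their extensions and $\Q_L,\Q_M$ for the hyperbolic quadrics carrying the reguli, so that $L\subseteq\Q_L$ and $M\subseteq\Q_M$ (a transversal meeting $\geq 3$ lines of a regulus is an opposite-regulus generator, hence lies on the quadric). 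If $\Pi$ were contained in $\Xi_L$ (in particular if $\pi_L=\pi_M$), then $\Pi$ would lie in the extension of a $3$-space of $\Sigma$, and every plane of that $3$-space would extend to a plane meeting $\Pi$ in a $(q^2+q+1)$-secant, contradicting the hypothesis; so $\Pi\not\subseteq\Xi_L,\Xi_M$, whence $\rho:=\pi_L\cap\pi_M$ is a plane of $\Sigma$ with $\overline\rho=\Xi_L\cap\Xi_M$. Since $\Pi\cap\Xi_L=L$ and $\Pi\cap\Xi_M=M$, I obtain $R=\Pi\cap\overline\rho\in\overline\rho$.

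Next I would pin down the types. For any $P_1\in L\cap\Omega_2$ and $P_2\in M\cap\Omega_2$ the line $N=P_1P_2$ is, by Theorem \ref{intersectionwithline}, a $2$-, $(q+1)$- or $(q^2+q+1)$-secant. A long secant is excluded by hypothesis, and if $N$ were a $(q+1)$-secant it would meet $L$ in the point $P_1\in\Omega_2$, so that $L$ and $N$ would be two $(q+1)$-secants through a point of $\Omega_2$, contradicting Lemma \ref{notwointersectingsublines}. Hence every such $N$ is a $2$-secant, and Theorems \ref{intersectionwithline} and \ref{qplusonesecant} then force the defining lines of $P_1$ and $P_2$ to be skew (if they met, $N$ would lie in an extended plane and be a $(q^2+q+1)$-secant) and of different type. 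In particular $L$ and $M$ have different type, and the conics $C_L=\rho\cap\Q_L$ and $C_M=\rho\cap\Q_M$ cut out on $\rho$ by the two reguli are disjoint.

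The key geometric observation is that $R$ lies on \emph{both} extended conics: from $R\in L\subseteq\Q_L$ and $R\in\overline\rho$ we get $R\in\Q_L\cap\overline\rho=:\overline{C_L}$, and symmetrically $R\in\overline{C_M}$. Moreover, since $R\notin\Omega_2$, no defining line of $L$ can lie in $\rho$: its extension would lie in $\overline\rho$ and meet $L$ in a rank-$2$ point, which would have to equal $R$. This rules out a tangent section and shows that $C_L$, and likewise $C_M$, is a nondegenerate conic. As $C_L\cap C_M=\emptyset$ with both nonempty, $\overline{C_L}$ and $\overline{C_M}$ are two \emph{distinct irreducible} conics of $\overline\rho\cong\PG(2,q^5)$.

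The hard part, and the punchline, is to convert this into a numerical contradiction, which I would do by a B\'ezout-versus-Galois count. Since $R\notin\Sigma$, it is an $\F_{q^5}$-point of $\overline\rho$ not defined over $\F_q$; as $5$ is prime, the Frobenius $x\mapsto x^q$ moves $R$ in an orbit of exactly $5$ distinct points. Both conics are defined over $\F_q$, so their intersection is Frobenius-invariant and hence contains all $5$ conjugates of $R$; but two distinct conics meet in at most $4$ points, so $|\overline{C_L}\cap\overline{C_M}|\leq 4<5$, a contradiction. I expect the main subtlety to be exactly the bookkeeping that guarantees $C_L$ and $C_M$ are genuine nondegenerate, distinct conics so that the B\'ezout bound $4$ is available — and this is precisely what the assumption $R\notin\Omega_2$ secures; once it is in place, the orbit of size $5$ against the bound $4$ closes the argument at once.
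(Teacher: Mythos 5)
Your proof is correct, but it takes a genuinely different route from the paper's. The paper argues computationally: it coordinatises the two $(q+1)$-secants via Theorem \ref{qplusonesecant} and Lemma \ref{gammatype}, writes down the linear system for the intersection point $S$, and after a sequence of coefficient identities (forced by the $\F_q$-independence of $1,\gamma',\dots,\gamma'^4$) shows that $S=\nu_2P_1+\nu_4P_2$ is itself a rank-$2$ point, contradicting Lemma \ref{notwointersectingsublines}. You instead exploit the regulus structure hidden in Case 2 of the proof of Theorem \ref{intersectionwithline}: each $(q+1)$-secant lies on the extended hyperbolic quadric carried by its defining regulus, the two carrier solids of $\Sigma$ meet in a plane $\rho$, and the point $R=L\cap M$ lies on the extensions of both conic sections $C_L,C_M$ of $\rho$; since $R\notin\Sigma$ its Frobenius orbit has exactly $5$ elements (here the primality of the extension degree and $5>4$ are used essentially), while two distinct irreducible conics share at most $4$ points. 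All the supporting steps check out: ruling out $\Pi\subseteq\Xi_L$ via the $(q^2+q+1)$-secant hypothesis, the skewness of every pair of defining lines (which is also what makes $C_L\cap C_M=\emptyset$ --- a common point of $\Sigma$ would lie on one defining line from each regulus, and these would meet there; you could state this one-line deduction explicitly, and note that the "different type" observation is never actually used), and the non-tangency of $\rho$ to either quadric. Your argument is shorter and more conceptual, and it isolates exactly where the hypothesis "degree $5$" enters; the paper's computation is heavier but stays inside the coordinate framework used for all the later case analyses and identifies concretely which point of $\Pi$ is forced to have rank $2$.
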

\begin{proof}
	Suppose that $L$ and $M$ are two $(q+1)$-secants to $\Omega_{2}$, with $L$ and $M$ lines in $\Pi$. Let $S$ be the intersection point of $L$ and $M$. From Lemma \ref{notwointersectingsublines} it follows that $S\notin\Omega_{2}$.
	\par By Theorems \ref{intersectionwithline} and \ref{qplusonesecant} and Lemma \ref{gammatype} there are $\gamma,\gamma'\in\F_{q^{5}}\setminus\F_{q}$ and points $Q_{1},Q_{2},Q_{3},Q_{4},Q'_{1},Q'_{2},Q'_{3},Q'_{4}\in\Sigma$ with $\dim\left\langle Q_{1},Q_{2},Q_{3},Q_{4}\right\rangle=3=\dim\left\langle Q'_{1},Q'_{2},Q'_{3},Q'_{4}\right\rangle$ such that $P_{1}=Q_{1}+\gamma Q_{2}$ and $P_{2}=Q_{3}+\gamma Q_{4}$ are points on $L$, and $P'_{1}=Q'_{1}+\gamma' Q'_{2}$ and $P'_{2}=Q'_{3}+\gamma'Q'_{4}$ are points on $M$. Note that $P_{1}$, $P_{2}$, $P'_{1}$ and $P'_{2}$ are points on $\Omega_{2}$. If the solids $\left\langle Q_{1},Q_{2},Q_{3},Q_{4}\right\rangle$ and $\left\langle Q'_{1},Q'_{2},Q'_{3},Q'_{4}\right\rangle$ would coincide,  $\Pi$ would be contained in the extension of this $3$-space so any plane of $\left\langle Q_{1},Q_{2},Q_{3},Q_{4}\right\rangle$ would give rise to a $(q^2+q+1)$-secant in $\Omega_2$, a contradiction. So at least one of the points $Q'_{i}$ is not contained in $\left\langle Q_{1},Q_{2},Q_{3},Q_{4}\right\rangle$. Without loss of generality, and by redefining $\gamma'$, we may assume $Q'_{2}\notin\left\langle Q_{1},Q_{2},Q_{3},Q_{4}\right\rangle$ and $Q'_{1}\in\left\langle Q_{1},Q_{2},Q_{3},Q_{4}\right\rangle$.
	\par Again without loss of generality, we can choose a basis for the underlying vector space such that $Q_{1}=\langle(1,0,0,0,0)\rangle$, $Q_{2}=\langle(0,1,0,0,0)\rangle$, $Q_{3}=\langle(0,0,1,0,0)\rangle$, $Q_{4}=\langle(0,0,0,1,0)\rangle$ and $Q'_{2}=\langle(0,0,0,0,1)\rangle$. There are $\lambda_{i}\in\F_{q}$, with $i=1,\dots4$ and $\mu_{i},\nu_{i}\in\F_{q}$, with $i=1,\dots,5$, such that $Q'_{1}=\langle(\lambda_{1},\lambda_{2},\lambda_{3},\lambda_{4},0)\rangle$, $Q'_{3}=\langle(\mu_{1},\mu_{2},\mu_{3},\mu_{4},\mu_{5})\rangle$ and $Q'_{4}=\langle(\nu_{1},\nu_{2},\nu_{3},\nu_{4},\nu_{5})\rangle$. It follows that 
	\begin{align*}
		P_{1}&=\langle(1,\gamma,0,0,0)\rangle\;,&P_{2}&=\langle(0,0,1,\gamma,0)\rangle\;,\\ P'_{1}&=\langle(\lambda_{1},\lambda_{2},\lambda_{3},\lambda_{4},\gamma')\rangle\;,& P'_{2}&=\langle(\mu_{1}+\nu_{1}\gamma',\mu_{2}+\nu_{2}\gamma',\mu_{3}+\nu_{3}\gamma',\mu_{4}+\nu_{4}\gamma',\mu_{5}+\nu_{5}\gamma')\rangle\;.
	\end{align*}
	\par Now, we calculate the coordinates of the point $S$. We know that there exist $\alpha_{1},\alpha_{2},\beta_{1}\in\F^{*}_{q^{5}}$ such that $\alpha_{1}P_{1}+\alpha_{2}P_{2}=\beta_{1}P'_{1}+P'_{2}$. We find the following system of equations (one equation for each coordinate):
	\begin{align}\label{eq:2keerq+1-vgl1}
		\begin{cases}
			\alpha_{1}=\lambda_{1}\beta_{1}+\mu_{1}+\nu_{1}\gamma'\\
			\alpha_{1}\gamma=\lambda_{2}\beta_{1}+\mu_{2}+\nu_{2}\gamma'\\
			\alpha_{2}=\lambda_{3}\beta_{1}+\mu_{3}+\nu_{3}\gamma'\\
			\alpha_{2}\gamma=\lambda_{4}\beta_{1}+\mu_{4}+\nu_{4}\gamma'\\
			0=\beta_{1}\gamma'+\mu_{5}+\nu_{5}\gamma'
		\end{cases}
		\Leftrightarrow\quad
		\begin{cases}
			\beta_{1}=-\frac{\mu_{5}+\nu_{5}\gamma'}{\gamma'}\\
			\alpha_{1}=-\lambda_{1}\frac{\mu_{5}+\nu_{5}\gamma'}{\gamma'}+\mu_{1}+\nu_{1}\gamma'\\
			\alpha_{2}=-\lambda_{3}\frac{\mu_{5}+\nu_{5}\gamma'}{\gamma'}+\mu_{3}+\nu_{3}\gamma'\\
			\alpha_{1}\gamma=-\lambda_{2}\frac{\mu_{5}+\nu_{5}\gamma'}{\gamma'}+\mu_{2}+\nu_{2}\gamma'\\
			\alpha_{2}\gamma=-\lambda_{4}\frac{\mu_{5}+\nu_{5}\gamma'}{\gamma'}+\mu_{4}+\nu_{4}\gamma'
		\end{cases}\;.
	\end{align}
	Note that $\gamma'\neq0$ since $\gamma'\notin\F_{q}$. The system of equations in \eqref{eq:2keerq+1-vgl1} has a solution in $\alpha_{1},\alpha_{2},\beta_{1}$ if $\gamma,\gamma'$ and $\lambda_{i},\mu_{i},\nu_{i}\in\F_{q}$, with $i=1,\dots,5$, fulfil
	\begin{align}\label{eq:2keerq+1-vgl2}
		&&&\begin{cases}
			\left(-\lambda_{1}\frac{\mu_{5}+\nu_{5}\gamma'}{\gamma'}+\mu_{1}+\nu_{1}\gamma'\right)\gamma=-\lambda_{2}\frac{\mu_{5}+\nu_{5}\gamma'}{\gamma'}+\mu_{2}+\nu_{2}\gamma'\\
			\left(-\lambda_{3}\frac{\mu_{5}+\nu_{5}\gamma'}{\gamma'}+\mu_{3}+\nu_{3}\gamma'\right)\gamma=-\lambda_{4}\frac{\mu_{5}+\nu_{5}\gamma'}{\gamma'}+\mu_{4}+\nu_{4}\gamma'
		\end{cases}\nonumber\\
		&\Leftrightarrow 
		&&\begin{cases}
			\left(\nu_{1}\gamma'^{2}+\left(\mu_{1}-\lambda_{1}\nu_{5}\right)\gamma'-\mu_{5}\lambda_{1}\right)\gamma=\nu_{2}\gamma'^{2}+\left(\mu_{2}-\lambda_{2}\nu_{5}\right)\gamma'-\mu_{5}\lambda_{2}\\
			\left(\nu_{3}\gamma'^{2}+\left(\mu_{3}-\lambda_{3}\nu_{5}\right)\gamma'-\mu_{5}\lambda_{3}\right)\gamma=\nu_{4}\gamma'^{2}+\left(\mu_{4}-\lambda_{4}\nu_{5}\right)\gamma'-\mu_{5}\lambda_{4}
		\end{cases}\;,
	\end{align}
	which can only be the case if 
	\begin{align*}
		&\left(\nu_{1}\gamma'^{2}+\left(\mu_{1}-\lambda_{1}\nu_{5}\right)\gamma'-\mu_{5}\lambda_{1}\right)\left(\nu_{4}\gamma'^{2}+\left(\mu_{4}-\lambda_{4}\nu_{5}\right)\gamma'-\mu_{5}\lambda_{4}\right)\\=\:&\left(\nu_{2}\gamma'^{2}+\left(\mu_{2}-\lambda_{2}\nu_{5}\right)\gamma'-\mu_{5}\lambda_{2}\right)	\left(\nu_{3}\gamma'^{2}+\left(\mu_{3}-\lambda_{3}\nu_{5}\right)\gamma'-\mu_{5}\lambda_{3}\right)\;,
	\end{align*}
	which is equivalent to
	\begin{align*}
		0=\:&\left[\nu_{1}\nu_{4}-\nu_{2}\nu_{3}\right]\gamma'^{4}+\left[\mu_{4}\nu_{1}+\mu_{1}\nu_{4}-\mu_{2}\nu_{3}-\mu_{3}\nu_{2}+\nu_{5}\left(\lambda_{2}\nu_{3}+\lambda_{3}\nu_{2}-\lambda_{1}\nu_{4}-\lambda_{4}\nu_{1}\right)\right]\gamma'^{3}\\
		&+\left[\mu_{1}\mu_{4}-\mu_{2}\mu_{3}+\nu_{5}\left(\lambda_{2}\mu_{3}+\lambda_{3}\mu_{2}-\lambda_{1}\mu_{4}-\lambda_{4}\mu_{1}\right)+\nu^{2}_{5}\left(\lambda_{1}\lambda_{4}-\lambda_{2}\lambda_{3}\right)\right.\\
		&\left.+\mu_{5}\left(\lambda_{2}\nu_{3}+\lambda_{3}\nu_{2}-\lambda_{1}\nu_{4}-\lambda_{4}\nu_{1}\right)\right]\gamma'^{2}+\left[\mu_{5}\left(\lambda_{2}\mu_{3}+\lambda_{3}\mu_{2}-\lambda_{1}\mu_{4}-\lambda_{4}\mu_{1}\right)\right.\\&\left.+2\mu_{5}\nu_{5}\left(\lambda_{1}\lambda_{4}-\lambda_{2}\lambda_{3}\right)\right]\gamma'+\mu^{2}_{5}\left(\lambda_{1}\lambda_{4}-\lambda_{2}\lambda_{3}\right)\;.
	\end{align*}
	However, as $\{1,\gamma',\gamma'^{2},\gamma'^{3},\gamma'^{4}\}$ is a linearly independent set over $\F_{q}$, we have that all coefficients in the right hand side of the previous expression equal zero. In particular, we have
	\begin{align}
		0&=\nu_{1}\nu_{4}-\nu_{2}\nu_{3}\;,\label{eq:2keerq+1-coeff1}\\
		0&=\mu_{4}\nu_{1}+\mu_{1}\nu_{4}-\mu_{2}\nu_{3}-\mu_{3}\nu_{2}+\nu_{5}\left(\lambda_{2}\nu_{3}+\lambda_{3}\nu_{2}-\lambda_{1}\nu_{4}-\lambda_{4}\nu_{1}\right)\;.\label{eq:2keerq+1-coeff2}
	\end{align}
	Now, subtracting $\nu_{1}$ times the second equation in \eqref{eq:2keerq+1-vgl2} from $\nu_{3}$ times the first, and using \eqref{eq:2keerq+1-coeff1}, we find
	\begin{align}\label{eq:2keerq+1-vriendje1}
		0&=\left(\mu_{1}\nu_{3}-\mu_{3}\nu_{1}+\nu_{5}\left(\lambda_{3}\nu_{1}-\lambda_{1}\nu_{3}\right)\right)\gamma\gamma'+\mu_{5}\left(\lambda_{3}\nu_{1}-\lambda_{1}\nu_{3}\right)\gamma\nonumber\\&\quad+\left(\mu_{4}\nu_{1}-\mu_{2}\nu_{3}+\nu_{5}\left(\lambda_{2}\nu_{3}-\lambda_{4}\nu_{1}\right)\right)\gamma'+\mu_{5}\left(\lambda_{2}\nu_{3}-\lambda_{4}\nu_{1}\right)\;.
	\end{align}
	Similarly, subtracting $\nu_{2}$ times the second equation in \eqref{eq:2keerq+1-vgl2} from $\nu_{4}$ times the first, and using \eqref{eq:2keerq+1-coeff1}, we find
	\begin{align}\label{eq:2keerq+1-vriendje2}
		0&=\left(\mu_{1}\nu_{4}-\mu_{3}\nu_{2}+\nu_{5}\left(\lambda_{3}\nu_{2}-\lambda_{1}\nu_{4}\right)\right)\gamma\gamma'+\mu_{5}\left(\lambda_{3}\nu_{2}-\lambda_{1}\nu_{4}\right)\gamma\nonumber\\&\quad+\left(\mu_{2}\nu_{4}-\mu_{4}\nu_{2}+\nu_{5}\left(\lambda_{4}\nu_{2}-\lambda_{2}\nu_{4}\right)\right)\gamma'+\mu_{5}\left(\lambda_{4}\nu_{2}-\lambda_{2}\nu_{4}\right)\;.
	\end{align}
	Unless all coefficients in both \eqref{eq:2keerq+1-vriendje1} and \eqref{eq:2keerq+1-vriendje2} equal zero, it follows from these equalities that $G(\gamma)=G(\gamma')$ by Lemma \ref{gammas}, and then by Theorem \ref{qplusonesecant} each line through a point of $L\cap\Omega_{2}$ and a point of $M\cap\Omega_{2}$ is a $(q+1)$-secant to $\Omega_{2}$, contradicting Lemma \ref{notwointersectingsublines}. So, we find that all coefficients in both \eqref{eq:2keerq+1-vriendje1} and \eqref{eq:2keerq+1-vriendje2} equal zero. In particular we find that
	\begin{align}
		0&=\mu_{4}\nu_{1}-\mu_{2}\nu_{3}+\nu_{5}\left(\lambda_{2}\nu_{3}-\lambda_{4}\nu_{1}\right)\;,\label{eq:2keerq+1-coeff3}\\
		0&=\mu_{5}\left(\lambda_{3}\nu_{2}-\lambda_{1}\nu_{4}\right)\;.\label{eq:2keerq+1-coeff4}
	\end{align}
	Considering the expression for $\alpha_1$ in \eqref{eq:2keerq+1-vgl1}, we deduce that
	\begin{align*}
		\nu_{4}\gamma'\alpha_{1}&=\nu_{1}\nu_{4}\gamma'^{2}+\left(\mu_{1}\nu_{4}-\lambda_{1}\nu_{4}\nu_{5}\right)\gamma'-\lambda_{1}\mu_{5}\nu_{4}\\
		&=\nu_{2}\nu_{3}\gamma'^{2}+\left(\mu_{2}\nu_{3}+\mu_{3}\nu_{2}-\mu_{4}\nu_{1}-\nu_{5}\left(\lambda_{2}\nu_{3}+\lambda_{3}\nu_{2}-\lambda_{4}\nu_{1}\right)\right)\gamma'-\lambda_{1}\mu_{5}\nu_{4}\\
		&=\nu_{2}\nu_{3}\gamma'^{2}+\left(\mu_{3}\nu_{2}-\lambda_{3}\nu_{2}\nu_{5}\right)\gamma'-\lambda_{3}\mu_{5}\nu_{2}\\
		&=\nu_{2}\gamma'\alpha_{2}\;.
	\end{align*}
	Here we used \eqref{eq:2keerq+1-coeff1} and \eqref{eq:2keerq+1-coeff2} in the first transition, \eqref{eq:2keerq+1-coeff3} and \eqref{eq:2keerq+1-coeff4} in the second transition and finally, the expression for $\alpha_2$ from \eqref{eq:2keerq+1-vgl1} in the last transition. We conclude that $\nu_{4}\alpha_{1}=\nu_{2}\alpha_{2}$ since $\gamma'\neq0$. Hence, the point $S$ is given by $\nu_{2}P_{1}+\nu_{4}P_{2}=(\nu_{2}Q_{1}+\nu_{4}Q_{3})+\gamma(\nu_{2}Q_{2}+\nu_{4}Q_{4})$. However, this implies that $S$ is a rank 2 point, contradicting the assumption that $S\notin\Omega_{2}$.
\end{proof}

\begin{theorem}\label{eriseensecant}
	If $\Pi$ is a plane disjoint from $\Sigma$ that contains a $(q+1)$-secant $L$ to $\Omega_{2}$, but that does not contain a $(q^{2}+q+1)$-secant to $\Omega_{2}$, then $q-1\leq |(\Pi\setminus L)\cap\Omega_{2}|\leq q+1$ and the points in $\Pi\cap\Omega_{2}$ not on $L$, form an arc. 
\end{theorem}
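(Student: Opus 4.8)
The plan is to treat the two assertions separately. The arc property is immediate: if three points of $(\Pi\setminus L)\cap\Omega_2$ were collinear, the line $M$ carrying them would be disjoint from $\Sigma$ (as $\Pi$ is) and meet $\Omega_2$ in at least three points, so by Theorem~\ref{intersectionwithline} it would be a $(q+1)$- or $(q^2+q+1)$-secant; the first contradicts Lemma~\ref{geentweekortesecanten} and the second is excluded by hypothesis. The same dichotomy shows that every line joining an off-$L$ point of $\Omega_2$ to a point of $L\cap\Omega_2$ is a $2$-secant. For the counting I would set up coordinates adapted to $L$. By Theorem~\ref{intersectionwithline} the points of $L\cap\Omega_2$ come from a regulus of lines of $\Sigma$ spanning a $3$-space $\tau$, and by Theorem~\ref{qplusonesecant} and Lemma~\ref{gammatype} two of them may be taken as $P_1=Q_1+\gamma Q_2$, $P_2=Q_3+\gamma Q_4$ with $Q_1,\dots,Q_4$ a basis of $\tau$ and $\gamma\in\F_{q^5}\setminus\F_q$. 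Completing to a basis $Q_1,\dots,Q_5$ of $\Sigma$, the extension $T$ of $\tau$ becomes $\{x_5=0\}$ and $L\subseteq T$; since $\Pi\subseteq T$ would force a $(q^2+q+1)$-secant in $\Pi$, we have $\Pi\not\subseteq T$ and hence $\Pi\cap T=L$. Thus $\Pi=\langle L,P_3\rangle$ with $P_3=(a_1,a_2,a_3,a_4,1)$, and the points of $\Pi\setminus L$ are exactly $v(\alpha,\beta)=(\alpha+a_1,\ \alpha\gamma+a_2,\ \beta+a_3,\ \beta\gamma+a_4,\ 1)$ for $\alpha,\beta\in\F_{q^5}$.

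Next I would translate membership in $\Omega_2$. For a point $\langle(z_1,\dots,z_5)\rangle$ with $z_i\in\F_{q^5}$ the rank equals $\dim_{\F_q}\langle z_1,\dots,z_5\rangle$ (the $\F_q$-rank of the matrix of coordinate vectors of the $z_i$). As $\Pi$ is disjoint from $\Sigma$ every point of $\Pi$ has rank at least $2$, so $v(\alpha,\beta)\in\Omega_2$ precisely when $\dim_{\F_q}\langle 1,\alpha+a_1,\alpha\gamma+a_2,\beta+a_3,\beta\gamma+a_4\rangle\le 2$, that is, when there exists $w\in\F_{q^5}\setminus\F_q$ with $\alpha+a_1,\alpha\gamma+a_2,\beta+a_3,\beta\gamma+a_4\in\langle 1,w\rangle$. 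The crucial feature is that the conditions decouple: once the common plane $V=\langle 1,w\rangle$ is fixed, the pair $\{\alpha+a_1,\alpha\gamma+a_2\}\subseteq V$ pins $\alpha$ down to $(V-a_1)\cap\gamma^{-1}(V-a_2)$ and, symmetrically, $\beta$ to $(V-a_3)\cap\gamma^{-1}(V-a_4)$, each an intersection of two cosets of $2$-dimensional $\F_q$-subspaces. Eliminating $\alpha$ and $\beta$ turns the existence of a solution into a single membership condition on $w$ depending only on $\gamma,a_1,\dots,a_4$.

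I would then determine, case by case, for how many $w$ this condition holds. After normalising $P_3$ to the parametrisation $L_{\gamma,\delta_1,\delta_2}$ of Subsection~\ref{constructions} (so that the $a_i$ are expressed through two parameters $\delta_1,\delta_2$), the admissible $w$ range over an $\F_q$-subline of type parameters, i.e.\ essentially over $\PG(1,q)$, from which one deletes the values that make a coset intersection empty or push $v(\alpha,\beta)$ onto $L$. This should produce exactly $q+1$, $q$ or $q-1$ off-$L$ points of $\Omega_2$, the precise value being governed by the $\F_q$-linear relations among the fixed elements $1,\gamma,\gamma\delta_1,\gamma\delta_2,\gamma^2\delta_1,\dots$ built from $\gamma$ and $P_3$; these are the various subcases (such as B.2.2) of the full argument. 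Since distinct $(\alpha,\beta)$ give distinct points and the plane $V$ is uniquely determined by a rank-$2$ point, there is no overcounting, and together with the first paragraph this would establish both the bounds and the arc property.

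The real difficulty lies entirely in this last counting step. The membership condition on $w$ degenerates in genuinely different ways depending on which $\F_q$-dependencies hold among the elements produced by $\gamma$ and the coordinates of $P_3$, and each branch must be carried out separately and reconciled with the list $\{q-1,q,q+1\}$. Establishing the lower bound is the subtlest part: one must show that, no matter how $\Pi$ is positioned (subject to having a $(q+1)$-secant and no $(q^2+q+1)$-secant), at least $q-1$ admissible values of $w$ always survive, so that the arc off $L$ is never too small, and in particular never empty. Controlling these degenerations, and checking in every branch that exactly the right number of deleted values occurs, is where the care in the computation is needed.
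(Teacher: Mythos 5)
Your treatment of the arc property is correct and coincides with the paper's closing argument: three collinear points of $(\Pi\setminus L)\cap\Omega_2$ would force a second $(q+1)$-secant or a $(q^2+q+1)$-secant, contradicting Lemma \ref{geentweekortesecanten} or the hypothesis. Your coordinate set-up is also the paper's (the paper normalises $P_3=\delta_1Q_1+\delta_2Q_3+Q_5$, which is your reduction to two parameters), and recasting ``$v(\alpha,\beta)\in\Omega_2$'' as the existence of a $2$-dimensional $\F_q$-space $V=\langle 1,w\rangle$ containing all five coordinates is a legitimate equivalent of the paper's elimination of $\varphi$ from the system \eqref{eq:q+1alg}.

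However, the quantitative content of the theorem --- $q-1\le|(\Pi\setminus L)\cap\Omega_2|\le q+1$ --- is not established, and this is where essentially all of the work lies. Two concrete gaps. First, the claim that ``the admissible $w$ range over an $\F_q$-subline of type parameters'' is asserted, not proven. It can be made true: in the transverse case $V\cap\gamma^{-1}V=0$, solvability for both $\alpha$ and $\beta$ forces $\langle 1,\gamma,w,\gamma w\rangle=\langle 1,\gamma,\gamma\delta_1,\gamma\delta_2\rangle=:H$, hence $w\in H\cap\gamma^{-1}H$, a $3$-space containing $\F_q$ (using the Intermezzo fact $\dim H=4$), which yields a pencil of $q+1$ candidate planes $V$. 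But you neither state nor prove this, and even granting it, the theorem reduces to showing that in every degenerate configuration of $\gamma,\delta_1,\delta_2$ the number of candidates destroyed is at least $0$ and at most $2$ --- which is exactly the content of the paper's cases A.1 through B.2.2, and which you explicitly defer (``this should produce exactly $q+1$, $q$ or $q-1$''). The lower bound $q-1$, the subtle half, is therefore simply not addressed. Second, your ``no overcounting'' step is incomplete: when $G(w)=G(\gamma)$ the space $V\cap\gamma^{-1}V$ is a line, the coset intersections defining $\alpha$ and $\beta$ are then cosets of that line rather than singletons, and a single admissible $V$ could a priori contribute up to $q^2$ points; one must first exclude type-$G(\gamma)$ points off $L$ (via Theorem \ref{qplusonesecant} and Lemma \ref{notwointersectingsublines}) before the count of points equals the count of admissible $V$'s. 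As written, the proposal is a sound strategy and a correct proof of the arc statement, but not a proof of the bounds.
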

\begin{proof}
	By Theorems \ref{intersectionwithline} and \ref{qplusonesecant} and Lemma \ref{gammatype} there is a $\gamma\in\F_{q^{5}}\setminus\F_{q}$ and there are points $Q_{1},Q_{2},Q_{3},Q_{4}\in\Sigma$ with $\dim\left\langle Q_{1},Q_{2},Q_{3},Q_{4}\right\rangle=3$ such that $P_{1}=Q_{1}+\gamma Q_{2}$ and $P_{2}=Q_{3}+\gamma Q_{4}$ are points on $L$. Note that $P_{1}$ and $P_{2}$ are points on $\Omega_{2}$. Without loss of generality we can choose a basis for the underlying vector space such that $Q_{1}=\langle(1,0,0,0,0)\rangle$, $Q_{2}=\langle(0,1,0,0,0)\rangle$, $Q_{3}=\langle(0,0,1,0,0)\rangle$ and $Q_{4}=\langle(0,0,0,1,0)\rangle$. Let $Q_{5}$ be the point $\langle(0,0,0,0,1)\rangle$.
	\par The line $\left\langle Q_{1},Q_{3}\right\rangle$ cannot contain a point of $\Pi$ since the plane $\left\langle Q_{1},Q_{2},Q_{3}\right\rangle$ cannot give rise to a $(q^{2}+q+1)$-secant on $\Pi$ by the assumption (see Theorem \ref{intersectionwithline}). So, the plane $\left\langle Q_{1},Q_{3},Q_{5}\right\rangle$ meets $\Pi$ in a point, and there are $\delta_{1},\delta_{2}\in\F_{q^{5}}$ such that $P_{3}=\delta_{1}Q_{1}+\delta_{2}Q_{3}+Q_{5}$ is a point of $\Pi$.
	\par By the assumption we know that $L$ is a $(q+1)$-secant to $\Omega_{2}$, so we now look for points in $(\Pi\setminus L)\cap\Omega_{2}$. It is clear that any point $P$ of $\Omega_{2}$ can be written as $(\mu_{1},\mu_{2},\mu_{3},\mu_{4},\mu_{5})+\varphi(\nu_{1},\nu_{2},\nu_{3},\nu_{4},0)$ for some $\mu_{i},\nu_{i}\in\F_{q}$ and $\varphi\in\F_{q^{5}}\setminus\F_{q}$. Since we assume that $P$ is not on $L$ we know that $\mu_{5}\neq0$. Clearly, each point $P$ can in many ways be written as such a sum. However, it is easy to see that for each $P$ in $(\Pi\setminus L)\cap\Omega_{2}$ there are either unique $\mu_{i},\nu_{i}\in \F_{q}$, $i=1,2,3$, such that $P=\langle(\mu_{1},\mu_{2},\mu_{3},0,1)+\varphi(\nu_{1},\nu_{2},\nu_{3},1,0)\rangle$, or unique $\mu_{1},\mu_{2},\mu_{4},\nu_{1},\nu_{2}\in \F_{q}$ such that $P=\langle(\mu_{1},\mu_{2},0,\mu_{4},1)+\varphi(\nu_{1},\nu_{2},1,0,0)\rangle$. Here we used again that $\Pi$ cannot contain a $(q^{2}+q+1)$-secant. As the point $P$ is contained in $\Pi$ there are $\alpha_{1},\alpha_{2},\alpha_{3}\in\F_{q^{5}}$ such that $P=\alpha_{1}P_{1}+\alpha_{2}P_{2}+\alpha_{3}P_{3}$. Comparing both expressions for $P$ we find the following system of equations (one equation for each coordinate):
	\begin{align}\label{eq:q+1alg}
		\begin{cases}
			\alpha_{1}+\alpha_{3}\delta_{1}=\mu_{1}+\varphi\nu_{1}\\
			\alpha_{1}\gamma=\mu_{2}+\varphi\nu_{2}\\
			\alpha_{2}+\alpha_{3}\delta_{2}=\mu_{3}+\varphi\nu_{3}\\
			\alpha_{2}\gamma=\mu_{4}+\varphi\nu_{4}\\
			\alpha_{3}=1
		\end{cases}
		\Leftrightarrow\quad
		\begin{cases}
			\alpha_{1}=\mu_{1}+\varphi\nu_{1}-\delta_{1}\\
			\alpha_{2}=\mu_{3}+\varphi\nu_{3}-\delta_{2}\\
			\alpha_{3}=1\\
			\varphi(\nu_{1}\gamma-\nu_{2})=\delta_{1}\gamma-\mu_{1}\gamma+\mu_{2}\\
			\varphi(\nu_{3}\gamma-\nu_{4})=\delta_{2}\gamma-\mu_{3}\gamma+\mu_{4}
		\end{cases}\;.
	\end{align}
	Each solution in the $\alpha_{i}$'s, $\mu_{i}$'s, $\nu_{i}$'s and $\varphi$ with either $(\mu_{4},\nu_{4})=(0,1)$ or $(\mu_{3},\nu_{3},\nu_{4})=(0,1,0)$ of this system of equations corresponds to a unique point of $(\Pi\setminus L)\cap\Omega_{2}$. The first three equations in \eqref{eq:q+1alg} describe $\alpha_{1},\alpha_{2},\alpha_{3}$ as functions of the other unknowns so can be disregarded from now on. So we consider the system of equations
	\begin{align}\label{eq:q+1algbis}
		\begin{cases}
			\varphi(\nu_{1}\gamma-\nu_{2})=\delta_{1}\gamma-\mu_{1}\gamma+\mu_{2}\\	\varphi(\nu_{3}\gamma-\nu_{4})=\delta_{2}\gamma-\mu_{3}\gamma+\mu_{4}
		\end{cases}.
	\end{align}
	Note that $\nu_{3}$ or $\nu_{4}$ is nonzero, hence the coefficients of $\varphi$ in the equations cannot simultaneously be zero. Consequently, \eqref{eq:q+1algbis} has a unique solution in $\varphi$ iff
	\begin{align}\label{eq:q+1standaard}
		(\nu_{1}\gamma-\nu_{2})(\delta_{2}\gamma-\mu_{3}\gamma+\mu_{4})=(\nu_{3}\gamma-\nu_{4})(\delta_{1}\gamma-\mu_{1}\gamma+\mu_{2})\;,
	\end{align}
	and no solution if \eqref{eq:q+1standaard} is not satisfied. We only need to find the solutions of \eqref{eq:q+1standaard} in case either $(\mu_{4},\nu_{4})=(0,1)$ or $(\mu_{3},\nu_{3},\nu_{4})=(0,1,0)$, so we will look at the two following equations:
	\begin{align}
		-\gamma\delta_{1}&=\mu_{2}+(\mu_{3}\nu_{2}-\mu_{2}\nu_{3}-\mu_{1})\gamma+(\mu_{1}\nu_{3}-\mu_{3}\nu_{1})\gamma^{2}-\nu_{2}\gamma\delta_{2}+\nu_{1}\gamma^{2}\delta_{2}-\nu_{3}\gamma^{2}\delta_{1}\;,\label{eq:q+1standaardA}\\
		\gamma^{2}\delta_{1}&=-\mu_{4}\nu_{2}+(\mu_{4}\nu_{1}-\mu_{2})\gamma+\mu_{1}\gamma^{2}-\nu_{2}\gamma\delta_{2}+\nu_{1}\gamma^{2}\delta_{2} \;.\label{eq:q+1standaardB}
	\end{align}
	We conclude that we need to count the total number of solutions to \eqref{eq:q+1standaardA} and \eqref{eq:q+1standaardB} in order to find the number of points of $\Omega_2\cap (\Pi\setminus L)$. We will distinguish between several cases, depending on the relation between $\gamma$, $\delta_{1}$ and $\delta_{2}$, when discussing these equations.
	\paragraph*{Intermezzo:} Before analysing the two equations above, we will show that $$\dim\left\langle 1,\gamma,\gamma\delta_{1},\gamma\delta_{2}\right\rangle_q=4.$$ Assume to the contrary that $\dim\left\langle 1,\gamma,\gamma\delta_{1},\gamma\delta_{2}\right\rangle_q\leq 3$, then either there are $\lambda,\lambda_{3},\lambda_{4}\in\F_{q}$ such that $\gamma\delta_{2}=\lambda\gamma\delta_{1}+\lambda_{3}\gamma+\lambda_{4}$ or else there are $\lambda_{1},\lambda_{2}\in\F_{q}$ such that $\gamma\delta_{1}=\lambda_{1}\gamma+\lambda_{2}$. In the former case the point $\langle(\delta_{1},0,\lambda_{3}+\lambda\delta_{1},-\lambda_{4},1)\rangle=(\lambda_{3}+\lambda\delta_{1}-\delta_{2})P_{2}+P_{3}$ is a point of $\Omega_{2}\cap\Pi$. However, then the plane $\left\langle\langle(1,0,\lambda,0,0)\rangle,\langle(0,1,0,\lambda,0)\rangle,\langle(0,0,\lambda_{3},-\lambda_{4},1)\rangle\right\rangle$ contains two points of $\Pi$ -- next to the point just described, there is also $P_{1}+\lambda P_{2}=\langle(1,\gamma,\lambda,\lambda\gamma,0)\rangle$ on $L\cap \Omega_2$ --, hence meets $\Pi$ in a line and consequently gives rise to a $(q^{2}+q+1)$-secant in $\Pi$, a contradiction. In the latter case the point $\langle(\lambda_{1},-\lambda_{2},\delta_{2},0,1)\rangle=(\lambda_{1}-\delta_{1})P_{1}+P_{3}$ is a point of $\Omega_{2}\cap\Pi$. However, then the plane $\left\langle\langle(0,0,1,0,0)\rangle,\langle(0,0,0,1,0)\rangle,\langle(\lambda_{1},-\lambda_{2},0,0,1)\rangle\right\rangle$ contains two points of $\Pi$ -- apart from the point just described, there is also $P_2=\langle(0,0,1,\gamma,0)\rangle$ --, hence meets $\Pi$ in a line and consequently gives rise to a $(q^{2}+q+1)$-secant in $\Pi$, a contradiction.
\par We now distinguish between several cases and subcases. For cases A.1, A.2 and B.1.1 we present the details. The arguments in the other subcases are similar, and can be found in Appendix \ifthenelse{\equal{\versie}{arxiv}}{\ref{ap:th4.3}}{A in the arXiv version of this paper}.
	\paragraph*{Case A:} We assume that $\dim\left\langle 1,\gamma,\gamma^{2},\gamma\delta_{1},\gamma\delta_{2}\right\rangle_q=5$, in other words $\left\{1,\gamma,\gamma^{2},\gamma\delta_{1},\gamma\delta_{2}\right\}$ is an $\F_{q}$-basis for $\F_{q^{5}}$. Then there are $a_{i},b_{i}\in\F_{q}$, $i=1,\dots,5$, such that
	\begin{align}
		\gamma^{2}\delta_{1}&=b_{1}+b_{2}\gamma+b_{3}\gamma^{2}+b_{4}\gamma\delta_{1}+b_{5}\gamma\delta_{2}\quad\text{and}\label{gammakwadraatdelta_1}\\
		\gamma^{2}\delta_{2}&=a_{1}+a_{2}\gamma+a_{3}\gamma^{2}+a_{4}\gamma\delta_{1}+a_{5}\gamma\delta_{2}\label{gammakwadraatdelta_2}\;.
	\end{align}
	Note that $\dim\left\langle \gamma,\gamma^{2},\gamma^{2}\delta_{1},\gamma^{2}\delta_{2}\right\rangle_q=4$ since $\dim\left\langle 1,\gamma,\gamma\delta_{1},\gamma\delta_{2}\right\rangle_q=4$, and hence we have $\rk\left(\begin{smallmatrix}a_{1}&a_{4}&a_{5}\\b_{1}&b_{4}&b_{5}\end{smallmatrix}\right)=2$. We also show that it is not possible that simultaneously $a_{4}=b_{5}=0$ and $a_{5}=b_{4}\neq0$. Indeed, if $a_{4}=b_{5}=0$ and $a_{5}=b_{4}\neq0$, then
	\begin{align*}
		0&=\left(a_{1}b^{-1}_{4}+a_{2}+a_{3}b_{4}\right)\left(\gamma^{2}\delta_{1}-b_{1}-b_{2}\gamma-b_{3}\gamma^{2}-b_{4}\gamma\delta_{1}\right)\\&\qquad-\left(b_{1}b^{-1}_{4}+b_{2}+b_{3}b_{4}\right)\left(\gamma^{2}\delta_{2}-a_{1}-a_{2}\gamma-a_{3}\gamma^{2}-b_{4}\gamma\delta_{2}\right)\\
		&=\left(\gamma-b_{4}\right)\left[\left(a_{1}b^{-1}_{4}+a_{2}+a_{3}b_{4}\right)\gamma\delta_{1}-\left(b_{1}b^{-1}_{4}+b_{2}+b_{3}b_{4}\right)\gamma\delta_{2}\right.\\&\qquad\left.+\left(\left(a_{3}b_{1}-a_{1}b_{3}\right)b^{-1}_{4}+a_{3}b_{2}-a_{2}b_{3}\right)\gamma+a_{3}b_{1}-a_{1}b_{3}+\left(a_{2}b_{1}-a_{1}b_{2}\right)b^{-1}_{4}\right]\;.
	\end{align*}
	The first factor in this expression cannot be zero as $\gamma\notin\F_{q}$. Hence, the second factor must be zero. However, as $\{1,\gamma,\gamma\delta_{1},\gamma\delta_{2}\}$ is independent over $\F_{q}$, we have that $a_{1}+a_{2}b_{4}+a_{3}b^{2}_{4}=0=b_{1}+b_{2}b_{4}+b_{3}b^{2}_{4}$ (and two more equalities but they are redundant). But, then it also follows that
	\begin{align*}
		0&=\left(\gamma^{2}\delta_{1}-b_{1}-b_{2}\gamma-b_{3}\gamma^{2}-b_{4}\gamma\delta_{1}\right)+\left(b_{1}+b_{2}b_{4}+b_{3}b^{2}_{4}\right)\\
		&=\left(\gamma-b_{4}\right)\left(\gamma\delta_{1}-b_{2}-b_{3}(\gamma+b_{4})\right)\;.
	\end{align*}
	The first factor in this expression cannot be zero as $\gamma\notin\F_{q}$, and the second factor cannot be zero as $\{1,\gamma,\gamma\delta_{1}\}$ is independent over $\F_{q}$. So, we find a contradiction, and we conclude that it is not possible that simultaneously $a_{4}=b_{5}=0$ and $a_{5}=b_{4}\neq0$.
	\par Since $\left\{1,\gamma,\gamma^{2},\gamma\delta_{1},\gamma\delta_{2}\right\}$ is a linearly independent set over $\F_{q}$, Equation \eqref{eq:q+1standaardA} is equivalent to the following system of equations:
	\begin{align}\label{eq:q+1standaardA1}
		\begin{cases}
			0=\mu_{2}+a_{1}\nu_{1}-b_{1}\nu_{3}\\
			0=\mu_{3}\nu_{2}-\mu_{2}\nu_{3}-\mu_{1}+a_{2}\nu_{1}-b_{2}\nu_{3}\\
			0=\mu_{1}\nu_{3}-\mu_{3}\nu_{1}+a_{3}\nu_{1}-b_{3}\nu_{3}\\
			0=a_{4}\nu_{1}-b_{4}\nu_{3}+1\\
			0=-\nu_{2}+a_{5}\nu_{1}-b_{5}\nu_{3}
		\end{cases}
		\Leftrightarrow\quad
		\begin{cases}
			\mu_{2}=b_{1}\nu_{3}-a_{1}\nu_{1}\\
			\nu_{2}=a_{5}\nu_{1}-b_{5}\nu_{3}\\
			0=\mu_{3}\nu_{2}-\mu_{2}\nu_{3}-\mu_{1}+a_{2}\nu_{1}-b_{2}\nu_{3}\\
			0=\mu_{1}\nu_{3}-\mu_{3}\nu_{1}+a_{3}\nu_{1}-b_{3}\nu_{3}\\
			0=a_{4}\nu_{1}-b_{4}\nu_{3}+1
		\end{cases}.
	\end{align}
	
	It is straightforward to see that there is a one-to-one correspondence between the solutions in $(\mu_{1},\mu_{2},\mu_{3},\nu_{1},\nu_{2},\nu_{3})$ of Equation \eqref{eq:q+1standaardA1} and the solutions in $(\mu_{1},\mu_{3},\nu_{1},\nu_{3})$ of
	\begin{align}\label{eq:q+1standaardA1bis}
		\begin{cases}
			0=\mu_{3}(a_{5}\nu_{1}-b_{5}\nu_{3})-(b_{1}\nu_{3}-a_{1}\nu_{1})\nu_{3}-\mu_{1}+a_{2}\nu_{1}-b_{2}\nu_{3}\\
			0=\mu_{1}\nu_{3}-\mu_{3}\nu_{1}+a_{3}\nu_{1}-b_{3}\nu_{3}\\
			0=a_{4}\nu_{1}-b_{4}\nu_{3}+1
		\end{cases}.
	\end{align}
	Equation \eqref{eq:q+1standaardB} is equivalent to the following system of equations:
	\begin{align}\label{eq:q+1standaardB1}
		\begin{cases}
			0=-\mu_{4}\nu_{2}+a_{1}\nu_{1}-b_{1}\\
			0=\mu_{4}\nu_{1}-\mu_{2}+a_{2}\nu_{1}-b_{2}\\
			0=\mu_{1}+a_{3}\nu_{1}-b_{3}\\
			0=a_{4}\nu_{1}-b_{4}\\
			0=-\nu_{2}+a_{5}\nu_{1}-b_{5}
		\end{cases}
		\Leftrightarrow\quad
		\begin{cases}
			\mu_{1}=b_{3}-a_{3}\nu_{1}\\
			\mu_{2}=\mu_{4}\nu_{1}+a_{2}\nu_{1}-b_{2}\\
			\nu_{2}=a_{5}\nu_{1}-b_{5}\\
			0=-\mu_{4}\nu_{2}+a_{1}\nu_{1}-b_{1}\\
			0=a_{4}\nu_{1}-b_{4}
		\end{cases}.
	\end{align}
	Again, it is straightforward that there is a one-to-one correspondence between the solutions in $(\mu_{1},\mu_{2},\mu_{4},\nu_{1},\nu_{2})$ of Equation \eqref{eq:q+1standaardB1} and the solutions in $(\mu_{4},\nu_{1})$ of
	\begin{align}\label{eq:q+1standaardB1bis}
		\begin{cases}
			0=\mu_{4}(b_{5}-a_{5}\nu_{1})+a_{1}\nu_{1}-b_{1}\\
			0=a_{4}\nu_{1}-b_{4}
		\end{cases}.
	\end{align}
	We see that in this case, we have reduced the problem of counting the solutions to \eqref{eq:q+1standaardA} and \eqref{eq:q+1standaardB}, to the problem to counting the number of solutions to \eqref{eq:q+1standaardA1bis} and \eqref{eq:q+1standaardB1bis}. Yet again, we will discuss several cases, now depending on the couple $(a_{4},b_{4})$.
	\par \textit{Case A.1: $(a_{4},b_{4})=(0,0)$.} 	 In this case Equation \eqref{eq:q+1standaardA1bis} clearly has no solutions; Equation \eqref{eq:q+1standaardB1bis} reduces to the equation of a conic in the $(\mu_{4},\nu_{1})$-plane $\pi\cong\AG(2,q)$ with line at infinity $\ell_{\infty}$. This conic is either a non-degenerate conic which has two points on $\ell_{\infty}$ (if $a_{5}\neq0$), or a degenerate conic consisting of one affine line and $\ell_{\infty}$ (if $a_{5}=0$). Since a conic in $\PG(2,q)$ has $q+1$ points, we find that Equation \eqref{eq:q+1standaardB1bis} has $q-1$ or $q$ solutions. So, we find $q-1$ or $q$ points in $(\Pi\setminus L)\cap\Omega_{2}$ in this case.
	\par \textit{Case A.2: $a_{4}\neq0$.} Clearly, in this case Equation \eqref{eq:q+1standaardB1bis} has 0, 1 or $q$ solutions and it can only have $q$ solutions if $\left|\begin{smallmatrix}a_{5}&a_{4}\\b_{5}&b_{4}\end{smallmatrix}\right|=0=\left|\begin{smallmatrix}a_{1}&a_{4}\\b_{1}&b_{4}\end{smallmatrix}\right|$, hence if $\rk\left(\begin{smallmatrix}a_{1}&a_{4}&a_{5}\\b_{1}&b_{4}&b_{5}\end{smallmatrix}\right)=1$, a contradiction. So, Equation \eqref{eq:q+1standaardB1bis} has 0 solutions or 1 solution in this case. Note that it can only have 0 solutions if $\left|\begin{smallmatrix}a_{5}&a_{4}\\b_{5}&b_{4}\end{smallmatrix}\right|=0$ and simultaneously $\left|\begin{smallmatrix}a_{1}&a_{4}\\b_{1}&b_{4}\end{smallmatrix}\right|\neq0$.
	\par We now look at Equation \eqref{eq:q+1standaardA1bis}. From the third equation in \eqref{eq:q+1standaardA1bis} we then have $\nu_{1}=\frac{b_{4}\nu_{3}-1}{a_{4}}$, so $\nu_{1}$ is uniquely determined by $\nu_{3}$, and we can look at the following system of equations:
	\begin{align}\label{eq:q+1standaardA1.2}
		\begin{cases}
			\mu_{1}-\mu_{3}\left(a_{5}\frac{b_{4}\nu_{3}-1}{a_{4}}-b_{5}\nu_{3}\right)=-\left(b_{1}\nu_{3}-a_{1}\frac{b_{4}\nu_{3}-1}{a_{4}}\right)\nu_{3}+a_{2}\frac{b_{4}\nu_{3}-1}{a_{4}}-b_{2}\nu_{3}\\
			\mu_{1}\nu_{3}-\mu_{3}\frac{b_{4}\nu_{3}-1}{a_{4}}=b_{3}\nu_{3}-a_{3}\frac{b_{4}\nu_{3}-1}{a_{4}}
		\end{cases}.
	\end{align}
	For a given value of $\nu_{3}$ Equation \eqref{eq:q+1standaardA1.2} is a linear system of equations in $\mu_{1}$ and $\mu_{3}$ and has either 0, 1 or $q$ solutions. It has 0 or $q$ solutions iff
	\begin{align}\label{eq:q+1-1.2ontaard}
		\left(a_{5}\frac{b_{4}\nu_{3}-1}{a_{4}}-b_{5}\nu_{3}\right)\nu_{3}-\frac{b_{4}\nu_{3}-1}{a_{4}}=0\quad\Leftrightarrow\quad(a_{5}b_{4}-a_{4}b_{5})\nu^{2}_{3}-(a_{5}+b_{4})\nu_{3}+1=0\;.
	\end{align}
	This is a non-vanishing quadratic or linear equation, so it has at most two solutions, hence for at least $q-2$ values of $\nu_{3}$ Equation \eqref{eq:q+1standaardA1.2} has precisely one solution. If $\nu$ is a solution of \eqref{eq:q+1-1.2ontaard}, then $\nu\neq0$. For $\nu_{3}=\nu$ Equation \eqref{eq:q+1standaardA1.2} has $q$ solutions iff
	\begin{align}
		&&&b_{3}\nu-a_{3}\frac{b_{4}\nu-1}{a_{4}}=\nu\left(-\left(b_{1}\nu-a_{1}\frac{b_{4}\nu-1}{a_{4}}\right)\nu+a_{2}\frac{b_{4}\nu-1}{a_{4}}-b_{2}\nu\right)\nonumber\\
		&\Leftrightarrow&0&=\left(a_{1}b_{4}-a_{4}b_{1}\right)\nu^{3}+\left(a_{2}b_{4}-a_{4}b_{2}-a_{1}\right)\nu^{2}+\left(a_{3}b_{4}-a_{4}b_{3}-a_{2}\right)\nu-a_{3}\nonumber\\
		&\Leftrightarrow& 0&=\left(a_{1}b_{4}-a_{4}b_{1}\right)+\left(a_{2}b_{4}-a_{4}b_{2}-a_{1}\right)\nu^{-1}+\left(a_{3}b_{4}-a_{4}b_{3}-a_{2}\right)\nu^{-2}-a_{3}\nu^{-3}\;.\label{eq:q+1-1.2qopl}
	\end{align}	
	Now, we also have that
	\begin{align*}
		&&0&=\gamma\delta_{2}(\gamma-a_{5})-a_{1}-a_{2}\gamma-a_{3}\gamma^{2}-a_{4}\gamma\delta_{1}\\
		&\Leftrightarrow&0&=\gamma\delta_{2}(\gamma-a_{5})(\gamma-b_{4})-(a_{1}+a_{2}\gamma+a_{3}\gamma^{2})(\gamma-b_{4})-a_{4}\gamma(\gamma-b_{4})\delta_{1}\\
		&&&=\gamma\delta_{2}\left((\gamma-a_{5})(\gamma-b_{4})-a_{4}b_{5}\right)-(a_{1}+a_{2}\gamma+a_{3}\gamma^{2})(\gamma-b_{4})-a_{4}\left(b_{1}+b_{2}\gamma+b_{3}\gamma^{2}\right)
	\end{align*}
	where we have used \eqref{gammakwadraatdelta_1}. Now subtracting \eqref{eq:q+1-1.2qopl} from this, we find that
	\begin{align}
		0&=\gamma\delta_{2}\left(\gamma^{2}-(a_{5}+b_{4})\gamma+a_{5}b_{4}-a_{4}b_{5}\right)+\left(a_{2}b_{4}-a_{4}b_{2}-a_{1}\right)\left(\gamma-\nu^{-1}\right)\nonumber\\&\qquad+\left(a_{3}b_{4}-a_{4}b_{3}-a_{2}\right)\left(\gamma^{2}-\nu^{-2}\right)-a_{3}\left(\gamma^{3}-\nu^{-3}\right)\nonumber\\
		&=\left(\gamma-\nu^{-1}\right)\left[\gamma\delta_{2}\left(\gamma-(a_{5}+b_{4})+\nu^{-1}\right)+\left(a_{2}b_{4}-a_{4}b_{2}-a_{1}\right)\right.\nonumber\\&\qquad\left.+\left(a_{3}b_{4}-a_{4}b_{3}-a_{2}\right)\left(\gamma+\nu^{-1}\right)-a_{3}\left(\gamma^{2}+\nu^{-1}\gamma+\nu^{-2}\right)\right]\;.\label{eq:q+1-1.2qoplbis}
	\end{align}
	The first factor in \eqref{eq:q+1-1.2qoplbis} cannot be zero as $\gamma\notin\F_{q}$. Hence, the second factor in \eqref{eq:q+1-1.2qoplbis} must be zero. It follows that
	\begin{align*}
		0&=\gamma^{2}\delta_{2}-(a_{5}+b_{4}-\nu^{-1})\gamma\delta_{2}+\left(a_{2}b_{4}-a_{4}b_{2}-a_{1}+\left(a_{3}b_{4}-a_{4}b_{3}-a_{2}\right)\nu^{-1}-a_{3}\nu^{-2}\right)\\&\qquad+\left(a_{3}b_{4}-a_{4}b_{3}-a_{2}-a_{3}\nu^{-1}\right)\gamma-a_{3}\gamma^{2}\\
		&=\left(a_{2}b_{4}-a_{4}b_{2}+\left(a_{3}b_{4}-a_{4}b_{3}-a_{2}\right)\nu^{-1}-a_{3}\nu^{-2}\right)+\left(a_{3}b_{4}-a_{4}b_{3}-a_{3}\nu^{-1}\right)\gamma\\&\qquad+a_{4}\gamma\delta_{1}-(a_{5}+b_{4}-\nu^{-1})\gamma\delta_{2}\;.
	\end{align*}
	However, as $\{1,\gamma,\gamma\delta_{1},\gamma\delta_{2}\}$ is independent over $\F_{q}$, we have that $a_{4}=0$, contradicting our assumption. So, if $\nu_{3}$ is a solution of \eqref{eq:q+1-1.2ontaard}, then \eqref{eq:q+1standaardA1.2} has no solutions.
	\par We conclude that in Case A.2, we have $q-2$, $q-1$ or $q$ solutions of Equation \eqref{eq:q+1standaardA1bis} and at most 1 solution of Equation \eqref{eq:q+1standaardB1bis}. Now, recall that \eqref{eq:q+1standaardB1bis} has no solutions if and only if $\left|\begin{smallmatrix}a_{5}&a_{4}\\b_{5}&b_{4}\end{smallmatrix}\right|=0$ and $\left|\begin{smallmatrix}a_{1}&a_{4}\\b_{1}&b_{4}\end{smallmatrix}\right|\neq0$, but in this case Equation \eqref{eq:q+1-1.2ontaard} is linear, and so there are at least $q-1$ solutions of Equation \eqref{eq:q+1standaardA1bis}. So, in total there are at least $q-1$ and at most $q+1$ points in $(\Pi\setminus L)\cap\Omega_{2}$ in this case.
	\par \textit{Case A.3: $a_{4}=0$ and $b_{4}\neq0$.} The arguments in this case are similar to the arguments in Case A.2. We find that there are $q-1$ or $q$ solutions of Equation \eqref{eq:q+1standaardA1bis} and no solutions of Equation \eqref{eq:q+1standaardB1bis}, so in total there are $q-1$ or $q$ points in $(\Pi\setminus L)\cap\Omega_{2}$. Details can be found in Appendix \ifthenelse{\equal{\versie}{arxiv}}{\ref{ap:th4.3}, see page \pageref{apA:A3}}{A in the arXiv version of this paper}. \comments{Clearly, in this case Equation \eqref{eq:q+1standaardB1bis} has no solutions. We now look at Equation \eqref{eq:q+1standaardA1bis}. From the third equation in \eqref{eq:q+1standaardA1bis} we then have $\nu_{3}=b^{-1}_{4}$, so we can look at the following system of equations:
	\begin{align}\label{eq:q+1standaardA1.3}
		\begin{cases}
			\mu_{1}-(a_{5}\nu_{1}-b_{5}b^{-1}_{4})\mu_{3}=-(b_{1}b^{-1}_{4}-a_{1}\nu_{1})b^{-1}_{4}+a_{2}\nu_{1}-b_{2}b^{-1}_{4}\\
			-b^{-1}_{4}\mu_{1}+\nu_{1}\mu_{3}=a_{3}\nu_{1}-b_{3}b^{-1}_{4}\\
		\end{cases}.
	\end{align}
	For a given value of $\nu_{1}$ Equation \eqref{eq:q+1standaardA1.3} is a linear system of equations in $\mu_{1}$ and $\mu_{3}$ and has either 0, 1 or $q$ solutions. It has 0 or $q$ solutions iff
	\begin{align}\label{eq:q+1-1.3ontaard}
		\nu_{1}-\left(a_{5}\nu_{1}-b_{5}b^{-1}_{4}\right)b^{-1}_{4}=0\quad\Leftrightarrow\quad(b_{4}-a_{5})\nu_{1}+b_{5}b^{-1}_{4}=0\;.
	\end{align}
	This is a non-vanishing linear equation since we have shown before that	it is not possible that simultaneously $b_{5}=0=a_4$ and $a_{5}=b_{4}\neq0$. More precisely, \eqref{eq:q+1-1.3ontaard} has solution $\nu_{1}=\frac{b_{5}b^{-1}_{4}}{a_{5}-b_{4}}$ which exists if and only if $a_{5}\neq b_{4}$. Hence, for $q-1$ or $q$ values of $\nu_{1}$ Equation \eqref{eq:q+1standaardA1.3} has precisely one solution. If $\nu_{1}=\frac{b_{5}b^{-1}_{4}}{a_{5}-b_{4}}$, then \eqref{eq:q+1standaardA1.3} has $q$ solutions iff
	\begin{align}
		&&&-b_{4}\left(a_{3}\frac{b_{5}b^{-1}_{4}}{a_{5}-b_{4}}-b_{3}b^{-1}_{4}\right)=-\left(b_{1}b^{-1}_{4}-a_{1}\frac{b_{5}b^{-1}_{4}}{a_{5}-b_{4}}\right)b^{-1}_{4}+a_{2}\frac{b_{5}b^{-1}_{4}}{a_{5}-b_{4}}-b_{2}b^{-1}_{4}\nonumber\\
		&\Leftrightarrow&0&=a_{3}b_{5}-(a_{5}-b_{4})b_{3}-(a_{5}-b_{4})b_{1}b^{-2}_{4}+a_{1}b_{5}b^{-2}_{4}+a_{2}b_{5}b^{-1}_{4}-(a_{5}-b_{4})b_{2}b^{-1}_{4}\nonumber\\
		&\Leftrightarrow& 0&=b_{5}\left(a_{1}+a_{2}b_{4}+a_{3}b^{2}_{4}\right)-a_{5}\left(b_{1}+b_{2}b_{4}+b_{3}b^{2}_{4}\right)+b_{4}\left(b_{1}+b_{2}b_{4}+b_{3}b^{2}_{4}\right)\;.\label{eq:q+1-1.3qopl}
	\end{align}
	Now, we also have that
	\begin{align*}
		&&0&=\gamma\delta_{1}(\gamma-b_{4})-b_{1}-b_{2}\gamma-b_{3}\gamma^{2}-b_{5}\gamma\delta_{2}\\
		&\Leftrightarrow&0&=\gamma\delta_{1}(\gamma-b_{4})(\gamma-a_{5})-(b_{1}+b_{2}\gamma+b_{3}\gamma^{2})(\gamma-a_{5})-b_{5}\gamma\delta_{2}(\gamma-a_{5})\\
		&&&=\gamma\delta_{1}(\gamma-b_{4})(\gamma-a_{5})-(b_{1}+b_{2}\gamma+b_{3}\gamma^{2})(\gamma-a_{5})-b_{5}(a_{1}+a_{2}\gamma+a_{3}\gamma^{2})\;,
	\end{align*}
	where we have used \eqref{gammakwadraatdelta_2}. Now adding \eqref{eq:q+1-1.3qopl} to this, we find that
	\begin{align}
		0&=\gamma\delta_{1}(\gamma-b_{4})(\gamma-a_{5})+a_{5}\left(b_{2}(\gamma-b_{4})+b_{3}(\gamma^{2}-b^{2}_{4})\right)\nonumber\\&\qquad-\left(b_{1}(\gamma-b_{4})+b_{2}\left(\gamma^{2}-b^{2}_{4}\right)+b_{3}\left(\gamma^{3}-b^{3}_{4}\right)\right)-b_{5}\left(a_{2}(\gamma-b_{4})+a_{3}(\gamma^{2}-b^{2}_{4})\right)
		\nonumber\\
		&=\left(\gamma-b_{4}\right)\left[\gamma\delta_{1}(\gamma-a_{5})+a_{5}\left(b_{2}+b_{3}(\gamma+b_{4})\right)\right.\nonumber\\&\qquad\qquad\qquad\left.-\left(b_{1}+b_{2}\left(\gamma+b_{4}\right)+b_{3}\left(\gamma^{2}+b_{4}\gamma+b^{2}_{4}\right)\right)-b_{5}\left(a_{2}+a_{3}(\gamma+b_{4})\right)\right]\;.\label{eq:q+1-1.3qoplbis}
	\end{align}
	The first factor in \eqref{eq:q+1-1.3qoplbis} cannot be zero as $\gamma\notin\F_{q}$. Hence, the second factor in \eqref{eq:q+1-1.3qoplbis} must be zero. It follows that \begin{align*}
		0&=\gamma^{2}\delta_{1}-a_{5}\gamma\delta_{1}+a_{5}b_{2}+a_{5}b_{3}\gamma+a_{5}b_{3}b_{4}\\&\qquad-b_{1}-b_{2}\gamma-b_{2}b_{4}-b_{3}\gamma^{2}-b_{3}b_{4}\gamma-b_{3}b^{2}_{4}-a_{2}b_{5}-a_{3}b_{5}\gamma-a_{3}b_{5}b_{4}\\
		&=(b_{4}-a_{5})\gamma\delta_{1}+b_{5}\gamma\delta_{2}+\left(a_{5}b_{2}-a_{2}b_{5}+(a_{5}b_{3}-a_{3}b_{5}-b_{2})b_{4}-b_{3}b^{2}_{4}\right)\\&\qquad+\left(a_{5}b_{3}-a_{3}b_{5}-b_{3}b_{4}\right)\gamma
	\end{align*}
	where we have used \eqref{gammakwadraatdelta_1}. However, as $\{1,\gamma,\gamma\delta_{1},\gamma\delta_{2}\}$ is independent over $\F_{q}$, we have that $b_{4}=a_{5}$, contradicting the assumption. So, if $\nu_{1}=\frac{b_{5}b^{-1}_{4}}{a_{5}-b_{4}}$ then \eqref{eq:q+1standaardA1.3} has no solutions.
	\par We conclude that in Case A.3, we have $q-1$ or $q$ solutions of Equation \eqref{eq:q+1standaardA1bis} and no solutions of Equation \eqref{eq:q+1standaardB1bis}, so in total there are $q-1$ or $q$ points in $(\Pi\setminus L)\cap\Omega_{2}$.}
	\paragraph*{Case B:} Now, we assume that $\dim\left\langle 1,\gamma,\gamma^2,\gamma\delta_{1},\gamma\delta_{2}\right\rangle_q\neq 5$. We showed in the beginning of the proof that $\dim\left\langle 1,\gamma,\gamma\delta_{1},\gamma\delta_{2}\right\rangle_q=4$. So, we have that $\gamma^{2}\in\left\langle 1,\gamma,\gamma\delta_{1},\gamma\delta_{2}\right\rangle_q$. Since $\gamma^{2}\notin\left\langle1,\gamma\right\rangle$, we know that $\gamma\delta_{1}\in\left\langle 1,\gamma,\gamma^{2},\gamma\delta_{2}\right\rangle_q$ or $\gamma\delta_{2}\in\left\langle 1,\gamma,\gamma^{2},\gamma\delta_{1}\right\rangle_q$.
	\par \textit{Case B.1: $\gamma\delta_{1}\in\left\langle 1,\gamma,\gamma^{2},\gamma\delta_{2}\right\rangle_q$.} By the assumption we can find $c_{0},c_{1},c_{2},c_{3}\in\F_{q}$ such that
	\begin{align}
		\gamma\delta_{1}=c_{0}+c_{1}\gamma+c_{2}\gamma^{2}+c_{3}\gamma\delta_{2}\label{gammadelta1B1}\;,
	\end{align}
	and we know that $c_{2}\neq0$ since $\dim\left\langle 1,\gamma,\gamma\delta_{1},\gamma\delta_{2}\right\rangle_q=4$. We now show that it is not possible that both $\gamma^{2}\delta_{1}$ and $\gamma^{2}\delta_{2}$ are contained in $\left\langle 1,\gamma,\gamma^{2},\gamma\delta_{2}\right\rangle_q$. Assume to the contrary they are, and let $a_{0},a_{1},a_{2},a_{3}\in\F_{q}$ and $b_{0},b_{1},b_{2},b_{3}\in\F_{q}$ be such that
	\begin{align*}
		\gamma^{2}\delta_{1}&=b_{0}+b_{1}\gamma+b_{2}\gamma^{2}+b_{3}\gamma\delta_{2}\;,\\
		\gamma^{2}\delta_{2}&=a_{0}+a_{1}\gamma+a_{2}\gamma^{2}+a_{3}\gamma\delta_{2}\;.
	\end{align*}
	It follows immediately that
	\begin{align*}
		0&=\gamma^{2}\delta_{1}-\gamma(\gamma\delta_{1})=b_{0}+(b_{1}-c_{0})\gamma+(b_{2}-c_{1})\gamma^{2}+b_{3}\gamma\delta_{2}-c_{2}\gamma^{3}-c_{3}\gamma^{2}\delta_{2}\\
		&=(b_{0}-c_{3}a_{0})+(b_{1}-c_{0}-c_{3}a_{1})\gamma+(b_{2}-c_{1}-c_{3}a_{2})\gamma^{2}+(b_{3}-c_{3}a_{3})\gamma\delta_{2}-c_{2}\gamma^{3}\;.
	\end{align*}
	This is a non-vanishing expression since $c_{2}\neq0$. So, as $\{1,\gamma,\gamma^{2},\gamma^{3}\}$ is a linearly independent set over $\F_{q}$, it follows that $\gamma\delta_{2}\in\left\langle1,\gamma,\gamma^{2},\gamma^{3}\right\rangle_q$. More precisely, $\gamma\delta_{2}\in\left\langle1,\gamma,\gamma^{2},\gamma^{3}\right\rangle_q\setminus\left\langle1,\gamma,\gamma^{2}\right\rangle_q$ since $c_{2}\neq0$. However, then $\gamma^{2}\delta_{2}\in\left\langle\gamma,\gamma^{2},\gamma^{3},\gamma^{4}\right\rangle_q\setminus\left\langle\gamma,\gamma^{2},\gamma^{3}\right\rangle_q$. Since 
	$\gamma^{2}\delta_{2}=a_{0}+a_{1}\gamma+a_{2}\gamma^{2}+a_{3}\gamma\delta_{2}$ and $\gamma\delta_{2}\in\left\langle1,\gamma,\gamma^{2},\gamma^{3}\right\rangle_q\setminus\left\langle1,\gamma,\gamma^{2}\right\rangle_q$, it follows that $\gamma^{2}\delta_{2}\in\left\langle1,\gamma,\gamma^{2},\gamma^{3}\right\rangle_q$, a contradiction since $\{1,\gamma,\gamma^{2},\gamma^{3},\gamma^{4}\}$ is a linearly independent set over $\F_{q}$.
	\par Having excluded the possibility that both $\gamma^{2}\delta_{1}$ and $\gamma^{2}\delta_{2}$ are contained in $\left\langle 1,\gamma,\gamma^{2},\gamma\delta_{2}\right\rangle_q$, we now distinguish between two cases.
	\par \textit{Case B.1.1: $\dim\left\langle 1,\gamma,\gamma^{2},\gamma\delta_{2},\gamma^{2}\delta_{1}\right\rangle_q=5$.} In other words, $\left\{1,\gamma,\gamma^{2},\gamma\delta_{2},\gamma^{2}\delta_{1}\right\}$ is an $\F_{q}$-basis for $\F_{q^{5}}$. Then, there are $a_{i}\in\F_{q}$, $i=1,\dots,5$, such that
	\begin{align}
		\gamma^{2}\delta_{2}&=a_{0}+a_{1}\gamma+a_{2}\gamma^{2}+a_{3}\gamma\delta_{2}+a_{4}\gamma^{2}\delta_{1}\label{gammakwadraatdelta2B11}\;.
	\end{align}
	Note that $\dim\left\langle \gamma,\gamma^{2},\gamma^{2}\delta_{1},\gamma^{2}\delta_{2}\right\rangle_q=4$ since $\dim\left\langle 1,\gamma,\gamma\delta_{1},\gamma\delta_{2}\right\rangle_q=4$, and hence $(a_{0},a_{3})\neq(0,0)$. We also show that it is not possible that simultaneously $a_{3}=0$ and $a_{4}c_{3}=1$. Indeed, if $a_{3}=0$ and $a_{4}c_{3}=1$, then
	\begin{align*}
		a_{0}+a_{1}\gamma+a_{2}\gamma^{2}=\gamma^{2}\left(\delta_{2}-c^{-1}_{3}\delta_{1}\right)=c^{-1}_{3}\gamma\left(\gamma\left(c_{3}\delta_{2}-\delta_{1}\right)\right)=-c^{-1}_{3}\gamma(c_{0}+c_{1}\gamma+c_{2}\gamma^{2})\;.
	\end{align*}
	However, as $\{1,\gamma,\gamma^{2},\gamma^{3}\}$ is independent over $\F_{q}$, we then find that $a_{0}=0$, a contradiction since $a_{3}=0$.
	\par We are now ready to discuss the number of solutions to \eqref{eq:q+1standaardA} and \eqref{eq:q+1standaardB} in Case B.1.1. We see that Equation \eqref{eq:q+1standaardB}, is equivalent to the following system of equations:
	\begin{align}\label{eq:q+1standaardB211}
		\begin{cases}
			0=-\mu_{4}\nu_{2}+a_{0}\nu_{1}\\
			0=\mu_{4}\nu_{1}-\mu_{2}+a_{1}\nu_{1}\\
			0=\mu_{1}+a_{2}\nu_{1}\\
			0=-\nu_{2}+a_{3}\nu_{1}\\
			1=a_{4}\nu_{1}
		\end{cases}
		\Leftrightarrow\quad
		\begin{cases}
			\mu_{1}=-a_{2}\nu_{1}\\
			\mu_{2}=\mu_{4}\nu_{1}+a_{1}\nu_{1}\\
			\nu_{2}=a_{3}\nu_{1}\\
			0=a_{0}\nu_{1}-a_{3}\mu_{4}\nu_{1}\\
			1=a_{4}\nu_{1}
		\end{cases}.
	\end{align}
	Again, it is straightforward that there is a one-to-one correspondence between the solutions in $(\mu_{1},\mu_{2},\mu_{4},\nu_{1},\nu_{2})$ of Equation \eqref{eq:q+1standaardB211} and the solutions in $(\mu_{4},\nu_{1})$ of
	\begin{align}\label{eq:q+1standaardB211bis}
		\begin{cases}
			0=\nu_{1}(a_{0}-a_{3}\mu_{4})\\
			1=a_{4}\nu_{1}
		\end{cases}
		\Leftrightarrow\quad
		\begin{cases}
			0=a_{0}-a_{3}\mu_{4}\\
			1=a_{4}\nu_{1}
		\end{cases}.
	\end{align}
	Clearly, Equation \eqref{eq:q+1standaardB211bis} has 0, 1 or $q$ solutions and it can only have $q$ solutions if $a_{0}=a_{3}=0$, a contradiction. So, Equation \eqref{eq:q+1standaardB211bis} has 0 solutions or 1 solution in this case, and it only has 0 solutions if $a_{3}=0$ and $a_{0}\neq0$ or if $a_{4}=0$. 	\par Equation \eqref{eq:q+1standaardA} is equivalent to the following system of equations:
	\begin{align}\label{eq:q+1standaardA211}
		&\begin{cases}
			0=\mu_{2}+a_{0}\nu_{1}+c_{0}\\
			0=\mu_{3}\nu_{2}-\mu_{2}\nu_{3}-\mu_{1}+a_{1}\nu_{1}+c_{1}\\
			0=\mu_{1}\nu_{3}-\mu_{3}\nu_{1}+a_{2}\nu_{1}+c_{2}\\
			0=-\nu_{2}+a_{3}\nu_{1}+c_{3}\\
			0=a_{4}\nu_{1}-\nu_{3}
		\end{cases}\nonumber\\
		\Leftrightarrow\quad&
		\begin{cases}
			\mu_{2}=-a_{0}\nu_{1}-c_{0}\\
			\nu_{2}=a_{3}\nu_{1}+c_{3}\\
			\nu_{3}=a_{4}\nu_{1}\\
			\mu_{1}=\mu_{3}(a_{3}\nu_{1}+c_{3})+(a_{0}\nu_{1}+c_{0})a_{4}\nu_{1}+a_{1}\nu_{1}+c_{1}\\
			0=\mu_{1}a_{4}\nu_{1}-\mu_{3}\nu_{1}+a_{2}\nu_{1}+c_{2}
		\end{cases}\!\!.
	\end{align}
	 It is straightforward to see that there is a one-to-one correspondence between the solutions in $(\mu_{1},\mu_{2},\mu_{3},\nu_{1},\nu_{2},\nu_{3})$ of Equation \eqref{eq:q+1standaardA211} and the solutions in $(\mu_{1},\mu_{3},\nu_{1})$ of
	\begin{align}\label{eq:q+1standaardA211bis}
		\begin{cases}
			\mu_{1}-\mu_{3}(a_{3}\nu_{1}+c_{3})=(a_{0}\nu_{1}+c_{0})a_{4}\nu_{1}+a_{1}\nu_{1}+c_{1}\\
			\mu_{1}a_{4}\nu_{1}-\mu_{3}\nu_{1}=-a_{2}\nu_{1}-c_{2}
		\end{cases}.
	\end{align}
	For a given value of $\nu_{1}$ Equation \eqref{eq:q+1standaardA211bis} is a linear system of equations in $\mu_{1}$ and $\mu_{3}$ and has either 0, 1 or $q$ solutions. It has 0 or $q$ solutions iff
	\begin{align}\label{eq:q+1-2.1.1ontaard}
		\left(a_{3}\nu_{1}+c_{3}\right)a_{4}\nu_{1}-\nu_{1}=0\quad\Leftrightarrow\quad\nu_{1}\left(a_{3}a_{4}\nu_{1}+c_{3}a_{4}-1\right)=0\;.
	\end{align}
	This is a non-vanishing quadratic or linear equation since it is not possible that simultaneously $a_{3}=0$ and $a_{4}c_{3}=1$. More precisely, \eqref{eq:q+1-2.1.1ontaard} has solutions $\nu_{1}=0$ and $\nu_{1}=\frac{1-a_{4}c_{3}}{a_{3}a_{4}}$. Note that the latter solution only exists if $a_{3}\neq0\neq a_{4}$. Hence for $q-2$ or $q-1$ values of $\nu_{3}$ Equation \eqref{eq:q+1standaardA211bis} has precisely one solution. Denote $\frac{1-a_{4}c_{3}}{a_{3}a_{4}}$ by $\nu$. If $\nu_{1}=0$, then \eqref{eq:q+1standaardA211bis} has no solutions since $c_{2}\neq0$. If $\nu_{1}=\nu\neq0$, then \eqref{eq:q+1standaardA211bis} has $q$ solutions iff
	\begin{align}
		&&-a_{2}\nu-c_{2}&=a_{4}\nu\left((a_{0}\nu+c_{0})a_{4}\nu+a_{1}\nu+c_{1}\right)\nonumber\\
		&\Leftrightarrow& 0&=a_{0}a^{2}_{4}\nu^{3}+a_{4}\left(a_{1}+a_{4}c_{0}\right)\nu^{2}+\left(a_{2}+a_{4}c_{1}\right)\nu+c_{2}\nonumber\\
		&\Leftrightarrow& 0&=a_{0}+\left(a_{1}+a_{4}c_{0}\right)(a_{4}\nu)^{-1}+\left(a_{2}+a_{4}c_{1}\right)(a_{4}\nu)^{-2}+a_{4}c_{2}(a_{4}\nu)^{-3}\;.\label{eq:q+1-2.1.1qopl}
	\end{align}
	Now, from \eqref{gammadelta1B1} and \eqref{gammakwadraatdelta2B11}, we also have that
	\begin{align*}
		0&=a_{0}+a_{1}\gamma+a_{2}\gamma^{2}+a_{3}\gamma\delta_{2}+a_{4}\gamma(\gamma\delta_{1})-\gamma^{2}\delta_{2}\\
		&=a_{0}+(a_{1}+a_{4}c_{0})\gamma+(a_{2}+a_{4}c_{1})\gamma^{2}+a_{4}c_{2}\gamma^{3}+a_{3}\gamma\delta_{2}-(1-a_{4}c_{3})\gamma^{2}\delta_{2}\;,
	\end{align*}
	and subtracting \eqref{eq:q+1-2.1.1qopl} from this, we find that
	\begin{align}
		0&=(a_{1}+a_{4}c_{0})\left(\gamma-(a_{4}\nu)^{-1}\right)+(a_{2}+a_{4}c_{1})\left(\gamma^{2}-(a_{4}\nu)^{-2}\right)\nonumber\\&\qquad+a_{4}c_{2}\left(\gamma^{3}-(a_{4}\nu)^{-3}\right)-(1-a_{4}c_{3})\gamma\delta_{2}\left(\gamma-(a_{4}\nu)^{-1}\right)\nonumber\\
		&=\left(\gamma-(a_{4}\nu)^{-1}\right)\left[(a_{1}+a_{4}c_{0})+(a_{2}+a_{4}c_{1})\left(\gamma+(a_{4}\nu)^{-1}\right)\right.\nonumber\\&\qquad\qquad\qquad\qquad\left.+a_{4}c_{2}\left(\gamma^{2}+(a_{4}\nu)^{-2}\gamma+(a_{4}\nu)^{-2}\right)-(1-a_{4}c_{3})\gamma\delta_{2}\right]\;.\label{eq:q+1-2.1.1qoplbis}
	\end{align}
	The first factor in \eqref{eq:q+1-2.1.1qoplbis} cannot be zero as $\gamma\notin\F_{q}$. Hence, the second factor in \eqref{eq:q+1-2.1.1qoplbis} must be zero. However, as $\{1,\gamma,\gamma^{2},\gamma\delta_{2}\}$ is independent over $\F_{q}$, we have that $a_{4}c_{3}=1$, a contradiction since we assumed $\nu\neq0$. So, if $\nu_{1}=\nu=\frac{1-a_{4}c_{3}}{a_{3}a_{4}}$ then \eqref{eq:q+1standaardA211bis} has no solutions.
	\par We conclude that in Case B.1.1, we have $q-2$, $q-1$ or $q$ solutions of Equation \eqref{eq:q+1standaardA211bis} and at most 1 solution of Equation \eqref{eq:q+1standaardB211bis}. Now, recall that \eqref{eq:q+1standaardB211bis} has no solutions if and only if $a_{3}=0$ and $a_{0}\neq0$, or if $a_{4}=0$, but in both cases \eqref{eq:q+1-2.1.1ontaard} is a linear equation, and so there are at least $q-1$ solutions of \eqref{eq:q+1standaardA211bis}. So, in total there are at least $q-1$ and at most $q+1$ points in $(\Pi\setminus L)\cap\Omega_{2}$ in Case B.1.1.
	\par \textit{Case B.1.2: $\gamma^{2}\delta_{1}\in\left\langle 1,\gamma,\gamma^{2},\gamma\delta_{2}\right\rangle_q$ and $\dim\left\langle 1,\gamma,\gamma^{2},\gamma\delta_{2},\gamma^{2}\delta_{2}\right\rangle_q=5$.} The arguments in this case are similar to the arguments in Case B.1.1. We find that Equation \eqref{eq:q+1standaardA} has $q-2$, $q-1$ or $q$ solutions and Equation \eqref{eq:q+1standaardB} has at most 1 solution. However it cannot happen that simultaneously Equation \eqref{eq:q+1standaardA} has $q-2$ solutions and Equation \eqref{eq:q+1standaardB} has no solutions. So, in total there are at least $q-1$ and at most $q+1$ points in $(\Pi\setminus L)\cap\Omega_{2}$ in this case. Details can be found in Appendix \ifthenelse{\equal{\versie}{arxiv}}{\ref{ap:th4.3}, see page \pageref{apA:B1.2}}{A in the arXiv version of this paper}.
	\comments{In other words, $\left\{1,\gamma,\gamma^{2},\gamma\delta_{2},\gamma^{2}\delta_{2}\right\}$ is an $\F_{q}$-basis for $\F_{q^{5}}$, and there are $b_{i}\in\F_{q}$, $i=0,\dots,3$, such that
	\begin{align}
		\gamma^{2}\delta_{1}&=b_{0}+b_{1}\gamma+b_{2}\gamma^{2}+b_{3}\gamma\delta_{2}\label{gammakwadraatdelta1B12}\;.
	\end{align}
	Note that $\dim\left\langle \gamma,\gamma^{2},\gamma^{2}\delta_{1},\gamma^{2}\delta_{2}\right\rangle_q=4$ since $\dim\left\langle 1,\gamma,\gamma\delta_{1},\gamma\delta_{2}\right\rangle_q=4$, and hence $(b_{0},b_{3})\neq(0,0)$. Now assume that $(b_{3},c_{3})=(0,0)$. Then $\gamma\delta_1=c_0+c_1\gamma+c_2\gamma^2$ and  $\gamma^2\delta_1=b_0+b_1\gamma+b_2\gamma^2$. Since  $\{1,\gamma,\gamma^{2},\gamma^{3}\}$ is an independent set over $\F_{q}$, we find that $c_2=0$, a contradiction. This implies that $(b_{3},c_{3})\neq (0,0).$
	\par We are now ready to discuss the number of solutions to \eqref{eq:q+1standaardA} and \eqref{eq:q+1standaardB} in Case B.1.2. We see that in this case Equation \eqref{eq:q+1standaardB} is equivalent to the following system of equations:
	\begin{align}\label{eq:q+1standaardB212}
		\begin{cases}
			b_{0}=-\mu_{4}\nu_{2}\\
			b_{1}=\mu_{4}\nu_{1}-\mu_{2}\\
			b_{2}=\mu_{1}\\
			b_{3}=-\nu_{2}\\
			0=\nu_{1}
		\end{cases}
		\Leftrightarrow\quad
		\begin{cases}
			\mu_{1}=b_{2}\\
			\mu_{2}=b_{1}\\
			\nu_{1}=0\\
			\nu_{2}=-b_{3}\\
			\mu_{4}b_{3}=b_{0}
	\end{cases}.
	\end{align}
	Clearly, Equation \eqref{eq:q+1standaardB212} has 0, 1 or $q$ solutions and it can only have $q$ solutions if $b_{0}=b_{3}=0$, a contradiction. So, Equation \eqref{eq:q+1standaardB212} has 0 solutions or 1 solution in this case. The former only occurs if $b_{3}=0$ and $b_{0}\neq0$.
	\par Equation \eqref{eq:q+1standaardA}, on the other hand, is equivalent to the following system of equations:
	\begin{align}\label{eq:q+1standaardA212}
		\begin{cases}
			0=\mu_{2}-b_{0}\nu_{3}+c_{0}\\
			0=\mu_{3}\nu_{2}-\mu_{2}\nu_{3}-\mu_{1}-b_{1}\nu_{3}+c_{1}\\
			0=\mu_{1}\nu_{3}-\mu_{3}\nu_{1}-b_{2}\nu_{3}+c_{2}\\
			0=-\nu_{2}-b_{3}\nu_{3}+c_{3}\\
			0=\nu_{1}
		\end{cases}
		\Leftrightarrow\quad
		\begin{cases}
			\mu_{2}=b_{0}\nu_{3}-c_{0}\\
			\nu_{1}=0\\
			\nu_{2}=-b_{3}\nu_{3}+c_{3}\\
			0=\mu_{3}\nu_{2}-\mu_{2}\nu_{3}-\mu_{1}-b_{1}\nu_{3}+c_{1}\\
			0=\mu_{1}\nu_{3}-b_{2}\nu_{3}+c_{2}
		\end{cases}.
	\end{align}
	 It is straightforward to see that there is a one-to-one correspondence between the solutions in $(\mu_{1},\mu_{2},\mu_{3},\nu_{1},\nu_{2},\nu_{3})$ of Equation \eqref{eq:q+1standaardA212} and the solutions in $(\mu_{1},\mu_{3},\nu_{3})$ of
	\begin{align}\label{eq:q+1standaardA212bis}
		\begin{cases}
			\mu_{1}+(b_{3}\nu_{3}-c_{3})\mu_{3}=(c_{0}-b_{0}\nu_{3})\nu_{3}-b_{1}\nu_{3}+c_{1}\\
			\mu_{1}\nu_{3}=b_{2}\nu_{3}-c_{2}
		\end{cases}.
	\end{align}
	For a given value of $\nu_{3}$ Equation \eqref{eq:q+1standaardA212bis} is a linear system of equations in $\mu_{1}$ and $\mu_{3}$ and has either 0, 1 or $q$ solutions. It has 0 or $q$ solutions iff
	\begin{align}\label{eq:q+1-2.1.2ontaard}
		\left(b_{3}\nu_{3}-c_{3}\right)\nu_{3}=0\;.
	\end{align}
	This is a non-vanishing quadratic or linear equation since $(b_{3},c_{3})\neq(0,0)$. More precisely, \eqref{eq:q+1-2.1.2ontaard} has solutions $\nu_{3}=0$ and $\nu_{3}=c_{3}b^{-1}_{3}$. Note that the latter solution only exists if $b_{3}\neq0$. Hence for $q-2$ or $q-1$ values of $\nu_{3}$ Equation \eqref{eq:q+1standaardA212bis} has precisely one solution. If $\nu_{3}=0$, then \eqref{eq:q+1standaardA212bis} has no solutions since $c_{2}\neq0$. If $\nu_{3}=c_{3}b^{-1}_{3}\neq 0$, then \eqref{eq:q+1standaardA212bis} has $q$ solutions if and only if
	\begin{align}
		&b_{2}-c_{2}\frac{b_{3}}{c_{3}}=\left(c_{0}-b_{0}\frac{c_{3}}{b_{3}}\right)\frac{c_{3}}{b_{3}}-b_{1}\frac{c_{3}}{b_{3}}+c_{1}\nonumber\\
		\Leftrightarrow\qquad &b_{0}+\left(b_{1}-c_{0}\right)\left(\frac{b_{3}}{c_{3}}\right)+\left(b_{2}-c_{1}\right)\left(\frac{b_{3}}{c_{3}}\right)^{2}-c_{2}\left(\frac{b_{3}}{c_{3}}\right)^{3}=0\;.\label{eq:q+1-2.1.2qopl}
	\end{align}
	Now, using \eqref{gammadelta1B1} and \eqref{gammakwadraatdelta1B12} we also have that
	\begin{align*}
		0&=\gamma^{2}\delta_{1}-\gamma(\gamma\delta_{1})=b_{0}+(b_{1}-c_{0})\gamma+(b_{2}-c_{1})\gamma^{2}+b_{3}\gamma\delta_{2}-c_{2}\gamma^{3}-c_{3}\gamma^{2}\delta_{2}\;,
	\end{align*}
	and subtracting \eqref{eq:q+1-2.1.2qopl} from this, we find that
	\begin{align}
		0&=(b_{1}-c_{0})\left(\gamma-\frac{b_{3}}{c_{3}}\right)+(b_{2}-c_{1})\left(\gamma^{2}-\left(\frac{b_{3}}{c_{3}}\right)^{2}\right)-c_{2}\left(\gamma^{3}-\left(\frac{b_{3}}{c_{3}}\right)^{3}\right)\nonumber\\&\qquad-c_{3}\gamma\delta_{2}\left(\gamma-\frac{b_{3}}{c_{3}}\right)\nonumber\\
		&=\left(\gamma-\frac{b_{3}}{c_{3}}\right)\left[b_{1}-c_{0}+(b_{2}-c_{1})\left(\gamma+\frac{b_{3}}{c_{3}}\right)-c_{2}\left(\gamma^{2}+\frac{b_{3}}{c_{3}}\gamma+\left(\frac{b_{3}}{c_{3}}\right)^{2}\right)-c_{3}\gamma\delta_{2}\right]\;.\label{eq:q+1-2.1.2qoplbis}
	\end{align}
	The first factor in \eqref{eq:q+1-2.1.2qoplbis} cannot be zero as $\gamma\notin\F_{q}$. Hence, the second factor in \eqref{eq:q+1-2.1.2qoplbis} must be zero. However, as $\{1,\gamma,\gamma^{2},\gamma\delta_{2}\}$ is independent over $\F_{q}$, we have that $c_{2}=0$, a contradiction. So, if $\nu_{3}=c_{3}b^{-1}_{3}$ then \eqref{eq:q+1standaardA212bis} has no solutions.
	\par We conclude that in Case B.1.2, we have $q-2$, $q-1$ or $q$ solutions of Equation \eqref{eq:q+1standaardA212bis} and at most 1 solution of Equation \eqref{eq:q+1standaardB212}. Now, recall that \eqref{eq:q+1standaardB212} has no solutions if and only if $b_{3}=0$ and $b_{0}\neq0$, but we showed above that \eqref{eq:q+1standaardA212bis} has $q-1$ solutions if $b_{3}=0$ and $b_{0}\neq0$. So, in total there are at least $q-1$ and at most $q+1$ points in $(\Pi\setminus L)\cap\Omega_{2}$ in this case.}
	\par \textit{Case B.2: $\gamma\delta_{2}\in\left\langle 1,\gamma,\gamma^{2},\gamma\delta_{1}\right\rangle_q$ and $\gamma\delta_{1}\notin\left\langle 1,\gamma,\gamma^{2},\gamma\delta_{2}\right\rangle_q$.} It follows from the assumption that $\gamma\delta_{2}\in\left\langle 1,\gamma,\gamma^{2}\right\rangle_q$. So, we can find $d_{0},d_{1},d_{2}\in\F_{q}$ such that
	\begin{align}
		\gamma\delta_{2}=d_{0}+d_{1}\gamma+d_{2}\gamma^{2}\label{gammadelta2B2}\;,
	\end{align}
	and we know that $d_{2}\neq0$ since $\dim\left\langle 1,\gamma,\gamma\delta_{1},\gamma\delta_{2}\right\rangle_q=4$. An argument, very similar to the one in Case B.1 shows that it is not possible that both $\gamma^{2}\delta_{1}$ and $\gamma^{2}\delta_{2}$ are contained in $\left\langle 1,\gamma,\gamma^{2},\gamma\delta_{1}\right\rangle_q$. We can now distinguish between two cases.
	\par \textit{Case B.2.1: $\dim\left\langle 1,\gamma,\gamma^{2},\gamma\delta_{1},\gamma^{2}\delta_{1}\right\rangle_q=5$.} The arguments in this case are similar to the arguments in Case B.1.1. We find that in total there are $q-1$ or $q$ points in $(\Pi\setminus L)\cap\Omega_{2}$ in this case. Details can be found in Appendix \ifthenelse{\equal{\versie}{arxiv}}{\ref{ap:th4.3}, see page \pageref{apA:B2.1}}{A in the arXiv version of this paper}.\comments{In other words, $\left\{1,\gamma,\gamma^{2},\gamma\delta_{1},\gamma^{2}\delta_{1}\right\}$ is an $\F_{q}$-basis for $\F_{q^{5}}$. Then, there are $a_{i}\in\F_{q}$, $i=1,\dots,5$, such that
	\begin{align}
		\gamma^{2}\delta_{2}&=a_{0}+a_{1}\gamma+a_{2}\gamma^{2}+a_{3}\gamma\delta_{1}+a_{4}\gamma^{2}\delta_{1}\label{gammakwadraatdelta2B21}\;.
	\end{align}
	Note that $\dim\left\langle \gamma,\gamma^{2},\gamma^{2}\delta_{1},\gamma^{2}\delta_{2}\right\rangle_q=4$ since $\dim\left\langle 1,\gamma,\gamma\delta_{1},\gamma\delta_{2}\right\rangle_q=4$, and hence $(a_{0},a_{3})\neq(0,0)$.  Note that also $(a_{3},a_{4})\neq(0,0)$ since $(a_{3},a_{4})=(0,0)$ implies that also $d_{2}=0$, a contradiction. In the last implication we use that $\{1,\gamma,\gamma^{2},\gamma^{3}\}$ is an independent set over $\F_{q}$.
	\par We are now ready to discuss the number of solutions to \eqref{eq:q+1standaardA} and \eqref{eq:q+1standaardB} in Case B.2.1. We see that in this case Equation \eqref{eq:q+1standaardB} is equivalent to the following system of equations:
	\begin{align}\label{eq:q+1standaardB221}
		\begin{cases}
			0=-\mu_{4}\nu_{2}-d_{0}\nu_{2}+a_{0}\nu_{1}\\
			0=\mu_{4}\nu_{1}-\mu_{2}-d_{1}\nu_{2}+a_{1}\nu_{1}\\
			0=\mu_{1}-d_{2}\nu_{2}+a_{2}\nu_{1}\\
			0=a_{3}\nu_{1}\\
			1=a_{4}\nu_{1}
		\end{cases}
		\Leftrightarrow\quad
		\begin{cases}
			\mu_{1}=d_{2}\nu_{2}-a_{2}\nu_{1}\\
			\mu_{2}=\mu_{4}\nu_{1}-d_{1}\nu_{2}+a_{1}\nu_{1}\\
			0=-\mu_{4}\nu_{2}-d_{0}\nu_{2}+a_{0}\nu_{1}\\
			0=a_{3}\nu_{1}\\
			1=a_{4}\nu_{1}
		\end{cases}.
	\end{align}
	It is clear that \eqref{eq:q+1standaardB221} has no solutions if $a_{3}\neq0$ or if $a_{4}=0$. So, we assume now that $a_{4}\neq0$ and $a_{3}=0$, and hence also $a_{0}\neq0$. Then, it is straightforward that there is a one-to-one correspondence between the solutions in $(\mu_{1},\mu_{2},\mu_{4},\nu_{1},\nu_{2})$ of Equation \eqref{eq:q+1standaardB221} and the solutions in $(\mu_{4},\nu_{2})$ of
	\begin{align}\label{eq:q+1standaardB221bis}
		0=-\mu_{4}\nu_{2}-d_{0}\nu_{2}+a_{0}a^{-1}_{4}\;.
	\end{align}
	For every value of $\nu_{2}\in\F^{*}_{q}$ there is a unique solution for $\mu_{4}$, and for $\nu_{2}=0$ Equation \eqref{eq:q+1standaardB221bis} has no solution since $a_{0}\neq0$. So, Equation \eqref{eq:q+1standaardB221bis} has 0 or $q-1$ solutions in this case. The former occurs if $a_{3}\neq0$ or if $a_{4}=0$, and the latter occurs if $a_{3}=0$ and $a_{4}\neq0$.
	\par Equation \eqref{eq:q+1standaardA} is equivalent to the following system of equations:
	\begin{align}\label{eq:q+1standaardA221}
		\begin{cases}
			0=\mu_{2}-d_{0}\nu_{2}+a_{0}\nu_{1}\\
			0=\mu_{3}\nu_{2}-\mu_{2}\nu_{3}-\mu_{1}-d_{1}\nu_{2}+a_{1}\nu_{1}\\
			0=\mu_{1}\nu_{3}-\mu_{3}\nu_{1}-d_{2}\nu_{2}+a_{2}\nu_{1}\\
			0=a_{3}\nu_{1}+1\\
			0=a_{4}\nu_{1}-\nu_{3}
		\end{cases}
		\Leftrightarrow\quad
		\begin{cases}
			\mu_{2}=d_{0}\nu_{2}-a_{0}\nu_{1}\\
			\nu_{3}=a_{4}\nu_{1}\\
			-1=a_{3}\nu_{1}\\
			0=\mu_{3}\nu_{2}-\mu_{2}\nu_{3}-\mu_{1}-d_{1}\nu_{2}+a_{1}\nu_{1}\\
			0=\mu_{1}\nu_{3}-\mu_{3}\nu_{1}-d_{2}\nu_{2}+a_{2}\nu_{1}
		\end{cases}.
	\end{align}
	It is clear that \eqref{eq:q+1standaardA221} has no solutions if $a_{3}=0$. So, we assume now that $a_{3}\neq0$. Then, it is straightforward that there is a one-to-one correspondence between the solutions in $(\mu_{1},\mu_{2},\mu_{3},\nu_{1},\nu_{2},\nu_{3})$ of Equation \eqref{eq:q+1standaardA221} and the solutions in $(\mu_{1},\mu_{3},\nu_{2})$ of
	\begin{align}\label{eq:q+1standaardA221bis}
		\begin{cases}
			\mu_{1}-\mu_{3}\nu_{2}=a_{4}a^{-1}_{3}\left(d_{0}\nu_{2}+a_{0}a^{-1}_{3}\right)-d_{1}\nu_{2}-a_{1}a^{-1}_{3}\\
			-a_{4}a^{-1}_{3}\mu_{1}+a^{-1}_{3}\mu_{3}=d_{2}\nu_{2}+a_{2}a^{-1}_{3}
		\end{cases}.		
	\end{align}
	For a given value of $\nu_{2}$ Equation \eqref{eq:q+1standaardA221bis} is a linear system of equations in $\mu_{1}$ and $\mu_{3}$ and has either 0, 1 or $q$ solutions. It has 0 or $q$ solutions iff
	\begin{align}\label{eq:q+1-2.2.1ontaard}
		a^{-1}_{3}\left(1-a_{4}\nu_{2}\right)=0\;.
	\end{align}
	This is a non-vanishing linear equation since $a_{3}\neq0$. More precisely, \eqref{eq:q+1-2.2.1ontaard} has no solutions if $a_{4}=0$ and one solution $\nu_{2}=a^{-1}_{4}$ if $a_{4}\neq0$. Hence for $q-1$ or $q$ values of $\nu_{2}$ Equation \eqref{eq:q+1standaardA221bis} has precisely one solution. If $\nu_{2}=a^{-1}_{4}$, then \eqref{eq:q+1standaardA221bis} has $q$ solutions iff
	\begin{align}
		&-\frac{a_{4}}{a_{3}}\left[\frac{a_{4}}{a_{3}}\left(\frac{d_{0}}{a_{4}}+\frac{a_{0}}{a_{3}}\right)-\frac{d_{1}}{a_{4}}-\frac{a_{1}}{a_{3}}\right]=\frac{d_{2}}{a_{4}}+\frac{a_{2}}{a_{3}}\nonumber\\
		\Leftrightarrow\qquad &a_{0}-\left(a_{1}-d_{0}\right)\left(\frac{a_{3}}{a_{4}}\right)+\left(a_{2}-d_{1}\right)\left(\frac{a_{3}}{a_{4}}\right)^{2}+d_{2}\left(\frac{a_{3}}{a_{4}}\right)^{3}=0\;.\label{eq:q+1-2.2.1qopl}
	\end{align}
	Now, we also have from \eqref{gammadelta2B2} and \eqref{gammakwadraatdelta2B21}  that
	\begin{align*}
		0&=\gamma^{2}\delta_{2}-\gamma(\gamma\delta_{2})=a_{0}+(a_{1}-d_{0})\gamma+(a_{2}-d_{1})\gamma^{2}-d_{2}\gamma^{3}+a_{3}\gamma\delta_{1}+a_{4}\gamma^{2}\delta_{1}\;,
	\end{align*}
	and subtracting \eqref{eq:q+1-2.2.1qopl} from this, we find that
	\begin{align}
		0&=(a_{1}-d_{0})\left(\gamma+\frac{a_{3}}{a_{4}}\right)+(a_{2}-d_{1})\left(\gamma^{2}-\left(\frac{a_{3}}{a_{4}}\right)^{2}\right)-d_{2}\left(\gamma^{3}+\left(\frac{a_{3}}{a_{4}}\right)^{3}\right)\nonumber\\&\qquad-a_{4}\gamma\delta_{1}\left(\gamma+\frac{a_{3}}{a_{4}}\right)\nonumber\\
		&=\left(\gamma+\frac{a_{3}}{a_{4}}\right)\left[a_{1}-d_{0}+(a_{2}-d_{1})\left(\gamma-\frac{a_{3}}{a_{4}}\right)-d_{2}\left(\gamma^{2}-\left(\frac{a_{3}}{a_{4}}\right)\gamma+\left(\frac{a_{3}}{a_{4}}\right)^{2}\right)-a_{4}\gamma\delta_{1}\right]\;.\label{eq:q+1-2.2.1qoplbis}
	\end{align}
	The first factor in \eqref{eq:q+1-2.2.1qoplbis} cannot be zero as $\gamma\notin\F_{q}$. Hence, the second factor in \eqref{eq:q+1-2.2.1qoplbis} must be zero. However, as $\{1,\gamma,\gamma^{2},\gamma\delta_{1}\}$ is independent over $\F_{q}$, we have that $d_{2}=0$, a contradiction. So, if $\nu_{2}=a^{-1}_{4}$ then \eqref{eq:q+1standaardA212bis} has no solutions.
	\par We conclude that in Case B.2.1, we have $0$ solutions of Equation \eqref{eq:q+1standaardB221} and $q-1$ solutions of Equation \eqref{eq:q+1standaardA221} if $a_{3}\neq0\neq a_{4}$, we have $0$ solutions of Equation \eqref{eq:q+1standaardB221} and $q$ solutions of Equation \eqref{eq:q+1standaardA221} if $a_{3}\neq0=a_{4}$, and we have $q-1$ solutions of Equation \eqref{eq:q+1standaardB221} and $0$ solutions of Equation \eqref{eq:q+1standaardA221} if $a_{3}=0\neq a_{4}$. Recall that $(a_{3},a_{4})\neq(0,0)$, so in Case B.2.1 there are in total $q-1$ or $q$ points in $(\Pi\setminus L)\cap\Omega_{2}$.}
	\par \textit{Case B.2.2: $\dim\left\langle 1,\gamma,\gamma^{2},\gamma\delta_{1},\gamma^2\delta_1\right\rangle_q\neq 5$.} The arguments in this case are similar to the arguments in Case B.1.1. Equation \eqref{eq:q+1standaardA} has $q-1$ solutions and \eqref{eq:q+1standaardB} has no solutions, or vice versa. We find that in total there are $q-1$ points in $(\Pi\setminus L)\cap\Omega_{2}$ in this case. Details can be found in Appendix \ifthenelse{\equal{\versie}{arxiv}}{\ref{ap:th4.3}, see page \pageref{apA:B2.2}}{A in the arXiv version of this paper}. \comments{Recall that it is not possible that both $\gamma^{2}\delta_{1}$ and $\gamma^{2}\delta_{2}$ are contained in $\left\langle 1,\gamma,\gamma^{2},\gamma\delta_{1}\right\rangle_q$, hence, we know that $\dim\left\langle 1,\gamma,\gamma^{2},\gamma\delta_{1},\gamma^{2}\delta_{2}\right\rangle_q=5$. 
	In other words, $\left\{1,\gamma,\gamma^{2},\gamma\delta_{1},\gamma^{2}\delta_{2}\right\}$ is an $\F_{q}$-basis for $\F_{q^{5}}$, and there are $b_{i}\in\F_{q}$, $i=0,\dots,3$, such that
	\begin{align}
		\gamma^{2}\delta_{1}&=b_{0}+b_{1}\gamma+b_{2}\gamma^{2}+b_{3}\gamma\delta_{1}\label{gammakwadraatdelta1B22}\;.
	\end{align}
	Note that $\dim\left\langle \gamma,\gamma^{2},\gamma^{2}\delta_{1},\gamma^{2}\delta_{2}\right\rangle_q=4$ since $\dim\left\langle 1,\gamma,\gamma\delta_{1},\gamma\delta_{2}\right\rangle_q=4$, and hence $(b_{0},b_{3})\neq(0,0)$.
	\par We are now ready to discuss the number of solutions to \eqref{eq:q+1standaardA} and \eqref{eq:q+1standaardB} in Case B.2.2. We see that in this case Equation \eqref{eq:q+1standaardB} is equivalent to the following system of equations:
	\begin{align}\label{eq:q+1standaardB222}
		\begin{cases}
			0=-\mu_{4}\nu_{2}-d_{0}\nu_{2}-b_{0}\\
			0=\mu_{4}\nu_{1}-\mu_{2}-d_{1}\nu_{2}-b_{1}\\
			0=\mu_{1}-d_{2}\nu_{2}-b_{2}\\
			0=-b_{3}\\
			0=\nu_{1}
		\end{cases}
		\Leftrightarrow\quad
		\begin{cases}
			0=\mu_{4}\nu_{2}+d_{0}\nu_{2}+b_{0}\\
			\mu_{1}=d_{2}\nu_{2}+b_{2}\\
			\mu_{2}=-d_{1}\nu_{2}-b_{1}\\
			\nu_{1}=0\\
			0=b_{3}
		\end{cases}.
	\end{align}
	It is clear that \eqref{eq:q+1standaardB222} has no solutions if $b_{3}\neq0$. So, we assume now that $b_{3}=0$, and hence also $b_{0}\neq0$. Then, it is straightforward that there is a one-to-one correspondence between the solutions in $(\mu_{1},\mu_{2},\mu_{4},\nu_{1},\nu_{2})$ of Equation \eqref{eq:q+1standaardB222} and the solutions in $(\mu_{4},\nu_{2})$ of
	\begin{align}\label{eq:q+1standaardB222bis}
		0=\mu_{4}\nu_{2}+d_{0}\nu_{2}+b_{0}\;.
	\end{align}
	For every value of $\nu_{2}\in\F^{*}_{q}$ there is a unique solution for $\mu_{4}$, and for $\nu_{2}=0$ Equation \eqref{eq:q+1standaardB222bis} has no solution since $b_{0}\neq0$. So, Equation \eqref{eq:q+1standaardB222bis} has $0$ or $q-1$ solutions in this case. The former occurs if $b_{3}\neq0$ and the latter if $b_{3}=0$.
	\par Equation \eqref{eq:q+1standaardA} is equivalent to the following system of equations:
	\begin{align}\label{eq:q+1standaardA222}
		\begin{cases}
			0=\mu_{2}-d_{0}\nu_{2}-b_{0}\nu_{3}\\
			0=\mu_{3}\nu_{2}-\mu_{2}\nu_{3}-\mu_{1}-d_{1}\nu_{2}-b_{1}\nu_{3}\\
			0=\mu_{1}\nu_{3}-\mu_{3}\nu_{1}-d_{2}\nu_{2}-b_{2}\nu_{3}\\
			0=1-b_{3}\nu_{3}\\
			0=\nu_{1}
		\end{cases}
		\Leftrightarrow\quad
		\begin{cases}
			\mu_{2}=d_{0}\nu_{2}+b_{0}\nu_{3}\\
			\nu_{1}=0\\
			0=\mu_{3}\nu_{2}-\mu_{2}\nu_{3}-\mu_{1}-d_{1}\nu_{2}-b_{1}\nu_{3}\\
			0=\mu_{1}\nu_{3}-d_{2}\nu_{2}-b_{2}\nu_{3}\\
			1=b_{3}\nu_{3}
		\end{cases}.
	\end{align}
	It is clear that \eqref{eq:q+1standaardA222} has no solutions if $b_{3}=0$. So, we assume now that $b_{3}\neq0$. Then, it is straightforward that there is a one-to-one correspondence between the solutions in $(\mu_{1},\mu_{2},\mu_{3},\nu_{1},\nu_{2},\nu_{3})$ of Equation \eqref{eq:q+1standaardA222} and the solutions in $(\mu_{1},\mu_{3},\nu_{2})$ of
	\begin{align}\label{eq:q+1standaardA222bis}
		\begin{cases}
			\mu_{1}-\nu_{2}\mu_{3}=-(d_{0}\nu_{2}+b_{0}b^{-1}_{3})b^{-1}_{3}-d_{1}\nu_{2}-b_{1}b^{-1}_{3}\\
			\mu_{1}b^{-1}_{3}=d_{2}\nu_{2}+b_{2}b^{-1}_{3}\\
		\end{cases}.		
	\end{align}
	For a given value of $\nu_{2}$ Equation \eqref{eq:q+1standaardA222bis} is a linear system of equations in $\mu_{1}$ and $\mu_{3}$ and has either 0, 1 or $q$ solutions. It has 0 or $q$ solutions if and only if $\nu_{2}=0$. Hence for $q-1$ values of $\nu_{2}$ Equation \eqref{eq:q+1standaardA222bis} has precisely one solution. If $\nu_{2}=0$, then \eqref{eq:q+1standaardA222bis} has $q$ solutions iff
	\begin{align}\label{eq:q+1-2.2.2qopl}
	b_{3}\left(b_{2}b^{-1}_{3}\right)=-(b_{0}b^{-1}_{3})b^{-1}_{3}-b_{1}b^{-1}_{3}\quad\Leftrightarrow\quad b_{2}b^{2}_{3}+b_{1}b_{3}+b_{0}=0\;.
	\end{align}
	Subtracting this from the expression for $\gamma^{2}\delta_{1}$ from \eqref{gammakwadraatdelta1B22}, we find that
	\begin{align}\label{eq:q+1-2.2.2qoplbis}
		\gamma^{2}\delta_{1}&=-b_{1}b_{3}-b_{2}b^{2}_{3}+b_{1}\gamma+b_{2}\gamma^{2}+b_{3}\gamma\delta_{1}\quad\Leftrightarrow\quad 0=\left(\gamma-b_{3}\right)\left(b_{1}+b_{2}(\gamma+b_{3})-\gamma\delta_{1}\right)\;.
	\end{align}
	The first factor in \eqref{eq:q+1-2.2.2qoplbis} cannot be zero as $\gamma\notin\F_{q}$, and the second factor in \eqref{eq:q+1-2.2.2qoplbis} cannot be zero as $\{1,\gamma,\gamma\delta_{1}\}$ is an independent set over $\F_{q}$. So, if $\nu_{2}=0$ then \eqref{eq:q+1standaardA222bis} has no solutions.
	\par We conclude that in Case B.2.2, we have $0$ solutions of Equation \eqref{eq:q+1standaardA222} and $q-1$ solutions of Equation \eqref{eq:q+1standaardB222} if $b_{3}=0$, and we have $q-1$ solutions of Equation \eqref{eq:q+1standaardA222} and $0$ solutions of Equation \eqref{eq:q+1standaardB222} if $b_{3}\neq0$. So, in this case there are in total $q-1$ points in $(\Pi\setminus L)\cap\Omega_{2}$.}
	\paragraph*{Conclusion:} We find in each of the cases that we described above that $(\Pi\setminus L)\cap\Omega_{2}$ contains at least $q-1$ and at most $q+1$ points. If three points of $(\Pi\setminus L)\cap\Omega_{2}$ would be collinear, then $\Pi\cap\Omega_{2}$ contains a $(q+1)$-secant different from $L$ by Theorem \ref{intersectionwithline}, and this contradicts Lemma \ref{geentweekortesecanten}. Hence, $|\Pi\cap\Omega_{2}|\in\{2q,2q+1,2q+2\}$, $q+1$ points of $\Pi\cap\Omega_{2}$ are collinear, and the remaining points of $\Pi\cap\Omega_{2}$ form an arc.
\end{proof}

\subsection{When there are no \texorpdfstring{$(q+1)$}{(q+1)}-secants to \texorpdfstring{$\Omega_2$}{Omega2}}

We need the following lemma.

\begin{lemma}\label{cubic}
	Let $f_{1},f_{2},g_{1},g_{2}$ be nonzero homogeneous polynomials in $\F_{q}[x,y,z]$ such that $\deg(f_{i})=1$ and $\deg(g_{i})=2$, $i=1,2$ where $g_i$ is possibly reducible. Let $L_i$ be the line of $\PG(2,q)$ defined by $f_i(x,y,z)=0$ and let $C_i$ be the conic defined by $g_i(x,y,z)=0$. Assume that $f_{1}g_{2}-f_{2}g_{1}$ does not vanish on every point and let $C$ be the cubic curve defined by $f_{1}g_{2}-f_{2}g_{1}=0$ in $\PG(2,q)$. Then either the cubic $C$ contains $N$ $\F_q$-rational points, where:
	\begin{enumerate}
		\item $N\in [q-2\sqrt{q}+1,q+2\sqrt{q}+1]$, or
		\item $N\in \{2q,2q+1,2q+2\}$, or
		\item $N \in \{3q,3q+1\}$,
	\end{enumerate}
	or else $C$ splits into three conjugate lines over a cubic extension, the lines $L_1$ and $L_2$ are distinct and both $C_{1}$ and $C_{2}$ contain the point $L_1\cap L_2$. 
\end{lemma}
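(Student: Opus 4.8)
The plan is to read Lemma~\ref{cubic} as a case analysis of the plane cubic $C$ defined by $f_{1}g_{2}-f_{2}g_{1}=0$, organised by the factorisation of $C$ over the algebraic closure $\overline{\F_{q}}$ together with the action of the Frobenius automorphism $\sigma\colon x\mapsto x^{q}$ on its geometric components. In each case I would compute $N=|C(\F_{q})|$ by inclusion--exclusion and check that the only factorisation type falling outside the ranges in (1)--(3) is that of three $\sigma$-conjugate lines, which I then analyse separately using the special shape $f_{1}g_{2}-f_{2}g_{1}$.

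First I would handle the absolutely irreducible case. Since an irreducible plane cubic has arithmetic genus $1$, it carries at most one singular point, and such a point, being unique, is fixed by $\sigma$ and hence $\F_{q}$-rational. If $C$ is smooth it is an elliptic curve, so the Hasse--Weil bound yields $|N-(q+1)|\le 2\sqrt{q}$, that is $N\in[q-2\sqrt{q}+1,q+2\sqrt{q}+1]$; if $C$ has a cusp, a split node or a non-split node, parametrising the smooth locus by $\mathbb{P}^{1}$ gives $N=q+1$, $N=q$ and $N=q+2$ respectively, all of which lie in the same interval. Thus an absolutely irreducible $C$ always falls under conclusion~(1).

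Next I would split the reducible cubics according to the $\sigma$-orbits on the geometric components. If $C=L\cdot Q$ with $Q$ an absolutely irreducible conic, then, the decomposition being unique, both $L$ and $Q$ are defined over $\F_{q}$, and $N=2(q+1)-|L\cap Q|\in\{2q,2q+1,2q+2\}$, giving~(2). If $C$ splits into three lines over $\overline{\F_{q}}$, the possibilities are: three distinct $\F_{q}$-rational lines, giving $N=3q$ (a triangle) or $N=3q+1$ (concurrent), hence~(3); a rational double line plus a rational line, giving $N=2q+1$, hence~(2); a rational triple line, giving $N=q+1$, hence~(1); one rational line together with a $\sigma$-conjugate pair over $\F_{q^{2}}$, giving $N=q+1$ or $N=q+2$, hence~(1); and finally three $\sigma$-conjugate lines over $\F_{q^{3}}$. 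A short Galois argument, using that multiplicities are constant along each $\sigma$-orbit, shows these patterns are exhaustive, and only the last escapes (1)--(3).

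It remains to show that when $C$ consists of three conjugate lines over $\F_{q^{3}}$ the stated geometric conclusion holds. If $L_{1}=L_{2}$ then $f_{2}=\lambda f_{1}$ for some $\lambda\in\F_{q}$ and $f_{1}g_{2}-f_{2}g_{1}=f_{1}(g_{2}-\lambda g_{1})$, so $C$ would contain the $\F_{q}$-rational line $L_{1}$ as a component, contradicting that all three components are non-rational; hence $L_{1}\neq L_{2}$ and $P:=L_{1}\cap L_{2}$ is an $\F_{q}$-rational point. Since $f_{1}(P)=f_{2}(P)=0$ we have $P\in C$; but three conjugate lines carry at most one $\F_{q}$-rational point (a rational point on one of them is $\sigma$-fixed, hence lies on all three), so $N=1$, the three lines are concurrent, and $P$ is their common triple point. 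Finally I would pass to affine coordinates centred at $P$: writing $f_{i}=\phi_{i}$ (a nonzero linear form, as $f_{i}(P)=0$) and $g_{i}=g_{i}(P)+\psi_{i}+(\text{quadratic})$, the linear part of $f_{1}g_{2}-f_{2}g_{1}$ at $P$ equals $g_{2}(P)\phi_{1}-g_{1}(P)\phi_{2}$. As $P$ has multiplicity at least two on $C$, this linear part vanishes, and since $L_{1}\neq L_{2}$ makes $\phi_{1},\phi_{2}$ linearly independent, we get $g_{1}(P)=g_{2}(P)=0$, i.e.\ $P\in C_{1}\cap C_{2}$, as required. I expect the main obstacle to be bookkeeping: ensuring the Galois-descent enumeration of the reducible types is genuinely complete, and carrying out the local multiplicity computation at $P$ cleanly enough to extract $g_{1}(P)=g_{2}(P)=0$; the point counts themselves are routine once the factorisation type is pinned down.
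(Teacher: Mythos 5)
Your proposal is correct, and for the bulk of the lemma (the classification of plane cubics by factorisation type over $\overline{\F_q}$, the Hasse bound for the smooth case, the node/cusp/isolated-double-point counts, and the line-plus-conic and three-lines enumerations) it coincides with the paper's argument. The one place where you genuinely diverge is the final step, where $C$ splits into three $\F_{q^3}$-conjugate lines and one must show $L_1\cap L_2\in C_1\cap C_2$. The paper argues globally: any point of $L_1\cap C_1$ lies on $C$; since a line meets a conic in points defined over $\F_{q^2}$ at worst, and $C=\ell_1\cup\ell_1^{q}\cup\ell_1^{q^2}$ has no $\F_{q^2}$-points outside $\PG(2,q)$ and only the single $\F_q$-point $R=L_1\cap L_2$, it follows that $L_1\cap C_1=\{R\}$ (and likewise for $L_2,C_2$). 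You instead argue locally: $R$ is a triple point of $C$, so the linear part of the dehomogenised $f_1g_2-f_2g_1$ at $R$, namely $g_2(R)\phi_1-g_1(R)\phi_2$, must vanish, and linear independence of $\phi_1,\phi_2$ (from $L_1\neq L_2$) forces $g_1(R)=g_2(R)=0$. Both are valid; your local computation is arguably cleaner and avoids the case analysis over $\F_{q^2}$, while the paper's version yields the slightly stronger fact that $L_i$ meets $C_i$ only in $R$. You also supply a justification for $L_1\neq L_2$ (via $f_2=\lambda f_1$ forcing a rational component of $C$) that the paper merely asserts.
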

\begin{proof}
	 We assume that $f_{1}g_{2}-f_{2}g_{1}$ does not vanish on each point, so it is not possible that each of the $q^2+q+1$ points of $\PG(2,q)$ satisfies $f_{1}g_{2}-f_{2}g_{1}=0$. In particular $f_{1}g_{2}-f_{2}g_{1}$ is not identically zero. In what follows, we use some facts about general cubic curves that can be found e.g.~in \cite[Chapter 11]{hirschfeld}. If a cubic plane curve is non-singular, then it is an elliptic curve and the Hasse bound implies that $|N-(q+1)|\leq 2\sqrt{q}$ (see \cite{hasse}). If a cubic curve is singular but irreducible, there is exactly one singular point (which is either a node, a cusp or an isolated double point). The number of points on $C$ is $q$ if there is a node, $q+1$ if there is a cusp and $q+2$ if there is an isolated double point. If a cubic curve is reducible, it either splits into three lines (possibly over an extension field) or in one line and an irreducible conic. In the latter case, the number of points on the curve is either $2q,2q+1$ or $2q+2$ depending on whether the line is secant, tangent or external to the conic.
	\par Now assume a cubic splits into three lines, say $\ell_1,\ell_2,\ell_3$. If all three are defined over $\F_q$, then $C$ contains $q+1$, $2q+1$, $3q$ or $3q+1$ points. If exactly one line, say $\ell_1$, is defined over $\F_q$ and the other two are conjugate lines over a quadratic extension of $\F_q$, then $C$ contains either $q+1$ or $q+2$ points depending on whether or not $\ell_2\cap \ell_3$ is on $\ell_1$.
	\par So now assume that the cubic curve $C:f_{1}g_{2}-f_{2}g_{1}=0$ splits in three lines, none of which are defined over $\F_q$. Then the curve splits into three conjugate lines over a cubic extension of $\F_q$, so it defines three lines $\ell_1,\ell_2,\ell_3$ in $\PG(2,q^3)$. Without loss of generality, we have $\ell_2=\ell_1^q,\ell_3=\ell_1^{q^2}$.  Note that in this case necessarily $L_1\neq L_2$. We denote the intersection point of $L_1$ and $L_2$ by $R$.
	\par It is clear that $C$ always contains the point $R$. If $\ell_1,\ell_2,\ell_3$ would be non-concurrent, then $C$ would not contain any points of $\PG(2,q)$. So we may now assume that the lines $\ell_1,\ell_1^q,\ell_1^{q^2}$ are concurrent in $R$ which implies that $C$ has only the point $R$ in $\PG(2,q)$.
	\par We will now show that this implies that $L_1\cap C_1=\{R\}$ and $L_2\cap C_2=\{R\}$ which then finishes the proof. Note that a point of $\PG(2,q)$ that lies on both $L_1$ and $C_1$ also lies on $C$. Similarly, any intersection point of $L_2$ and $C_2$ is contained in $C$.
		\par Since $C$ splits in three conjugate lines, we see that $L_1$ does not lie on $C_1$. Since $C_1$ is quadratic, either the intersection points of $L_1$ and $C_1$ are points of $\PG(2,q)$ or of a quadratic extension $\PG(2,q^2)$. 
	Since $C=\ell_1\cup\ell_1^q\cup \ell_1^{q^2}$ does not have points of $\PG(2,q^2)\setminus \PG(2,q)$. We find that $L_1$ meets $C_1$ in $2$ (possibly coinciding) points of $\PG(2,q)$. But since $C$ has only the point $R$ in $\PG(2,q)$, we find that $L_1\cap C_1=\{R\}$. 
	The same reasoning for $L_2$ and $C_2$ shows that $L_2\cap C_2=\{R\}$.
\end{proof}

\begin{theorem}\label{arcs}
	If $\Pi$ is a plane disjoint from $\Sigma$ that does not contain a $(q+1)$-secant or a $(q^{2}+q+1)$-secant to $\Omega_{2}$, then the points of $\Pi\cap \Omega_2$ form an $s$-arc with $s\in[q-2\sqrt{q}+1,q+2\sqrt{q}+1]$ or $s\in\{0,2q,2q+1,3q,3q+1,q^{2}+1\}$.
\end{theorem}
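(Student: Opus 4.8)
The plan is to separate the easy qualitative statement from the hard enumerative one. The arc property is immediate: by hypothesis $\Pi$ contains no $(q+1)$-secant and no $(q^2+q+1)$-secant to $\Omega_2$, so by Theorem \ref{intersectionwithline} every line of $\Pi$ meets $\Omega_2$ in at most two points. Thus no three points of $\Pi\cap\Omega_2$ are collinear, i.e.\ $\Pi\cap\Omega_2$ is an arc in $\Pi\cong\PG(2,q^5)$, and the whole problem is to determine its size $s=|\Pi\cap\Omega_2|$ (which may be large precisely because $\Pi$ is a plane over $\F_{q^5}$).

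For the size I would mimic the coordinate set-up of Theorem \ref{eriseensecant}. Fix three points $P_1,P_2,P_3$ spanning $\Pi$ and a basis of the underlying vector space adapted to them. Writing a general point of $\Pi$ as $\alpha_1P_1+\alpha_2P_2+\alpha_3P_3$ and, using Lemma \ref{gammatype} to normalise, demanding that it also have the rank $2$ shape $(\mu_1,\dots,\mu_5)+\varphi(\nu_1,\dots,\nu_5)$ with $\mu_i,\nu_i\in\F_q$ and $\varphi\in\F_{q^5}\setminus\F_q$, I eliminate the $\alpha_i$ and obtain two relations that are linear in $\varphi$, of the form $\varphi f_i=g_i$ ($i=1,2$), where $f_i$ is linear and $g_i$ is quadratic in the surviving parameters. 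Eliminating $\varphi$ yields the single condition $f_1g_2-f_2g_1=0$, which after expanding in a suitable $\F_q$-basis of $\F_{q^5}$ (chosen, as in Theorem \ref{eriseensecant}, according to the $\F_q$-linear dependencies among $1,\gamma,\gamma^2,\gamma\delta_1,\gamma\delta_2$ and their products) I expect to let me identify $s$ with the number $N$ of $\F_q$-rational points of the plane cubic $C\colon f_1g_2-f_2g_1=0$ of Lemma \ref{cubic}, living in a parameter plane $\PG(2,q)$.

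Granting this identification, Lemma \ref{cubic} immediately restricts $N$ to $[q-2\sqrt q+1,q+2\sqrt q+1]\cup\{2q,2q+1,2q+2,3q,3q+1\}$, except in the degenerate situation flagged by that lemma, where $f_1g_2-f_2g_1$ either vanishes identically or factors as three concurrent conjugate lines with both conics $C_1,C_2$ through $R=L_1\cap L_2$. I would treat these separately: the point $R$ is exactly the parameter at which $f_1=f_2=0$, so when $R$ also lies on $C_1$ and $C_2$ the value of $\varphi$ becomes free and $R$ contributes not a single point but a whole extended line's worth of rank $2$ points of $\Pi$. The goal here is to show that this never yields a lone point of weight $2$ (which would be a $2$-club), but instead produces either the empty set ($s=0$, the non-concurrent conjugate case) or the full subline-type configuration with $s=q^2+1$ (matching case $(d)(vi)$ of the rank-$4$ list). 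Finally I would check that the output $2q+2$ — the ``external line plus irreducible conic'' of Lemma \ref{cubic} — does not survive the translation back to $\Omega_2$, so that the surviving values are exactly those claimed.

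The hard part is the last paragraph, carried out uniformly but delicately across the numerous subcases. As in Theorem \ref{eriseensecant}, these subcases are indexed by which of $1,\gamma,\gamma^2,\gamma\delta_1,\gamma\delta_2$ (and the higher products $\gamma^2\delta_i$) are $\F_q$-linearly dependent, and in each one the reduction to $f_1g_2-f_2g_1$ and the explicit forms of $f_i,g_i$ must be recomputed. The recurring obstacle is to rule out, in every subcase, that $C$ degenerates to a single $\F_q$-point: this requires pairing the structural information of Lemma \ref{cubic} (namely $L_1\neq L_2$ and $R\in C_1\cap C_2$) with the ambient incidence geometry and with Lemmas \ref{notwointersectingsublines} and \ref{geentweekortesecanten}, so as to force instead the configuration giving $q^2+1$ points and thereby exclude the existence of a $2$-club.
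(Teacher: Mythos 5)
Your opening observation is correct and matches the paper: under the hypotheses, Theorem \ref{intersectionwithline} forces every line of $\Pi$ to meet $\Omega_2$ in at most two points, so $\Pi\cap\Omega_2$ is automatically an arc and only its size $s$ is at issue. Your overall strategy --- normalise a rank-$2$ point of $\Pi$, eliminate $\varphi$, reduce to a curve $f_1g_2-f_2g_1=0$ in a parameter plane over $\F_q$, and invoke Lemma \ref{cubic} --- is also the paper's. But there are genuine gaps. First, a rank-$2$ point of $\Pi$ does not admit a single normal form: after fixing $P_0\in\Pi\cap\Omega_2$ and an adapted frame, one needs \emph{four} mutually exclusive normal forms, yielding four separate equations \eqref{eq:1standaardA}--\eqref{eq:1standaardD}. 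Only the first produces the affine cubic; the other three must be shown to contribute exactly the points of the projective completion $\overline{C}$ on the line at infinity (with corrections at the distinguished point $R$ and at $\langle(1,0,0)\rangle$, $\langle(0,1,0)\rangle$), and it is this bookkeeping that identifies $s$ with the total number $N$ of points of $\overline{C}$. That step, which is where most of the work lies, is absent from your plan. Moreover, in several subcases (e.g.\ when $\gamma_2',\gamma_0\gamma_2',\delta,\gamma_2$ all lie in $U_1$, or when $\dim U_1=\dim U_2=3$) the elimination produces a conic or a linear determinant condition rather than a cubic, and these are precisely the subcases that yield $s=q^2+1$.

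Second, your treatment of the degenerate alternative of Lemma \ref{cubic} is wrong in both directions. If for a fixed parameter value both linear and both quadratic forms vanish, $\varphi$ does not ``become free'' --- that would place an $\F_q$-subline of $\Sigma$ on a line of $\Pi$, contradicting $\Pi\cap\Sigma=\emptyset$; what actually happens is that one obtains $q$ rank-$2$ points sharing the same $\varphi$, which by Theorem \ref{qplusonesecant} span a $(q+1)$-secant and so contradict the hypothesis. Hence the ``three concurrent conjugate lines with $R\in C_1\cap C_2$'' branch of Lemma \ref{cubic} is simply excluded in every subcase; it is not the source of the exceptional values. The value $s=0$ occurs trivially when $\Pi\cap\Omega_2=\emptyset$ (there is then no $P_0$ to start from), and $s=q^2+1$ arises from the conic subcases mentioned above, where the count is $q^2+q+2-N$ with $N=q+1$ the size of a nondegenerate conic. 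Finally, you are right to flag $2q+2$: Lemma \ref{cubic} permits it and the statement of Theorem \ref{arcs} omits it, and you supply no argument ruling it out; note however that this is harmless for the Main Theorem, whose list already contains $2q+2$ via Theorem \ref{eriseensecant}.
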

\begin{proof}
	We assume that $P_{0}$ is a point of $\Pi\cap\Omega_{2}$. Then, there exists a $\gamma_{0}\in\F_{q^{5}}\setminus\F_{q}$ and points $Q_{1},Q_{2}\in\Sigma$ such that $P_{0}=Q_{1}+\gamma_{0}Q_{2}$. 
	Let $\pi_{1}$ and $\pi_{2}$ be two planes of $\Sigma$ not through $\left\langle Q_{1},Q_{2}\right\rangle$, meeting this line in $Q_{1}$ and $Q_{2}$, respectively, and such that $\pi_1$ and $\pi_2$ do not have a line in common. Thus the intersection of the planes $\pi_{1}$ and $\pi_{2}$ is a point $Q_{3}$. Let $Q_{4}$ be a point of $\pi_{1}\setminus\left\langle Q_{1},Q_{3}\right\rangle$ and let $Q_{5}$ be a point of $\pi_{2}\setminus\left\langle Q_{2},Q_{3}\right\rangle$. Without loss of generality we can choose a basis for the underlying vector space such that $Q_{1}=\langle(1,0,0,0,0)\rangle$, $Q_{2}=\langle(0,1,0,0,0)\rangle$, $Q_{3}=\langle(0,0,1,0,0)\rangle$, $Q_{4}=\langle(0,0,0,1,0)\rangle$ and $Q_{5}=\langle(0,0,0,0,1)\rangle$.
	\par The planes $\Pi_{1}$ and $\Pi_{2}$ that are the extensions of $\pi_{1}$ and $\pi_{2}$ in $\PG(4,q^{5})$, respectively, cannot meet $\Pi$ in a line since otherwise the planes $\pi_{1}$ and $\pi_{2}$ would give rise to a $(q^{2}+q+1)$-secant to $\Omega_2$ in $\Pi$. So, the planes $\Pi_{1}$ and $\Pi_{2}$ each meet $\Pi$ in a point, and there are $\gamma_{1},\gamma'_{1},\gamma_{2},\gamma'_{2}\in\F_{q^{5}}$ such that $P_{1}=\gamma_{1}Q_{1}+\gamma'_{1}Q_{3}+Q_{4}$ and $P_{2}=\gamma_{2}Q_{2}+\gamma'_{2}Q_{3}+Q_{5}$ are points of $\Pi$. Note that $\gamma_{1}$ and $\gamma'_{1}$ cannot be both elements of $\F_{q}$ since $\Pi$ is disjoint to $\Sigma$. Similarly, $\gamma_{2}$ and $\gamma'_{2}$ cannot both be elements of $\F_{q}$ since $\Pi$ is disjoint to $\Sigma$.
	\par We now look for points in $\Pi\cap\Omega_{2}$. It is clear that any point $P$ of $\Omega_{2}$ can be written as $\langle(\mu_{1},\mu_{2},\mu_{3},\mu_{4},\mu_{5})+\varphi(\nu_{1},\nu_{2},\nu_{3},\nu_{4},\nu_{5})\rangle$ for some $\mu_{i},\nu_{i}\in\F_{q}$ and $\varphi\in\F_{q^{5}}\setminus\F_{q}$. Clearly, each point $P$ can in many ways be written as such a sum. However, it is easy to see that for each $P$ in $\Pi\cap\Omega_{2}$ different from $P_{0}$ there is a unique presentation in exactly one of the following forms:
	\begin{enumerate}[(a)]
		\item there are unique $\mu_{i},\nu_{i}\in \F_{q}$, $i=1,2,3$, such that $P=\langle(\mu_{1},\mu_{2},\mu_{3},0,1)+\varphi(\nu_{1},\nu_{2},\nu_{3},1,0)\rangle$,
		\item there are unique $\mu_{1},\mu_{2},\nu_{1},\nu_{2}\in \F_{q}$ and $\mu_{4}\in \F^{*}_{q}$ such that $P=\langle(\mu_{1},\mu_{2},0,\mu_{4},1)+\varphi(\nu_{1},\nu_{2},1,0,0)\rangle$,
		\item there are unique $\mu_{1},\mu_{2},\nu_{1},\nu_{2}\in \F_{q}$ such that $P=\langle(\mu_{1},\mu_{2},1,0,0)+\varphi(\nu_{1},\nu_{2},0,1,0)\rangle$,
		\item there are unique $\mu_{1},\mu_{2},\nu_{1},\nu_{2}\in \F_{q}$ such that $P=\langle(\mu_{1},\mu_{2},1,0,0)+\varphi(\nu_{1},\nu_{2},0,0,1)\rangle$.
	\end{enumerate}
	In this distinction of four cases we used again that $\Pi$ cannot contain a $(q^{2}+q+1)$-secant. As the point $P$ is contained in $\Pi$ there are $\alpha_{0},\alpha_{1},\alpha_{2}\in\F_{q^{5}}$ such that $P=\alpha_{0}P_{0}+\alpha_{1}P_{1}+\alpha_{2}P_{2}$. Comparing both expressions for $P$ we find the following system of equations (one equation for each coordinate):
	\begin{align}\label{eq:1alg}
		\begin{cases}
			\alpha_{0}+\alpha_{1}\gamma_{1}=\mu_{1}+\varphi\nu_{1}\\
			\alpha_{0}\gamma_{0}+\alpha_{2}\gamma_{2}=\mu_{2}+\varphi\nu_{2}\\
			\alpha_{1}\gamma'_{1}+\alpha_{2}\gamma'_{2}=\mu_{3}+\varphi\nu_{3}\\
			\alpha_{1}=\mu_{4}+\varphi\nu_{4}\\
			\alpha_{2}=\mu_{5}+\varphi\nu_{5}
		\end{cases}
		\Leftrightarrow\quad
		\begin{cases}
			\alpha_{0}=\mu_{1}+\varphi\nu_{1}-\left(\mu_{4}+\varphi\nu_{4}\right)\gamma_{1}\\
			\alpha_{1}=\mu_{4}+\varphi\nu_{4}\\
			\alpha_{2}=\mu_{5}+\varphi\nu_{5}\\
			\alpha_{0}\gamma_{0}+\alpha_{2}\gamma_{2}=\mu_{2}+\varphi\nu_{2}\\
			\alpha_{1}\gamma'_{1}+\alpha_{2}\gamma'_{2}=\mu_{3}+\varphi\nu_{3}
		\end{cases}.
	\end{align}
	Each solution in the $\alpha_{i}$'s, $\mu_{i}$'s, $\nu_{i}$'s and $\varphi$ with  $(\mu_{4},\nu_{4},\mu_{5},\nu_{5})=(0,1,1,0)$, or with $(\mu_{3},\nu_{3},\nu_{4},\mu_{5},\nu_{5})=(0,1,0,1,0)$ and $\mu_{4}\neq0$, or with $(\mu_{3},\nu_{3},\mu_{4},\nu_{4},\mu_{5},\nu_{5})=(1,0,0,1,0,0)$, or with $(\mu_{3},\nu_{3},\mu_{4},\nu_{4},\mu_{5},\nu_{5})=(1,0,0,0,0,1)$, of this system of equations corresponds to a unique point of $\Pi\cap\Omega_{2}$. The first three equations in \eqref{eq:1alg} describe $\alpha_{0},\alpha_{1},\alpha_{2}$ as functions of the other unknowns so can be disregarded from now on. So we consider the system of equations
	\begin{align}\label{eq:1algbis}
		&\begin{cases}
			\left(\mu_{1}+\varphi\nu_{1}-\left(\mu_{4}+\varphi\nu_{4}\right)\gamma_{1}\right)\gamma_{0}+\left(\mu_{5}+\varphi\nu_{5}\right)\gamma_{2}=\mu_{2}+\varphi\nu_{2}\\
			\left(\mu_{4}+\varphi\nu_{4}\right)\gamma'_{1}+\left(\mu_{5}+\varphi\nu_{5}\right)\gamma'_{2}=\mu_{3}+\varphi\nu_{3}
		\end{cases}\nonumber\\
		\Leftrightarrow\quad
		&\begin{cases}
			\varphi\left(\nu_{1}\gamma_{0}-\nu_{4}\gamma_{0}\gamma_{1}+\nu_{5}\gamma_{2}-\nu_{2}\right)=\mu_{2}-\mu_{1}\gamma_{0}+\mu_{4}\gamma_{0}\gamma_{1}-\mu_{5}\gamma_{2}\\
			\varphi\left(\nu_{4}\gamma'_{1}+\nu_{5}\gamma'_{2}-\nu_{3}\right)=\mu_{3}-\mu_{4}\gamma'_{1}-\mu_{5}\gamma'_{2}
		\end{cases}.
	\end{align}
	Given the $\mu_{i}$'s and $\nu_{i}$'s, this system of equations has $0$, $1$ or $q^{5}$ solutions for $\varphi$. However, if it has $q^{5}$ solutions for $\varphi$, then the line $\left\langle(\mu_{1},\mu_{2},\mu_{3},\mu_{4},\mu_{5}),(\nu_{1},\nu_{2},\nu_{3},\nu_{4},\nu_{5})\right\rangle$ is a line of $\Pi$ that meets $\Sigma$ in an $\F_{q}$-subline, contradicting the assumption that $\Pi$ and $\Sigma$ are disjoint. So, \eqref{eq:1algbis} has either 0 solutions or a unique solution in $\varphi$, and the latter occurs iff
	\begin{multline}\label{eq:1standaard}
		\left(\nu_{1}\gamma_{0}-\nu_{4}\delta+\nu_{5}\gamma_{2}-\nu_{2}\right)\left(\mu_{3}-\mu_{4}\gamma'_{1}-\mu_{5}\gamma'_{2}\right)\\=\left(\nu_{4}\gamma'_{1}+\nu_{5}\gamma'_{2}-\nu_{3}\right)\left(\mu_{2}-\mu_{1}\gamma_{0}+\mu_{4}\delta-\mu_{5}\gamma_{2}\right)\;,
	\end{multline}
	where we substituted $\gamma_{0}\gamma_{1}$ by $\delta$. We only need to find the solutions of \eqref{eq:1standaard} in the four cases for the $\mu_{i}$'s and $\nu_{i}$'s described above, so we will look at the four following equations:
	\begin{align}
		\delta\gamma'_{2}+\gamma'_{1}\gamma_{2}&=\left(\mu_{3}\nu_{2}-\mu_{2}\nu_{3}\right)+\left(\mu_{1}\nu_{3}-\mu_{3}\nu_{1}\right)\gamma_{0}+\mu_{3}\delta+\mu_{2}\gamma'_{1}+\nu_{3}\gamma_{2}-\nu_{2}\gamma'_{2}\nonumber\\&\qquad-\mu_{1}\gamma_{0}\gamma'_{1}+\nu_{1}\gamma_{0}\gamma'_{2}\;,\label{eq:1standaardA}\\
		-\gamma_{2}&=-\mu_{2}+\mu_{1}\gamma_{0}-\mu_{4}\delta-\nu_{2}\mu_{4}\gamma'_{1}-\nu_{2}\gamma'_{2}+\mu_{4}\nu_{1}\gamma_{0}\gamma'_{1}+\nu_{1}\gamma_{0}\gamma'_{2} \;,\label{eq:1standaardB}\\
		-\delta&=\nu_{2}-\nu_{1}\gamma_{0}+\mu_{2}\gamma'_{1}-\mu_{1}\gamma_{0}\gamma'_{1}\;,\label{eq:1standaardC}\\
		\gamma_{2}&=\nu_{2}-\nu_{1}\gamma_{0}+\mu_{2}\gamma'_{2}-\mu_{1}\gamma_{0}\gamma'_{2}\;.\label{eq:1standaardD}
	\end{align}
	We will distinguish between several cases, depending on the relation between $\gamma_{0}$, $\gamma_{2}$, $\gamma'_{1}$, $\gamma'_{2}$ and $\delta$, when discussing these equations. To do this we define $U_{1}=\left\langle1,\gamma_{0},\gamma'_{1},\gamma_{0}\gamma'_{1}\right\rangle_q$ and $U_{2}=\left\langle1,\gamma_{0},\gamma'_{2},\gamma_{0}\gamma'_{2}\right\rangle_q$.
	\paragraph*{Intermezzo:} 
	\begin{itemize}
		\item We first show that if $\dim U_{1}=2$ or $\dim U_{2}=2$, then there is a $(q^2+q+1)$-secant to $\Omega_2$ in $\Pi$. Suppose that $\dim U_1=2$. Since $\gamma_0\notin \F_q$, $U_1=\langle 1,\gamma_0\rangle$. This implies that $\gamma_1'=a+b\gamma_0$ for some $a,b\in \F_q$ and $\gamma_0\gamma'_1=a\gamma_0+b\gamma_0^2=c+d\gamma_0$ for some $c,d\in \F_q$. Since $\gamma_0\in \F_{q^5}\setminus \F_q$, the set $\{1,\gamma_0,\gamma_0^2\}$ is $\F_q$-independent. It follows that $b=0$ and hence, $\gamma'_1=a\in \F_q$. This implies that $P_1=\gamma_{1}Q_{1}+aQ_{3}+Q_{4}=\gamma_1Q_1+Q_3'$, where $Q_3'=aQ_3+Q_4\in\Sigma$. We see that both $Q_1Q_2$ and $Q_1Q_3'$ extend to a line containing a point of $\Pi$. Hence, the line $P_0P_1$ is a $(q^2+q+1)$-secant. A similar reasoning shows that if $\dim U_{2}=2$, the line $P_0P_2$ is a $(q^2+q+1)$-secant.
		\item Now, we show that if $\dim U_2=3$ and $\gamma_2\in U_2$, then $P_0P_2$ is a $(q+1)$-secant. 
		If $\dim U_2=3$ and $\gamma_2\in U_2$, then Equation \eqref{eq:1standaardD} has clearly $q$ solutions. Each of these solutions corresponds to a distinct point on the line $P_0P_2$ in $\Pi$, different from $P_{0}$. So, the line $P_0P_2$ is a $(q+1)$-secant. Analogously, if $\dim U_1=3$ and $\delta\in U_1$, the line $P_0P_1$ is a $(q+1)$-secant.
		\item Next, we show that there is a $(q^{2}+q+1)$-secant in $\Pi$ if 1, $\gamma'_{1}$ and $\gamma'_{2}$ are linearly dependent over $\F_{q}$. Assume that there are $a,b,c\in\F_{q}$, with $(a,b,c)\neq(0,0,0)$, such that $a+b\gamma'_{1}+c\gamma'_{2}=0$. Consider Equation \eqref{eq:1algbis}, and put $\mu_3=-\nu_{3}=a$, $\mu_4=-\nu_4=-b$ and $\mu_5=-\nu_5=-c$. We see that the second equation vanishes. We can pick $(\nu_{1},\nu_{2})=(\overline{\nu_1},\overline{\nu_2})$ such that $\overline{\nu_{1}}\gamma_{0}-b\delta+c\gamma_{2}-\overline{\nu_{2}}\neq 0$, hence such that the coefficient of $\varphi$ in the first equation is different from zero. It follows that the line through the points $(\mu_1,\mu_2,a,-b,-c)$ and $(\overline{\nu_1},\overline{\nu_2},-a,b,c)$ gives rise to a point of rank $2$ for any $\mu_{1},\mu_{2}\in\F_{q}$ with $(\mu_1,\mu_2)\neq (\overline{\nu_1},\overline{\nu_2})$. In particular, the point $Q_1=\langle(1,0,0,0,0)\rangle$ lies on a line of this form, different from $Q_1Q_2$, and hence, we find a $(q^2+q+1)$-secant through $P_0$.
		\item Finally, we show that if $U_{1}=U_{2}$ and $\dim U_{1}=3$, then there is a $(q^2+q+1)$-secant to $\Omega_2$ in $\Pi$. Suppose that $U_1=U_2$ and $\dim U_1=3$. First assume that there exist $e_{1},e_{2}\in\F_{q}$ such that $\gamma_1'=e_1+e_2\gamma_0$. We have that $e_2\neq 0$ since otherwise $\dim U_1=2$. We have that $\gamma_0\gamma_1'=e_1\gamma_0+e_2\gamma_0^2$ and $U_1=\langle 1,\gamma_0,\gamma_0^2\rangle$. Since $U_2=U_1=\langle 1,\gamma_0,\gamma_0^2\rangle$, we have that $\gamma_2'=f_1+f_2\gamma_0+f_3\gamma_0^2$ and $\gamma_0\gamma_2'=f_1\gamma_0+f_2\gamma_0^2+f_3\gamma_0^3\in U_2$. Since $1$, $\gamma_0$, $\gamma_0^2$ and $\gamma_0^3$ are $\F_q$-independent, we have that $f_3=0$. This implies that $\gamma_2'=f_1+f_2\gamma_0$ and hence that $f_2\gamma_1'-e_2\gamma_2'+e_2f_1-e_1f_2=0$. It follows that $\{1,\gamma_1',\gamma_2'\}$ is a linearly dependent set over $\F_q$. We have seen in the previous bullet point that this implies that there is a $(q^2+q+1)$-secant in $\Pi$.
		\par Now suppose that $1,\gamma_0,\gamma_1'$ are $\F_q$-independent. This implies that there are $a_{i},b_{i}c_{i}\in\F_{q}$, $i=1,2,3$, such that
		\begin{align*}
			\gamma_0\gamma_1'&=a_1+a_2\gamma_0+a_3\gamma_1'\;,\\
			\gamma_2'&=b_1+b_2\gamma_0+b_3\gamma_1'\quad\text{and}\\
			\gamma_0\gamma_2'&=c_1+c_2\gamma_0+c_3\gamma_1'\;.
		\end{align*}
		First note that $1$, $\gamma_0$, $\gamma_0^2$ and $\gamma_1'$ are $\F_{q}$-independent. Indeed, if $\gamma_1'=d_0+d_1\gamma_0+d_2\gamma_0^2$, then $d_0\gamma_0+d_1\gamma_0^2+d_2\gamma_0^3=\gamma_0\gamma_1'=a_1+a_2\gamma_{0}+a_3(d_0+d_1\gamma_0+d_2\gamma_0^2)$. Since $1,\gamma_0,\gamma_0^2,\gamma_0^3$ are linearly independent, $d_2=0$ and hence $\gamma_1'=d_0+d_1\gamma_0$, a contradiction since $1,\gamma_0,\gamma_1'$ are independent. We now have that
		\[
		c_1+c_2\gamma_0+c_3\gamma_1'=\gamma_0\gamma'_2=\gamma_0(b_1+b_2\gamma_0+b_3\gamma_1')=\gamma_0b_1+\gamma_0^2b_2+b_3(a_1+a_2\gamma_0+a_3\gamma_1')\;.
		\]
		This implies that $c_1=a_1b_3$, $c_2=b_1+a_2b_3$, $c_3=a_3b_3$ and $b_2=0$ since $1$, $\gamma_0$, $\gamma_0^2$ and $\gamma_1'$ are $\F_{q}$-independent. We see that $b_3\gamma_1'-\gamma_2'+b_1=0$. It follows that $\{1,\gamma_1',\gamma_2'\}$ is a linearly dependent set over $\F_q$. We have seen in the previous bullet point that this implies that there is a $(q^2+q+1)$-secant in $\Pi$.
	\end{itemize}
	\par Keeping the results of this intermezzo in mind, we have that either (A) at least one of the subspaces $U_{1}$ and $U_{2}$ has dimension 4, or else (B) $\dim U_{1}=\dim U_{2}=3$ and $\dim\left\langle U_{1},U_{2}\right\rangle=4$ since $\Pi$ does not contain a $(q^2+q+1)$-secant by the assumption of the theorem. So, we will distinguish between these two cases. 
	\paragraph*{Case A:} We assume that $\dim U_{1}=4$ or $\dim U_{2}=4$. Now, note that Equation \eqref{eq:1standaard} is invariant when interchanging $(\gamma'_{1},\delta,\mu_{4},\nu_{4})$ and $(\gamma'_{2},-\gamma_{2},\mu_{5},\nu_{5})$. It follows that the set of Equations \eqref{eq:1standaardA}--\eqref{eq:1standaardD} when interchanging $(\gamma'_{1},\delta)$ and $(\gamma'_{2},-\gamma_{2})$ yields the same system of equations after renaming some of the variables $\mu_{i}$ and $\nu_{i}$. Recall that $\mu_{4}$ in Equation \eqref{eq:1standaardB} cannot be zero. For this reason $U_{1}$ and  $U_{2}$ are interchangeable, so we may assume without loss of generality that $\dim U_{1}=4$. We now distinguish between several subcases. For cases A.1 and A.3 we present the details. The arguments in the other subcases are similar and can be found in Appendix \ifthenelse{\equal{\versie}{arxiv}}{\ref{ap:th4.5}}{B in the arXiv version of this paper}.
	\par \textit{Case A.1: $\gamma'_{2}\notin U_{1}$}. Hence, we assume that $\dim\left\langle 1,\gamma_{0},\gamma'_{1},\gamma_{0}\gamma'_{1},\gamma'_{2}\right\rangle_q=5$, in other words $\left\{1,\gamma_{0},\gamma'_{1},\gamma_{0}\gamma'_{1},\gamma'_{2}\right\}$ is an $\F_{q}$-basis for $\F_{q^{5}}$. Then, there are $a_{i},b_{i},c_{i},d_{i}\in\F_{q}$, $i=1,\dots,5$, such that
	\begin{align*}
		\gamma_{0}\gamma'_{2}&=a_{1}+a_{2}\gamma_{0}+a_{3}\gamma'_{1}+a_{4}\gamma_{0}\gamma'_{1}+a_{5}\gamma'_{2}\;,\\
		\delta&=b_{1}+b_{2}\gamma_{0}+b_{3}\gamma'_{1}+b_{4}\gamma_{0}\gamma'_{1}+b_{5}\gamma'_{2}\;,\\
		\gamma_{2}&=c_{1}+c_{2}\gamma_{0}+c_{3}\gamma'_{1}+c_{4}\gamma_{0}\gamma'_{1}+c_{5}\gamma'_{2}\quad\text{and}\\
		\delta\gamma'_{2}+\gamma'_{1}\gamma_{2}&=d_{1}+d_{2}\gamma_{0}+d_{3}\gamma'_{1}+d_{4}\gamma_{0}\gamma'_{1}+d_{5}\gamma'_{2}\;.
	\end{align*}
	As seen in the intermezzo, $\left\langle P_{0},P_{2}\right\rangle$ is a $(q+1)$-secant if $\gamma_{2}\in U_{2}$ and $\dim U_{2}=3$. In this case, these conditions are fulfilled if and only if $a_{3}=a_{4}=0$ and $c_{3}=c_{4}=0$. So, we may assume that $(a_{3},a_{4},c_{3},c_{4})\neq(0,0,0,0)$. We also show that we cannot have simultaneously $a_{1}=-a_{2}a_{5}$ and $a_{3}=-a_{4}a_{5}$, since otherwise it follows that
	\begin{align*}
		\gamma_{0}\gamma'_{2}&=-a_{2}a_{5}+a_{2}\gamma_{0}-a_{4}a_{5}\gamma'_{1}+a_{4}\gamma_{0}\gamma'_{1}+a_{5}\gamma'_{2}\quad\Leftrightarrow\quad 0=(\gamma_{0}-a_{5})(\gamma'_{2}-a_{4}\gamma'_{1}-a_{2})\;,
	\end{align*}
	which is not possible since $\gamma_{0}\notin\F_{q}$ and $\{1,\gamma'_{1},\gamma'_{2}\}$ is independent over $\F_{q}$.
	\par Considering $\F_{q^{5}}$ as a vector space over $\F_{q}$, Equation \eqref{eq:1standaardA} is equivalent to the following system of equations:
	\begin{align}\label{eq:1standaardA11}
		\begin{cases}
			d_{1}=\mu_{3}\nu_{2}-\mu_{2}\nu_{3}+\nu_{1}a_{1}+\mu_{3}b_{1}+\nu_{3}c_{1}\\
			d_{2}=\mu_{1}\nu_{3}-\mu_{3}\nu_{1}+\nu_{1}a_{2}+\mu_{3}b_{2}+\nu_{3}c_{2}\\
			d_{3}=\mu_{2}+\nu_{1}a_{3}+\mu_{3}b_{3}+\nu_{3}c_{3}\\
			d_{4}=-\mu_{1}+\nu_{1}a_{4}+\mu_{3}b_{4}+\nu_{3}c_{4}\\
			d_{5}=-\nu_{2}+\nu_{1}a_{5}+\mu_{3}b_{5}+\nu_{3}c_{5}
		\end{cases}
		\!\Leftrightarrow\ 
		\begin{cases}
			\mu_{1}=\nu_{1}a_{4}+\mu_{3}b_{4}+\nu_{3}c_{4}-d_{4}\\
			\mu_{2}=-\nu_{1}a_{3}-\mu_{3}b_{3}-\nu_{3}c_{3}+d_{3}\\
			\nu_{2}=\nu_{1}a_{5}+\mu_{3}b_{5}+\nu_{3}c_{5}-d_{5}\\
			d_{1}=\mu_{3}\nu_{2}-\mu_{2}\nu_{3}+\nu_{1}a_{1}+\mu_{3}b_{1}+\nu_{3}c_{1}\\
			d_{2}=\mu_{1}\nu_{3}-\mu_{3}\nu_{1}+\nu_{1}a_{2}+\mu_{3}b_{2}+\nu_{3}c_{2}
		\end{cases}\!\!\!.
	\end{align}
	It is straightforward to see that there is a one-to-one correspondence between the solutions in $(\mu_{1},\mu_{2},\mu_{3},\nu_{1},\nu_{2},\nu_{3})$ of Equation \eqref{eq:1standaardA11} and the solutions in $(\mu_{3},\nu_{1},\nu_{3})$ of
	\begin{align}\label{eq:1standaardA11bis}
		&\begin{cases}
			d_{1}=\mu_{3}(\nu_{1}a_{5}+\mu_{3}b_{5}+\nu_{3}c_{5}-d_{5})+(\nu_{1}a_{3}+\mu_{3}b_{3}+\nu_{3}c_{3}-d_{3})\nu_{3}+\nu_{1}a_{1}+\mu_{3}b_{1}+\nu_{3}c_{1}\\
			d_{2}=(\nu_{1}a_{4}+\mu_{3}b_{4}+\nu_{3}c_{4}-d_{4})\nu_{3}-\mu_{3}\nu_{1}+\nu_{1}a_{2}+\mu_{3}b_{2}+\nu_{3}c_{2}
		\end{cases}\nonumber\\
		\Leftrightarrow\ 
		&\begin{cases}
			-L_{1}(\mu_{3},\nu_{3})\nu_{1}=C_{1}(\mu_{3},\nu_{3})\\
			-L_{2}(\mu_{3},\nu_{3})\nu_{1}=C_{2}(\mu_{3},\nu_{3})
		\end{cases}
	\end{align}
	with
	\begin{align*}
		L_{1}(\mu_{3},\nu_{3})&=a_{5}\mu_{3}+a_{3}\nu_{3}+a_{1}\;,\\
		L_{2}(\mu_{3},\nu_{3})&=-\mu_{3}+a_{4}\nu_{3}+a_{2}\;,\\
		C_{1}(\mu_{3},\nu_{3})&=b_{5}\mu^{2}_{3}+(c_{5}+b_{3})\mu_{3}\nu_{3}+c_{3}\nu^{2}_{3}+(b_{1}-d_{5})\mu_{3}+(c_{1}-d_{3})\nu_{3}-d_{1}\;\text{ and}\\
		C_{2}(\mu_{3},\nu_{3})&=b_{4}\mu_{3}\nu_{3}+c_{4}\nu^{2}_{3}+b_{2}\mu_{3}+(c_{2}-d_{4})\nu_{3}-d_{2}\;.
	\end{align*}
	Given $\mu_{3}$ and $\nu_{3}$, the system of equations in \eqref{eq:1standaardA11bis} has $0$, $1$ or $q$ solutions for $\nu_{1}$. Assume that for $(\mu_{3},\nu_{3})=(\overline{\mu_{3}},\overline{\nu_{3}})$ the system of equations in \eqref{eq:1standaardA11bis} would have $q$ solutions. Then, looking at \eqref{eq:1algbis} with $(\mu_4,\mu_5,\nu_4,\nu_5)=(0,1,1,0)$, we see that for the $q$ corresponding points, we have $\varphi=\frac{\overline{\mu_{3}}-\gamma'_{2}}{\gamma'_{1}-\overline{\nu_{3}}}$. Hence, any two of these $q$ points determine a $(q+1)$-secant by Theorem \ref{qplusonesecant}, contradicting the assumption on $\Pi$. So, \eqref{eq:1standaardA11bis} has either 0 solutions or a unique solution in $\nu_{1}$, and the latter occurs iff
	\begin{multline}\label{eq:1cubic11}
		F(\mu_{3},\nu_{3})=L_{1}(\mu_{3},\nu_{3})C_{2}(\mu_{3},\nu_{3})-L_{2}(\mu_{3},\nu_{3})C_{1}(\mu_{3},\nu_{3})=0\\ \wedge\quad\left(L_{1}(\mu_{3},\nu_{3}),L_{2}(\mu_{3},\nu_{3})\right)\neq(0,0)\;.
	\end{multline}
	The equation $F(\mu_{3},\nu_{3})=0$ determines a cubic curve $C$ in the $(\mu_{3},\nu_{3})$-plane $\pi\cong\AG(2,q)$. We embed this affine plane in the projective plane $\PG(2,q)$ by adding the line at infinity $\ell_{\infty}$ and extend $C$ to the cubic curve $\overline{C}$ by going to a homogeneous equation $\overline{F}(\mu_{3},\nu_{3},\rho)=0$. Note that the equation of $\ell_\infty$ is $\rho=0$. Analogously, we define the homogeneous functions $\overline{L_{1}}(\mu_{3},\nu_{3},\rho)$, $\overline{L_{2}}(\mu_{3},\nu_{3},\rho)$, $\overline{C_{1}}(\mu_{3},\nu_{3},\rho)$, and $\overline{C_{2}}(\mu_{3},\nu_{3},\rho)$. Note that neither $\overline{L_{1}}$ nor $\overline{L_{2}}$ can be identically zero; the latter is obvious, and in case $L_{1}\equiv0$ we would have that $\gamma_{0}\gamma'_{2}=a_{2}\gamma_{0}+a_{4}\gamma_{0}\gamma'_{1}$, hence that $\{1,\gamma'_{1},\gamma'_{2}\}$ is not a linearly independent set over $\F_{q}$, contradicting the assumption of Case A.1. The lines defined by $\overline{L_{1}}=0$ and $\overline{L_{2}}=0$ in the projective plane $\PG(2,q)$, do not coincide since we cannot simultaneously have $a_{1}=-a_{2}a_{5}$ and $a_{3}=-a_{4}a_{5}$ (as we noted before). So, these lines have precisely one intersection point $R$, which is on $\ell_{\infty}$ if and only if $a_{3}+a_{4}a_{5}=0$. It is clear that $R$ is on the cubic curve $\overline{F}=0$.
	\par We denote the number of points on $\overline{C}$ by $N$ and the number of points on $\overline{C}\cap\ell_{\infty}$ by $N_{\infty}$. Furthermore, we set $\varepsilon=1$ if $R$ is an affine point, and $\varepsilon=0$ if $R\in\ell_{\infty}$. We find that Equation \eqref{eq:1cubic11}, and hence also Equation \eqref{eq:1standaardA11}, has $N-N_{\infty}-\varepsilon$ solutions.
	\par Now, we look at Equation \eqref{eq:1standaardB}; it is equivalent to the following system of equations:
	\begin{align}\label{eq:1standaardB11}
		\begin{cases}
			-c_{1}=-\mu_{2}+a_{1}\nu_{1}-b_{1}\mu_{4}\\
			-c_{2}=\mu_{1}+a_{2}\nu_{1}-b_{2}\mu_{4}\\
			-c_{3}=-\nu_{2}\mu_{4}+a_{3}\nu_{1}-b_{3}\mu_{4}\\
			-c_{4}=\mu_{4}\nu_{1}+a_{4}\nu_{1}-b_{4}\mu_{4}\\
			-c_{5}=-\nu_{2}+a_{5}\nu_{1}-b_{5}\mu_{4}
		\end{cases}
		\Leftrightarrow\quad
		\begin{cases}
			\mu_{1}=-a_{2}\nu_{1}+b_{2}\mu_{4}-c_{2}\\
			\mu_{2}=a_{1}\nu_{1}-b_{1}\mu_{4}+c_{1}\\
			\nu_{2}=a_{5}\nu_{1}-b_{5}\mu_{4}+c_{5}\\
			c_{3}=\nu_{2}\mu_{4}-a_{3}\nu_{1}+b_{3}\mu_{4}\\
			-c_{4}=\mu_{4}\nu_{1}+a_{4}\nu_{1}-b_{4}\mu_{4}
		\end{cases}\;.
	\end{align}
	Recall that $\mu_{4}\in\F^{*}_{q}$. It is straightforward that there is a one-to-one correspondence between the solutions in $(\mu_{1},\mu_{2},\mu_{4},\nu_{1},\nu_{2})$ of Equation \eqref{eq:1standaardB11} and the solutions in $(\mu_{4},\nu_{1})$ of
	\begin{align}\label{eq:1standaardB11bis}
		&\begin{cases}
			c_{3}=(a_{5}\nu_{1}-b_{5}\mu_{4}+c_{5})\mu_{4}-a_{3}\nu_{1}+b_{3}\mu_{4}\\
			-c_{4}=\mu_{4}\nu_{1}+a_{4}\nu_{1}-b_{4}\mu_{4}
		\end{cases}\nonumber\\
		\Leftrightarrow\quad
		&\begin{cases}
			\left(a_{5}\mu_{4}-a_{3}\right)\nu_{1}=b_{5}\mu^{2}_{4}-(c_{5}+b_{3})\mu_{4}+c_{3}\\
			(\mu_{4}+a_{4})\nu_{1}=b_{4}\mu_{4}-c_{4}
		\end{cases}\nonumber\\
		\Leftrightarrow\quad
		&\begin{cases}
			-\overline{L_{1}}\left(-\mu_{4},1,0\right)\nu_{1}=\overline{C_{1}}\left(-\mu_{4},1,0\right)\\
			-\overline{L_{2}}\left(-\mu_{4},1,0\right)\nu_{1}=\overline{C_{2}}\left(-\mu_{4},1,0\right)
		\end{cases}.
	\end{align}
	Given $\mu_{4}$, the system of equations in \eqref{eq:1standaardB11bis} has $0$, $1$ or $q$ solutions for $\nu_{1}$. Assume that for $\mu_{4}=\overline{\mu_{4}}$ the system of equations in \eqref{eq:1standaardB11bis} has $q$ solutions. Then, looking at \eqref{eq:1algbis} with $(\mu_{3},\nu_{3},\nu_{4},\mu_{5},\nu_{5})=(0,1,0,1,0)$, we see that for the $q$ corresponding points, we have $\varphi=\overline{\mu_{4}}\gamma'_{1}+\gamma'_{2}$. Hence, any two of these $q$ points determine a $(q+1)$-secant by Theorem \ref{qplusonesecant} contradicting the assumption on $\Pi$. So, \eqref{eq:1standaardB11bis} has either 0 solutions or a unique solution in $\nu_{1}$, and the latter occurs if and only if $\mu_{4}\in\F^{*}_{q}$ fulfils
	\begin{align}\label{eq:1oneindig11}
		0&=\overline{L_{1}}\left(-\mu_{4},1,0\right)\overline{C_{2}}\left(-\mu_{4},1,0\right)-\overline{L_{2}}\left(-\mu_{4},1,0\right)\overline{C_{1}}\left(-\mu_{4},1,0\right)\nonumber\\
		&=\overline{F}(-\mu_{4},1,0)\;,
	\end{align}
	and simultaneously $(\overline{L_{1}}\left(-\mu_{4},1,0\right),\overline{L_{2}}\left(-\mu_{4},1,0\right))\neq(0,0)$. However, if 
	$$(\overline{L_{1}}\left(-\mu_{4},1,0\right),\overline{L_{2}}\left(-\mu_{4},1,0\right))=(0,0),$$ then $R=\langle(-\mu_{4},1,0)\rangle\in\ell_{\infty}$. So, the solutions of \eqref{eq:1oneindig11} correspond to the points of $\overline{C}\cap(\ell_{\infty}\setminus\{R,\langle(1,0,0)\rangle,\langle(0,1,0)\rangle\})$.
	\par Now we look at Equations \eqref{eq:1standaardC} and \eqref{eq:1standaardD}. It is immediately clear that Equation \eqref{eq:1standaardC} has 1 solution if $b_{5}=0$ and no solutions otherwise, so it has a solution if and only if $\langle(1,0,0)\rangle\in\overline{C}$. Equation \eqref{eq:1standaardD} is equivalent to
	\begin{multline*}
		c_{1}+c_{2}\gamma_{0}+c_{3}\gamma'_{1}+c_{4}\gamma_{0}\gamma'_{1}+c_{5}\gamma'_{2}\\=(\nu_{2}-a_{1}\mu_{1})-(\nu_{1}+a_{2}\mu_{1})\gamma_{0}-a_{3}\mu_{1}\gamma'_{1}-a_{4}\mu_{1}\gamma_{0}\gamma'_{1}+(\mu_{2}-a_{5}\mu_{1})\gamma'_{2}\;.
	\end{multline*}
	This equation has one solution if $a_{3}c_{4}=a_{4}c_{3}$ and $(a_{3},a_{4})\neq(0,0)$ and no solutions otherwise; recall (from the beginning of this case) that it is not possible that $a_{3}=a_{4}=c_{3}=c_{4}=0$. So, it has a solution if and only if $\langle(0,1,0)\rangle\in\overline{C}$, but $R\neq\langle(0,1,0)\rangle$.
	\par We note that $R$ cannot be the point $\langle(1,0,0)\rangle$, and we conclude that regardless of the behaviour of $b_{5}$ and $a_{3}c_{4}-a_{4}c_{3}$ and the position of $R$, the total number of solutions of the Equations \eqref{eq:1standaardB}, \eqref{eq:1standaardC} and \eqref{eq:1standaardD} together equals $N_{\infty}-(1-\varepsilon)$. Including the solutions from Equation \eqref{eq:1standaardA} and the point $P_{0}$, we find that $\Pi\cap\Omega_{2}$ contains $N$ points.
	\par We have seen during the analysis of Equations \eqref{eq:1standaardA11bis} and \eqref{eq:1standaardB11bis} that we could only find at most one solution for $\nu_1$ for any choice of the parameters $(\mu_{3},\nu_{3})$ or $\mu_{4}$. From this it follows that $R$ cannot be on both conics $\overline{C_{1}}=0$ and $\overline{C_{2}}=0$ if it is an affine point or a point on $\ell_{\infty}\setminus\{\langle(1,0,0)\rangle,\langle(0,1,0)\rangle\}$. If $R=\langle(0,1,0)\rangle$ this point cannot be on both conics since it is not possible that $a_{3}=a_{4}=c_{3}=c_{4}=0$. Recall that $R\neq\langle(1,0,0)\rangle$ since $\langle(1,0,0)\rangle\notin \overline{L_2}$. So, $R$ cannot be on both conics $\overline{C_{1}}=0$ and $\overline{C_{2}}=0$, regardless of its coordinates.
	\par Note that if $\Pi\cap\Omega_{2}$ contains $N=q^2+q+1$ points, then there are at least $2$ points with the same type (recall that there are $q^{2}+1$ $G$-orbits), and hence, there is a $(q+1)$-secant by Theorem \ref{qplusonesecant}. So, $\overline{F}$ does not vanish. This implies that the theorem follows from applying Lemma \ref{cubic} to the cubic $\overline{C}$.
	\par \textit{Case A.2: $\gamma'_{2}\in U_{1}$, but $\gamma_{0}\gamma'_{2}\notin U_{1}$}.  The arguments in this case are similar to the arguments in Case A.1. We find that the points on $\Pi\cap\Omega_{2}$ correspond to the points on a cubic and we apply Lemma \ref{cubic}. Details can be found in Appendix \ifthenelse{\equal{\versie}{arxiv}}{\ref{ap:th4.5}, see page \pageref{apB:A2}}{B in the arXiv version of this paper}.\comments{Hence, we assume that $\dim\left\langle 1,\gamma_{0},\gamma'_{1},\gamma_{0}\gamma'_{1},\gamma_{0}\gamma'_{2}\right\rangle_q=5$, in other words $\left\{1,\gamma_{0},\gamma'_{1},\gamma_{0}\gamma'_{1},\gamma_{0}\gamma'_{2}\right\}$ is an $\F_{q}$-basis for $\F_{q^{5}}$. Then, there are $a_{i},b_{i},c_{i},d_{i}\in\F_{q}$, $i=1,\dots,5$, such that
	\begin{align*}
		\gamma'_{2}&=a_{1}+a_{2}\gamma_{0}+a_{3}\gamma'_{1}+a_{4}\gamma_{0}\gamma'_{1}\;,\\
		\delta&=b_{1}+b_{2}\gamma_{0}+b_{3}\gamma'_{1}+b_{4}\gamma_{0}\gamma'_{1}+b_{5}\gamma_{0}\gamma'_{2}\;,\\
		\gamma_{2}&=c_{1}+c_{2}\gamma_{0}+c_{3}\gamma'_{1}+c_{4}\gamma_{0}\gamma'_{1}+c_{5}\gamma_{0}\gamma'_{2}\quad\text{and}\\
		\delta\gamma'_{2}+\gamma'_{1}\gamma_{2}&=d_{1}+d_{2}\gamma_{0}+d_{3}\gamma'_{1}+d_{4}\gamma_{0}\gamma'_{1}+d_{5}\gamma_{0}\gamma'_{2}\;.
	\end{align*}
	We saw in the intermezzo that $\left\langle P_{0},P_{2}\right\rangle$ is a $(q+1)$-secant if $\gamma_{2}\in U_{2}$ and $\dim U_{2}=3$. In this case, these conditions are fulfilled if and only if $a_{3}=a_{4}=0$ and $c_{3}=c_{4}=0$. So, we may assume that $(a_{3},a_{4},c_{3},c_{4})\neq(0,0,0,0)$.
	\par Considering $\F_{q^{5}}$ as a vector space over $\F_{q}$, Equation \eqref{eq:1standaardA} is equivalent to the following system of equations:
	\begin{align}\label{eq:1standaardA12}
		\begin{cases}
			d_{1}=\mu_{3}\nu_{2}-\mu_{2}\nu_{3}-\nu_{2}a_{1}+\mu_{3}b_{1}+\nu_{3}c_{1}\\
			d_{2}=\mu_{1}\nu_{3}-\mu_{3}\nu_{1}-\nu_{2}a_{2}+\mu_{3}b_{2}+\nu_{3}c_{2}\\
			d_{3}=\mu_{2}-\nu_{2}a_{3}+\mu_{3}b_{3}+\nu_{3}c_{3}\\
			d_{4}=-\mu_{1}-\nu_{2}a_{4}+\mu_{3}b_{4}+\nu_{3}c_{4}\\
			d_{5}=\nu_{1}+\mu_{3}b_{5}+\nu_{3}c_{5}
		\end{cases}
		\Leftrightarrow\quad
		\begin{cases}
			\mu_{1}=-d_{4}-\nu_{2}a_{4}+\mu_{3}b_{4}+\nu_{3}c_{4}\\
			\mu_{2}=d_{3}+\nu_{2}a_{3}-\mu_{3}b_{3}-\nu_{3}c_{3}\\
			\nu_{1}=d_{5}-\mu_{3}b_{5}-\nu_{3}c_{5}\\
			d_{1}=\mu_{3}\nu_{2}-\mu_{2}\nu_{3}-\nu_{2}a_{1}+\mu_{3}b_{1}+\nu_{3}c_{1}\\
			d_{2}=\mu_{1}\nu_{3}-\mu_{3}\nu_{1}-\nu_{2}a_{2}+\mu_{3}b_{2}+\nu_{3}c_{2}
		\end{cases}\;.
	\end{align}
It is straightforward to see that there is a one-to-one correspondence between the solutions in $(\mu_{1},\mu_{2},\mu_{3},\nu_{1},\nu_{2},\nu_{3})$ of Equation \eqref{eq:1standaardA12} and the solutions in $(\mu_{3},\nu_{2},\nu_{3})$ of
	\begin{align}\label{eq:1standaardA12bis}
		&\begin{cases}
			d_{1}=\mu_{3}\nu_{2}-(d_{3}+\nu_{2}a_{3}-\mu_{3}b_{3}-\nu_{3}c_{3})\nu_{3}-\nu_{2}a_{1}+\mu_{3}b_{1}+\nu_{3}c_{1}\\
			d_{2}=(-d_{4}-\nu_{2}a_{4}+\mu_{3}b_{4}+\nu_{3}c_{4})\nu_{3}-\mu_{3}(d_{5}-\mu_{3}b_{5}-\nu_{3}c_{5})-\nu_{2}a_{2}+\mu_{3}b_{2}+\nu_{3}c_{2}
		\end{cases}\nonumber\\
		\Leftrightarrow\quad
		&\begin{cases}
			L_{1}(\mu_{3},\nu_{3})\nu_{2}=C_{1}(\mu_{3},\nu_{3})\\
			L_{2}(\mu_{3},\nu_{3})\nu_{2}=C_{2}(\mu_{3},\nu_{3})
		\end{cases}
	\end{align}
	with
	\begin{align*}
		L_{1}(\mu_{3},\nu_{3})&=-\mu_{3}+a_{3}\nu_{3}+a_{1}\;,\\
		L_{2}(\mu_{3},\nu_{3})&=a_{4}\nu_{3}+a_{2}\;,\\
		C_{1}(\mu_{3},\nu_{3})&=b_{3}\mu_{3}\nu_{3}+c_{3}\nu^{2}_{3}+b_{1}\mu_{3}+(c_{1}-d_{3})\nu_{3}-d_{1}\;\text{ and}\\
		C_{2}(\mu_{3},\nu_{3})&=b_{5}\mu^{2}_{3}+(b_{4}+c_{5})\mu_{3}\nu_{3}+c_{4}\nu^{2}_{3}+(b_{2}-d_{5})\mu_{3}+(c_{2}-d_{4})\nu_{3}-d_{2}\;.
	\end{align*}
	Given $\mu_{3}$ and $\nu_{3}$, the system of equations in \eqref{eq:1standaardA12bis} has $0$, $1$ or $q$ solutions for $\nu_{1}$. Assume that for $(\mu_{3},\nu_{3})=(\overline{\mu_{3}},\overline{\nu_{3}})$ the system of equations in \eqref{eq:1standaardA12bis} would have $q$ solutions. Then, looking at \eqref{eq:1algbis} with $(\mu_4,\mu_5,\nu_4,\nu_5)=(0,1,1,0)$, we see that for the $q$ corresponding points, we have $\varphi=\frac{\overline{\mu_{3}}-\gamma'_{2}}{\gamma'_{1}-\overline{\nu_{3}}}$. Hence any two of these $q$ points determine a $(q+1)$-secant by Theorem \ref{qplusonesecant}, contradicting the assumption on $\Pi$. So, \eqref{eq:1standaardA12bis} has either 0 solutions or a unique solution in $\nu_{1}$, and the latter occurs iff 
	\begin{multline}\label{eq:1cubic12}
		F(\mu_{3},\nu_{3})=L_{1}(\mu_{3},\nu_{3})C_{2}(\mu_{3},\nu_{3})-L_{2}(\mu_{3},\nu_{3})C_{1}(\mu_{3},\nu_{3})=0\\ \wedge\quad\left(L_{1}(\mu_{3},\nu_{3}),L_{2}(\mu_{3},\nu_{3})\right)\neq(0,0)\;.
	\end{multline}
	The equation $F(\mu_{3},\nu_{3})=0$ determines a cubic curve $C$ in the $(\mu_{3},\nu_{3})$-plane $\pi\cong\AG(2,q)$. We embed this affine plane in the projective plane $\PG(2,q)$ by adding the line at infinity $\ell_{\infty}$ and extend $C$ to the cubic curve $\overline{C}$ by going to a homogeneous equation $\overline{F}(\mu_{3},\nu_{3},\rho)=0$. Analogously, we define the homogeneous functions $\overline{L_{1}}(\mu_{3},\nu_{3},\rho)$, $\overline{L_{2}}(\mu_{3},\nu_{3},\rho)$, $\overline{C_{1}}(\mu_{3},\nu_{3},\rho)$, and $\overline{C_{2}}(\mu_{3},\nu_{3},\rho)$. Note that neither $\overline{L_{1}}$ nor $\overline{L_{2}}$ can be identically zero; the former is obvious, and in case $L_{2}\equiv0$ we would have that $\gamma'_{2}=a_{1}+a_{3}\gamma'_{1}$, hence that $\{1,\gamma'_{1},\gamma'_{2}\}$ is a linearly dependent set over $\F_{q}$ which forces the existence of a $(q^2+q+1)$-secant to $\Pi$ as seen in the intermezzo.  The lines defined by $\overline{L_{1}}=0$ and $\overline{L_{2}}=0$ in the projective plane $\PG(2,q)$, clearly do not coincide. So, these lines have precisely one intersection point $R$, which is on $\ell_{\infty}$ if and only if $a_{4}=0$. It is clear that $R$ is on the cubic curve $\overline{F}=0$.
	\par We denote the number of points on $\overline{C}$ by $N$ and the number of points on $\overline{C}\cap\ell_{\infty}$ by $N_{\infty}$. Furthermore, we set $\varepsilon=1$ if $R$ is an affine point, $\varepsilon=0$ if $R\in\ell_{\infty}$. We find that Equation \eqref{eq:1cubic12}, and hence also Equation \eqref{eq:1standaardA11}, has $N-N_{\infty}-\varepsilon$ solutions.
	\par Now, we look at Equation \eqref{eq:1standaardB}; it is equivalent to the following system of equations:
	\begin{align}\label{eq:1standaardB12}
		\begin{cases}
			-c_{1}=-\mu_{2}-a_{1}\nu_{2}-b_{1}\mu_{4}\\
			-c_{2}=\mu_{1}-a_{2}\nu_{2}-b_{2}\mu_{4}\\
			-c_{3}=-\nu_{2}\mu_{4}-a_{3}\nu_{2}-b_{3}\mu_{4}\\
			-c_{4}=\mu_{4}\nu_{1}-a_{4}\nu_{2}-b_{4}\mu_{4}\\
			-c_{5}=\nu_{1}-b_{5}\mu_{4}
		\end{cases}
		\Leftrightarrow\quad
		\begin{cases}
			\mu_{1}=a_{2}\nu_{2}+b_{2}\mu_{4}-c_{2}\\
			\mu_{2}=c_{1}-a_{1}\nu_{2}-b_{1}\mu_{4}\\
			\nu_{1}=b_{5}\mu_{4}-c_{5}\\
			-c_{3}=-\nu_{2}\mu_{4}-a_{3}\nu_{2}-b_{3}\mu_{4}\\
			-c_{4}=\mu_{4}\nu_{1}-a_{4}\nu_{2}-b_{4}\mu_{4}
		\end{cases}\;.
	\end{align}
	Recall that $\mu_{4}\in\F^{*}_{q}$. It is straightforward that there is a one-to-one correspondence between the solutions in $(\mu_{1},\mu_{2},\mu_{4},\nu_{1},\nu_{2})$ of Equation \eqref{eq:1standaardB12} and the solutions in $(\mu_{4},\nu_{1})$ of
	\begin{align}\label{eq:1standaardB12bis}
		&\begin{cases}
		-c_{3}=-\nu_{2}\mu_{4}-a_{3}\nu_{2}-b_{3}\mu_{4}\\
		-c_{4}=\mu_{4}(b_{5}\mu_{4}-c_{5})-a_{4}\nu_{2}-b_{4}\mu_{4}
		\end{cases}
		&\Leftrightarrow\quad
		&\begin{cases}
		\left(\mu_{4}+a_{3}\right)\nu_{2}=-b_{3}\mu_{4}+c_{3}\\
		a_{4}\nu_{2}=b_{5}\mu^{2}_{4}-(b_{4}+c_{5})\mu_{4}+c_{4}
		\end{cases}\nonumber\\
		&&\Leftrightarrow\quad
		&\begin{cases}
		-\overline{L_{1}}\left(-\mu_{4},1,0\right)\nu_{2}=\overline{C_{1}}\left(-\mu_{4},1,0\right)\\
		-\overline{L_{2}}\left(-\mu_{4},1,0\right)\nu_{2}=\overline{C_{2}}\left(-\mu_{4},1,0\right)
		\end{cases}.
	\end{align}
	Given $\mu_{4}$, the system of equations in \eqref{eq:1standaardB12bis} has $0$, $1$ or $q$ solutions for $\nu_{2}$. Assume that for $\mu_{4}=\overline{\mu_{4}}$ the system of equations in \eqref{eq:1standaardB12bis} would have $q$ solutions. Then, looking at \eqref{eq:1algbis} with$(\mu_{3},\nu_{3},\nu_{4},\mu_{5},\nu_{5})=(0,1,0,1,0)$, we see that for the $q$ corresponding points, we have $\varphi=\overline{\mu_{4}}\gamma'_{1}+\gamma'_{2}$. Hence, any two of these $q$ points determine a $(q+1)$-secant by Theorem \ref{qplusonesecant}, contradicting the assumption on $\Pi$. So, \eqref{eq:1standaardB12bis} has either 0 solutions or a unique solution in $\nu_{2}$, and the latter occurs if and only if $\mu_{4}\in\F^{*}_{q}$ fulfils
	\begin{align}\label{eq:1oneindig12}
		0&=\overline{L_{1}}\left(-\mu_{4},1,0\right)\overline{C_{2}}\left(-\mu_{4},1,0\right)-\overline{L_{2}}\left(-\mu_{4},1,0\right)\overline{C_{1}}\left(-\mu_{4},1,0\right)\nonumber\\
		&=\overline{F}(-\mu_{4},1,0)\;,
	\end{align}
	and simultaneously $(\overline{L_{1}}\left(-\mu_{4},1,0\right),\overline{L_{2}}\left(-\mu_{4},1,0\right))\neq(0,0)$. However, if $$(\overline{L_{1}}\left(-\mu_{4},1,0\right),\overline{L_{2}}\left(-\mu_{4},1,0\right))=(0,0),$$ then $R=(-\mu_{4},1,0)\in\ell_{\infty}$. So, the solutions of \eqref{eq:1oneindig12} correspond to the points of $\overline{C}\cap(\ell_{\infty}\setminus\{R,(1,0,0),(0,1,0)\})$.
	\par Now we look at Equations \eqref{eq:1standaardC} and \eqref{eq:1standaardD}. It is immediately clear that Equation \eqref{eq:1standaardC} has 1 solution if $b_{5}=0$ and no solutions otherwise. Equation \eqref{eq:1standaardD} is equivalent to
	\begin{multline*}
		c_{1}+c_{2}\gamma_{0}+c_{3}\gamma'_{1}+c_{4}\gamma_{0}\gamma'_{1}+c_{5}\gamma_{0}\gamma'_{2}\\=(\nu_{2}+a_{1}\mu_{2})-(\nu_{1}-a_{2}\mu_{2})\gamma_{0}+a_{3}\mu_{2}\gamma'_{1}+a_{4}\mu_{2}\gamma_{0}\gamma'_{1}-\mu_{1}\gamma_{0}\gamma'_{2}\;.
	\end{multline*}
	This equation has one solution if $a_{3}c_{4}=a_{4}c_{3}$ and $(a_{3},a_{4})\neq(0,0)$ and no solutions otherwise; recall (from the beginning of this case) that it is not possible that $a_{3}=a_{4}=c_{3}=c_{4}=0$. So, it has a solution if and only if $(0,1,0)\in\overline{C}$, but $R\neq(0,1,0)$.
	\par We note that $R$ cannot be the point $(1,0,0)$, and we conclude that regardless of the behaviour of $b_{5}$ and $a_{3}c_{4}-a_{4}c_{3}$ and the position of $R$, the total number of solutions of the Equations \eqref{eq:1standaardB}, \eqref{eq:1standaardC} and \eqref{eq:1standaardD} together equals $N_{\infty}-(1-\varepsilon)$. Including the solutions from Equation \eqref{eq:1standaardA} and the point $P_{0}$, we find that $\Pi\cap\Omega_{2}$ contains $N$ points. Note that if $\Pi\cap\Omega_{2}$ contains $N=q^2+q+1$ points, then there are at least $2$ points with the same type (recall that there are $q^{2}+1$ $G$-orbits), and hence, there is a $(q+1)$-secant by Theorem \ref{qplusonesecant}. This implies that $\overline{F}$ does not vanish.
	\par By the analysis of Equations \eqref{eq:1standaardA12bis} and \eqref{eq:1standaardB12bis} we know that $R$ cannot be on both conics $\overline{C_{1}}=0$ and $\overline{C_{2}}=0$ if it is an affine point or a point on $\ell_{\infty}\setminus\{(1,0,0),(0,1,0)\}$. Similarly, if $R=(0,1,0)$ this point cannot be on both conics since it is not possible that $a_{3}=a_{4}=c_{3}=c_{4}=0$. Recall that $R\neq(1,0,0)$. Hence, we can apply Lemma \ref{cubic} to the cubic $\overline{C}$ and the statement of the theorem follows.}
	\par \textit{Case A.3: $\gamma'_{2},\gamma_{0}\gamma'_{2}\in U_{1}$, but $\delta\notin U_{1}$}. Hence, we assume that $\dim\left\langle 1,\gamma_{0},\gamma'_{1},\gamma_{0}\gamma'_{1},\delta\right\rangle_q=5$, in other words $\left\{1,\gamma_{0},\gamma'_{1},\gamma_{0}\gamma'_{1},\delta\right\}$ is an $\F_{q}$-basis for $\F_{q^{5}}$. Note that in this case $U_{2}\leq U_{1}$. Then, there are $a_{i},b_{i},c_{i},d_{i}\in\F_{q}$, $i=1,\dots,5$, such that
	\begin{align*}
		\gamma'_{2}&=a_{1}+a_{2}\gamma_{0}+a_{3}\gamma'_{1}+a_{4}\gamma_{0}\gamma'_{1}\;,\\
		\gamma_{0}\gamma'_{2}&=b_{1}+b_{2}\gamma_{0}+b_{3}\gamma'_{1}+b_{4}\gamma_{0}\gamma'_{1}\;,\\
		\gamma_{2}&=c_{1}+c_{2}\gamma_{0}+c_{3}\gamma'_{1}+c_{4}\gamma_{0}\gamma'_{1}+c_{5}\delta\quad\text{and}\\
		\delta\gamma'_{2}+\gamma'_{1}\gamma_{2}&=d_{1}+d_{2}\gamma_{0}+d_{3}\gamma'_{1}+d_{4}\gamma_{0}\gamma'_{1}+d_{5}\delta\;.
	\end{align*}
	As seen in the intermezzo, $\left\langle P_{0},P_{2}\right\rangle$ is a $(q+1)$-secant if $\gamma_{2}\in U_{2}$ and $\dim U_{2}=3$. In this case, these conditions are fulfilled if and only if $a_{3}b_{4}=a_{4}b_{3}$ and $c_{5}=0$. So, we may assume that $(a_{3}b_{4}-a_{4}b_{3},c_{5})\neq(0,0)$. Note also that we cannot have $a_{2}=a_{4}=0$ or $b_{1}=b_{3}=0$: in both cases we would have that $\{1,\gamma'_{1},\gamma'_{2}\}$ is not a linearly independent set over $\F_{q}$. However, then the intermezzo shows that there is a $(q^{2}+q+1)$-secant, a contradiction.
	\par Considering $\F_{q^{5}}$ as a vector space over $\F_{q}$, Equation \eqref{eq:1standaardA} is equivalent to the following system of equations:
	\begin{align}\label{eq:1standaardA13}
		\begin{cases}
			d_{1}=\mu_{3}\nu_{2}-\mu_{2}\nu_{3}-\nu_{2}a_{1}+\nu_{1}b_{1}+\nu_{3}c_{1}\\
			d_{2}=\mu_{1}\nu_{3}-\mu_{3}\nu_{1}-\nu_{2}a_{2}+\nu_{1}b_{2}+\nu_{3}c_{2}\\
			d_{3}=\mu_{2}-\nu_{2}a_{3}+\nu_{1}b_{3}+\nu_{3}c_{3}\\
			d_{4}=-\mu_{1}-\nu_{2}a_{4}+\nu_{1}b_{4}+\nu_{3}c_{4}\\
			d_{5}=\mu_{3}+\nu_{3}c_{5}
		\end{cases}
		\Leftrightarrow\quad
		\begin{cases}
			\mu_{1}=-\nu_{2}a_{4}+\nu_{1}b_{4}+\nu_{3}c_{4}-d_{4}\\
			\mu_{2}=\nu_{2}a_{3}-\nu_{1}b_{3}-\nu_{3}c_{3}+d_{3}\\
			\mu_{3}=-\nu_{3}c_{5}+d_{5}\\
			d_{1}=\mu_{3}\nu_{2}-\mu_{2}\nu_{3}-\nu_{2}a_{1}+\nu_{1}b_{1}+\nu_{3}c_{1}\\
			d_{2}=\mu_{1}\nu_{3}-\mu_{3}\nu_{1}-\nu_{2}a_{2}+\nu_{1}b_{2}+\nu_{3}c_{2}
		\end{cases}\;.
	\end{align}
	It is straightforward to see that there is a one-to-one correspondence between the solutions in $(\mu_{1},\mu_{2},\mu_{3},\nu_{1},\nu_{2},\nu_{3})$ of Equation \eqref{eq:1standaardA13} and the solutions in $(\nu_{1},\nu_{2},\nu_{3})$ of
	\begin{align}\label{eq:1standaardA13bis}
		&\begin{cases}
			d_{1}=(d_{5}-\nu_{3}c_{5})\nu_{2}+(\nu_{1}b_{3}-\nu_{2}a_{3}+\nu_{3}c_{3}-d_{3})\nu_{3}-\nu_{2}a_{1}+\nu_{1}b_{1}+\nu_{3}c_{1}\\
			d_{2}=(\nu_{1}b_{4}-\nu_{2}a_{4}+\nu_{3}c_{4}-d_{4})\nu_{3}+(\nu_{3}c_{5}-d_{5})\nu_{1}-\nu_{2}a_{2}+\nu_{1}b_{2}+\nu_{3}c_{2}
		\end{cases}\nonumber\\
		\Leftrightarrow\quad
		&\begin{cases}
			L_{1}(\nu_{2},\nu_{3})\nu_{1}=C_{1}(\nu_{2},\nu_{3})\\
			L_{2}(\nu_{2},\nu_{3})\nu_{1}=C_{2}(\nu_{2},\nu_{3})
		\end{cases}
	\end{align}
	with
	\begin{align*}
		L_{1}(\nu_{2},\nu_{3})&=b_{3}\nu_{3}+b_{1}\;,\\
		L_{2}(\nu_{2},\nu_{3})&=(b_{4}+c_{5})\nu_{3}+b_{2}-d_{5}\;,\\
		C_{1}(\nu_{2},\nu_{3})&=(a_{3}+c_{5})\nu_{2}\nu_{3}-c_{3}\nu^{2}_{3}+(a_{1}-d_{5})\nu_{2}+(d_{3}-c_{1})\nu_{3}+d_{1}\;\text{ and}\\
		C_{2}(\nu_{2},\nu_{3})&=a_{4}\nu_{2}\nu_{3}-c_{4}\nu^{2}_{3}+a_{2}\nu_{2}+(d_{4}-c_{2})\nu_{3}+d_{2}\;.
	\end{align*}
	Given $\nu_{2}$ and $\nu_{3}$, the system of equations in \eqref{eq:1standaardA13bis} has $0$, $1$ or $q$ solutions for $\nu_{1}$. Assume that for $(\nu_{2},\nu_{3})=(\overline{\nu_{2}},\overline{\nu_{3}})$ the system of equations in \eqref{eq:1standaardA13bis} would have $q$ solutions. Then, looking at \eqref{eq:1algbis} with $(\mu_4,\mu_5,\nu_4,\nu_5)=(0,1,1,0)$, we see that, for the $q$ corresponding points, we have $\varphi=\frac{d_{5}-c_{5}\overline{\nu_{3}}-\gamma'_{2}}{\gamma'_{1}-\overline{\nu_{3}}}$. Hence any two of these $q$ points determine a $(q+1)$-secant by Theorem \ref{qplusonesecant}, contradicting the assumption on $\Pi$. So, \eqref{eq:1standaardA13bis} has either 0 solutions or a unique solution in $\nu_{1}$, and the latter occurs iff 
	\begin{multline}\label{eq:1cubic13}
		F(\nu_{2},\nu_{3})=L_{1}(\nu_{2},\nu_{3})C_{2}(\nu_{2},\nu_{3})-L_{2}(\nu_{2},\nu_{3})C_{1}(\nu_{2},\nu_{3})=0\\ \wedge\quad\left(L_{1}(\nu_{2},\nu_{3}),L_{2}(\nu_{2},\nu_{3})\right)\neq(0,0)\;.
	\end{multline}
	The equation $F(\nu_{2},\nu_{3})=0$ determines a cubic curve $C$ in the $(\nu_{2},\nu_{3})$-plane $\pi\cong\AG(2,q)$. We embed this affine plane in the projective plane $\overline{\pi}\cong\PG(2,q)$ by adding the line at infinity $\ell_{\infty}$ and extend $C$ to the cubic curve $\overline{C}$ by going to a homogeneous equation $\overline{F}(\nu_{2},\nu_{3},\rho)=0$. Analogously, we define the homogeneous functions $\overline{L_{1}}(\nu_{2},\nu_{3},\rho)$, $\overline{L_{2}}(\nu_{2},\nu_{3},\rho)$, $\overline{C_{1}}(\nu_{2},\nu_{3},\rho)$, and $\overline{C_{2}}(\nu_{2},\nu_{3},\rho)$.
	\par Note that $\overline{L_{1}}$ cannot be identically zero: in case $L_{1}\equiv0$ we would have that $b_{1}=b_{3}=0$, which is not possible as we have seen above. Furthermore, if $(\overline{\nu_{2}},\overline{\nu_{3}})$ and $(\overline{\nu_{2}}',\overline{\nu_{3}})$ are two points on $C$, then the corresponding rank 2 points of $\Pi$ both have $\varphi=\frac{d_{5}-c_{5}\overline{\nu_{3}}-\gamma'_{2}}{\gamma'_{1}-\overline{\nu_{3}}}$, so these two points determine a $(q+1)$-secant by Theorem \ref{qplusonesecant}, contradicting the assumption on $\Pi$. So, no two affine points of $C$ can be on the same line through $R=\langle(1,0,0)\rangle$ unless one of them is contained in both $L_{1}=0$ and $L_{2}=0$. Note that $R \in \overline{C}\cap \ell_\infty$.
	\par Now, we look at Equation \eqref{eq:1standaardB}; it is equivalent to the following system of equations:
	\begin{align}\label{eq:1standaardB13}
		\begin{cases}
			-c_{1}=-\mu_{2}-a_{1}\nu_{2}+b_{1}\nu_{1}\\
			-c_{2}=\mu_{1}-a_{2}\nu_{2}+b_{2}\nu_{1}\\
			-c_{3}=-\nu_{2}\mu_{4}-a_{3}\nu_{2}+b_{3}\nu_{1}\\
			-c_{4}=\mu_{4}\nu_{1}-a_{4}\nu_{2}+b_{4}\nu_{1}\\
			-c_{5}=-\mu_{4}
		\end{cases}
		\Leftrightarrow\quad
		\begin{cases}
			\mu_{1}=a_{2}\nu_{2}-b_{2}\nu_{1}-c_{2}\\
			\mu_{2}=-a_{1}\nu_{2}+b_{1}\nu_{1}+c_{1}\\
			\mu_{4}=c_{5}\\
			-c_{3}=-\nu_{2}\mu_{4}-a_{3}\nu_{2}+b_{3}\nu_{1}\\
			-c_{4}=\mu_{4}\nu_{1}-a_{4}\nu_{2}+b_{4}\nu_{1}
		\end{cases}.
	\end{align}
	Recall that $\mu_{4}\in\F^{*}_{q}$. So, the system of equations in \eqref{eq:1standaardB13} has no solutions if $c_{5}=0$. Hence, we assume in the discussion of this system of equations that $c_{5}\neq0$. It is straightforward that there is a one-to-one correspondence between the solutions in $(\mu_{1},\mu_{2},\mu_{4},\nu_{1},\nu_{2})$ of Equation \eqref{eq:1standaardB13} and the solutions in $(\nu_{1},\nu_{2})$ of
	\begin{align}\label{eq:1standaardB13bis}
		&\begin{cases}
			-c_{3}=-c_{5}\nu_{2}-a_{3}\nu_{2}+b_{3}\nu_{1}\\
			-c_{4}=c_{5}\nu_{1}-a_{4}\nu_{2}+b_{4}\nu_{1}
		\end{cases}
		&\Leftrightarrow\quad
		&\begin{cases}
			-c_{3}=b_{3}\nu_{1}-(a_{3}+c_{5})\nu_{2}\\
			-c_{4}=(b_{4}+c_{5})\nu_{1}-a_{4}\nu_{2}
		\end{cases}\nonumber\\
		&&\Leftrightarrow\quad
		&\begin{cases}
			\overline{L_{1}}\left(\nu_{2},1,0\right)\nu_{1}=\overline{C_{1}}\left(\nu_{2},1,0\right)\\
			\overline{L_{2}}\left(\nu_{2},1,0\right)\nu_{1}=\overline{C_{2}}\left(\nu_{2},1,0\right)
		\end{cases}.
	\end{align}
	The system of equations in \eqref{eq:1standaardB13bis} has $0$, $1$ or $q$ solutions for $\nu_{1}$. Assume that it would have $q$ solutions. Then, looking at \eqref{eq:1algbis} with $(\mu_{3},\nu_{3},\nu_{4},\mu_{5},\nu_{5})=(0,1,0,1,0)$, we see that for the $q$ corresponding points, we have $\varphi=c_{5}\gamma'_{1}+\gamma'_{2}$. Hence any two of these $q$ points determine a $(q+1)$-secant by Theorem \ref{qplusonesecant}, contradicting the assumption on $\Pi$. So, \eqref{eq:1standaardB13bis} has either 0 solutions or a unique solution in $\nu_{2}$, and the latter occurs if and only if $\nu_{2}$ fulfils
	\begin{align}\label{eq:1oneindig13}
		0&=\overline{L_{1}}\left(\nu_{2},1,0\right)\overline{C_{2}}\left(\nu_{2},1,0\right)-\overline{L_{2}}\left(\nu_{2},1,0\right)\overline{C_{1}}\left(\nu_{2},1,0\right)\nonumber\\
		&=\overline{F}(\nu_{2},1,0)\nonumber\\
		&=\left(a_{4}b_{3}-a_{3}b_{4}-c_{5}\left(a_{3}+b_{4}\right)-c^{2}_{5}\right)\nu_{2}+\left(b_{4}c_{3}-b_{3}c_{4}+c_{3}c_{5}\right)\;,
	\end{align}
	and simultaneously $(\overline{L_{1}}\left(\nu_{2},1,0\right),\overline{L_{2}}\left(\nu_{2},1,0\right))\neq(0,0)$. Furthermore, if $\langle(\overline{\nu_{2}},1,0)\rangle$ and $\langle(\overline{\nu_{2}}',1,0)\rangle$ are two points on $\overline{C}$, then the corresponding rank 2 points of $\Pi$ both have $\varphi=c_{5}\gamma'_{1}+\gamma'_{2}$, so these two points determine a $(q+1)$-secant by Theorem \ref{qplusonesecant}, contradicting the assumption on $\Pi$. So, Equation \eqref{eq:1standaardA13bis} has no solutions or one solution; the former occurs if $a_{4}b_{3}-a_{3}b_{4}-c_{5}\left(a_{3}+b_{4}\right)-c^{2}_{5}=0$ and the latter otherwise.
	\par Now we look at Equations \eqref{eq:1standaardC} and \eqref{eq:1standaardD}. It is immediately clear that Equation \eqref{eq:1standaardC} has no solutions by the assumption of Case A.3. Equation \eqref{eq:1standaardD} is equivalent to
	\begin{multline}\label{eq:1standaardD13}
		c_{1}+c_{2}\gamma_{0}+c_{3}\gamma'_{1}+c_{4}\gamma_{0}\gamma'_{1}+c_{5}\delta\\=(\nu_{2}+a_{1}\mu_{2}-b_{1}\mu_{1})-(\nu_{1}-a_{2}\mu_{2}+b_{2}\mu_{1})\gamma_{0}+(a_{3}\mu_{2}-b_{3}\mu_{1})\gamma'_{1}+(a_{4}\mu_{2}-b_{4}\mu_{1})\gamma_{0}\gamma'_{1}\;.
	\end{multline}
	This equation has no solutions if $c_{5}\neq0$ and one solution otherwise; recall (from the beginning of this case) that it is not possible that $(a_{3}b_{4}-a_{4}b_{3},c_{5})=(0,0)$.
	{\par \textit{Interlude:} Before we continue, we will show that it is impossible that the cubic $\overline{C}$ decomposes in three lines through $R$ (possibly defined over an extension field). We will assume it does decompose and derive a contradiction. Then, the equation $F=0$ enjoys zero coefficients for $\nu_{2}\nu^{2}_{3}$, $\nu_{2}\nu_{3}$ and $\nu_{2}$. Hence, we have
	\begin{align}
		0&=a_{4}b_{3}-\overline{a}\overline{b}\;,\label{eq:cubicontaard1}\\
		0&=a_{2}b_{3}+a_{4}b_{1}-\overline{a}\hat{b}-\hat{a}\overline{b}\;\text{ and}\label{eq:cubicontaard2}\\
		0&=a_{2}b_{1}-\hat{a}\hat{b}\label{eq:cubicontaard3}\;,
	\end{align}
	with $\overline{a}=a_{3}+c_{5}$, $\overline{b}=b_{4}+c_{5}$, $\hat{a}=a_{1}-d_{5}$ and $\hat{b}=b_{2}-d_{5}$. It follows that
	\begin{align}
		\left(a_{4}\hat{a}-a_{2}\overline{a}\right)\left(a_{4}\hat{b}-a_{2}\overline{b}\right)&=a^{2}_{4}\hat{a}\hat{b}+a^{2}_{2}\overline{a}\overline{b}-a_{2}a_{4}\left(\overline{a}\hat{b}+\hat{a}\overline{b}\right)\nonumber\\
		&=a^{2}_{4}a_{2}b_{1}+a^{2}_{2}a_{4}b_{3}-a_{2}a_{4}\left(a_{2}b_{3}+a_{4}b_{1}\right)=0\;\text{ and}\label{eq:cubicontaardbis1}\\
		\left(b_{3}\hat{a}-b_{1}\overline{a}\right)\left(b_{3}\hat{b}-b_{1}\overline{b}\right)&=b^{2}_{3}\hat{a}\hat{b}+b^{2}_{1}\overline{a}\overline{b}-b_{1}b_{3}\left(\overline{a}\hat{b}+\hat{a}\overline{b}\right)\nonumber\\
		&=b^{2}_{3}a_{2}b_{1}+b^{2}_{1}a_{4}b_{3}-b_{1}b_{3}\left(a_{2}b_{3}+a_{4}b_{1}\right)=0\;.\label{eq:cubicontaardbis2}
	\end{align}
	Recall that it not possible that either $a_{2}=a_{4}=0$ or $b_{1}=b_{3}=0$. Then it follows from Equations \eqref{eq:cubicontaard1}, \eqref{eq:cubicontaard2} and \eqref{eq:cubicontaard3} that it is impossible that $\overline{a}=\hat{a}=0$ or $\overline{b}=\hat{b}=0$. Furthermore, if $\overline{a}=\overline{b}=0$, then it follows from Equations \eqref{eq:cubicontaardbis1} and \eqref{eq:cubicontaardbis2} that $a_{4}=b_{3}=0$. Then, we also find that
	\begin{align*}
		\gamma'_{2}&=a_{1}+a_{2}\gamma_{0}-c_{5}\gamma'_{1} &\Leftrightarrow&& \gamma'_{2}+c_{5}\gamma'_{1}-d_{5}&=\hat{a}+a_{2}\gamma_{0}\;\text{ and}\\
		\gamma_{0}\gamma'_{2}&=b_{1}+b_{2}\gamma_{0}-c_{5}\gamma_{0}\gamma'_{1} &\Leftrightarrow&& \gamma_{0}\left(\gamma'_{2}+c_{5}\gamma'_{1}-d_{5}\right)&=b_{1}+\hat{b}\gamma_{0}\;.
	\end{align*}
	So, $a_{2}\gamma^{2}_{0}+(\hat{a}-\hat{b})\gamma_{0}-b_{1}=0$, and since $\{1,\gamma_{0},\gamma^{2}_{0}\}$ is an $\F_{q}$-independent set, we must have $a_{2}=0$, a contradiction as also $a_{4}=0$. So, we cannot have $\overline{a}=\overline{b}=0$. Similarly we can prove that we also cannot have $\hat{a}=\hat{b}=0$.
	\par Considering these remarks, we see that there are four possibilities given Equations \eqref{eq:cubicontaardbis1} and \eqref{eq:cubicontaardbis2}. We discuss them one by one. Recall that we showed in the intermezzo that $\{1,\gamma'_{1},\gamma'_{2}\}$ is an $\F_{q}$-independent set.
	\begin{itemize}
		\item There are $k,k'\in\F^{*}_{q}$ such that $(a_{2},a_{4})=k(\hat{a},\overline{a})$ and $(b_{1},b_{3})=k'(\hat{a},\overline{a})$. From Equations \eqref{eq:cubicontaard1}, \eqref{eq:cubicontaard2} and \eqref{eq:cubicontaard3} it follows that $\overline{a}(kk'\overline{a}-\overline{b})=0=\hat{a}(kk'\hat{a}-\hat{b})$ and $2kk'\overline{a}\hat{a}-\overline{a}\hat{b}-\hat{a}\overline{b}=0$. As $(\overline{a},\hat{a})\neq(0,0)$ and $(\overline{a},\overline{b})\neq(0,0)\neq(\hat{a},\hat{b})$, we must have $\overline{b}=kk'\overline{a}$ and $\hat{b}=kk'\hat{a}$. So, in this case we find that
		\begin{align*}
			&&\gamma'_{2}&=(\hat{a}+d_{5})+k\hat{a}\gamma_{0}+(\overline{a}-c_{5})\gamma'_{1}+k\overline{a}\gamma_{0}\gamma'_{1}\\
			\Leftrightarrow&& \gamma'_{2}+c_{5}\gamma'_{1}-d_{5}&=(1+k\gamma_{0})(\hat{a}+\overline{a}\gamma'_{1})\;\text{ and}\\
			&&\gamma_{0}\gamma'_{2}&=k'\hat{a}+(\hat{b}+d_{5})\gamma_{0}+k'\overline{a}\gamma'_{1}+(\overline{b}-c_{5})\gamma_{0}\gamma'_{1}\\
			\Leftrightarrow&& \gamma_{0}\left(\gamma'_{2}+c_{5}\gamma'_{1}-d_{5}\right)&=k'(1+k\gamma_{0})(\hat{a}+\overline{a}\gamma'_{1})\;.
		\end{align*}
		It follows that $(\gamma_{0}-k')(\gamma'_{2}+c_{5}\gamma'_{1}-d_{5})=0$,
		which is a contradiction since $\gamma_{0}\notin\F_{q}$ and $\{1,\gamma'_{1},\gamma'_{2}\}$ is a linearly independent set over $\F_{q}$.
		\item There are $k,k'\in\F^{*}_{q}$ such that $(a_{2},a_{4})=k(\hat{a},\overline{a})$ and $(b_{1},b_{3})=k'(\hat{b},\overline{b})$. From Equations \eqref{eq:cubicontaard1}, \eqref{eq:cubicontaard2} and \eqref{eq:cubicontaard3} it follows that $\overline{a}\overline{b}(kk'-1)=0=\hat{a}\hat{b}(kk'-1)$ and $(kk'-1)\overline{a}\hat{b}+(kk'-1)\hat{a}\overline{b}=0$. As $(\overline{a},\hat{a})\neq(0,0)$ and $(\overline{a},\overline{b})\neq(0,0)\neq(\hat{a},\hat{b})$, we must have $kk'=1$. So, in this case we find that
		\begin{align*}
			&&\gamma'_{2}&=(\hat{a}+d_{5})+k\hat{a}\gamma_{0}+(\overline{a}-c_{5})\gamma'_{1}+k\overline{a}\gamma_{0}\gamma'_{1}\\ &\Leftrightarrow& \gamma'_{2}+c_{5}\gamma'_{1}-d_{5}&=(1+k\gamma_{0})(\hat{a}+\overline{a}\gamma'_{1})\qquad\text{ and}\\
			&&\gamma_{0}\gamma'_{2}&=k'\hat{b}+(\hat{b}+d_{5})\gamma_{0}+k'\overline{b}\gamma'_{1}+(\overline{b}-c_{5})\gamma_{0}\gamma'_{1}\\ &\Leftrightarrow& \gamma_{0}\left(\gamma'_{2}+c_{5}\gamma'_{1}-d_{5}\right)&=(k'+\gamma_{0})(\hat{b}+\overline{b}\gamma'_{1})\;.
		\end{align*}
		Hence, we have
		\begin{align*}
			\gamma_{0}(1+k\gamma_{0})(\hat{a}+\overline{a}\gamma'_{1})&=(k'+\gamma_{0})(\hat{b}+\overline{b}\gamma'_{1})\\
			\Leftrightarrow\quad 0&=(k'+\gamma_{0})\left(\hat{b}-k\hat{a}\gamma_{0}+\overline{b}\gamma'_{1}-k\overline{a}\gamma_{0}\gamma'_{1}\right)\;,
		\end{align*}
		which is a contradiction since $\gamma_{0}\notin\F_{q}$ and since $\{1,\gamma_{0},\gamma'_{1},\gamma_{0}\gamma'_{1}\}$ is a linearly independent set over $\F_{q}$ and $(\overline{b},\hat{b})\neq(0,0)$.
		\item There are $k,k'\in\F^{*}_{q}$ such that $(a_{2},a_{4})=k(\hat{b},\overline{b})$ and $(b_{1},b_{3})=k'(\hat{a},\overline{a})$. From Equations \eqref{eq:cubicontaard1}, \eqref{eq:cubicontaard2} and \eqref{eq:cubicontaard3} it follows that $\overline{a}\overline{b}(kk'-1)=0=\hat{a}\hat{b}(kk'-1)$ and $(kk'-1)\overline{a}\hat{b}+(kk'-1)\hat{a}\overline{b}=0$. As $(\overline{a},\hat{a})\neq(0,0)$ and $(\overline{a},\overline{b})\neq(0,0)\neq(\hat{a},\hat{b})$, we must have $kk'=1$. So, in this case we find that 
		\begin{align*}
			(\gamma_{0}-k')(\gamma'_{2}+c_{5}\gamma'_{1}-d_{5})&=k'\hat{a}+(\hat{b}+d_{5})\gamma_{0}+k'\overline{a}\gamma'_{1}+(\overline{b}-c_{5})\gamma_{0}\gamma'_{1}+c_{5}\gamma_{0}\gamma'_{1}-d_{5}\gamma_{0}\\
			&\qquad-k'(\hat{a}+d_{5})-k'k\hat{b}\gamma_{0}-k'(\overline{a}-c_{5})\gamma'_{1}-k'k\overline{b}\gamma_{0}\gamma'_{1}\\
			&\qquad-k'c_{5}\gamma'_{1}+k'd_{5}\\
			&=0\;,
		\end{align*}
		which is a contradiction since $\gamma_{0}\notin\F_{q}$ and $\{1,\gamma'_{1},\gamma'_{2}\}$ is a linearly independent set over $\F_{q}$.
		\item There are $k,k'\in\F^{*}_{q}$ such that $(a_{2},a_{4})=k(\hat{b},\overline{b})$ and $(b_{1},b_{3})=k'(\hat{b},\overline{b})$. From Equations \eqref{eq:cubicontaard1}, \eqref{eq:cubicontaard2} and \eqref{eq:cubicontaard3} it follows that $\overline{b}(kk'\overline{b}-\overline{a})=0=\hat{b}(kk'\hat{b}-\hat{a})$ and $2kk'\overline{b}\hat{b}-\overline{a}\hat{b}-\hat{a}\overline{b}=0$. As $(\overline{b},\hat{b})\neq(0,0)$ and $(\overline{a},\overline{b})\neq(0,0)\neq(\hat{a},\hat{b})$, we must have $\overline{a}=kk'\overline{b}$ and $\hat{a}=kk'\hat{b}$. So, in this case we find that
		\begin{align*}
			(k\gamma_{0}-1)(\gamma'_{2}+c_{5}\gamma'_{1}-d_{5})&=kk'\hat{b}+k(\hat{b}+d_{5})\gamma_{0}+kk'\overline{b}\gamma'_{1}+k(\overline{b}-c_{5})\gamma_{0}\gamma'_{1}+kc_{5}\gamma_{0}\gamma'_{1}\\
			&\qquad-kd_{5}\gamma_{0}-(\hat{a}+d_{5})-k\hat{b}\gamma_{0}-(\overline{a}-c_{5})\gamma'_{1}-k\overline{b}\gamma_{0}\gamma'_{1}\\&\qquad-c_{5}\gamma'_{1}+d_{5}\\
			&=0\;,
		\end{align*}
		which is a contradiction since $\gamma_{0}\notin\F_{q}$ and $\{1,\gamma'_{1},\gamma'_{2}\}$ is a linearly independent set over $\F_{q}$.
	\end{itemize}
	So, we find a contradiction in each case, and we conclude this interlude: the cubic $\overline{C}$ cannot decompose in three lines through $R=\langle(1,0,0)\rangle$.}
	\par Now, we denote the number of points on $\overline{C}$ by $N$ and the number of points on $\overline{C}\cap\ell_{\infty}$ by $N_{\infty}$. If $c_{5}=0$ (and consequently $a_{3}b_{4}-a_{4}b_{3}\neq0$), Equations \eqref{eq:1standaardB13} and \eqref{eq:1standaardC} have no solutions, and \eqref{eq:1standaardD13} has one solution.
	Since \eqref{eq:1standaardB13bis} has one solution and, as seen before, $R\in\overline{C}\cap\ell_{\infty}$, it follows that $N_{\infty}=2$ in this case. If $c_{5}\neq0$, Equations \eqref{eq:1standaardC} and \eqref{eq:1standaardD13} have no solutions, and \eqref{eq:1standaardB13bis} has $N_{\infty}-1$ solutions. So, regardless of the behaviour of $c_{5}$, we find that the total number of solutions of Equations \eqref{eq:1standaardB}, \eqref{eq:1standaardC} and \eqref{eq:1standaardD} equals $N_{\infty}-1$ in this case.
	\par  For the discussion of the number of solutions of the Equations \eqref{eq:1standaardA}, \eqref{eq:1standaardB}, \eqref{eq:1standaardC} and \eqref{eq:1standaardD} together, we distinguish between the following cases. Denote the line $\overline{L_{1}}=0$ by $\ell_{1}$ and the line $\overline{L_{2}}=0$ by $\ell_{2}$, in case $\overline{L_{2}}$ does not vanish. 	\begin{itemize}
		\item \textit{The function $\overline{L_{2}}$ does not vanish, and the lines $\ell_{1}$ and $\ell_{2}$ in $\overline{\pi}$ do not coincide.} Then the lines $\ell_{1}$ and $\ell_{2}$ meet in the point $R\in\ell_{\infty}$. In this case Equation \eqref{eq:1cubic13}, and hence also Equation \eqref{eq:1standaardA}, has $N-N_{\infty}$ solutions. We find that the total number of solutions of the four equations equals $N-1$. Including the point $P_{0}$, we find that $\Pi\cap\Omega_{2}$ contains $N$ points.
		\par On the one hand, we showed before that in Case A.3 the affine cubic $C$ contains at most $q$ points. Since the cubic $\overline{C}$ contains $R$ but cannot decompose in three lines through $R$, Lemma \ref{cubic} shows that $N=|\Pi\cap \Omega_2|\in [q-2\sqrt{q}+1,q]$.
		\item \textit{The function $\overline{L_{2}}$ does not vanish, but the lines $\ell_{1}$ and $\ell_{2}:\overline{L_{2}}=0$ in $\overline{\pi}$ coincide} or \textit{the function $\overline{L_{2}}$ vanishes.} Then there is a $k\in\F_{q}$ be such that $\overline{L_{2}}=k\overline{L_{1}}$. Then $\overline{F}=\overline{L_{1}}\left(\overline{C_{2}}-k\overline{C_{1}}\right)$, and hence the cubic $C$ decomposes in the line $\ell_{1}$ and a conic $C':\overline{C_{2}}-k\overline{C_{1}}=0$. Note that $R\in C'$. The solutions of \eqref{eq:1standaardA13bis} correspond to the affine points on $C'\setminus\ell_{1}$. Now,
		\begin{itemize}
			\item if $C'$ is non-degenerate, then the number of affine points on $C'\setminus\ell_{1}$ equals $q-2$ or $q-1$; the latter occurs if $\ell_{1}$ or $\ell_{\infty}$ is a tangent to $C'$, and the former otherwise. If $\ell_{\infty}$ is a tangent then $N_{\infty}=1$, and else $N_{\infty}=2$. We find that the total number of solutions of the four equations equals $q-1$ or $q$. Including the point $P_{0}$, we find that $\Pi\cap\Omega_{2}$ contains $q$ or $q+1$ points.
			\item If $C'$ decomposes in two lines $m$ and $m'$ in $\overline{\pi}$, then at least one of them, say $m$ must contain $R$. However, we know that $\overline{C}$ cannot contain two affine points on the same line through $R$ if this line is different from $\ell_{1}$ (see discussion after Equation \eqref{eq:1cubic13}). Also, we know that $\ell_{\infty}$ contains at most one point of $\overline{C}$ next to $R$ (see the discussion after Equation \eqref{eq:1oneindig13}). So, $m=\ell_{1}$. Now, note that $R\notin m'$ since $\overline{C}$ cannot decompose in three lines through $R$. So, the solutions of Equation \eqref{eq:1cubic13} correspond to the $q-1$ affine points of $m'$ not on $\ell_{1}$. Moreover, as $R\notin m'$, we have $N_{\infty}=2$, so the four equations have $q$ solutions together. Including the point $P_{0}$, we find that $\Pi\cap\Omega_{2}$ contains $q+1$ points.
			\item If $C'$ decomposes in two lines $m$ and $m'$ not in $\overline{\pi}$, but defined over a quadratic extension of $\F_{q}$, then $m$ and $m'$ both contain $R$ -- recall that $C'$ contains $R$. However, then $\overline{C}$ decomposes in three lines through $R$, a contradiction.
		\end{itemize}
	\end{itemize}
	We conclude that in all subcases the statement of theorem follows.
	\par \textit{Case A.4: $\gamma'_{2},\gamma_{0}\gamma'_{2},\delta\in U_{1}$, but $\gamma_{2}\notin U_{1}$}.The arguments in this case are similar to the arguments in Case A.1, albeit easier since we can make a reduction to a conic instead of a cubic. We find that $|\Pi\cap\Omega_{2}|$ is contained in $\{q,q+1,q+2,2q,2q+1\}$. Details can be found in Appendix \ifthenelse{\equal{\versie}{arxiv}}{\ref{ap:th4.5}, see page \pageref{apB:A4}}{B in the arXiv version of this paper}.\comments{So, we assume that $\dim\left\langle 1,\gamma_{0},\gamma'_{1},\gamma_{0}\gamma'_{1},\gamma_{2}\right\rangle_q=5$, in other words $\left\{1,\gamma_{0},\gamma'_{1},\gamma_{0}\gamma'_{1},\gamma_{2}\right\}$ is an $\F_{q}$-basis for $\F_{q^{5}}$. Note that in this case $U_{2}\leq U_{1}$. Then, there are $a_{i},b_{i},c_{i},d_{i}\in\F_{q}$, $i=1,\dots,5$, such that
	\begin{align*}
		\gamma'_{2}&=a_{1}+a_{2}\gamma_{0}+a_{3}\gamma'_{1}+a_{4}\gamma_{0}\gamma'_{1}\;,\\
		\gamma_{0}\gamma'_{2}&=b_{1}+b_{2}\gamma_{0}+b_{3}\gamma'_{1}+b_{4}\gamma_{0}\gamma'_{1}\;,\\
		\delta&=c_{1}+c_{2}\gamma_{0}+c_{3}\gamma'_{1}+c_{4}\gamma_{0}\gamma'_{1}\quad\text{and}\\
		\delta\gamma'_{2}+\gamma'_{1}\gamma_{2}&=d_{1}+d_{2}\gamma_{0}+d_{3}\gamma'_{1}+d_{4}\gamma_{0}\gamma'_{1}+d_{5}\gamma_{2}\;.
	\end{align*}
	\par Considering $\F_{q^{5}}$ as a vector space over $\F_{q}$, Equation \eqref{eq:1standaardA} is equivalent to the following system of equations:
	\begin{align}\label{eq:1standaardA14}
		\begin{cases}
			d_{1}=\mu_{3}\nu_{2}-\mu_{2}\nu_{3}-\nu_{2}a_{1}+\nu_{1}b_{1}+\mu_{3}c_{1}\\
			d_{2}=\mu_{1}\nu_{3}-\mu_{3}\nu_{1}-\nu_{2}a_{2}+\nu_{1}b_{2}+\mu_{3}c_{2}\\
			d_{3}=\mu_{2}-\nu_{2}a_{3}+\nu_{1}b_{3}+\mu_{3}c_{3}\\
			d_{4}=-\mu_{1}-\nu_{2}a_{4}+\nu_{1}b_{4}+\mu_{3}c_{4}\\
			d_{5}=\nu_{3}
		\end{cases}
		\Leftrightarrow\quad
		\begin{cases}
			\mu_{1}=-\nu_{2}a_{4}+\nu_{1}b_{4}+\mu_{3}c_{4}-d_{4}\\
			\mu_{2}=\nu_{2}a_{3}-\nu_{1}b_{3}-\mu_{3}c_{3}+d_{3}\\
			\nu_{3}=d_{5}\\
			d_{1}=\mu_{3}\nu_{2}-\mu_{2}\nu_{3}-\nu_{2}a_{1}+\nu_{1}b_{1}+\mu_{3}c_{1}\\
			d_{2}=\mu_{1}\nu_{3}-\mu_{3}\nu_{1}-\nu_{2}a_{2}+\nu_{1}b_{2}+\mu_{3}c_{2}
		\end{cases}.
	\end{align}
	It is straightforward to see that there is a one-to-one correspondence between the solutions in $(\mu_{1},\mu_{2},\mu_{3},\nu_{1},\nu_{2},\nu_{3})$ of Equation \eqref{eq:1standaardA14} and the solutions in $(\nu_{1},\nu_{2},\mu_{3})$ of
	\begin{align}\label{eq:1standaardA14bis}
		&\begin{cases}
			d_{1}=\mu_{3}\nu_{2}-(\nu_{2}a_{3}-\nu_{1}b_{3}-\mu_{3}c_{3}+d_{3})d_{5}-\nu_{2}a_{1}+\nu_{1}b_{1}+\mu_{3}c_{1}\\
			d_{2}=(-\nu_{2}a_{4}+\nu_{1}b_{4}+\mu_{3}c_{4}-d_{4})d_{5}-\mu_{3}\nu_{1}-\nu_{2}a_{2}+\nu_{1}b_{2}+\mu_{3}c_{2}
		\end{cases}\nonumber\\
		\Leftrightarrow\quad
		&\begin{cases}
			-L_{1}(\nu_{1},\nu_{2})\mu_{3}=C_{1}(\nu_{1},\nu_{2})\\
			-L_{2}(\nu_{1},\nu_{2})\mu_{3}=C_{2}(\nu_{1},\nu_{2})
		\end{cases}
	\end{align}
	with
	\begin{align*}
		L_{1}(\nu_{1},\nu_{2})&=\nu_{2}+c_{1}+c_{3}d_{5}\;,\\
		L_{2}(\nu_{1},\nu_{2})&=-\nu_{1}+c_{2}+c_{4}d_{5}\;,\\
		C_{1}(\nu_{1},\nu_{2})&=(b_{1}+b_{3}d_{5})\nu_{1}-(a_{1}+a_{3}d_{5})\nu_{2}-d_{1}-d_{3}d_{5}\;\text{ and}\\
		C_{2}(\nu_{1},\nu_{2})&=(b_{2}+b_{4}d_{5})\nu_{1}-(a_{2}+a_{4}d_{5})\nu_{2}-d_{2}-d_{4}d_{5}\;.
	\end{align*}
	Given $\nu_{1}$ and $\nu_{2}$, the system of equations in \eqref{eq:1standaardA14bis} has $0$, $1$ or $q$ solutions for $\mu_{3}$. Assume that for $(\nu_{1},\nu_{2})=(\overline{\nu_{1}},\overline{\nu_{2}})$ the system of equations in \eqref{eq:1standaardA14bis} would have $q$ solutions. Then, $L_{1}(\overline{\nu_{1}},\overline{\nu_{2}})=L_{2}(\overline{\nu_{1}},\overline{\nu_{2}})=C_{1}(\overline{\nu_{1}},\overline{\nu_{2}})=C_{2}(\overline{\nu_{1}},\overline{\nu_{2}})=0$. It follows that $\overline{\nu_{1}}=c_{2}+c_{4}d_{5}$ and $\overline{\nu_{2}}=-c_{1}-c_{3}d_{5}$, and we find that
	\begin{align}
		0&=\left(a_{3}c_{3}+b_{3}c_{4}\right)d^{2}_{5}+\left(a_{1}c_{3}+a_{3}c_{1}+b_{1}c_{4}+b_{3}c_{2}-d_{3}\right)d_{5}+a_{1}c_{1}+b_{1}c_{2}-d_{1}\quad\text{and}\label{eq:1standaardA14qopl1}\\
		0&=\left(a_{4}c_{3}+b_{4}c_{4}\right)d^{2}_{5}+\left(a_{2}c_{3}+a_{4}c_{1}+b_{2}c_{4}+b_{4}c_{2}-d_{4}\right)d_{5}+a_{2}c_{1}+b_{2}c_{2}-d_{2}\label{eq:1standaardA14qopl2}\;.
	\end{align}
	Now, we also have that
	\begin{align*}
		\gamma_{2}\left(\gamma'_{1}-d_{5}\right)&=d_{1}+d_{2}\gamma_{0}+d_{3}\gamma'_{1}+d_{4}\gamma_{0}\gamma'_{1}-\gamma'_{2}\delta\\
		&=d_{1}+d_{2}\gamma_{0}+d_{3}\gamma'_{1}+d_{4}\gamma_{0}\gamma'_{1}-\left(a_{1}+a_{2}\gamma_{0}+a_{3}\gamma'_{1}+a_{4}\gamma_{0}\gamma'_{1}\right)\left(c_{1}+c_{3}\gamma'_{1}\right)\\&\qquad-\left(b_{1}+b_{2}\gamma_{0}+b_{3}\gamma'_{1}+b_{4}\gamma_{0}\gamma'_{1}\right)\left(c_{2}+c_{4}\gamma'_{1}\right)\\
		&=\left(d_{1}-a_{1}c_{1}-b_{1}c_{2}\right)+\left(d_{3}-a_{1}c_{3}-a_{3}c_{1}-b_{1}c_{4}-b_{3}c_{2}\right)\gamma'_{1}-\left(a_{3}c_{3}+b_{3}c_{4}\right)\gamma'^{2}_{1}\\&\qquad+\left(d_{2}-a_{2}c_{1}-b_{2}c_{2}\right)\gamma_{0}+\left(d_{4}-a_{2}c_{3}-a_{4}c_{1}-b_{2}c_{4}-b_{4}c_{2}\right)\gamma_{0}\gamma'_{1}\\&\qquad-\left(a_{4}c_{3}+b_{4}c_{4}\right)\gamma_{0}\gamma'^{2}_{1}\;.
	\end{align*}
	Substituting Equations \eqref{eq:1standaardA14qopl1} and \eqref{eq:1standaardA14qopl2} in this expression, we find that
	\begin{align*}
		0&=(\gamma'_{1}-d_{5})\left[\gamma_{2}+\left(a_{1}c_{3}+a_{3}c_{1}+b_{1}c_{4}+b_{3}c_{2}-d_{3}\right)+\left(a_{3}c_{3}+b_{3}c_{4}\right)(\gamma'_{1}+d_{5})\right.\\&\qquad\qquad\qquad\left.+\left(a_{2}c_{3}+a_{4}c_{1}+b_{2}c_{4}+b_{4}c_{2}-d_{4}\right)\gamma_{0}+\left(a_{4}c_{3}+b_{4}c_{4}\right)\gamma_{0}(\gamma'_{1}+d_{5})\right].
	\end{align*}
	Since $\gamma'_{1}\notin\F_{q}$ and $\gamma_{2}\notin U_{1}$ by the assumption, we find a contradiction. So, the system of equations in \eqref{eq:1standaardA14bis} has $0$ solutions or a unique solution in $\mu_{3}$, and the latter occurs if and only if 
	\begin{multline}\label{eq:1cubic14}
		F(\nu_{1},\nu_{2})=L_{1}(\nu_{1},\nu_{2})C_{2}(\nu_{1},\nu_{2})-L_{2}(\nu_{1},\nu_{2})C_{1}(\nu_{1},\nu_{2})=0\\ \wedge\quad\left(L_{1}(\nu_{1},\nu_{2}),L_{2}(\nu_{1},\nu_{2})\right)\neq(0,0)\;.
	\end{multline}
	The equation $F(\nu_{1},\nu_{2})=0$ determines a conic $C$ in the $(\nu_{1},\nu_{2})$-plane $\pi\cong\AG(2,q)$. We embed this affine plane in the projective plane $\overline{\pi}\cong\PG(2,q)$ by adding the line at infinity $\ell_{\infty}$ and extend $C$ to the conic $\overline{C}$ by going to a homogeneous equation $\overline{F}(\nu_{1},\nu_{2},\rho)=0$. Analogously, we define the homogeneous functions $\overline{L_{1}}(\nu_{1},\nu_{2},\rho)$ and $\overline{L_{2}}(\nu_{1},\nu_{2},\rho)$.
	\par Note that both $\overline{L_{1}}$ and $\overline{L_{2}}$ cannot be identically zero. Moreover, $\overline{L_{1}}=0$ and $\overline{L_{2}}=0$ determine different lines in $\pi$ and their intersection point $R=(c_{2}+c_{4}d_{5},-c_{1}-c_{3}d_{5},1)$ is not on $\ell_{\infty}$. It is clear that $R$ is on the conic $\overline{C}$. Moreover, this conic cannot decompose in two lines (either over $\F_{q}$ or an algebraic extension) through $R$, since for this to happen we should have that $R$ is also on $C_{1}=0$ and $C_{2}=0$, but we have showed before that an affine point cannot be on all four lines $L_{1}=0$, $L_{2}=0$, $C_{1}=0$ and $C_{2}=0$.	
	\par We know that the number of points on $\overline{C}$ equals $q+1$ or $2q+1$. Subtracting $R$ and the number of points on $\overline{C}\cap\ell_{\infty}$ we find that the number of solutions of Equation \eqref{eq:1cubic14}, and hence also of Equation \eqref{eq:1standaardA14}, is contained in $\{q-2,q-1,q,2q-2,2q-1\}$.
	\par Now we look at Equations \eqref{eq:1standaardB}, \eqref{eq:1standaardC} and \eqref{eq:1standaardD}. It is immediately clear that by the assumption of Case A.4 Equation \eqref{eq:1standaardB} has no solutions, Equation \eqref{eq:1standaardC} has a unique solution and Equation \eqref{eq:1standaardD} has no solutions. Including the point $P_{0}$, we find that $|\Pi\cap\Omega_{2}|$ is contained in $\{q,q+1,q+2,2q,2q+1\}$.}
	\par \textit{Case A.5: $\gamma'_{2},\gamma_{0}\gamma'_{2},\delta,\gamma_{2}\in U_{1}$, but $\delta\gamma'_{2}+\gamma'_{1}\gamma_{2}\notin U_{1}$}.  The arguments in this case are similar to the arguments in Case A.4, but a bit more involved. We need to handle the case $q=2$ separately, but we can confirm the statement of the Theorem both for $q=2$ and $q\geq3$. Details can be found in Appendix \ifthenelse{\equal{\versie}{arxiv}}{\ref{ap:th4.5}, see page \pageref{apB:A5}}{B in the arXiv version of this paper}.\comments{By this assumption we have $\dim\left\langle 1,\gamma_{0},\gamma'_{1},\gamma_{0}\gamma'_{1},\delta\gamma'_{2}+\gamma'_{1}\gamma_{2}\right\rangle_q=5$, in other words $\left\{1,\gamma_{0},\gamma'_{1},\gamma_{0}\gamma'_{1},\delta\gamma'_{2}+\gamma'_{1}\gamma_{2}\right\}$ is an $\F_{q}$-basis for $\F_{q^{5}}$. Note that in this case $U_{2}\leq U_{1}$ Then, there are $a_{i},b_{i},c_{i},d_{i}\in\F_{q}$, $i=1,\dots,5$, such that
	\begin{align*}
		\gamma'_{2}&=a_{1}+a_{2}\gamma_{0}+a_{3}\gamma'_{1}+a_{4}\gamma_{0}\gamma'_{1}\;,\\
		\gamma_{0}\gamma'_{2}&=b_{1}+b_{2}\gamma_{0}+b_{3}\gamma'_{1}+b_{4}\gamma_{0}\gamma'_{1}\;,\\
		\delta&=c_{1}+c_{2}\gamma_{0}+c_{3}\gamma'_{1}+c_{4}\gamma_{0}\gamma'_{1}\quad\text{and}\\
		\gamma_{2}&=d_{1}+d_{2}\gamma_{0}+d_{3}\gamma'_{1}+d_{4}\gamma_{0}\gamma'_{1}\;.
	\end{align*}
	We mentioned before that $\left\langle P_{0},P_{2}\right\rangle$ is a $(q+1)$-secant if $\gamma_{2}\in U_{2}$ and $\dim U_{2}=3$. In this case, these conditions are fulfilled if and only if $\rk\left(\begin{smallmatrix} a_{3} & b_{3} & d_{3}\\a_{4} & b_{4} & d_{4}\end{smallmatrix}\right)=1$. Suppose that $\rk\left(\begin{smallmatrix} a_{3} & b_{3} & d_{3}\\a_{4} & b_{4} & d_{4}\end{smallmatrix}\right)=0$. This implies that $\gamma_2' \in \F_q$, which in turn implies that $\{1,\gamma_1',\gamma_2'\}$ is not an $\F_q$-independent set. As seen in the intermezzo, this shows that there is a $(q^2+q+1)$-secant. We conclude that 	$\rk\left(\begin{smallmatrix} a_{3} & b_{3} & d_{3}\\a_{4} & b_{4} & d_{4}\end{smallmatrix}\right)=2$. We also note that
	\begin{align*}
		\delta\gamma'_{2}+\gamma'_{1}\gamma_{2}&=\left(c_{1}+c_{3}\gamma'_{1}\right)\left(a_{1}+a_{2}\gamma_{0}+a_{3}\gamma'_{1}+a_{4}\gamma_{0}\gamma'_{1}\right)+\left(c_{2}+c_{4}\gamma'_{1}\right)\left(b_{1}+b_{2}\gamma_{0}+b_{3}\gamma'_{1}+b_{4}\gamma_{0}\gamma'_{1}\right)\\
		&\qquad+\gamma'_{1}\left(d_{1}+d_{2}\gamma_{0}+d_{3}\gamma'_{1}+d_{4}\gamma_{0}\gamma'_{1}\right)\\
		&=\left(a_{1}c_{1}+b_{1}c_{2}\right)+\left(a_{2}c_{1}+b_{2}c_{2}\right)\gamma_{0}+\left(a_{1}c_{3}+a_{3}c_{1}+b_{1}c_{4}+b_{3}c_{2}+d_{1}\right)\gamma'_{1}\\&\qquad+\left(a_{2}c_{3}+a_{4}c_{1}+b_{2}c_{4}+b_{4}c_{2}+d_{2}\right)\gamma_{0}\gamma'_{1}+\left(a_{3}c_{3}+b_{3}c_{4}+d_{3}\right)\gamma'^{2}_{1}\\&\qquad+\left(a_{4}c_{3}+b_{4}c_{4}+d_{4}\right)\gamma_{0}\gamma'^{2}_{1}\;.
	\end{align*}
	so we cannot have that
	\begin{align}\label{niet00}
		\left(a_{3}c_{3}+b_{3}c_{4}+d_{3},a_{4}c_{3}+b_{4}c_{4}+d_{4}\right)\neq (0,0)
	\end{align}
	by the assumption that $\delta\gamma'_{2}+\gamma'_{1}\gamma_{2}\notin U_{1}$.
	\par It is obvious that Equation \eqref{eq:1standaardA} has no solutions in this case. We look at Equation \eqref{eq:1standaardB}; it is equivalent to the following system of equations:
	\begin{align}\label{eq:1standaardB15}
		\begin{cases}
			-d_{1}=-\mu_{2}-a_{1}\nu_{2}+b_{1}\nu_{1}-c_{1}\mu_{4}\\
			-d_{2}=\mu_{1}-a_{2}\nu_{2}+b_{2}\nu_{1}-c_{2}\mu_{4}\\
			-d_{3}=-\mu_{4}\nu_{2}-a_{3}\nu_{2}+b_{3}\nu_{1}-c_{3}\mu_{4}\\
			-d_{4}=\mu_{4}\nu_{1}-a_{4}\nu_{2}+b_{4}\nu_{1}-c_{4}\mu_{4}\\
		\end{cases}
		\Leftrightarrow\quad
		\begin{cases}
			\mu_{1}=a_{2}\nu_{2}-b_{2}\nu_{1}+c_{2}\mu_{4}-d_{2}\\
			\mu_{2}=-a_{1}\nu_{2}+b_{1}\nu_{1}-c_{1}\mu_{4}+d_{1}\\
			-d_{3}=-\mu_{4}\nu_{2}-a_{3}\nu_{2}+b_{3}\nu_{1}-c_{3}\mu_{4}\\
			-d_{4}=\mu_{4}\nu_{1}-a_{4}\nu_{2}+b_{4}\nu_{1}-c_{4}\mu_{4}\\
		\end{cases}\;.
	\end{align}
	It is straightforward to see that there is a one-to-one correspondence between the solutions in $(\mu_{1},\mu_{2},\mu_{4},\nu_{1},\nu_{2})$ of Equation \eqref{eq:1standaardB13} and the solutions in $(\mu_{4},\nu_{1},\nu_{2})$ of
	\begin{align}\label{eq:1standaardB15bis}
		&\begin{cases}
			-d_{3}=-\mu_{4}\nu_{2}-a_{3}\nu_{2}+b_{3}\nu_{1}-c_{3}\mu_{4}\\
			-d_{4}=\mu_{4}\nu_{1}-a_{4}\nu_{2}+b_{4}\nu_{1}-c_{4}\mu_{4}\\
		\end{cases}
		&\Leftrightarrow\quad
		&\begin{cases}
			(\nu_{2}+c_{3})\mu_{4}=b_{3}\nu_{1}-a_{3}\nu_{2}+d_{3}\\
			(-\nu_{1}+c_{4})\mu_{4}=b_{4}\nu_{1}-a_{4}\nu_{2}+d_{4}
		\end{cases}\nonumber\\
		&&\Leftrightarrow\quad
		&\begin{cases}
			L_{1}\left(\nu_{1},\nu_{2}\right)\mu_{4}=C_{1}\left(\nu_{1},\nu_{2}\right)\\
			L_{2}\left(\nu_{1},\nu_{2}\right)\mu_{4}=C_{2}\left(\nu_{1},\nu_{2}\right)
		\end{cases}
	\end{align}
	with
	\begin{align*}
	L_{1}(\nu_{1},\nu_{2})&=\nu_{2}+c_{3}\;,\\
	L_{2}(\nu_{1},\nu_{2})&=-\nu_{1}+c_{4}\;,\\
	C_{1}(\nu_{1},\nu_{2})&=b_{3}\nu_{1}-a_{3}\nu_{2}+d_{3}\;\text{ and}\\
	C_{2}(\nu_{1},\nu_{2})&=b_{4}\nu_{1}-a_{4}\nu_{2}+d_{4}\;.
	\end{align*}
	The system of equations in \eqref{eq:1standaardB15bis} has $0$, $1$ or $q$ solutions for $\mu_{4}$.  Assume that for $(\nu_{1},\nu_{2})=(\overline{\nu_{1}},\overline{\nu_{2}})$ the system of equations in \eqref{eq:1standaardB15bis} would have $q$ solutions. Then, $L_{1}(\overline{\nu_{1}},\overline{\nu_{2}})=L_{2}(\overline{\nu_{1}},\overline{\nu_{2}})=C_{1}(\overline{\nu_{1}},\overline{\nu_{2}})=C_{2}(\overline{\nu_{1}},\overline{\nu_{2}})=0$. It follows that $b_{3}c_{4}+a_{3}c_{3}+d_{3}=0=b_{4}c_{4}+a_{4}c_{3}+d_{4}$, contradicting the observation we made above. So, \eqref{eq:1standaardB15bis} has either 0 solutions or a unique solution in $\mu_{4}$, and the latter occurs iff
	\begin{multline}\label{eq:1cubic15}
		F(\nu_{1},\nu_{2})=L_{1}(\nu_{1},\nu_{2})C_{2}(\nu_{1},\nu_{2})-L_{2}(\nu_{1},\nu_{2})C_{1}(\nu_{1},\nu_{2})=0\\ \wedge\quad\left(L_{1}(\nu_{1},\nu_{2}),L_{2}(\nu_{1},\nu_{2})\right)\neq(0,0)\quad\wedge\quad\left(C_{1}(\nu_{1},\nu_{2}),C_{2}(\nu_{1},\nu_{2})\right)\neq(0,0)\;.
	\end{multline}
	Recall for this last condition that $\mu_{4}\in\F^{*}_{q}$. The equation $F(\nu_{1},\nu_{2})=0$ determines a conic $C$ in the $(\nu_{1},\nu_{2})$-plane $\pi\cong\AG(2,q)$. We embed this affine plane in the projective plane $\overline{\pi}\cong\PG(2,q)$ by adding the line at infinity $\ell_{\infty}$ and extend $C$ to the conic $\overline{C}$ by going to a homogeneous equation $\overline{F}(\nu_{1},\nu_{2},\rho)=0$. Analogously, we define the homogeneous functions $\overline{L_{1}}(\nu_{1},\nu_{2},\rho)$, $\overline{L_{2}}(\nu_{1},\nu_{2},\rho)$, $\overline{C_{1}}(\nu_{1},\nu_{2},\rho)$ and $\overline{C_{2}}(\nu_{1},\nu_{2},\rho)$.
	\par Note that both $\overline{L_{1}}$ and $\overline{L_{2}}$ cannot be identically zero. Moreover, $\overline{L_{1}}=0$ and $\overline{L_{2}}=0$ determine different lines in $\pi$ and their intersection point $R=(c_{4},-c_{3},1)$ is not on $\ell_{\infty}$. It is clear that $R$ is on the conic $\overline{C}$. Furthermore, there is precisely one point $R'$ in $\overline{\pi}$ that is on $\overline{C_{1}}=0$ and $\overline{C_{2}}=0$ since $\rk\left(\begin{smallmatrix} b_{3} & -a_{3} & d_{3}\\b_{4} & -a_{4} & d_{4}\end{smallmatrix}\right)=\rk\left(\begin{smallmatrix} a_{3} & b_{3} & d_{3}\\a_{4} & b_{4} & d_{4}\end{smallmatrix}\right)=2$. In other words, $\overline{C_{1}}=0$ and $\overline{C_{2}}=0$ determine non-coinciding lines. Note that $R'\in\ell_{\infty}$ if and only if $a_{4}b_{3}-a_{3}b_{4}=0$. We set $\varepsilon=1$ if $R'$ is affine, and $\varepsilon=0$ if $R'\in\ell_{\infty}$. Furthermore $R\neq R'$ since we showed above that $L_{1}$, $L_{2}$, $C_{1}$ and $C_{2}$ cannot be simultaneously zero.
	\par Also, it is impossible that simultaneously the lines $\overline{L_{1}}=0$ and $\overline{C_{1}}=0$ coincide and the lines $\overline{L_{2}}=0$ and $\overline{C_{2}}=0$ coincide in $\overline{\pi}$, since then we would have that $b_{3}=a_{4}=0$ and $a_{3}c_{3}+d_{3}=b_{4}c_{4}+d_{4}=0$, which contradicts \eqref{niet00}. We conclude that $\overline{L_{1}}=0$ and $\overline{C_{1}}=0$ intersect in a point $R_{1}$, or $\overline{L_{2}}=0$ and $\overline{C_{2}}=0$ intersect in a point $R_{2}$. Without loss of generality, we can assume that $R_{1}$ exists; we see that $R_{1}$ is on $\overline{C}$. Since the lines $\overline{L_{1}}=0$ and $\overline{C_{1}}=0$ do not coincide, it follows that the lines $\overline{L_{1}}=0$ and $\overline{C_{1}}=0$ contain at most one point different from $R_{1}$ on $\overline{C}$, the points $R$ and $R'$, respectively.
	\par Now note that, if there are two points of $C=\overline{C}\cap\pi$ different from $R_{1}$ on the same line through $R_{1}$ (different from $\overline{L_{1}}=0$ and $\overline{C_{1}}=0$), then these points correspond to the same solution $\overline{\mu}$ for $\mu_{4}$ in \eqref{eq:1standaardB15bis}; hence looking at \eqref{eq:1algbis} with$(\mu_{3},\nu_{3},\nu_{4},\mu_{5},\nu_{5})=(0,1,0,1,0)$, we see that the corresponding rank 2 points of $\Pi$ both have $\varphi=\overline{\mu}\gamma'_{1}+\gamma'_{2}$, so these two points determine a $(q+1)$-secant by Theorem \ref{qplusonesecant}, contradicting the assumption on $\Pi$.
	\par Assume that $q>2$ and that $\overline{C}$ decomposes in two lines over $\F_{q}$ (so in $\overline{\pi}$). One of these two lines, say $m$, contains $R_{1}$. Since $\overline{L_{1}}=0$ and $\overline{C_{1}}=0$ contain at most two points of $\overline{C}$, the line $m$ is different from $\overline{L_{1}}=0$ and $\overline{C_{1}}=0$. However, then the line $m$ (and hence $\overline{C}$) contains $q-1\geq2$ affine points, which are obviously on the same line through $R_{1}$, a contradiction. So, if $q>2$, the conic $\overline{C}$ cannot decompose in two lines over $\F_{q}$. The conic $\overline{C}$ also cannot decompose in two lines over a quadratic extension, since $\overline{C}$ contains at least two different points $R$ and $R'$. Hence $\overline{C}$ is a non-degenerate conic if $q>2$ and it contains $q+1$ points, of which $q-1$, $q$ or $q+1$ are affine (on $C$). So, Equation \eqref{eq:1cubic15} and hence also Equation \eqref{eq:1standaardB15} has $q-2-\varepsilon$, $q-1-\varepsilon$ or $q-\varepsilon$ solutions since we must disregard the solutions corresponding to $R$ and $R'$. If $q=2$ the conic $\overline{C}$ contains at least two points, $R$ and $R'$, and hence it contains $q+1=3$ or $2q+1=5$ points, of which $q-1=1$, $q=2$, $q+1=2q-1=3$ or $2q=4$ are affine. So, Equation \eqref{eq:1standaardB15} has $0$, $1-\varepsilon$, $2-\varepsilon$ or $3-\varepsilon$ solutions.
	\par Now we look at Equations \eqref{eq:1standaardC} and \eqref{eq:1standaardD}. It is immediately clear that Equation \eqref{eq:1standaardC} has precisely one solution by the assumption of Case A.5. Equation \eqref{eq:1standaardD} has no solutions if and only if $\dim U_{2}=3$ and $\gamma_{2}\notin U_{2}$, so if and only if $a_{3}b_{4}-a_{4}b_{3}=0$; recall that $\rk\left(\begin{smallmatrix} a_{3} & b_{3} & d_{3}\\a_{4} & b_{4} & d_{4}\end{smallmatrix}\right)=2$. It has one solution otherwise. In other words, Equation \eqref{eq:1standaardD} has $\varepsilon$ solutions.
	\par We find that Equations \eqref{eq:1standaardA}, \eqref{eq:1standaardB}, \eqref{eq:1standaardC} and \eqref{eq:1standaardD} in total have between $q-1$ and $q+1$ solutions, if $q>2$. Including the point $P_{0}$, we find that $|\Pi\cap\Omega_{2}|$ is contained in $\{q,q+1,q+2\}$. If $q=2$, we find in the same way that $|\Pi\cap\Omega_{2}|$ is contained in $\{1,\dots,5\}$. So, both for $q=2$ and $q>2$ we find that the theorem is true in Case A.5.}
	\par \textit{Case A.6: $\gamma'_{2},\gamma_{0}\gamma'_{2},\delta,\gamma_{2},\delta\gamma'_{2}+\gamma'_{1}\gamma_{2}\in U_{1}$}.  The arguments in this case are similar to the arguments in Case A.5. Also arguments from Case A.3 are used. We find that $|\Pi\cap\Omega_{2}|=q^{2}+1$. Details can be found in Appendix \ifthenelse{\equal{\versie}{arxiv}}{\ref{ap:th4.5}, see page \pageref{apB:A6}}{B in the arXiv version of this paper}.\comments{Note that in this case $U_{2}\leq U_{1}$. There are $a_{i},b_{i},c_{i},d_{i},e_{i}\in\F_{q}$, $i=1,\dots,4$, such that
	\begin{align*}
		\gamma'_{2}&=a_{1}+a_{2}\gamma_{0}+a_{3}\gamma'_{1}+a_{4}\gamma_{0}\gamma'_{1}\;,\\
		\gamma_{0}\gamma'_{2}&=b_{1}+b_{2}\gamma_{0}+b_{3}\gamma'_{1}+b_{4}\gamma_{0}\gamma'_{1}\;,\\
		\delta&=c_{1}+c_{2}\gamma_{0}+c_{3}\gamma'_{1}+c_{4}\gamma_{0}\gamma'_{1}\;,\\
		\gamma_{2}&=d_{1}+d_{2}\gamma_{0}+d_{3}\gamma'_{1}+d_{4}\gamma_{0}\gamma'_{1}\quad\text{and}\\
		\delta\gamma'_{2}+\gamma'_{1}\gamma_{2}&=e_{1}+e_{2}\gamma_{0}+e_{3}\gamma'_{1}+e_{4}\gamma_{0}\gamma'_{1}\;.
	\end{align*}
	Analogous to the deduction in the beginning of Case A.5, we find that $\rk\left(\begin{smallmatrix} a_{3} & b_{3} & d_{3}\\a_{4} & b_{4} & d_{4}\end{smallmatrix}\right)=2$. Note that we cannot have $a_{2}=a_{4}=0$ or $b_{1}=b_{3}=0$: in both cases we would have that $\{1,\gamma'_{1},\gamma'_{2}\}$ is not a linearly independent set over $\F_{q}$, contradicting a statement from the intermezzo.
	\par Considering now $\F_{q^{5}}$ as a vector space over $\F_{q}$, Equation \eqref{eq:1standaardA} is equivalent to the following system of equations:
	\begin{align}\label{eq:1standaardA16}
		&\begin{cases}
			e_{1}=\mu_{3}\nu_{2}-\mu_{2}\nu_{3}-\nu_{2}a_{1}+\nu_{1}b_{1}+\mu_{3}c_{1}+\nu_{3}d_{1}\\
			e_{2}=\mu_{1}\nu_{3}-\mu_{3}\nu_{1}-\nu_{2}a_{2}+\nu_{1}b_{2}+\mu_{3}c_{2}+\nu_{3}d_{2}\\
			e_{3}=\mu_{2}-\nu_{2}a_{3}+\nu_{1}b_{3}+\mu_{3}c_{3}+\nu_{3}d_{3}\\
			e_{4}=-\mu_{1}-\nu_{2}a_{4}+\nu_{1}b_{4}+\mu_{3}c_{4}+\nu_{3}d_{4}
		\end{cases}\nonumber\\
		\Leftrightarrow\quad
		&\begin{cases}
			\mu_{1}=-\nu_{2}a_{4}+\nu_{1}b_{4}+\mu_{3}c_{4}+\nu_{3}d_{4}-e_{4}\\
			\mu_{2}=\nu_{2}a_{3}-\nu_{1}b_{3}-\mu_{3}c_{3}-\nu_{3}d_{3}+e_{3}\\
			e_{1}=\mu_{3}\nu_{2}-\mu_{2}\nu_{3}-\nu_{2}a_{1}+\nu_{1}b_{1}+\mu_{3}c_{1}+\nu_{3}d_{1}\\
			e_{2}=\mu_{1}\nu_{3}-\mu_{3}\nu_{1}-\nu_{2}a_{2}+\nu_{1}b_{2}+\mu_{3}c_{2}+\nu_{3}d_{2}
		\end{cases}\;.
	\end{align}
	 It is straightforward to see that there is a one-to-one correspondence between the solutions in $(\mu_{1},\mu_{2},\mu_{3},\nu_{1},\nu_{2},\nu_{3})$ of Equation \eqref{eq:1standaardA16} and the solutions in $(\nu_{1},\nu_{2},\mu_{3},\nu_{3})$ of
	\begin{align}\label{eq:1standaardA16bis}
		&\begin{cases}
			e_{1}=\mu_{3}\nu_{2}-(\nu_{2}a_{3}-\nu_{1}b_{3}-\mu_{3}c_{3}-\nu_{3}d_{3}+e_{3})\nu_{3}-\nu_{2}a_{1}+\nu_{1}b_{1}+\mu_{3}c_{1}+\nu_{3}d_{1}\\
			e_{2}=(-\nu_{2}a_{4}+\nu_{1}b_{4}+\mu_{3}c_{4}+\nu_{3}d_{4}-e_{4})\nu_{3}-\mu_{3}\nu_{1}-\nu_{2}a_{2}+\nu_{1}b_{2}+\mu_{3}c_{2}+\nu_{3}d_{2}
		\end{cases}\nonumber\\
		\Leftrightarrow\quad
		&\begin{cases}
			-L_{11}(\mu_{3},\nu_{3})\nu_{1}+L_{12}(\mu_{3},\nu_{3})\nu_{2}=C_{1}(\mu_{3},\nu_{3})\\
			-L_{21}(\mu_{3},\nu_{3})\nu_{1}+L_{22}(\mu_{3},\nu_{3})\nu_{2}=C_{2}(\mu_{3},\nu_{3})
		\end{cases}
	\end{align}
	with
	\begin{align*}
		L_{11}(\mu_{3},\nu_{3})&=b_{3}\nu_{3}+b_{1}\;,\\
		L_{12}(\mu_{3},\nu_{3})&=-\mu_{3}+a_{3}\nu_{3}+a_{1}\;,\\
		L_{21}(\mu_{3},\nu_{3})&=-\mu_{3}+b_{4}\nu_{3}+b_{2}\;,\\
		L_{22}(\mu_{3},\nu_{3})&=a_{4}\nu_{3}+a_{2}\;,\\
		C_{1}(\mu_{3},\nu_{3})&=c_{3}\mu_{3}\nu_{3}+d_{3}\nu^{2}_{3}+c_{1}\mu_{3}+(d_{1}-e_{3})\nu_{3}-e_{1}\;\text{ and}\\
		C_{2}(\mu_{3},\nu_{3})&=c_{4}\mu_{3}\nu_{3}+d_{4}\nu^{2}_{3}+c_{2}\mu_{3}+(d_{2}-e_{4})\nu_{3}-e_{2}\;.
	\end{align*}
	Given $\mu_{3}$ and $\nu_{3}$, the system of equations in \eqref{eq:1standaardA16bis} has $0$, $1$, $q$ or $q^{2}$ solutions for $(\nu_{1},\nu_{2})$. Assume that for $(\mu_{3},\nu_{3})=(\overline{\mu},\overline{\nu})$ the system of equations in \eqref{eq:1standaardA16bis} would have $q$ or $q^{2}$ solutions. Then, looking at \eqref{eq:1algbis} with $(\mu_4,\mu_5,\nu_4,\nu_5)=(0,1,1,0)$,  for the $q$ or $q^{2}$ corresponding points, we have $\varphi=\frac{\overline{\mu}-\gamma'_{2}}{\gamma'_{1}-\overline{\nu}}$, so any two of these $q$ points determine a $(q+1)$-secant by Theorem \ref{qplusonesecant}, contradicting the assumption on $\Pi$. So, the system of equations in \eqref{eq:1standaardA16bis} has $0$ solutions or a unique solution in $(\nu_{1},\nu_{2})$, and the former occurs iff
	\begin{align}\label{eq:1cubic16}
		F(\mu_{3},\nu_{3})=L_{11}(\mu_{3},\nu_{3})L_{22}(\mu_{3},\nu_{3})-L_{12}(\mu_{3},\nu_{3})L_{21}(\mu_{3},\nu_{3})=0\;.
	\end{align}
	The equation $F(\nu_{1},\nu_{2})=0$ determines a conic $C$ in the $(\nu_{1},\nu_{2})$-plane $\pi\cong\AG(2,q)$. We embed this affine plane in the projective plane $\overline{\pi}\cong\PG(2,q)$ by adding the line at infinity $\ell_{\infty}$ and extend $C$ to the conic $\overline{C}$ by going to a homogeneous equation $\overline{F}(\nu_{1},\nu_{2},\rho)=0$. Analogously, we define the homogeneous functions $\overline{L_{11}}(\nu_{1},\nu_{2},\rho)$, $\overline{L_{12}}(\nu_{1},\nu_{2},\rho)$, $\overline{L_{21}}(\nu_{1},\nu_{2},\rho)$, $\overline{L_{22}}(\nu_{1},\nu_{2},\rho)$, $\overline{C_{1}}(\nu_{1},\nu_{2},\rho)$ and $\overline{C_{2}}(\nu_{1},\nu_{2},\rho)$.
	\par We denote the number of points on $\overline{C}$ by $N$ and the number of points on $\overline{C}\cap\ell_{\infty}$ by $N_{\infty}$. We find that Equation \eqref{eq:1cubic16} has $N-N_{\infty}$ solutions, and hence Equation \eqref{eq:1standaardA16bis}, has $q^{2}-N+N_{\infty}$ solutions.
	\par Now, we look at Equation \eqref{eq:1standaardB}; it is equivalent to the following system of equations:
	\begin{align}\label{eq:1standaardB16}
		\begin{cases}
			-d_{1}=-\mu_{2}-a_{1}\nu_{2}+b_{1}\nu_{1}-c_{1}\mu_{4}\\
			-d_{2}=\mu_{1}-a_{2}\nu_{2}+b_{2}\nu_{1}-c_{2}\mu_{4}\\
			-d_{3}=-\mu_{4}\nu_{2}-a_{3}\nu_{2}+b_{3}\nu_{1}-c_{3}\mu_{4}\\
			-d_{4}=\mu_{4}\nu_{1}-a_{4}\nu_{2}+b_{4}\nu_{1}-c_{4}\mu_{4}\\
		\end{cases}
		\Leftrightarrow\quad
		\begin{cases}
			\mu_{2}=-a_{1}\nu_{2}+b_{1}\nu_{1}-c_{1}\mu_{4}+d_{1}\\
			\mu_{1}=a_{2}\nu_{2}-b_{2}\nu_{1}+c_{2}\mu_{4}+d_{2}\\
			-d_{3}=-\mu_{4}\nu_{2}-a_{3}\nu_{2}+b_{3}\nu_{1}-c_{3}\mu_{4}\\
			-d_{4}=\mu_{4}\nu_{1}-a_{4}\nu_{2}+b_{4}\nu_{1}-c_{4}\mu_{4}\\
		\end{cases}\;.
	\end{align}
	It is straightforward to see that there is a one-to-one correspondence between the solutions in $(\mu_{1},\mu_{2},\mu_{4},\nu_{1},\nu_{2})$ of Equation \eqref{eq:1standaardB13} and the solutions in $(\mu_{4},\nu_{1},\nu_{2})$ of
	\begin{align}\label{eq:1standaardB16bis}
		&\begin{cases}
			-d_{3}=-\mu_{4}\nu_{2}-a_{3}\nu_{2}+b_{3}\nu_{1}-c_{3}\mu_{4}\\
			-d_{4}=\mu_{4}\nu_{1}-a_{4}\nu_{2}+b_{4}\nu_{1}-c_{4}\mu_{4}
		\end{cases}\nonumber\\
		\Leftrightarrow\quad
		&\begin{cases}
			-b_{3}\nu_{1}+(\mu_{4}+a_{3})\nu_{2}=-c_{3}\mu_{4}+d_{3}\\
			-(\mu_{4}+b_{4})\nu_{1}+a_{4}\nu_{2}=-c_{4}\mu_{4}+d_{4}
		\end{cases}\nonumber\\
		\Leftrightarrow\quad
		&\begin{cases}
			-\overline{L_{11}}\left(-\mu_{4},1,0\right)\nu_{1}+\overline{L_{12}}\left(-\mu_{4},1,0\right)\nu_{2}=\overline{C_{1}}\left(-\mu_{4},1,0\right)\\
			-\overline{L_{12}}\left(-\mu_{4},1,0\right)\nu_{1}+\overline{L_{22}}\left(-\mu_{4},1,0\right)\nu_{2}=\overline{C_{2}}\left(-\mu_{4},1,0\right)
		\end{cases}.
	\end{align}
	The system of equations in \eqref{eq:1standaardB16bis} has $0$, $1$, $q$ or $q^{2}$ solutions for $(\nu_{1},\nu_{2})$. Assume that for $\mu_{4}=\overline{\mu}$ the system of equations in \eqref{eq:1standaardB16bis} would have $q$ or $q^{2}$ solutions. Then, looking at \eqref{eq:1algbis} with $(\mu_{3},\nu_{3},\nu_{4},\mu_{5},\nu_{5})=(0,1,0,1,0)$, we see that for the $q$ or $q^{2}$ corresponding points, we have $\varphi=\overline{\mu}\gamma'_{1}+\gamma'_{2}$, so any two of these $q$ points determine a $(q+1)$-secant by Theorem \ref{qplusonesecant}, contradicting the assumption on $\Pi$. So, \eqref{eq:1standaardB16bis} has either 0 solutions or a unique solution in $(\nu_{1},\nu_{2})$, and the former occurs iff
	\begin{align}\label{eq:1oneindig16}
		0&=\overline{L_{11}}\left(-\mu_{4},1,0\right)\overline{L_{22}}\left(-\mu_{4},1,0\right)-\overline{L_{12}}\left(-\mu_{4},1,0\right)\overline{L_{21}}\left(-\mu_{4},1,0\right)\nonumber\\
		&=\overline{F}(-\mu_{4},1,0)\nonumber\\
		&=-\mu^{2}_{4}+(a_{3}+b_{4})\mu_{4}-(a_{3}b_{4}-a_{4}b_{3})\;.
	\end{align}
	Recall that $\mu_{4}\in\F^{*}_{q}$. Note that the point $(1,0,0)\notin\overline{C}$ and that $(0,1,0)\in\overline{C}\Leftrightarrow a_{3}b_{4}-a_{4}b_{3}=0$. We set $\varepsilon=0$ if $a_{3}b_{4}-a_{4}b_{3}=0$ and $\varepsilon=1$ otherwise. So, Equation \eqref{eq:1oneindig16} has $(q-1)-(N_{\infty}-1+\varepsilon)$ solutions.
	\par Now we look at Equations \eqref{eq:1standaardC} and \eqref{eq:1standaardD}. It is immediately clear that Equation \eqref{eq:1standaardC} has precisely one solution by the assumption of Case A.6. Equation \eqref{eq:1standaardD} has no solutions if and only if $\dim U_{2}=3$ and $\gamma_{2}\notin U_{2}$, so if and only if $a_{3}b_{4}-a_{4}b_{3}=0$; recall that $\rk\left(\begin{smallmatrix} a_{3} & b_{3} & d_{3}\\a_{4} & b_{4} & d_{4}\end{smallmatrix}\right)=2$. It has one solution otherwise. In other words, Equation \eqref{eq:1standaardD} has $\varepsilon$ solutions.
	\par We find that the Equations \eqref{eq:1standaardA}, \eqref{eq:1standaardB}, \eqref{eq:1standaardC} and \eqref{eq:1standaardD} in total have $(q^{2}-N+N_{\infty})+(q-N_{\infty}-\varepsilon)+1+\varepsilon=q^{2}+q+1-N$ solutions. Including the point $P_{0}$, we find that $|\Pi\cap\Omega_{2}|$ equals $q^{2}+q+2-N$. We conclude this case by showing that $\overline{C}$ is a non-degenerate conic, hence that $N=q+1$ and consequently $|\Pi\cap\Omega_{2}|=q^{2}+1$.
	\par The conic $\overline{C}$ is given by the equation
	\begin{align*}
		0&=\overline{F}(\mu_{3},\nu_{3},\rho)\\&=\mu^{2}_{3}-(a_{3}+b_{4})\mu_{3}\nu_{3}+(a_{3}b_{4}-a_{4}b_{3})\nu^{2}_{3}-(a_{1}+b_{2})\mu_{3}\\&\qquad+(a_{1}b_{4}-a_{2}b_{3}+a_{3}b_{2}-a_{4}b_{1})\nu_{3}+(a_{1}b_{2}-a_{2}b_{1})\;.
	\end{align*}
	We distinguish between two cases of degeneracy.
	\begin{itemize}
		\item If $\overline{C}$ decomposes in two lines in $\overline{\pi}$ (over $\F_{q}$), then there exist $k,k'\in\F_{q}$ such that
		\[
			\overline{F}(\mu_{3},\nu_{3},\rho)=\left(\mu_{3}+k\nu_{3}+k'\right)\left(\mu_{3}-(a_{3}+b_{4}+k)\nu_{3}-(a_{1}+b_{2}+k')\right)
		\]
		with $k$ and $k'$ fulfilling
		\begin{align*}
			a_{3}b_{4}-a_{4}b_{3}&=-k(a_{3}+b_{4}+k)\;,\\
			a_{1}b_{2}-a_{2}b_{1}&=-k'(a_{1}+b_{2}+k')\;\text{ and}\\
			a_{1}b_{4}-a_{2}b_{3}+a_{3}b_{2}-a_{4}b_{1}&=-2kk'-k(a_{1}+b_{2})-k'(a_{3}+b_{4})\;,
		\end{align*}
		equivalently
		\begin{align*}
			(k+a_{3})(k+b_{4})&=a_{4}b_{3}\;,\\
			(k'+a_{1})(k'+b_{2})&=a_{2}b_{1}\;\text{ and}\\
			(a_{1}+k')(b_{4}+k)+(a_{3}+k)(b_{2}+k')&=a_{2}b_{3}+a_{4}b_{1}\;.
		\end{align*}
		For brevity of notation, we introduce $\overline{a}=a_{3}+k$, $\overline{b}=b_{4}+k$, $\hat{a}=a_{1}+k'$ and $\hat{b}=b_{2}+k'$. The previous equations can then be rewritten as
		\begin{align*}
			0&=a_{4}b_{3}-\overline{a}\overline{b}\;,\\
			0&=a_{2}b_{3}+a_{4}b_{1}-\overline{a}\hat{b}-\hat{a}\overline{b}\;\text{ and}\\
			0&=a_{2}b_{1}-\hat{a}\hat{b}\;.
		\end{align*}
		These equations in $a_{2},a_{4},b_{1},b_{3},\overline{a},\hat{a},\overline{b},\hat{b}$ are similar to the ones in Equations \eqref{eq:cubicontaard1}, \eqref{eq:cubicontaard2} and \eqref{eq:cubicontaard3}, with $(k,k')$ replacing $(c_{5},-d_{5})$. So, similarly we can derive a contradiction. Hence, $\overline{C}$ does not decompose over $\F_{q}$.
		\item Now we assume that $\overline{C}$ decomposes in two lines not in $\overline{\pi}$, so over a quadratic extension $\F_{q^{2}}$ of $\F_{q}$. Then $\overline{C}$ contains only one point in $\overline{\pi}$. Let $\ell_{i,j}$ be the line with equation $L_{ij}=0$. It is clear that the points $R_{1}=\ell_{1,1}\cap\ell_{1,2}$, $R_{2}=\ell_{1,1}\cap\ell_{2,1}$, $R_{3}=\ell_{2,2}\cap\ell_{1,2}$ and $R_{4}=\ell_{2,2}\cap\ell_{2,1}$ are all on $\overline{C}$ -- note that these points are always well-defined since $\ell_{i,i}$ and $\ell_{j,3-j}$ cannot coincide for any choice of  $i,j\in\{1,2\}$. As $\overline{C}$ contains only one point in $\overline{\pi}$, we must have that $R_{1}=R_{2}=R_{3}=R_{4}$, but $(1,0,0)$ is a point that is contained in $\ell_{1,1}$ and $\ell_{2,2}$ but surely not contained in $\ell_{1,2}$ and $\ell_{2,1}$. Hence, we must have that $\ell_{1,1}$ and $\ell_{2,2}$ coincide. So $\overline{C}$ is of the form $tL^{2}_{11}-L_{12}L_{21}=0$. Note that $L_{12}$ and $L_{21}$ are not indentically zero. So $\overline{C}$ can only be degenerate if $L_{12}=sL_{21}$ for some $s\in \F_q^*$. This implies that the lines $\ell_{1,2}$ and $\ell_{2,1}$ coincide.
		\par Since $\ell_{1,1}$ and $\ell_{2,2}$ coincide, and also $\ell_{1,2}$ and $\ell_{2,1}$ coincide, we have $a_{1}=b_{2}$ and $a_{3}=b_{4}$, and $k(a_{2},a_{4})=(b_{1},b_{3})$ for some $k\in\F^{*}_{q}$. Recall that $(a_{2},a_{4})\neq(0,0)\neq(b_{1},b_{3})$. So, the conic $\overline{C}$ is given by
		\[
			0=\overline{F}(\mu_{3},\nu_{3},\rho)=k(a_{4}\nu_{3}+a_{2})^{2}-\left(-\mu_{3}+a_{3}\nu_{3}+a_{1}\right)^{2}\;.
		\]
		As $\overline{C}$ contains only one point in $\overline{\pi}$ we have that $k$ is a non-square (and necessarily that $q$ is odd), and since $\overline{C}$ decomposes over $\F_{q^{2}}$, there is a $k'\in\F_{q^{2}}\setminus\F_{q}$ such that $k'^{2}=k$. We find that
		\begin{align}\label{eq:conicontaard}
			(\gamma_{0}-k')(\gamma'_{2}+(k'a_{4}-a_{3})\gamma'_{1}+k'a_{2}-a_{1})&=ka_{2}+a_{1}\gamma_{0}+ka_{4}\gamma'_{1}+a_{3}\gamma_{0}\gamma'_{1}-k'a_{1}\nonumber\\&\qquad-k'a_{2}\gamma_{0}-k'a_{3}\gamma'_{1}-k'a_{4}\gamma_{0}\gamma'_{1}\nonumber\\&\qquad+(k'a_{4}-a_{3})\gamma_{0}\gamma'_{1}+(k'a_{2}-a_{1})\gamma_{0}\nonumber\\&\qquad-k'(k'a_{4}-a_{3})\gamma'_{1}-k'(k'a_{2}-a_{1})\nonumber\\
			&=0\;.
		\end{align}
		It is important to note that $\F_{q^{5}}\cap\F_{q^{2}}=\F_{q}$. Hence, the first factor in \eqref{eq:conicontaard} cannot be zero since $k'\notin\F_{q}$. So, the second factor has to be zero, but then $\gamma'_{2}-a_{3}\gamma'_{1}-a_{1}=0$, a contradiction since $\{1,\gamma'_{1},\gamma'_{2}\}$ is a linearly independent set over $\F_{q}$.
	\end{itemize}
	So, in both cases we have found a contradiction, leading to the conclusion that indeed $N=q+1$ and $|\Pi\cap\Omega_{2}|=q^{2}+1$.}
	\paragraph*{Case B:} We assume that $\dim U_{1}=\dim U_{2}=3$ and $\dim\left\langle U_{1},U_{2}\right\rangle=4$. Note that it follows that $\gamma'_{1},\gamma'_{2}\notin\F_{q}$. Furthermore, if $\delta\in U_1$, then Equation \eqref{eq:1standaardC} clearly has $q$ solutions which implies that $P_0P_1$ is a $(q+1)$-secant as discussed in the intermezzo; similarly, we showed that if $\gamma_2\in U_2$, then Equation \eqref{eq:1standaardD} has $q$ solutions which implies that $P_0P_2$ is a $(q+1)$-secant. So, by the assumption of the theorem we have that $\delta\notin U_1$ and $\gamma_2\notin U_2$. It follows that Equations \eqref{eq:1standaardC} and \eqref{eq:1standaardD} do not have solutions in any subcase of Case B.
	\par We denote $\left\langle U_{1},U_{2}\right\rangle$ by $U$. Regarding a basis of $U$ we have the following possibilities.
	\begin{enumerate}[(i)]
		\item If $\gamma'_{1},\gamma'_{2}\notin\left\langle 1,\gamma_{0}\right\rangle$, then also $\gamma'_{2}\notin U_{1}$ since $U_{1}\neq U_{2}$. So, $\left\{1,\gamma_{0},\gamma'_{1},\gamma'_{2}\right\}$ is an $\F_{q}$-basis for $U$.
		\item If $\gamma'_{1}\notin\left\langle 1,\gamma_{0}\right\rangle$ but $\gamma'_{2}\in\left\langle 1,\gamma_{0}\right\rangle$, then $\gamma_{0}\gamma'_{2}\notin U_{1}$ since $U_{1}\neq U_{2}$. So, $\left\{1,\gamma_{0},\gamma'_{1},\gamma_{0}\gamma'_{2}\right\}$ is an $\F_{q}$-basis for $U$.
		\item If $\gamma'_{2}\notin\left\langle 1,\gamma_{0}\right\rangle$ but $\gamma'_{1}\in\left\langle 1,\gamma_{0}\right\rangle$, then $\gamma_{0}\gamma'_{1}\notin U_{1}$ since $U_{1}\neq U_{2}$. So, $\left\{1,\gamma_{0},\gamma'_{2},\gamma_{0}\gamma'_{1}\right\}$ is an $\F_{q}$-basis for $U$.
		\item If $\gamma'_{1},\gamma'_{2}\in\left\langle 1,\gamma_{0}\right\rangle$, then $U_1=\langle 1,\gamma_0,\gamma_0\gamma_1'\rangle=\langle 1,\gamma_0,\gamma_0^2\rangle=\langle 1,\gamma_0,\gamma_0\gamma_1'\rangle=U_2$, a contradiction since we assumed that $\dim U=4$.
	\end{enumerate}
	Recall that we argued in the beginning of Case A that the set of Equations \eqref{eq:1standaardA}--\eqref{eq:1standaardD} when interchanging $(\gamma'_{1},\delta)$ and $(\gamma'_{2},\gamma_{2})$ yields the same system of equations after renaming some of the variables $\mu_{i}$ and $\nu_{i}$. For this reason cases (ii) and (iii) are equivalent, and we only need to treat one of them. We now distinguish between the two subcases, corresponding to possibilities (i) and (iii). For Case B.1.1 we present the details. The arguments in the other subcases are similar and can be found in Appendix \ifthenelse{\equal{\versie}{arxiv}}{\ref{ap:th4.5}}{B in the arXiv version of this paper}.
	\par \textit{Case B.1: $\gamma'_{1},\gamma'_{2}\notin\left\langle 1,\gamma_{0}\right\rangle$}. Hence, $\left\{1,\gamma_{0},\gamma'_{1},\gamma'_{2}\right\}$ is an $\F_{q}$-basis for $U$. There are $a_{i},b_{i}\in\F_{q}$, $i=1,\dots,4$, such that
	\begin{align*}
		\gamma_0\gamma_1'&=a_1+a_2\gamma_0+a_3\gamma_1'\quad \text{and}\\
		\gamma_0\gamma_2'&=b_1+b_2\gamma_0+b_4\gamma_2'\;.
	\end{align*}
	It follows immediately that $a_{1}+a_{2}a_{3}\neq0\neq b_{1}+b_{2}b_{4}$ since $\gamma_{0},\gamma'_{1},\gamma'_{2}\notin\F_{q}$. We claim that also $a_3\neq b_4$. Suppose that $a_3=b_4$, then $\gamma_1'(\gamma_0-a_3)=a_1+a_2\gamma_0$ and $\gamma_2'(\gamma_0-a_3)=b_1+b_2\gamma_0$. It follows that $\gamma_1'/\gamma_2'=(a_1+a_2\gamma_0)/(b_1+b_2\gamma_0)$, and hence, that
	\[
		(b_1+b_2\gamma_0)\gamma_1'=(a_1+a_2\gamma_0)\gamma_2'\;,
	\] 
	which in turn yields that
	\[
		(a_1b_2-a_2b_1)+(b_1+a_3b_2)\gamma_1'-(a_1+a_2a_3)\gamma_2'=0\;.
	\]
	But we have seen in the intermezzo that $1,\gamma_1',\gamma_2'$ are linearly independent over $\F_{q}$. This implies that $a_1+a_2a_3=0$, a contradiction. We now make a further distinction based on $\delta$ and $\gamma_{2}$.
	\par \textit{Case B.1.1: $\delta\notin U$.} Hence, $\left\{1,\gamma_{0},\gamma'_{1},\gamma'_{2},\delta\right\}$ is an $\F_{q}$-basis for $\F_{q^{5}}$.	There are $c_{i},e_{i}\in\F_{q}$, $i=1,\dots,5$, such that 
	\begin{align*}
		-\gamma_2&=c_1+c_2\gamma_0+c_3\gamma_1'+c_4\gamma_2'+c_5\delta\quad\text{and}\\
		\delta\gamma'_{2}+\gamma'_{1}\gamma_{2}&=e_1+e_2\gamma_0+e_3\gamma_1'+e_4\gamma_2'+e_5\delta\;.
	\end{align*}
	\par Considering $\F_{q^{5}}$ as a vector space over $\F_{q}$, Equation \eqref{eq:1standaardA} is equivalent to the following system of equations:
	\begin{align}\label{eq:1standaardA211}
		\begin{cases}
			e_1+c_1\nu_3&=-a_1\mu_1-\nu_3\mu_2+b_1\nu_1+\mu_3\nu_2\\
			e_2+c_2\nu_3&=(-a_2+\nu_3)\mu_1+(b_2-\mu_3)\nu_1\\
			e_3+c_3\nu_3&=-a_3\mu_1+\mu_2\\
			e_4+c_4\nu_3&=b_4\nu_1-\nu_2\\
			e_5+c_5\nu_3&=\mu_3
		\end{cases}.
	\end{align}
	It is straightforward to see that there is a one-to-one correspondence between the solutions in $(\mu_{1},\mu_{2},\mu_{3},\nu_{1},\nu_{2},\nu_{3})$ of Equation \eqref{eq:1standaardA211} and the solutions in $(\mu_{1},\mu_{2},\nu_{1},\nu_{2},\nu_{3})$ of
	\begin{align}\label{eq:1standaardA211bis}
		\begin{cases}
			e_1+c_1\nu_3&=-a_1\mu_1-\nu_3\mu_2+b_1\nu_1+(e_5+c_5\nu_3)\nu_2\\
			e_2+c_2\nu_3&=(\nu_3-a_2)\mu_1+(b_2-e_5-c_5\nu_3)\nu_1\\
			e_3+c_3\nu_3&=-a_3\mu_1+\mu_2\\
			e_4+c_4\nu_3&=b_4\nu_1-\nu_2
		\end{cases}.
	\end{align}
	Given $\nu_{3}$, the system of equations in \eqref{eq:1standaardA211bis} has $0$, $1$ or at least $q$ solutions for $(\mu_{1},\mu_{2},\nu_{1},\nu_{2})$. Assume that for $\nu_{3}=\overline{\nu}$ the system of equations in \eqref{eq:1standaardA211bis} would have at least $q$ solutions. Then, looking at \eqref{eq:1algbis} with $(\mu_4,\mu_5,\nu_4,\nu_5)=(0,1,1,0)$, we see that for the corresponding points, we have $\varphi=\frac{e_{5}+c_{5}\overline{\nu}-\gamma'_{2}}{\gamma'_{1}-\overline{\nu}}$, so any two of these at least $q$ points determine a $(q+1)$-secant by Theorem \ref{qplusonesecant}, contradicting the assumption on $\Pi$. So, the system of equations in \eqref{eq:1standaardA211bis} has $0$ solutions or a unique solution in $(\mu_{1},\mu_{2},\nu_{1},\nu_{2})$. The coefficient matrix of the system of equations in \eqref{eq:1standaardA211bis} is
	\[
		A_{11}=
		\begin{pmatrix}
			-a_1&-\nu_3&b_1&e_{5}+c_{5}\nu_3\\
			\nu_3-a_2&0&b_2-e_{5}-c_{5}\nu_3&0\\
			-a_3&1&0&0\\
			0&0&b_4&-1
		\end{pmatrix}\,
	\]
	with
	\begin{align*}
		\det\left(A_{11}\right)&=c_{5}(a_3-b_4)\nu^{2}_{3}+((a_{3}-b_{4})e_{5}+(a_{1}+a_{2}b_{4})c_{5}-b_{1}-a_{3}b_{2})\nu_{3}\nonumber\\&\qquad+(a_{1}+a_{2}b_{4})e_{5}+a_{2}b_{1}-a_{1}b_{2}\nonumber\\ 
		&=D(\nu_3)\;.
	\end{align*}
	For a given $\nu_{3}$ the system of equations in \eqref{eq:1standaardA211bis} has a unique solution if $D(\nu_{3})\neq0$ and no solutions otherwise. We show that $D(\nu_{3})=0$ is a non-vanishing quadratic equation. Assume that it does vanish. Recall that $a_{3}\neq b_{4}$. Hence, we have that $c_{5}=0$ and $(a_{3}-b_{4})e_{5}-b_{1}-a_{3}b_{2}=0=(a_{1}+a_{2}b_{4})e_{5}+a_{2}b_{1}-a_{1}b_{2}$, and consequently
	\[
		(a_{3}-b_{4})(a_{2}b_{1}-a_{1}b_{2})=-(b_{1}+a_{3}b_{2})(a_{1}+a_{2}b_{4})\quad\Leftrightarrow\quad(a_{1}+a_{2}a_{3})(b_{1}+b_{2}b_{4})=0\;,
	\]
	contradicting the statements in the beginning of Case B.1. So, indeed $D(\nu_{3})=0$ is a non-vanishing quadratic equation. Consequently, it has at most two solutions, and thus the system of equations in \eqref{eq:1standaardA211bis} has $q-2$, $q-1$ or $q$ solutions. In particular, if $c_5=0$, this system has $q-1$ or $q$ solutions.
	\par Now, we look at Equation \eqref{eq:1standaardB}; it is equivalent to the following system of equations:
	\begin{align}\label{eq:1standaardB211}
		\begin{cases}
			c_1&=-\mu_2+\mu_4a_1\nu_1+b_1\nu_1\\
			c_2&=\mu_1+\mu_4a_2\nu_1+b_2\nu_1\\
			c_3&=-\mu_4\nu_2+a_3\mu_4\nu_1\\
			c_4&=-\nu_2+b_4\nu_1\\
			c_5&=-\mu_4
		\end{cases}.
	\end{align}
	Recall that $\mu_{4}\in\F^{*}_{q}$. So, the system of equations in \eqref{eq:1standaardB211} has no solutions if $c_{5}=0$. Hence, we assume in the discussion of this system of equations that $c_{5}\neq0$. We can see that there is a one-to-one correspondence between the solutions in $(\mu_{1},\mu_{2},\mu_{4},\nu_{1},\nu_{2})$ of Equation \eqref{eq:1standaardB211} and the solutions in $(\mu_{1},\mu_{2},\nu_{1},\nu_{2})$ of
	\begin{align}\label{eq:1standaardB211bis}
		\begin{cases}
			c_1&=-\mu_2-c_5a_1\nu_1+b_1\nu_1\\
			c_2&=\mu_1-c_5a_2\nu_1+b_2\nu_1\\
			c_3&=c_5\nu_2-a_3c_5\nu_1\\
			c_4&=-\nu_2+b_4\nu_1
		\end{cases}.
	\end{align}
	The coefficient matrix of this system is
	\[
		B_{11}=
		\begin{pmatrix}
			0&-1&b_1-c_5a_1&0\\
			1&0&b_2-c_5a_2&0\\
			0&0&-a_3c_5&c_5\\
			0&0&b_4&-1
		\end{pmatrix}\;.
	\]
	Now $\det\left(B_{11}\right)=0$ if and only if $c_5(a_3-b_4)=0$. Recall that $a_{3}\neq b_{4}$. As $c_5\neq 0$, we find a unique solution to the system of equations in \eqref{eq:1standaardB211bis}. So, Equation \eqref{eq:1standaardB211} has no solutions if $c_{5}=0$ and a unique solution if $c_{5}\neq0$.
	\par We find that the Equations \eqref{eq:1standaardA}, \eqref{eq:1standaardB}, \eqref{eq:1standaardC} and \eqref{eq:1standaardD} in total have between $q-1$ and $q+1$ solutions. Including the point $P_{0}$, we find that $|\Pi\cap\Omega_{2}|$ is contained in $\{q,q+1,q+2\}$.
	\par \textit{Case B.1.2: $\delta\in U$, but $\gamma_{2}\notin U$.}  The arguments in this case are similar to the arguments in Case B.1.1. We find that $|\Pi\cap\Omega_{2}|\in\{q,q+1\}$. Details can be found in Appendix \ifthenelse{\equal{\versie}{arxiv}}{\ref{ap:th4.5}, see page \pageref{apB:B1.2}}{B in the arXiv version of this paper}.\comments{Hence, $\left\{1,\gamma_{0},\gamma'_{1},\gamma'_{2},\gamma_{2}\right\}$ is an $\F_{q}$-basis for $\F_{q^{5}}$. There are $c_{i},e_{i}\in\F_{q}$, $i=1,\dots,5$, such that 
	\begin{align*}
		\delta&=c_1+c_2\gamma_0+c_3\gamma_1'+c_4\gamma_2'\quad\text{and}\\
		\delta\gamma'_{2}+\gamma'_{1}\gamma_{2}&=e_1+e_2\gamma_0+e_3\gamma_1'+e_4\gamma_2'+e_5\gamma_{2}\;.
	\end{align*}
	\par Considering  $\F_{q^{5}}$ as a vector space over $\F_{q}$, Equation \eqref{eq:1standaardA} is equivalent to the following system of equations:
	\begin{align}\label{eq:1standaardA212}
		\begin{cases}
			e_1-c_{1}\mu_{3}=-a_1\mu_1-\nu_3\mu_2+b_1\nu_1+\mu_3\nu_2\\
			e_2-c_{2}\mu_{3}=(\nu_3-a_2)\mu_1+(b_2-\mu_3)\nu_1\\
			e_3-c_{3}\mu_{3}=-a_3\mu_1+\mu_2\\
			e_4-c_{4}\mu_{3}=b_4\nu_1-\nu_2\\
			e_5=\nu_3
		\end{cases}.
	\end{align}
	It is straightforward to see that there is a one-to-one correspondence between the solutions in $(\mu_{1},\mu_{2},\mu_{3},\nu_{1},\nu_{2},\nu_{3})$ of Equation \eqref{eq:1standaardA212} and the solutions in $(\mu_{1},\mu_{2},\mu_{3},\nu_{1},\nu_{2})$ of
	\begin{align}\label{eq:1standaardA212bis}
		\begin{cases}
			e_1-c_{1}\mu_{3}=-a_1\mu_1-e_5\mu_2+b_1\nu_1+\mu_3\nu_2\\
			e_2-c_{2}\mu_{3}=(e_5-a_2)\mu_1+(b_2-\mu_3)\nu_1\\
			e_3-c_{3}\mu_{3}=-a_3\mu_1+\mu_2\\
			e_4-c_{4}\mu_{3}=b_4\nu_1-\nu_2
		\end{cases}.
	\end{align}
	Given $\mu_{3}$, the system of equations in \eqref{eq:1standaardA212bis} has $0$, $1$ or at least $q$ solutions for $(\mu_{1},\mu_{2},\nu_{1},\nu_{2})$. Assume that for $\mu_{3}=\overline{\mu}$ the system of equations in \eqref{eq:1standaardA212bis} would have at least $q$ solutions. Then, looking at \eqref{eq:1algbis} with $(\mu_4,\mu_5,\nu_4,\nu_5)=(0,1,1,0)$, we see that for the corresponding points, we have $\varphi=\frac{\overline{\mu}-\gamma'_{2}}{\gamma'_{1}-e_{5}}$, so any two of these at least $q$ points determine a $(q+1)$-secant by Theorem \ref{qplusonesecant}, contradicting the assumption on $\Pi$. So, the system of equations in \eqref{eq:1standaardA212bis} has $0$ solutions or a unique solution in $(\mu_{1},\mu_{2},\nu_{1},\nu_{2})$. The coefficient matrix of the system of equations in \eqref{eq:1standaardA212bis} is
	\[
		A_{12}=
		\begin{pmatrix}
			-a_1&-e_{5}&b_1&\mu_3\\
			e_{5}-a_2&0&b_2-\mu_3&0\\
			-a_3&1&0&0\\
			0&0&b_4&-1
		\end{pmatrix}\,
	\]
	with
	\begin{align*}
		\det\left(A_{12}\right)&=(a_{1}+a_{2}b_{4}+(a_{3}-b_{4})e_{5})\mu_{3}+ (a_{2}b_{1}-b_{2}a_{1}-(b_{1}+a_{3}b_{2})e_{5})\nonumber\\
		&=D(\mu_3)\;.
	\end{align*}
	For a given $\mu_{3}$ the system of equations in \eqref{eq:1standaardA212bis} has a unique solution if $D(\mu_{3})\neq0$ and no solutions otherwise. We show that $D(\mu_{3})=0$ is a non-vanishing linear equation. Assume that it does vanish. Then, we have that $a_{1}+a_{2}b_{4}+(a_{3}-b_{4})e_{5}=0=a_{2}b_{1}-b_{2}a_{1}-(b_{1}+a_{3}b_{2})e_{5}$, and consequently 	\[
		(a_{3}-b_{4})(a_{2}b_{1}-a_{1}b_{2})=-(a_{1}+a_{2}b_{4})(b_{1}+a_{3}b_{2})\quad\Leftrightarrow\quad(a_{1}+a_{2}a_{3})(b_{1}+b_{2}b_{4})=0\;,
	\]
	contradicting the statements in the beginning of Case B.1. So, indeed $D(\nu_{3})=0$ is a non-vanishing linear equation. Consequently, it has at most one solution, and thus the system of equations in \eqref{eq:1standaardA212bis} has $q-1$ or $q$ solutions.
	\par Now, we look at Equation \eqref{eq:1standaardB}. However, since $\gamma_{2}\notin U$ by the assumption of this case, it is clear that Equation \eqref{eq:1standaardB} has no solutions.
	\par We find that the Equations \eqref{eq:1standaardA}, \eqref{eq:1standaardB}, \eqref{eq:1standaardC} and \eqref{eq:1standaardD} in total have $q-1$ or $q$ solutions. Including the point $P_{0}$, we find that $|\Pi\cap\Omega_{2}|$ equals $q$ or $q+1$.}
	\par \textit{Case B.1.3: $\delta,\gamma_{2}\in U$.}   The arguments in this case are similar to the arguments in Case B.1.1. We find that $|\Pi\cap\Omega_{2}|\in\{q,q^{2}+1\}$. Details can be found in Appendix \ifthenelse{\equal{\versie}{arxiv}}{\ref{ap:th4.5}, see page \pageref{apB:B1.3}}{B in the arXiv version of this paper}.\comments{There are $c_{i},d_{i}\in\F_{q}$, $i=1,\dots,4$, such that 
	\begin{align*}
		-\gamma_2&=c_1+c_2\gamma_0+c_3\gamma_1'+c_4\gamma_2'\quad\text{and}\\
		\delta&=d_1+d_2\gamma_0+d_3\gamma_1'+d_4\gamma_2'\;.
	\end{align*}
	\par First we look at Equation \eqref{eq:1standaardA}. If $\delta\gamma_2'+\gamma_1'\gamma_2\notin U$, then there are no solutions to this equation. So, we assume that $\delta\gamma_2'+\gamma_1'\gamma_2\in U$. Then, there are $e_{i}\in\F_{q}$, $i=1,\dots,4$, such that
	\[
		\delta\gamma_2'+\gamma_1'\gamma_2=e_1+e_2\gamma_0+e_3\gamma_1'+e_4\gamma_2'\;.
	\]
	Considering $\F_{q^{5}}$ as a vector space over $\F_{q}$, Equation \eqref{eq:1standaardA} is equivalent to the following system of equations:
	\begin{align}\label{eq:1standaardA213}
		\begin{cases}
			e_1-d_1\mu_3+c_1\nu_3=-a_1\mu_1-\nu_3\mu_2+b_1\nu_1+\mu_3\nu_2\\
			e_2-d_2\mu_3+c_2\nu_3=(\nu_3-a_2)\mu_1+(b_2-\mu_3)\nu_1\\
			e_3-d_3\mu_3+c_3\nu_3=-a_3\mu_1+\mu_2\\
			e_4-d_4\mu_3+c_4\nu_3=b_4\nu_1-\nu_2
		\end{cases}.
	\end{align}
	Given $\mu_{3}$ and $\nu_{3}$, the system of equations in \eqref{eq:1standaardA213} has $0$, $1$ or at least $q$ solutions for $(\mu_{1},\mu_{2},\nu_{1},\nu_{2})$. Assume that for $(\mu_{3},\nu_{3})=(\overline{\mu},\overline{\nu})$ the system of equations in \eqref{eq:1standaardA213} would have at least $q$ solutions. Then, looking at \eqref{eq:1algbis} with $(\mu_4,\mu_5,\nu_4,\nu_5)=(0,1,1,0)$, we see that for the corresponding points, we have $\varphi=\frac{\overline{\mu}-\gamma'_{2}}{\gamma'_{1}-\overline{\nu}}$, so any two of these at least $q$ points determine a $(q+1)$-secant by Theorem \ref{qplusonesecant}, contradicting the assumption on $\Pi$. So, the system of equations in \eqref{eq:1standaardA213} has $0$ solutions or a unique solution in $(\mu_{1},\mu_{2},\nu_{1},\nu_{2})$. The coefficient matrix of the system of equations in \eqref{eq:1standaardA213} is
	\[
		A_{13}=
		\begin{pmatrix}
			-a_1&-\nu_3&b_1&\mu_3\\
			\nu_3-a_2&0&b_2-\mu_3&0\\
			-a_3&1&0&0\\
			0&0&b_4&-1
		\end{pmatrix}\,
	\]
	with
	\begin{align*}
		\det\left(A_{13}\right)&=(a_3-b_4)\mu_3\nu_3+(a_1+a_2b_4)\mu_3-(b_1+a_3b_2)\nu_3+a_2b_1-a_1b_2\nonumber\\
		&=D(\mu_{3},\nu_3)\;.
	\end{align*} 
	Given $\mu_{3}$ and $\nu_{3}$ the system of equations in \eqref{eq:1standaardA213} has a unique solution if $D(\mu_{3},\nu_{3})\neq0$ and no solutions otherwise. The  equation $D(\mu_3,\nu_3)=0$ represents a conic $C$ in the $(\mu_{3},\nu_{3})$-plane $\pi\cong\AG(2,q)$. Clearly, $C$ has two points on the line at infinity. One can check that the conic $C$ is singular if and only if $(a_3-b_4)(a_1+a_2a_3)(b_1+b_2b_4)=0$. Now we have seen before that $a_3\neq b_4$, that $a_1+a_2a_3\neq 0$ and that $b_1+b_2b_4\neq 0$. This implies that $C$ is non-singular, and so it has $q-1$ points in $\pi$.
	Consequently, the system of equations in \eqref{eq:1standaardA213} has $q^{2}-q+1$ solutions.
	\par Now, we look at Equation \eqref{eq:1standaardB}; it is equivalent to the following system of equations:
	\begin{align}\label{eq:1standaardB213}
		\begin{cases}
			c_1+d_1\mu_4=-\mu_2+\mu_4a_1\nu_1+\nu_1b_1\\
			c_2+d_2\mu_4=\mu_1+\mu_4a_2\nu_1+\nu_1b_2\\
			c_3+d_3\mu_4=\mu_4a_3\nu_1-\mu_4\nu_2\\
			c_4+d_4\mu_4=b_4\nu_1-\nu_2
		\end{cases}.
	\end{align}
	Recall that $\mu_{4}\in\F^{*}_{q}$. Given $\mu_{4}$, the coefficient matrix of this system of equations in $(\mu_{1},\mu_{2},\nu_{1},\nu_{2})$ is
	\[
		B_{13}=
		\begin{pmatrix}
			0&-1&\mu_4a_1+b_1&0\\
			1&0&\mu_4a_2+b_2&0\\
			0&0&\mu_4a_3&-\mu_4\\
			0&0&b_4&-1
		\end{pmatrix}\;.
	\]
	We find that $\det\left(B_{13}\right)=\mu_4(b_4-a_3)$. Since $a_3\neq b_4$ and $\mu_4\neq 0$, there is a unique solution in $\mu_1,\mu_2,\nu_1,\nu_2$ to the linear system in \eqref{eq:1standaardB213}. Hence, we find exactly $q-1$ solutions to \eqref{eq:1standaardB}. 
	\par We find that the Equations \eqref{eq:1standaardA}, \eqref{eq:1standaardB}, \eqref{eq:1standaardC} and \eqref{eq:1standaardD} in total have either $q-1$ or $q^{2}$ solutions, depending on whether $\delta\gamma_2'+\gamma_1'\gamma_2$ is contained in $U$ or not. Including the point $P_{0}$, we find that $|\Pi\cap\Omega_{2}|$ is either $q$ or $q^{2}+1$.}
	\par \textit{Case B.2: $\gamma'_{2}\notin\left\langle 1,\gamma_{0}\right\rangle$ but $\gamma'_{1}\in\left\langle 1,\gamma_{0}\right\rangle$}. Hence, $\left\{1,\gamma_{0},\gamma'_{2},\gamma_{0}\gamma'_{1}\right\}$ is an $\F_{q}$-basis for $U$. There are $a_{i},b_{i}\in\F_{q}$, $i=1,\dots,4$, such that
	\begin{align*}
		\gamma_1'&=a_1+a_2\gamma_0\quad \text{and}\\
		\gamma_0\gamma_2'&=b_1+b_2\gamma_0+b_4\gamma_2'\;.
	\end{align*}
	It follows immediately that $a_{2}\neq0\neq b_{1}+b_{2}b_{4}$ since $\gamma_{0},\gamma'_{1},\gamma'_{2}\notin\F_{q}$. We now make a further distinction based on $\delta$ and $\gamma_{2}$.
	\par \textit{Case B.2.1: $\delta\notin U$.} The arguments in this case are similar to the arguments in Case B.1.1. We find that $|\Pi\cap\Omega_{2}|\in\{q,q+1,q+2\}$. Details can be found in Appendix \ifthenelse{\equal{\versie}{arxiv}}{\ref{ap:th4.5}, see page \pageref{apB:B2.1}}{B in the arXiv version of this paper}.\comments{Hence, $\left\{1,\gamma_{0},\gamma'_{2},\gamma_{0}\gamma'_{1},\delta\right\}$ is an $\F_{q}$-basis for $\F_{q^{5}}$. There are $c_{i},e_{i}\in\F_{q}$, $i=1,\dots,5$, such that 
	\begin{align*}
		-\gamma_2&=c_1+c_2\gamma_0+c_3\gamma_{0}\gamma_1'+c_4\gamma_2'+c_5\delta\text{ and}\\
		\delta\gamma'_{2}+\gamma'_{1}\gamma_{2}&=e_1+e_2\gamma_0+e_3\gamma_{0}\gamma_1'+e_4\gamma_2'+e_5\delta\;.
	\end{align*}
	\par Considering now $\F_{q^{5}}$ as a vector space over $\F_{q}$, Equation \eqref{eq:1standaardA} is equivalent to the following system of equations:
	\begin{align}\label{eq:1standaardA221}
		\begin{cases}
			e_1+c_1\nu_3=(a_1-\nu_3)\mu_2+b_1\nu_1+\mu_3\nu_2\\
			e_2+c_2\nu_3=\nu_3\mu_1+a_2\mu_2+(b_2-\mu_3)\nu_1\\
			e_3+c_3\nu_3=-\mu_1\\
			e_4+c_4\nu_3=b_4\nu_1-\nu_2\\
			e_5+c_5\nu_3=\mu_3
		\end{cases}.
	\end{align}
	It is straightforward to see that there is a one-to-one correspondence between the solutions in $(\mu_{1},\mu_{2},\mu_{3},\nu_{1},\nu_{2},\nu_{3})$ of Equation \eqref{eq:1standaardA221} and the solutions in $(\mu_{1},\mu_{2},\nu_{1},\nu_{2},\nu_{3})$ of
	\begin{align}\label{eq:1standaardA221bis}
		\begin{cases}
			e_1+c_1\nu_3=(a_1-\nu_3)\mu_2+b_1\nu_1+(e_5+c_5\nu_3)\nu_2\\
			e_2+c_2\nu_3=\nu_3\mu_1+a_2\mu_2+(b_2-e_5-c_5\nu_3)\nu_1\\
			e_3+c_3\nu_3=-\mu_1\\
			e_4+c_4\nu_3=b_4\nu_1-\nu_2
		\end{cases}.
	\end{align}
	Given $\nu_{3}$, the system of equations in \eqref{eq:1standaardA221bis} has $0$, $1$ or at least $q$ solutions for $(\mu_{1},\mu_{2},\nu_{1},\nu_{2})$. Assume that for $\nu_{3}=\overline{\nu}$ the system of equations in \eqref{eq:1standaardA221bis} would have at least $q$ solutions. Then, looking at \eqref{eq:1algbis} with $(\mu_4,\mu_5,\nu_4,\nu_5)=(0,1,1,0)$, we see that for the corresponding points, we have $\varphi=\frac{e_{5}+c_{5}\overline{\nu}-\gamma'_{2}}{\gamma'_{1}-\overline{\nu}}$, so any two of these at least $q$ points determine a $(q+1)$-secant by Theorem \ref{qplusonesecant}, contradicting the assumption on $\Pi$. So, the system of equations in \eqref{eq:1standaardA221bis} has $0$ solutions or a unique solution in $(\mu_{1},\mu_{2},\nu_{1},\nu_{2})$. The coefficient matrix of the system of equations in \eqref{eq:1standaardA221bis} is
	\[
		A_{21}=
		\begin{pmatrix}
			0&a_1-\nu_3&b_1&e_{5}+c_{5}\nu_3\\
			\nu_3&a_2&b_2-e_{5}-c_{5}\nu_3&0\\
			-1&0&0&0\\
			0&0&b_4&-1
		\end{pmatrix}
	\]
	with
	\begin{align*}
		\det\left(A_{21}\right)&=c_{5}\nu^{2}_{3}-((a_{1}+a_{2}b_{4})c_{5}+b_{2}-e_{5})\nu_{3}+a_{1}b_{2}-a_{2}b_{1}-(a_{1}+a_{2}b_{4})e_{5}\\
		&=D(\nu_3)\;.
	\end{align*}
	For a given $\nu_{3}$ the system of equations in \eqref{eq:1standaardA221bis} has a unique solution if $D(\nu_{3})\neq0$ and no solutions otherwise. We show that $D(\nu_{3})=0$ is a non-vanishing quadratic equation. Assume that it does vanish. Then, we have that $c_{5}=0$, that $e_{5}=b_{2}$ and that $a_{2}(b_{1}+b_{2}b_{4})=0$, contradicting the statements in the beginning of Case B.2. So, indeed $D(\nu_{3})=0$ is a non-vanishing quadratic equation. Consequently, it has at most two solutions, and thus the system of equations in \eqref{eq:1standaardA221bis} has $q-2$, $q-1$ or $q$ solutions. In particular, if $c_5=0$, this system has $q-1$ or $q$ solutions.
	\par Now, we look at Equation \eqref{eq:1standaardB}; it is equivalent to the following system of equations:	
	\begin{align}\label{eq:1standaardB221}
		\begin{cases}
			c_1=-\mu_2-a_1\mu_4\nu_2+b_1\nu_1\\
			c_2=\mu_1-a_2\mu_4\nu_2+b_2\nu_1\\
			c_3=\mu_4\nu_1\\
			c_4=-\nu_2+b_4\nu_1\\
			c_5=-\mu_4
		\end{cases}.
	\end{align}
	Recall that $\mu_{4}\in\F^{*}_{q}$. So, the system of equations in \eqref{eq:1standaardB221} has no solutions if $c_{5}=0$. Hence, we assume in the discussion of this system of equations that $c_{5}\neq0$. We can see that there is a one-to-one correspondence between the solutions in $(\mu_{1},\mu_{2},\mu_{4},\nu_{1},\nu_{2})$ of Equation \eqref{eq:1standaardB211} and the solutions in $(\mu_{1},\mu_{2},\nu_{1},\nu_{2})$ of
	\begin{align}\label{eq:1standaardB221bis}
		\begin{cases}
			c_1=-\mu_2+a_1c_5\nu_2+b_1\nu_1\\
			c_2=\mu_1+a_2c_5\nu_2+b_2\nu_1\\
			c_3=-c_5\nu_1\\
			c_4=-\nu_2+b_4\nu_1
		\end{cases}.
	\end{align}
	The coefficient matrix of this system is
	\[
		B_{21}=
		\begin{pmatrix}
			0&-1&b_1&c_5a_1\\
			1&0&b_2&c_5a_2\\
			0&0&-c_5&0\\
			0&0&b_4&-1
		\end{pmatrix}\;.
	\]
	Now $\det\left(B_{21}\right)=c_{5}$. As $c_5\neq 0$, we find a unique solution to the system of equations in \eqref{eq:1standaardB221bis}. So, Equation \eqref{eq:1standaardB221} has no solutions if $c_{5}=0$ and a unique solution if $c_{5}\neq0$.
	\par We find that the Equations \eqref{eq:1standaardA}, \eqref{eq:1standaardB}, \eqref{eq:1standaardC} and \eqref{eq:1standaardD} in total have between $q-1$ and $q+1$ solutions. Including the point $P_{0}$, we find that $|\Pi\cap\Omega_{2}|$ is contained in $\{q,q+1,q+2\}$.}
	\par \textit{Case B.2.2: $\delta\in U$, but $\gamma_{2}\notin U$.}  The arguments in this case are similar to the arguments in Case B.1.1, albeit easier since Equation \eqref{eq:1standaardB} has no solutions in this case. We find that $|\Pi\cap\Omega_{2}|\in\{q,q+1\}$. Details can be found in Appendix \ifthenelse{\equal{\versie}{arxiv}}{\ref{ap:th4.5}, see page \pageref{apB:B2.2}}{B in the arXiv version of this paper}.\comments{Hence, $\left\{1,\gamma_{0},\gamma'_{2},\gamma_{0}\gamma'_{1},\gamma_{2}\right\}$ is an $\F_{q}$-basis for $\F_{q^{5}}$. There are $c_{i},e_{i}\in\F_{q}$, $i=1,\dots,5$, such that 
	\begin{align*}
		\delta&=c_1+c_2\gamma_0+c_3\gamma_{0}\gamma_1'+c_4\gamma_2'\quad\text{and}\\
		\delta\gamma'_{2}+\gamma'_{1}\gamma_{2}&=e_1+e_2\gamma_0+e_3\gamma_{0}\gamma_1'+e_4\gamma_2'+e_5\gamma_{2}\;.
	\end{align*}
	\par Considering $\F_{q^{5}}$ as a vector space over $\F_{q}$, Equation \eqref{eq:1standaardA} is equivalent to the following system of equations:
	\begin{align}\label{eq:1standaardA222}
		\begin{cases}
			e_1-c_{1}\mu_{3}=(a_{1}-\nu_3)\mu_2+b_1\nu_1+\mu_3\nu_2\\
			e_2-c_{2}\mu_{3}=\nu_3\mu_1+a_2\mu_{2}+(b_2-\mu_3)\nu_1\\
			e_3-c_{3}\mu_{3}=-\mu_1\\
			e_4-c_{4}\mu_{3}=b_4\nu_1-\nu_2\\
			e_5=\nu_3
		\end{cases}.
	\end{align}
	It is straightforward to see that there is a one-to-one correspondence between the solutions in $(\mu_{1},\mu_{2},\mu_{3},\nu_{1},\nu_{2},\nu_{3})$ of Equation \eqref{eq:1standaardA222} and the solutions in $(\mu_{1},\mu_{2},\mu_{3},\nu_{1},\nu_{2})$ of
	\begin{align}\label{eq:1standaardA222bis}
		\begin{cases}
			e_1-c_{1}\mu_{3}=(a_{1}-e_5)\mu_2+b_1\nu_1+\mu_3\nu_2\\
			e_2-c_{2}\mu_{3}=e_5\mu_1+a_2\mu_{2}+(b_2-\mu_3)\nu_1\\
			e_3-c_{3}\mu_{3}=-\mu_1\\
			e_4-c_{4}\mu_{3}=b_4\nu_1-\nu_2
		\end{cases}.
	\end{align}
	Given $\mu_{3}$, the system of equations in \eqref{eq:1standaardA222bis} has $0$, $1$ or at least $q$ solutions for $(\mu_{1},\mu_{2},\nu_{1},\nu_{2})$. Assume that for $\mu_{3}=\overline{\mu}$ the system of equations in \eqref{eq:1standaardA222bis} would have at least $q$ solutions. Then, looking at \eqref{eq:1algbis} with $(\mu_4,\mu_5,\nu_4,\nu_5)=(0,1,1,0)$, we see that for the corresponding points, we have $\varphi=\frac{\overline{\mu}-\gamma'_{2}}{\gamma'_{1}-e_{5}}$, so any two of these at least $q$ points determine a $(q+1)$-secant by Theorem \ref{qplusonesecant}, contradicting the assumption on $\Pi$. So, the system of equations in \eqref{eq:1standaardA222bis} has $0$ solutions or a unique solution in $(\mu_{1},\mu_{2},\nu_{1},\nu_{2})$. The coefficient matrix of the system of equations in \eqref{eq:1standaardA222bis} is
	\[
		A_{22}=
		\begin{pmatrix}
			0&a_1-e_{5}&b_1&\mu_3\\
			e_{5}&a_2&b_2-\mu_3&0\\
			-1&0&0&0\\
			0&0&b_4&-1
		\end{pmatrix}\,
	\]
	with
	\begin{align*}
		\det\left(A_{22}\right)&=-(a_{1}+a_{2}b_{4}-e_{5})\mu_{3}+ (a_{1}b_{2}-a_{2}b_{1}-b_{2}e_{5})\nonumber\\
		&=D(\mu_3)\;.
	\end{align*}
	For a given $\mu_{3}$ the system of equations in \eqref{eq:1standaardA222bis} has a unique solution if $D(\mu_{3})\neq0$ and no solutions otherwise. We show that $D(\mu_{3})=0$ is a non-vanishing linear equation. Assume that it does vanish. Then, we have that $a_{1}+a_{2}b_{4}-e_{5}=0=a_{1}b_{2}-a_{2}b_{1}-b_{2}e_{5}$, and consequently
	\[
		0=a_{1}b_{2}-a_{2}b_{1}-b_{2}(a_{1}+a_{2}b_{4})\quad\Leftrightarrow\quad a_{2}(b_{1}+b_{2}b_{4})=0\;,
	\]
	contradicting the statements in the beginning of Case B.2. So, indeed $D(\nu_{3})=0$ is a non-vanishing linear equation. Consequently, it has at most one solution, and thus the system of equations in \eqref{eq:1standaardA222bis} has $q-1$ or $q$ solutions.
	\par Now, we look at Equation \eqref{eq:1standaardB}. However, since $\gamma_{2}\notin U$ by the assumption of this case, it is clear that Equation \eqref{eq:1standaardB} has no solutions. \par We find that the Equations \eqref{eq:1standaardA}, \eqref{eq:1standaardB}, \eqref{eq:1standaardC} and \eqref{eq:1standaardD} in total have $q-1$ or $q$ solutions. Including the point $P_{0}$, we find that $|\Pi\cap\Omega_{2}|$ equals $q$ or $q+1$.}
	\par \textit{Case B.2.3: $\delta,\gamma_{2}\in U$.}  The arguments in this case are similar to the arguments in Case B.1.1. We find that $|\Pi\cap\Omega_{2}|\in\{q,q^{2}+1\}$. Details can be found in Appendix \ifthenelse{\equal{\versie}{arxiv}}{\ref{ap:th4.5}, see page \pageref{apB:B2.3}}{B in the arXiv version of this paper}.\comments{There are $c_{i},d_{i}\in\F_{q}$, $i=1,\dots,4$, such that 
	\begin{align*}
		-\gamma_2&=c_1+c_2\gamma_0+c_3\gamma_{0}\gamma_1'+c_4\gamma_2'\quad\text{and}\\
		\delta&=d_1+d_2\gamma_0+d_3\gamma_{0}\gamma_1'+d_4\gamma_2'\;.
	\end{align*}
	\par First we look at Equation \eqref{eq:1standaardA}. If $\delta\gamma_2'+\gamma_1'\gamma_2\notin U$, then there are no solutions to this equation. So, we assume that $\delta\gamma_2'+\gamma_1'\gamma_2\in U$. Then, there are $e_{i}\in\F_{q}$, $i=1,\dots,4$, such that
	\[
		\delta\gamma_2'+\gamma_1'\gamma_2=e_1+e_2\gamma_0+e_3\gamma_{0}\gamma_1'+e_4\gamma_2'\;.
	\]
	Considering now $\F_{q^{5}}$ as a vector space over $\F_{q}$, Equation \eqref{eq:1standaardA} is equivalent to the following system of equations:
	\begin{align}\label{eq:1standaardA223}
		\begin{cases}
		e_1-d_1\mu_3+c_1\nu_3=(a_1-\nu_3)\mu_2+b_1\nu_1+\mu_3\nu_2\\
		e_2-d_2\mu_3+c_2\nu_3=\nu_3\mu_1+a_2\mu_{2}+(b_2-\mu_3)\nu_1\\
		e_3-d_3\mu_3+c_3\nu_3=-\mu_1\\
		e_4-d_4\mu_3+c_4\nu_3=b_4\nu_1-\nu_2
		\end{cases}.
	\end{align}
	Given $\mu_{3}$ and $\nu_{3}$, the system of equations in \eqref{eq:1standaardA223} has $0$, $1$ or at least $q$ solutions for $(\mu_{1},\mu_{2},\nu_{1},\nu_{2})$. Assume that for $(\mu_{3},\nu_{3})=(\overline{\mu},\overline{\nu})$ the system of equations in \eqref{eq:1standaardA223} would have at least $q$ solutions. Then, looking at \eqref{eq:1algbis} with $(\mu_4,\mu_5,\nu_4,\nu_5)=(0,1,1,0)$, we see that for the corresponding points, we have $\varphi=\frac{\overline{\mu}-\gamma'_{2}}{\gamma'_{1}-\overline{\nu}}$, so any two of these at least $q$ points determine a $(q+1)$-secant by Theorem \ref{qplusonesecant}, contradicting the assumption on $\Pi$. So, the system of equations in \eqref{eq:1standaardA223} has $0$ solutions or a unique solution in $(\mu_{1},\mu_{2},\nu_{1},\nu_{2})$. The coefficient matrix of the system of equations in \eqref{eq:1standaardA213} is
	\[
		A_{23}=
		\begin{pmatrix}
		0&a_1-\nu_3&b_1&\mu_3\\
		\nu_3&a_2&b_2-\mu_3&0\\
		-1&0&0&0\\
		0&0&b_4&-1
		\end{pmatrix}\,
	\]
	with 
	\begin{align*}
		\det\left(A_{23}\right)&=\mu_3\nu_3-(a_1+a_2b_4)\mu_3-b_2\nu_3+a_1b_2-a_2b_1\nonumber\\
		&=D(\mu_{3},\nu_3)\;.
	\end{align*}
	Given $\mu_{3}$ and $\nu_{3}$ the system of equations in \eqref{eq:1standaardA223} has a unique solution if $D(\mu_{3},\nu_{3})\neq0$ and no solutions otherwise. The  equation $D(\mu_3,\nu_3)=0$ represents a conic $C$ in the $(\mu_{3},\nu_{3})$-plane $\pi\cong\AG(2,q)$. Clearly, $C$ has two points on the line at infinity. One can check that the conic $C$ is singular if and only if $a_{2}(b_1+b_2b_4)=0$. Now we have seen before that $a_2\neq 0$ and that $b_1+b_2b_4\neq 0$. This implies that $C$ is non-singular, and so it has $q-1$ points in $\pi$. Consequently, the system of equations in \eqref{eq:1standaardA223} has $q^{2}-q+1$ solutions.
	\par Now, we look at Equation \eqref{eq:1standaardB}; it is equivalent to the following system of equations:
	\begin{align}\label{eq:1standaardB223}
		\begin{cases}
			c_1+d_1\mu_4&=-\mu_2+b_1\nu_1-a_1\mu_4\nu_2\\
			c_2+d_2\mu_4&=\mu_1+b_2\nu_1-a_2\mu_4\nu_2\\
			c_3+d_3\mu_4&=\mu_4\nu_1\\
			c_4+d_4\mu_4&=b_4\nu_1-\nu_2
		\end{cases}.
	\end{align}
	Recall that $\mu_{4}\in\F^{*}_{q}$. Given $\mu_{4}$, the coefficient matrix of this system of equations in $(\mu_{1},\mu_{2},\nu_{1},\nu_{2})$ is
	\[
		B_{23}=
		\begin{pmatrix}
		0&-1&b_1&-\mu_4a_1\\
		1&0&b_2&-\mu_4a_2\\
		0&0&\mu_4&0\\
		0&0&b_4&-1
		\end{pmatrix}\;.
	\]
	We find that $\det\left(B_{23}\right)=-\mu_4$. Since $\mu_4\neq 0$, there is a unique solution in $\mu_1,\mu_2,\nu_1,\nu_2$ to the linear system in \eqref{eq:1standaardB223}. Hence, we find exactly $q-1$ solutions to \eqref{eq:1standaardB}. 
	\par We find that the Equations \eqref{eq:1standaardA}, \eqref{eq:1standaardB}, \eqref{eq:1standaardC} and \eqref{eq:1standaardD} in total have either $q-1$ or $q^{2}$ solutions, depending on whether $\delta\gamma_2'+\gamma_1'\gamma_2$ is contained in $U$ or not. Including the point $P_{0}$, we find that $|\Pi\cap\Omega_{2}|$ is either $q$ or $q^{2}+1$.}
\end{proof}

\paragraph{Acknowledgement:} The authors would like to thank John Sheekey for sharing some helpful computational results with us. In particular, he has done an independent verification of the weight distribution of linear sets of rank $5$ in $\PG(1,2^5)$ and $\PG(1,3^5)$ (Theorems \ref{qistwo} and \ref{qisthree}).
\par  This research was partially carried out when the first author was visiting the School of Mathematics and Statistics at the University of Canterbury in the framework of the project MFP-UOC1805 funded by the Marsden council. He wants to thank the School, and in particular the second author, for their hospitality.
\par The authors would like to thank the anonymous referees for their suggestions.
{\vspace{1cm}}

{\bf Address of the authors: }

\vspace{0.5cm}
\noindent
Maarten De Boeck\vspace*{0.2cm}\\
University of Rijeka\\
Faculty of Mathematics\\
Radmile Matej\v{c}i\'c 2\\
51000 Rijeka, Croatia\vspace*{0.2cm}\\
Ghent University\\
Department of Mathematics: Algebra and Geometry\\
Gent, Flanders, Belgium.

\vspace{0.5cm}
\noindent
Geertrui Van de Voorde\\
School of Mathematics and Statistics $\vert$ Te Kura P\=angarau\\
University of Canterbury $\vert$ Te Whare W\=ananga o Waitaha \\
Private bag 4800\\
8140 Christchurch, Canterbury, New Zealand $\vert$  \=Otautahi, Waitaha, Aotearoa

\newpage

\ifthenelse{\equal{\versie}{arxiv}}{

\appendix
\section{Appendix: The proof of Theorem \texorpdfstring{\ref{eriseensecant}}{4.3}}\label{ap:th4.3}

\subsection*{Case A.3}\label{apA:A3}
\textit{In Case A.3 we assume that $a_{4}=0$ and $b_{4}\neq0$.} Clearly, in this case Equation \eqref{eq:q+1standaardB1bis} has no solutions. We now look at Equation \eqref{eq:q+1standaardA1bis}. From the third equation in \eqref{eq:q+1standaardA1bis} we then have $\nu_{3}=b^{-1}_{4}$, so we can look at the following system of equations:
\begin{align}\label{eq:q+1standaardA1.3}
\begin{cases}
\mu_{1}-(a_{5}\nu_{1}-b_{5}b^{-1}_{4})\mu_{3}=-(b_{1}b^{-1}_{4}-a_{1}\nu_{1})b^{-1}_{4}+a_{2}\nu_{1}-b_{2}b^{-1}_{4}\\
-b^{-1}_{4}\mu_{1}+\nu_{1}\mu_{3}=a_{3}\nu_{1}-b_{3}b^{-1}_{4}\\
\end{cases}.
\end{align}
For a given value of $\nu_{1}$ Equation \eqref{eq:q+1standaardA1.3} is a linear system of equations in $\mu_{1}$ and $\mu_{3}$ and has either 0, 1 or $q$ solutions. It has 0 or $q$ solutions iff
\begin{align}\label{eq:q+1-1.3ontaard}
\nu_{1}-\left(a_{5}\nu_{1}-b_{5}b^{-1}_{4}\right)b^{-1}_{4}=0\quad\Leftrightarrow\quad(b_{4}-a_{5})\nu_{1}+b_{5}b^{-1}_{4}=0\;.
\end{align}
This is a non-vanishing linear equation since we have shown before that	it is not possible that simultaneously $b_{5}=0=a_4$ and $a_{5}=b_{4}\neq0$. More precisely, \eqref{eq:q+1-1.3ontaard} has solution $\nu_{1}=\frac{b_{5}b^{-1}_{4}}{a_{5}-b_{4}}$ which exists if and only if $a_{5}\neq b_{4}$. Hence, for $q-1$ or $q$ values of $\nu_{1}$ Equation \eqref{eq:q+1standaardA1.3} has precisely one solution. If $\nu_{1}=\frac{b_{5}b^{-1}_{4}}{a_{5}-b_{4}}$, then \eqref{eq:q+1standaardA1.3} has $q$ solutions iff
\begin{align}
&&&-b_{4}\left(a_{3}\frac{b_{5}b^{-1}_{4}}{a_{5}-b_{4}}-b_{3}b^{-1}_{4}\right)=-\left(b_{1}b^{-1}_{4}-a_{1}\frac{b_{5}b^{-1}_{4}}{a_{5}-b_{4}}\right)b^{-1}_{4}+a_{2}\frac{b_{5}b^{-1}_{4}}{a_{5}-b_{4}}-b_{2}b^{-1}_{4}\nonumber\\
&\Leftrightarrow&0&=a_{3}b_{5}-(a_{5}-b_{4})b_{3}-(a_{5}-b_{4})b_{1}b^{-2}_{4}+a_{1}b_{5}b^{-2}_{4}+a_{2}b_{5}b^{-1}_{4}-(a_{5}-b_{4})b_{2}b^{-1}_{4}\nonumber\\
&\Leftrightarrow& 0&=b_{5}\left(a_{1}+a_{2}b_{4}+a_{3}b^{2}_{4}\right)-a_{5}\left(b_{1}+b_{2}b_{4}+b_{3}b^{2}_{4}\right)+b_{4}\left(b_{1}+b_{2}b_{4}+b_{3}b^{2}_{4}\right)\;.\label{eq:q+1-1.3qopl}
\end{align}
Now, we also have that
\begin{align*}
&&0&=\gamma\delta_{1}(\gamma-b_{4})-b_{1}-b_{2}\gamma-b_{3}\gamma^{2}-b_{5}\gamma\delta_{2}\\
&\Leftrightarrow&0&=\gamma\delta_{1}(\gamma-b_{4})(\gamma-a_{5})-(b_{1}+b_{2}\gamma+b_{3}\gamma^{2})(\gamma-a_{5})-b_{5}\gamma\delta_{2}(\gamma-a_{5})\\
&&&=\gamma\delta_{1}(\gamma-b_{4})(\gamma-a_{5})-(b_{1}+b_{2}\gamma+b_{3}\gamma^{2})(\gamma-a_{5})-b_{5}(a_{1}+a_{2}\gamma+a_{3}\gamma^{2})\;,
\end{align*}
where we have used \eqref{gammakwadraatdelta_2}. Now adding \eqref{eq:q+1-1.3qopl} to this, we find that
\begin{align}
0&=\gamma\delta_{1}(\gamma-b_{4})(\gamma-a_{5})+a_{5}\left(b_{2}(\gamma-b_{4})+b_{3}(\gamma^{2}-b^{2}_{4})\right)\nonumber\\&\qquad-\left(b_{1}(\gamma-b_{4})+b_{2}\left(\gamma^{2}-b^{2}_{4}\right)+b_{3}\left(\gamma^{3}-b^{3}_{4}\right)\right)-b_{5}\left(a_{2}(\gamma-b_{4})+a_{3}(\gamma^{2}-b^{2}_{4})\right)
\nonumber\\
&=\left(\gamma-b_{4}\right)\left[\gamma\delta_{1}(\gamma-a_{5})+a_{5}\left(b_{2}+b_{3}(\gamma+b_{4})\right)\right.\nonumber\\&\qquad\qquad\qquad\left.-\left(b_{1}+b_{2}\left(\gamma+b_{4}\right)+b_{3}\left(\gamma^{2}+b_{4}\gamma+b^{2}_{4}\right)\right)-b_{5}\left(a_{2}+a_{3}(\gamma+b_{4})\right)\right]\;.\label{eq:q+1-1.3qoplbis}
\end{align}
The first factor in \eqref{eq:q+1-1.3qoplbis} cannot be zero as $\gamma\notin\F_{q}$. Hence, the second factor in \eqref{eq:q+1-1.3qoplbis} must be zero. It follows that \begin{align*}
0&=\gamma^{2}\delta_{1}-a_{5}\gamma\delta_{1}+a_{5}b_{2}+a_{5}b_{3}\gamma+a_{5}b_{3}b_{4}\\&\qquad-b_{1}-b_{2}\gamma-b_{2}b_{4}-b_{3}\gamma^{2}-b_{3}b_{4}\gamma-b_{3}b^{2}_{4}-a_{2}b_{5}-a_{3}b_{5}\gamma-a_{3}b_{5}b_{4}\\
&=(b_{4}-a_{5})\gamma\delta_{1}+b_{5}\gamma\delta_{2}+\left(a_{5}b_{2}-a_{2}b_{5}+(a_{5}b_{3}-a_{3}b_{5}-b_{2})b_{4}-b_{3}b^{2}_{4}\right)\\&\qquad+\left(a_{5}b_{3}-a_{3}b_{5}-b_{3}b_{4}\right)\gamma
\end{align*}
where we have used \eqref{gammakwadraatdelta_1}. However, as $\{1,\gamma,\gamma\delta_{1},\gamma\delta_{2}\}$ is independent over $\F_{q}$, we have that $b_{4}=a_{5}$, contradicting the assumption. So, if $\nu_{1}=\frac{b_{5}b^{-1}_{4}}{a_{5}-b_{4}}$ then \eqref{eq:q+1standaardA1.3} has no solutions.
\par We conclude that in Case A.3, we have $q-1$ or $q$ solutions of Equation \eqref{eq:q+1standaardA1bis} and no solutions of Equation \eqref{eq:q+1standaardB1bis}, so in total there are $q-1$ or $q$ points in $(\Pi\setminus L)\cap\Omega_{2}$.

\subsection*{Case B.1.2}\label{apA:B1.2}
\textit{In Case B.1.2 we assume that $\gamma^{2}\delta_{1}\in\left\langle 1,\gamma,\gamma^{2},\gamma\delta_{2}\right\rangle_q$ and $\dim\left\langle 1,\gamma,\gamma^{2},\gamma\delta_{2},\gamma^{2}\delta_{2}\right\rangle_q=5$.} In other words, $\left\{1,\gamma,\gamma^{2},\gamma\delta_{2},\gamma^{2}\delta_{2}\right\}$ is an $\F_{q}$-basis for $\F_{q^{5}}$, and there are $b_{i}\in\F_{q}$, $i=0,\dots,3$, such that
\begin{align}
\gamma^{2}\delta_{1}&=b_{0}+b_{1}\gamma+b_{2}\gamma^{2}+b_{3}\gamma\delta_{2}\label{gammakwadraatdelta1B12}\;.
\end{align}
Note that $\dim\left\langle \gamma,\gamma^{2},\gamma^{2}\delta_{1},\gamma^{2}\delta_{2}\right\rangle_q=4$ since $\dim\left\langle 1,\gamma,\gamma\delta_{1},\gamma\delta_{2}\right\rangle_q=4$, and hence $(b_{0},b_{3})\neq(0,0)$. Now assume that $(b_{3},c_{3})=(0,0)$. Then $\gamma\delta_1=c_0+c_1\gamma+c_2\gamma^2$ and  $\gamma^2\delta_1=b_0+b_1\gamma+b_2\gamma^2$. Since  $\{1,\gamma,\gamma^{2},\gamma^{3}\}$ is an independent set over $\F_{q}$, we find that $c_2=0$, a contradiction. This implies that $(b_{3},c_{3})\neq (0,0).$
\par We are now ready to discuss the number of solutions to \eqref{eq:q+1standaardA} and \eqref{eq:q+1standaardB} in Case B.1.2. We see that in this case Equation \eqref{eq:q+1standaardB} is equivalent to the following system of equations:
\begin{align}\label{eq:q+1standaardB212}
\begin{cases}
b_{0}=-\mu_{4}\nu_{2}\\
b_{1}=\mu_{4}\nu_{1}-\mu_{2}\\
b_{2}=\mu_{1}\\
b_{3}=-\nu_{2}\\
0=\nu_{1}
\end{cases}
\Leftrightarrow\quad
\begin{cases}
\mu_{1}=b_{2}\\
\mu_{2}=b_{1}\\
\nu_{1}=0\\
\nu_{2}=-b_{3}\\
\mu_{4}b_{3}=b_{0}
\end{cases}.
\end{align}
Clearly, Equation \eqref{eq:q+1standaardB212} has 0, 1 or $q$ solutions and it can only have $q$ solutions if $b_{0}=b_{3}=0$, a contradiction. So, Equation \eqref{eq:q+1standaardB212} has 0 solutions or 1 solution in this case. The former only occurs if $b_{3}=0$ and $b_{0}\neq0$.
\par Equation \eqref{eq:q+1standaardA}, on the other hand, is equivalent to the following system of equations:
\begin{align}\label{eq:q+1standaardA212}
\begin{cases}
0=\mu_{2}-b_{0}\nu_{3}+c_{0}\\
0=\mu_{3}\nu_{2}-\mu_{2}\nu_{3}-\mu_{1}-b_{1}\nu_{3}+c_{1}\\
0=\mu_{1}\nu_{3}-\mu_{3}\nu_{1}-b_{2}\nu_{3}+c_{2}\\
0=-\nu_{2}-b_{3}\nu_{3}+c_{3}\\
0=\nu_{1}
\end{cases}
\Leftrightarrow\quad
\begin{cases}
\mu_{2}=b_{0}\nu_{3}-c_{0}\\
\nu_{1}=0\\
\nu_{2}=-b_{3}\nu_{3}+c_{3}\\
0=\mu_{3}\nu_{2}-\mu_{2}\nu_{3}-\mu_{1}-b_{1}\nu_{3}+c_{1}\\
0=\mu_{1}\nu_{3}-b_{2}\nu_{3}+c_{2}
\end{cases}.
\end{align}
It is straightforward to see that there is a one-to-one correspondence between the solutions in $(\mu_{1},\mu_{2},\mu_{3},\nu_{1},\nu_{2},\nu_{3})$ of Equation \eqref{eq:q+1standaardA212} and the solutions in $(\mu_{1},\mu_{3},\nu_{3})$ of
\begin{align}\label{eq:q+1standaardA212bis}
\begin{cases}
\mu_{1}+(b_{3}\nu_{3}-c_{3})\mu_{3}=(c_{0}-b_{0}\nu_{3})\nu_{3}-b_{1}\nu_{3}+c_{1}\\
\mu_{1}\nu_{3}=b_{2}\nu_{3}-c_{2}
\end{cases}.
\end{align}
For a given value of $\nu_{3}$ Equation \eqref{eq:q+1standaardA212bis} is a linear system of equations in $\mu_{1}$ and $\mu_{3}$ and has either 0, 1 or $q$ solutions. It has 0 or $q$ solutions iff
\begin{align}\label{eq:q+1-2.1.2ontaard}
\left(b_{3}\nu_{3}-c_{3}\right)\nu_{3}=0\;.
\end{align}
This is a non-vanishing quadratic or linear equation since $(b_{3},c_{3})\neq(0,0)$. More precisely, \eqref{eq:q+1-2.1.2ontaard} has solutions $\nu_{3}=0$ and $\nu_{3}=c_{3}b^{-1}_{3}$. Note that the latter solution only exists if $b_{3}\neq0$. Hence for $q-2$ or $q-1$ values of $\nu_{3}$ Equation \eqref{eq:q+1standaardA212bis} has precisely one solution. If $\nu_{3}=0$, then \eqref{eq:q+1standaardA212bis} has no solutions since $c_{2}\neq0$. If $\nu_{3}=c_{3}b^{-1}_{3}\neq 0$, then \eqref{eq:q+1standaardA212bis} has $q$ solutions if and only if
\begin{align}
&b_{2}-c_{2}\frac{b_{3}}{c_{3}}=\left(c_{0}-b_{0}\frac{c_{3}}{b_{3}}\right)\frac{c_{3}}{b_{3}}-b_{1}\frac{c_{3}}{b_{3}}+c_{1}\nonumber\\
\Leftrightarrow\qquad &b_{0}+\left(b_{1}-c_{0}\right)\left(\frac{b_{3}}{c_{3}}\right)+\left(b_{2}-c_{1}\right)\left(\frac{b_{3}}{c_{3}}\right)^{2}-c_{2}\left(\frac{b_{3}}{c_{3}}\right)^{3}=0\;.\label{eq:q+1-2.1.2qopl}
\end{align}
Now, using \eqref{gammadelta1B1} and \eqref{gammakwadraatdelta1B12} we also have that
\begin{align*}
0&=\gamma^{2}\delta_{1}-\gamma(\gamma\delta_{1})=b_{0}+(b_{1}-c_{0})\gamma+(b_{2}-c_{1})\gamma^{2}+b_{3}\gamma\delta_{2}-c_{2}\gamma^{3}-c_{3}\gamma^{2}\delta_{2}\;,
\end{align*}
and subtracting \eqref{eq:q+1-2.1.2qopl} from this, we find that
\begin{align}
0&=(b_{1}-c_{0})\left(\gamma-\frac{b_{3}}{c_{3}}\right)+(b_{2}-c_{1})\left(\gamma^{2}-\left(\frac{b_{3}}{c_{3}}\right)^{2}\right)-c_{2}\left(\gamma^{3}-\left(\frac{b_{3}}{c_{3}}\right)^{3}\right)\nonumber\\&\qquad-c_{3}\gamma\delta_{2}\left(\gamma-\frac{b_{3}}{c_{3}}\right)\nonumber\\
&=\left(\gamma-\frac{b_{3}}{c_{3}}\right)\left[b_{1}-c_{0}+(b_{2}-c_{1})\left(\gamma+\frac{b_{3}}{c_{3}}\right)-c_{2}\left(\gamma^{2}+\frac{b_{3}}{c_{3}}\gamma+\left(\frac{b_{3}}{c_{3}}\right)^{2}\right)-c_{3}\gamma\delta_{2}\right]\;.\label{eq:q+1-2.1.2qoplbis}
\end{align}
The first factor in \eqref{eq:q+1-2.1.2qoplbis} cannot be zero as $\gamma\notin\F_{q}$. Hence, the second factor in \eqref{eq:q+1-2.1.2qoplbis} must be zero. However, as $\{1,\gamma,\gamma^{2},\gamma\delta_{2}\}$ is independent over $\F_{q}$, we have that $c_{2}=0$, a contradiction. So, if $\nu_{3}=c_{3}b^{-1}_{3}$ then \eqref{eq:q+1standaardA212bis} has no solutions.
\par We conclude that in Case B.1.2, we have $q-2$, $q-1$ or $q$ solutions of Equation \eqref{eq:q+1standaardA212bis} and at most 1 solution of Equation \eqref{eq:q+1standaardB212}. Now, recall that \eqref{eq:q+1standaardB212} has no solutions if and only if $b_{3}=0$ and $b_{0}\neq0$, but we showed above that \eqref{eq:q+1standaardA212bis} has $q-1$ solutions if $b_{3}=0$ and $b_{0}\neq0$. So, in total there are at least $q-1$ and at most $q+1$ points in $(\Pi\setminus L)\cap\Omega_{2}$ in this case.

\subsection*{Case B.2.1}\label{apA:B2.1}

\textit{In Case B.2.1 we assume that $\dim\left\langle 1,\gamma,\gamma^{2},\gamma\delta_{1},\gamma^{2}\delta_{1}\right\rangle_q=5$.} In other words, we assume that $\left\{1,\gamma,\gamma^{2},\gamma\delta_{1},\gamma^{2}\delta_{1}\right\}$ is an $\F_{q}$-basis for $\F_{q^{5}}$. Then, there are $a_{i}\in\F_{q}$, $i=1,\dots,5$, such that
\begin{align}
\gamma^{2}\delta_{2}&=a_{0}+a_{1}\gamma+a_{2}\gamma^{2}+a_{3}\gamma\delta_{1}+a_{4}\gamma^{2}\delta_{1}\label{gammakwadraatdelta2B21}\;.
\end{align}
Note that $\dim\left\langle \gamma,\gamma^{2},\gamma^{2}\delta_{1},\gamma^{2}\delta_{2}\right\rangle_q=4$ since $\dim\left\langle 1,\gamma,\gamma\delta_{1},\gamma\delta_{2}\right\rangle_q=4$, and hence $(a_{0},a_{3})\neq(0,0)$.  Note that also $(a_{3},a_{4})\neq(0,0)$ since $(a_{3},a_{4})=(0,0)$ implies that also $d_{2}=0$, a contradiction. In the last implication we use that $\{1,\gamma,\gamma^{2},\gamma^{3}\}$ is an independent set over $\F_{q}$.
\par We are now ready to discuss the number of solutions to \eqref{eq:q+1standaardA} and \eqref{eq:q+1standaardB} in Case B.2.1. We see that in this case Equation \eqref{eq:q+1standaardB} is equivalent to the following system of equations:
\begin{align}\label{eq:q+1standaardB221}
\begin{cases}
0=-\mu_{4}\nu_{2}-d_{0}\nu_{2}+a_{0}\nu_{1}\\
0=\mu_{4}\nu_{1}-\mu_{2}-d_{1}\nu_{2}+a_{1}\nu_{1}\\
0=\mu_{1}-d_{2}\nu_{2}+a_{2}\nu_{1}\\
0=a_{3}\nu_{1}\\
1=a_{4}\nu_{1}
\end{cases}
\Leftrightarrow\quad
\begin{cases}
\mu_{1}=d_{2}\nu_{2}-a_{2}\nu_{1}\\
\mu_{2}=\mu_{4}\nu_{1}-d_{1}\nu_{2}+a_{1}\nu_{1}\\
0=-\mu_{4}\nu_{2}-d_{0}\nu_{2}+a_{0}\nu_{1}\\
0=a_{3}\nu_{1}\\
1=a_{4}\nu_{1}
\end{cases}.
\end{align}
It is clear that \eqref{eq:q+1standaardB221} has no solutions if $a_{3}\neq0$ or if $a_{4}=0$. So, we assume now that $a_{4}\neq0$ and $a_{3}=0$, and hence also $a_{0}\neq0$. Then, it is straightforward that there is a one-to-one correspondence between the solutions in $(\mu_{1},\mu_{2},\mu_{4},\nu_{1},\nu_{2})$ of Equation \eqref{eq:q+1standaardB221} and the solutions in $(\mu_{4},\nu_{2})$ of
\begin{align}\label{eq:q+1standaardB221bis}
0=-\mu_{4}\nu_{2}-d_{0}\nu_{2}+a_{0}a^{-1}_{4}\;.
\end{align}
For every value of $\nu_{2}\in\F^{*}_{q}$ there is a unique solution for $\mu_{4}$, and for $\nu_{2}=0$ Equation \eqref{eq:q+1standaardB221bis} has no solution since $a_{0}\neq0$. So, Equation \eqref{eq:q+1standaardB221bis} has 0 or $q-1$ solutions in this case. The former occurs if $a_{3}\neq0$ or if $a_{4}=0$, and the latter occurs if $a_{3}=0$ and $a_{4}\neq0$.
\par Equation \eqref{eq:q+1standaardA} is equivalent to the following system of equations:
\begin{align}\label{eq:q+1standaardA221}
\begin{cases}
0=\mu_{2}-d_{0}\nu_{2}+a_{0}\nu_{1}\\
0=\mu_{3}\nu_{2}-\mu_{2}\nu_{3}-\mu_{1}-d_{1}\nu_{2}+a_{1}\nu_{1}\\
0=\mu_{1}\nu_{3}-\mu_{3}\nu_{1}-d_{2}\nu_{2}+a_{2}\nu_{1}\\
0=a_{3}\nu_{1}+1\\
0=a_{4}\nu_{1}-\nu_{3}
\end{cases}
\Leftrightarrow\quad
\begin{cases}
\mu_{2}=d_{0}\nu_{2}-a_{0}\nu_{1}\\
\nu_{3}=a_{4}\nu_{1}\\
-1=a_{3}\nu_{1}\\
0=\mu_{3}\nu_{2}-\mu_{2}\nu_{3}-\mu_{1}-d_{1}\nu_{2}+a_{1}\nu_{1}\\
0=\mu_{1}\nu_{3}-\mu_{3}\nu_{1}-d_{2}\nu_{2}+a_{2}\nu_{1}
\end{cases}.
\end{align}
It is clear that \eqref{eq:q+1standaardA221} has no solutions if $a_{3}=0$. So, we assume now that $a_{3}\neq0$. Then, it is straightforward that there is a one-to-one correspondence between the solutions in $(\mu_{1},\mu_{2},\mu_{3},\nu_{1},\nu_{2},\nu_{3})$ of Equation \eqref{eq:q+1standaardA221} and the solutions in $(\mu_{1},\mu_{3},\nu_{2})$ of
\begin{align}\label{eq:q+1standaardA221bis}
\begin{cases}
\mu_{1}-\mu_{3}\nu_{2}=a_{4}a^{-1}_{3}\left(d_{0}\nu_{2}+a_{0}a^{-1}_{3}\right)-d_{1}\nu_{2}-a_{1}a^{-1}_{3}\\
-a_{4}a^{-1}_{3}\mu_{1}+a^{-1}_{3}\mu_{3}=d_{2}\nu_{2}+a_{2}a^{-1}_{3}
\end{cases}.		
\end{align}
For a given value of $\nu_{2}$ Equation \eqref{eq:q+1standaardA221bis} is a linear system of equations in $\mu_{1}$ and $\mu_{3}$ and has either 0, 1 or $q$ solutions. It has 0 or $q$ solutions iff
\begin{align}\label{eq:q+1-2.2.1ontaard}
a^{-1}_{3}\left(1-a_{4}\nu_{2}\right)=0\;.
\end{align}
This is a non-vanishing linear equation since $a_{3}\neq0$. More precisely, \eqref{eq:q+1-2.2.1ontaard} has no solutions if $a_{4}=0$ and one solution $\nu_{2}=a^{-1}_{4}$ if $a_{4}\neq0$. Hence for $q-1$ or $q$ values of $\nu_{2}$ Equation \eqref{eq:q+1standaardA221bis} has precisely one solution. If $\nu_{2}=a^{-1}_{4}$, then \eqref{eq:q+1standaardA221bis} has $q$ solutions iff
\begin{align}
&-\frac{a_{4}}{a_{3}}\left[\frac{a_{4}}{a_{3}}\left(\frac{d_{0}}{a_{4}}+\frac{a_{0}}{a_{3}}\right)-\frac{d_{1}}{a_{4}}-\frac{a_{1}}{a_{3}}\right]=\frac{d_{2}}{a_{4}}+\frac{a_{2}}{a_{3}}\nonumber\\
\Leftrightarrow\qquad &a_{0}-\left(a_{1}-d_{0}\right)\left(\frac{a_{3}}{a_{4}}\right)+\left(a_{2}-d_{1}\right)\left(\frac{a_{3}}{a_{4}}\right)^{2}+d_{2}\left(\frac{a_{3}}{a_{4}}\right)^{3}=0\;.\label{eq:q+1-2.2.1qopl}
\end{align}
Now, we also have from \eqref{gammadelta2B2} and \eqref{gammakwadraatdelta2B21}  that
\begin{align*}
0&=\gamma^{2}\delta_{2}-\gamma(\gamma\delta_{2})=a_{0}+(a_{1}-d_{0})\gamma+(a_{2}-d_{1})\gamma^{2}-d_{2}\gamma^{3}+a_{3}\gamma\delta_{1}+a_{4}\gamma^{2}\delta_{1}\;,
\end{align*}
and subtracting \eqref{eq:q+1-2.2.1qopl} from this, we find that
\begin{align}
0&=(a_{1}-d_{0})\left(\gamma+\frac{a_{3}}{a_{4}}\right)+(a_{2}-d_{1})\left(\gamma^{2}-\left(\frac{a_{3}}{a_{4}}\right)^{2}\right)-d_{2}\left(\gamma^{3}+\left(\frac{a_{3}}{a_{4}}\right)^{3}\right)\nonumber\\&\qquad-a_{4}\gamma\delta_{1}\left(\gamma+\frac{a_{3}}{a_{4}}\right)\nonumber\\
&=\left(\gamma+\frac{a_{3}}{a_{4}}\right)\left[a_{1}-d_{0}+(a_{2}-d_{1})\left(\gamma-\frac{a_{3}}{a_{4}}\right)-d_{2}\left(\gamma^{2}-\left(\frac{a_{3}}{a_{4}}\right)\gamma+\left(\frac{a_{3}}{a_{4}}\right)^{2}\right)-a_{4}\gamma\delta_{1}\right]\;.\label{eq:q+1-2.2.1qoplbis}
\end{align}
The first factor in \eqref{eq:q+1-2.2.1qoplbis} cannot be zero as $\gamma\notin\F_{q}$. Hence, the second factor in \eqref{eq:q+1-2.2.1qoplbis} must be zero. However, as $\{1,\gamma,\gamma^{2},\gamma\delta_{1}\}$ is independent over $\F_{q}$, we have that $d_{2}=0$, a contradiction. So, if $\nu_{2}=a^{-1}_{4}$ then \eqref{eq:q+1standaardA212bis} has no solutions.
\par We conclude that in Case B.2.1, we have $0$ solutions of Equation \eqref{eq:q+1standaardB221} and $q-1$ solutions of Equation \eqref{eq:q+1standaardA221} if $a_{3}\neq0\neq a_{4}$, we have $0$ solutions of Equation \eqref{eq:q+1standaardB221} and $q$ solutions of Equation \eqref{eq:q+1standaardA221} if $a_{3}\neq0=a_{4}$, and we have $q-1$ solutions of Equation \eqref{eq:q+1standaardB221} and $0$ solutions of Equation \eqref{eq:q+1standaardA221} if $a_{3}=0\neq a_{4}$. Recall that $(a_{3},a_{4})\neq(0,0)$, so in Case B.2.1 there are in total $q-1$ or $q$ points in $(\Pi\setminus L)\cap\Omega_{2}$.

\subsection*{Case B.2.2}\label{apA:B2.2}

\textit{In Case B.2.2 we assume that $\dim\left\langle 1,\gamma,\gamma^{2},\gamma\delta_{1},\gamma^2\delta\right\rangle_q\neq 5$.} Recall that it is not possible that both $\gamma^{2}\delta_{1}$ and $\gamma^{2}\delta_{2}$ are contained in $\left\langle 1,\gamma,\gamma^{2},\gamma\delta_{1}\right\rangle_q$, hence, we know that $\dim\left\langle 1,\gamma,\gamma^{2},\gamma\delta_{1},\gamma^{2}\delta_{2}\right\rangle_q=5$. 
In other words, $\left\{1,\gamma,\gamma^{2},\gamma\delta_{1},\gamma^{2}\delta_{2}\right\}$ is an $\F_{q}$-basis for $\F_{q^{5}}$, and there are $b_{i}\in\F_{q}$, $i=0,\dots,3$, such that
\begin{align}
\gamma^{2}\delta_{1}&=b_{0}+b_{1}\gamma+b_{2}\gamma^{2}+b_{3}\gamma\delta_{1}\label{gammakwadraatdelta1B22}\;.
\end{align}
Note that $\dim\left\langle \gamma,\gamma^{2},\gamma^{2}\delta_{1},\gamma^{2}\delta_{2}\right\rangle_q=4$ since $\dim\left\langle 1,\gamma,\gamma\delta_{1},\gamma\delta_{2}\right\rangle_q=4$, and hence $(b_{0},b_{3})\neq(0,0)$.
\par We are now ready to discuss the number of solutions to \eqref{eq:q+1standaardA} and \eqref{eq:q+1standaardB} in Case B.2.2. We see that in this case Equation \eqref{eq:q+1standaardB} is equivalent to the following system of equations:
\begin{align}\label{eq:q+1standaardB222}
\begin{cases}
0=-\mu_{4}\nu_{2}-d_{0}\nu_{2}-b_{0}\\
0=\mu_{4}\nu_{1}-\mu_{2}-d_{1}\nu_{2}-b_{1}\\
0=\mu_{1}-d_{2}\nu_{2}-b_{2}\\
0=-b_{3}\\
0=\nu_{1}
\end{cases}
\Leftrightarrow\quad
\begin{cases}
0=\mu_{4}\nu_{2}+d_{0}\nu_{2}+b_{0}\\
\mu_{1}=d_{2}\nu_{2}+b_{2}\\
\mu_{2}=-d_{1}\nu_{2}-b_{1}\\
\nu_{1}=0\\
0=b_{3}
\end{cases}.
\end{align}
It is clear that \eqref{eq:q+1standaardB222} has no solutions if $b_{3}\neq0$. So, we assume now that $b_{3}=0$, and hence also $b_{0}\neq0$. Then, it is straightforward that there is a one-to-one correspondence between the solutions in $(\mu_{1},\mu_{2},\mu_{4},\nu_{1},\nu_{2})$ of Equation \eqref{eq:q+1standaardB222} and the solutions in $(\mu_{4},\nu_{2})$ of
\begin{align}\label{eq:q+1standaardB222bis}
0=\mu_{4}\nu_{2}+d_{0}\nu_{2}+b_{0}\;.
\end{align}
For every value of $\nu_{2}\in\F^{*}_{q}$ there is a unique solution for $\mu_{4}$, and for $\nu_{2}=0$ Equation \eqref{eq:q+1standaardB222bis} has no solution since $b_{0}\neq0$. So, Equation \eqref{eq:q+1standaardB222bis} has $0$ or $q-1$ solutions in this case. The former occurs if $b_{3}\neq0$ and the latter if $b_{3}=0$.
\par Equation \eqref{eq:q+1standaardA} is equivalent to the following system of equations:
\begin{align}\label{eq:q+1standaardA222}
\begin{cases}
0=\mu_{2}-d_{0}\nu_{2}-b_{0}\nu_{3}\\
0=\mu_{3}\nu_{2}-\mu_{2}\nu_{3}-\mu_{1}-d_{1}\nu_{2}-b_{1}\nu_{3}\\
0=\mu_{1}\nu_{3}-\mu_{3}\nu_{1}-d_{2}\nu_{2}-b_{2}\nu_{3}\\
0=1-b_{3}\nu_{3}\\
0=\nu_{1}
\end{cases}
\Leftrightarrow\quad
\begin{cases}
\mu_{2}=d_{0}\nu_{2}+b_{0}\nu_{3}\\
\nu_{1}=0\\
0=\mu_{3}\nu_{2}-\mu_{2}\nu_{3}-\mu_{1}-d_{1}\nu_{2}-b_{1}\nu_{3}\\
0=\mu_{1}\nu_{3}-d_{2}\nu_{2}-b_{2}\nu_{3}\\
1=b_{3}\nu_{3}
\end{cases}.
\end{align}
It is clear that \eqref{eq:q+1standaardA222} has no solutions if $b_{3}=0$. So, we assume now that $b_{3}\neq0$. Then, it is straightforward that there is a one-to-one correspondence between the solutions in $(\mu_{1},\mu_{2},\mu_{3},\nu_{1},\nu_{2},\nu_{3})$ of Equation \eqref{eq:q+1standaardA222} and the solutions in $(\mu_{1},\mu_{3},\nu_{2})$ of
\begin{align}\label{eq:q+1standaardA222bis}
\begin{cases}
\mu_{1}-\nu_{2}\mu_{3}=-(d_{0}\nu_{2}+b_{0}b^{-1}_{3})b^{-1}_{3}-d_{1}\nu_{2}-b_{1}b^{-1}_{3}\\
\mu_{1}b^{-1}_{3}=d_{2}\nu_{2}+b_{2}b^{-1}_{3}\\
\end{cases}.		
\end{align}
For a given value of $\nu_{2}$ Equation \eqref{eq:q+1standaardA222bis} is a linear system of equations in $\mu_{1}$ and $\mu_{3}$ and has either 0, 1 or $q$ solutions. It has 0 or $q$ solutions if and only if $\nu_{2}=0$. Hence for $q-1$ values of $\nu_{2}$ Equation \eqref{eq:q+1standaardA222bis} has precisely one solution. If $\nu_{2}=0$, then \eqref{eq:q+1standaardA222bis} has $q$ solutions iff
\begin{align}\label{eq:q+1-2.2.2qopl}
b_{3}\left(b_{2}b^{-1}_{3}\right)=-(b_{0}b^{-1}_{3})b^{-1}_{3}-b_{1}b^{-1}_{3}\quad\Leftrightarrow\quad b_{2}b^{2}_{3}+b_{1}b_{3}+b_{0}=0\;.
\end{align}
Subtracting this from the expression for $\gamma^{2}\delta_{1}$ from \eqref{gammakwadraatdelta1B22}, we find that
\begin{align}\label{eq:q+1-2.2.2qoplbis}
\gamma^{2}\delta_{1}&=-b_{1}b_{3}-b_{2}b^{2}_{3}+b_{1}\gamma+b_{2}\gamma^{2}+b_{3}\gamma\delta_{1}\quad\Leftrightarrow\quad 0=\left(\gamma-b_{3}\right)\left(b_{1}+b_{2}(\gamma+b_{3})-\gamma\delta_{1}\right)\;.
\end{align}
The first factor in \eqref{eq:q+1-2.2.2qoplbis} cannot be zero as $\gamma\notin\F_{q}$, and the second factor in \eqref{eq:q+1-2.2.2qoplbis} cannot be zero as $\{1,\gamma,\gamma\delta_{1}\}$ is an independent set over $\F_{q}$. So, if $\nu_{2}=0$ then \eqref{eq:q+1standaardA222bis} has no solutions.
\par We conclude that in Case B.2.2, we have $0$ solutions of Equation \eqref{eq:q+1standaardA222} and $q-1$ solutions of Equation \eqref{eq:q+1standaardB222} if $b_{3}=0$, and we have $q-1$ solutions of Equation \eqref{eq:q+1standaardA222} and $0$ solutions of Equation \eqref{eq:q+1standaardB222} if $b_{3}\neq0$. So, in this case there are in total $q-1$ points in $(\Pi\setminus L)\cap\Omega_{2}$.

\section{Appendix: The proof of Theorem \texorpdfstring{\ref{arcs}}{4.5}}\label{ap:th4.5}

\subsection*{Case A.2}\label{apB:A2}
\textit{In Case A.2 we assume that $\gamma'_{2}\in U_{1}$, but $\gamma_{0}\gamma'_{2}\notin U_{1}$}. Hence, we assume in this case that $\dim\left\langle 1,\gamma_{0},\gamma'_{1},\gamma_{0}\gamma'_{1},\gamma_{0}\gamma'_{2}\right\rangle_q=5$, in other words $\left\{1,\gamma_{0},\gamma'_{1},\gamma_{0}\gamma'_{1},\gamma_{0}\gamma'_{2}\right\}$ is an $\F_{q}$-basis for $\F_{q^{5}}$. Then, there are $a_{i},b_{i},c_{i},d_{i}\in\F_{q}$, $i=1,\dots,5$, such that
\begin{align*}
\gamma'_{2}&=a_{1}+a_{2}\gamma_{0}+a_{3}\gamma'_{1}+a_{4}\gamma_{0}\gamma'_{1}\;,\\
\delta&=b_{1}+b_{2}\gamma_{0}+b_{3}\gamma'_{1}+b_{4}\gamma_{0}\gamma'_{1}+b_{5}\gamma_{0}\gamma'_{2}\;,\\
\gamma_{2}&=c_{1}+c_{2}\gamma_{0}+c_{3}\gamma'_{1}+c_{4}\gamma_{0}\gamma'_{1}+c_{5}\gamma_{0}\gamma'_{2}\quad\text{and}\\
\delta\gamma'_{2}+\gamma'_{1}\gamma_{2}&=d_{1}+d_{2}\gamma_{0}+d_{3}\gamma'_{1}+d_{4}\gamma_{0}\gamma'_{1}+d_{5}\gamma_{0}\gamma'_{2}\;.
\end{align*}
We saw in the intermezzo that $\left\langle P_{0},P_{2}\right\rangle$ is a $(q+1)$-secant if $\gamma_{2}\in U_{2}$ and $\dim U_{2}=3$. In this case, these conditions are fulfilled if and only if $a_{3}=a_{4}=0$ and $c_{3}=c_{4}=0$. So, we may assume that $(a_{3},a_{4},c_{3},c_{4})\neq(0,0,0,0)$.
\par Considering $\F_{q^{5}}$ as a vector space over $\F_{q}$, Equation \eqref{eq:1standaardA} is equivalent to the following system of equations:
\begin{align}\label{eq:1standaardA12}
\begin{cases}
d_{1}=\mu_{3}\nu_{2}-\mu_{2}\nu_{3}-\nu_{2}a_{1}+\mu_{3}b_{1}+\nu_{3}c_{1}\\
d_{2}=\mu_{1}\nu_{3}-\mu_{3}\nu_{1}-\nu_{2}a_{2}+\mu_{3}b_{2}+\nu_{3}c_{2}\\
d_{3}=\mu_{2}-\nu_{2}a_{3}+\mu_{3}b_{3}+\nu_{3}c_{3}\\
d_{4}=-\mu_{1}-\nu_{2}a_{4}+\mu_{3}b_{4}+\nu_{3}c_{4}\\
d_{5}=\nu_{1}+\mu_{3}b_{5}+\nu_{3}c_{5}
\end{cases}
\!\Leftrightarrow\ 
\begin{cases}
\mu_{1}=-d_{4}-\nu_{2}a_{4}+\mu_{3}b_{4}+\nu_{3}c_{4}\\
\mu_{2}=d_{3}+\nu_{2}a_{3}-\mu_{3}b_{3}-\nu_{3}c_{3}\\
\nu_{1}=d_{5}-\mu_{3}b_{5}-\nu_{3}c_{5}\\
d_{1}=\mu_{3}\nu_{2}-\mu_{2}\nu_{3}-\nu_{2}a_{1}+\mu_{3}b_{1}+\nu_{3}c_{1}\\
d_{2}=\mu_{1}\nu_{3}-\mu_{3}\nu_{1}-\nu_{2}a_{2}+\mu_{3}b_{2}+\nu_{3}c_{2}
\end{cases}\!\!\!\!.
\end{align}
It is straightforward to see that there is a one-to-one correspondence between the solutions in $(\mu_{1},\mu_{2},\mu_{3},\nu_{1},\nu_{2},\nu_{3})$ of Equation \eqref{eq:1standaardA12} and the solutions in $(\mu_{3},\nu_{2},\nu_{3})$ of
\begin{align}\label{eq:1standaardA12bis}
&\begin{cases}
d_{1}=\mu_{3}\nu_{2}-(d_{3}+\nu_{2}a_{3}-\mu_{3}b_{3}-\nu_{3}c_{3})\nu_{3}-\nu_{2}a_{1}+\mu_{3}b_{1}+\nu_{3}c_{1}\\
d_{2}=(-d_{4}-\nu_{2}a_{4}+\mu_{3}b_{4}+\nu_{3}c_{4})\nu_{3}-\mu_{3}(d_{5}-\mu_{3}b_{5}-\nu_{3}c_{5})-\nu_{2}a_{2}+\mu_{3}b_{2}+\nu_{3}c_{2}
\end{cases}\nonumber\\
\Leftrightarrow\quad
&\begin{cases}
L_{1}(\mu_{3},\nu_{3})\nu_{2}=C_{1}(\mu_{3},\nu_{3})\\
L_{2}(\mu_{3},\nu_{3})\nu_{2}=C_{2}(\mu_{3},\nu_{3})
\end{cases}
\end{align}
with
\begin{align*}
L_{1}(\mu_{3},\nu_{3})&=-\mu_{3}+a_{3}\nu_{3}+a_{1}\;,\\
L_{2}(\mu_{3},\nu_{3})&=a_{4}\nu_{3}+a_{2}\;,\\
C_{1}(\mu_{3},\nu_{3})&=b_{3}\mu_{3}\nu_{3}+c_{3}\nu^{2}_{3}+b_{1}\mu_{3}+(c_{1}-d_{3})\nu_{3}-d_{1}\;\text{ and}\\
C_{2}(\mu_{3},\nu_{3})&=b_{5}\mu^{2}_{3}+(b_{4}+c_{5})\mu_{3}\nu_{3}+c_{4}\nu^{2}_{3}+(b_{2}-d_{5})\mu_{3}+(c_{2}-d_{4})\nu_{3}-d_{2}\;.
\end{align*}
Given $\mu_{3}$ and $\nu_{3}$, the system of equations in \eqref{eq:1standaardA12bis} has $0$, $1$ or $q$ solutions for $\nu_{1}$. Assume that for $(\mu_{3},\nu_{3})=(\overline{\mu_{3}},\overline{\nu_{3}})$ the system of equations in \eqref{eq:1standaardA12bis} would have $q$ solutions. Then, looking at \eqref{eq:1algbis} with $(\mu_4,\mu_5,\nu_4,\nu_5)=(0,1,1,0)$, we see that for the $q$ corresponding points, we have $\varphi=\frac{\overline{\mu_{3}}-\gamma'_{2}}{\gamma'_{1}-\overline{\nu_{3}}}$. Hence any two of these $q$ points determine a $(q+1)$-secant by Theorem \ref{qplusonesecant}, contradicting the assumption on $\Pi$. So, \eqref{eq:1standaardA12bis} has either 0 solutions or a unique solution in $\nu_{1}$, and the latter occurs iff 
\begin{multline}\label{eq:1cubic12}
F(\mu_{3},\nu_{3})=L_{1}(\mu_{3},\nu_{3})C_{2}(\mu_{3},\nu_{3})-L_{2}(\mu_{3},\nu_{3})C_{1}(\mu_{3},\nu_{3})=0\\ \wedge\quad\left(L_{1}(\mu_{3},\nu_{3}),L_{2}(\mu_{3},\nu_{3})\right)\neq(0,0)\;.
\end{multline}
The equation $F(\mu_{3},\nu_{3})=0$ determines a cubic curve $C$ in the $(\mu_{3},\nu_{3})$-plane $\pi\cong\AG(2,q)$. We embed this affine plane in the projective plane $\PG(2,q)$ by adding the line at infinity $\ell_{\infty}$ and extend $C$ to the cubic curve $\overline{C}$ by going to a homogeneous equation $\overline{F}(\mu_{3},\nu_{3},\rho)=0$. Analogously, we define the homogeneous functions $\overline{L_{1}}(\mu_{3},\nu_{3},\rho)$, $\overline{L_{2}}(\mu_{3},\nu_{3},\rho)$, $\overline{C_{1}}(\mu_{3},\nu_{3},\rho)$, and $\overline{C_{2}}(\mu_{3},\nu_{3},\rho)$. Note that neither $\overline{L_{1}}$ nor $\overline{L_{2}}$ can be identically zero; the former is obvious, and in case $L_{2}\equiv0$ we would have that $\gamma'_{2}=a_{1}+a_{3}\gamma'_{1}$, hence that $\{1,\gamma'_{1},\gamma'_{2}\}$ is a linearly dependent set over $\F_{q}$ which forces the existence of a $(q^2+q+1)$-secant to $\Pi$ as seen in the intermezzo.  The lines defined by $\overline{L_{1}}=0$ and $\overline{L_{2}}=0$ in the projective plane $\PG(2,q)$, clearly do not coincide. So, these lines have precisely one intersection point $R$, which is on $\ell_{\infty}$ if and only if $a_{4}=0$. It is clear that $R$ is on the cubic curve $\overline{F}=0$.
\par We denote the number of points on $\overline{C}$ by $N$ and the number of points on $\overline{C}\cap\ell_{\infty}$ by $N_{\infty}$. Furthermore, we set $\varepsilon=1$ if $R$ is an affine point, $\varepsilon=0$ if $R\in\ell_{\infty}$. We find that Equation \eqref{eq:1cubic12}, and hence also Equation \eqref{eq:1standaardA11}, has $N-N_{\infty}-\varepsilon$ solutions.
\par Now, we look at Equation \eqref{eq:1standaardB}; it is equivalent to the following system of equations:
\begin{align}\label{eq:1standaardB12}
\begin{cases}
-c_{1}=-\mu_{2}-a_{1}\nu_{2}-b_{1}\mu_{4}\\
-c_{2}=\mu_{1}-a_{2}\nu_{2}-b_{2}\mu_{4}\\
-c_{3}=-\nu_{2}\mu_{4}-a_{3}\nu_{2}-b_{3}\mu_{4}\\
-c_{4}=\mu_{4}\nu_{1}-a_{4}\nu_{2}-b_{4}\mu_{4}\\
-c_{5}=\nu_{1}-b_{5}\mu_{4}
\end{cases}
\Leftrightarrow\quad
\begin{cases}
\mu_{1}=a_{2}\nu_{2}+b_{2}\mu_{4}-c_{2}\\
\mu_{2}=c_{1}-a_{1}\nu_{2}-b_{1}\mu_{4}\\
\nu_{1}=b_{5}\mu_{4}-c_{5}\\
-c_{3}=-\nu_{2}\mu_{4}-a_{3}\nu_{2}-b_{3}\mu_{4}\\
-c_{4}=\mu_{4}\nu_{1}-a_{4}\nu_{2}-b_{4}\mu_{4}
\end{cases}\;.
\end{align}
Recall that $\mu_{4}\in\F^{*}_{q}$. It is straightforward that there is a one-to-one correspondence between the solutions in $(\mu_{1},\mu_{2},\mu_{4},\nu_{1},\nu_{2})$ of Equation \eqref{eq:1standaardB12} and the solutions in $(\mu_{4},\nu_{1})$ of
\begin{align}\label{eq:1standaardB12bis}
&\begin{cases}
-c_{3}=-\nu_{2}\mu_{4}-a_{3}\nu_{2}-b_{3}\mu_{4}\\
-c_{4}=\mu_{4}(b_{5}\mu_{4}-c_{5})-a_{4}\nu_{2}-b_{4}\mu_{4}
\end{cases}
&\Leftrightarrow\quad
&\begin{cases}
\left(\mu_{4}+a_{3}\right)\nu_{2}=-b_{3}\mu_{4}+c_{3}\\
a_{4}\nu_{2}=b_{5}\mu^{2}_{4}-(b_{4}+c_{5})\mu_{4}+c_{4}
\end{cases}\nonumber\\
&&\Leftrightarrow\quad
&\begin{cases}
-\overline{L_{1}}\left(-\mu_{4},1,0\right)\nu_{2}=\overline{C_{1}}\left(-\mu_{4},1,0\right)\\
-\overline{L_{2}}\left(-\mu_{4},1,0\right)\nu_{2}=\overline{C_{2}}\left(-\mu_{4},1,0\right)
\end{cases}.
\end{align}
Given $\mu_{4}$, the system of equations in \eqref{eq:1standaardB12bis} has $0$, $1$ or $q$ solutions for $\nu_{2}$. Assume that for $\mu_{4}=\overline{\mu_{4}}$ the system of equations in \eqref{eq:1standaardB12bis} would have $q$ solutions. Then, looking at \eqref{eq:1algbis} with$(\mu_{3},\nu_{3},\nu_{4},\mu_{5},\nu_{5})=(0,1,0,1,0)$, we see that for the $q$ corresponding points, we have $\varphi=\overline{\mu_{4}}\gamma'_{1}+\gamma'_{2}$. Hence, any two of these $q$ points determine a $(q+1)$-secant by Theorem \ref{qplusonesecant}, contradicting the assumption on $\Pi$. So, \eqref{eq:1standaardB12bis} has either 0 solutions or a unique solution in $\nu_{2}$, and the latter occurs if and only if $\mu_{4}\in\F^{*}_{q}$ fulfils
\begin{align}\label{eq:1oneindig12}
0&=\overline{L_{1}}\left(-\mu_{4},1,0\right)\overline{C_{2}}\left(-\mu_{4},1,0\right)-\overline{L_{2}}\left(-\mu_{4},1,0\right)\overline{C_{1}}\left(-\mu_{4},1,0\right)\nonumber\\
&=\overline{F}(-\mu_{4},1,0)\;,
\end{align}
and simultaneously $(\overline{L_{1}}\left(-\mu_{4},1,0\right),\overline{L_{2}}\left(-\mu_{4},1,0\right))\neq(0,0)$. However, if 
\[
(\overline{L_{1}}\left(-\mu_{4},1,0\right),\overline{L_{2}}\left(-\mu_{4},1,0\right))=(0,0)\;,
\] then $R=\langle(-\mu_{4},1,0)\rangle\in\ell_{\infty}$. So, the solutions of \eqref{eq:1oneindig12} correspond to the points of $\overline{C}\cap(\ell_{\infty}\setminus\{R,\langle(1,0,0)\rangle,\langle(0,1,0)\rangle\})$.
\par Now we look at Equations \eqref{eq:1standaardC} and \eqref{eq:1standaardD}. It is immediately clear that Equation \eqref{eq:1standaardC} has 1 solution if $b_{5}=0$ and no solutions otherwise. Equation \eqref{eq:1standaardD} is equivalent to
\begin{multline*}
c_{1}+c_{2}\gamma_{0}+c_{3}\gamma'_{1}+c_{4}\gamma_{0}\gamma'_{1}+c_{5}\gamma_{0}\gamma'_{2}\\=(\nu_{2}+a_{1}\mu_{2})-(\nu_{1}-a_{2}\mu_{2})\gamma_{0}+a_{3}\mu_{2}\gamma'_{1}+a_{4}\mu_{2}\gamma_{0}\gamma'_{1}-\mu_{1}\gamma_{0}\gamma'_{2}\;.
\end{multline*}
This equation has one solution if $a_{3}c_{4}=a_{4}c_{3}$ and $(a_{3},a_{4})\neq(0,0)$ and no solutions otherwise; recall (from the beginning of this case) that it is not possible that $a_{3}=a_{4}=c_{3}=c_{4}=0$. So, it has a solution if and only if $(0,1,0)\in\overline{C}$, but $R\neq(0,1,0)$.
\par We note that $R$ cannot be the point $(1,0,0)$, and we conclude that regardless of the behaviour of $b_{5}$ and $a_{3}c_{4}-a_{4}c_{3}$ and the position of $R$, the total number of solutions of the Equations \eqref{eq:1standaardB}, \eqref{eq:1standaardC} and \eqref{eq:1standaardD} together equals $N_{\infty}-(1-\varepsilon)$. Including the solutions from Equation \eqref{eq:1standaardA} and the point $P_{0}$, we find that $\Pi\cap\Omega_{2}$ contains $N$ points. Note that if $\Pi\cap\Omega_{2}$ contains $N=q^2+q+1$ points, then there are at least $2$ points with the same type (recall that there are $q^{2}+1$ $G$-orbits), and hence, there is a $(q+1)$-secant by Theorem \ref{qplusonesecant}. This implies that $\overline{F}$ does not vanish.
\par By the analysis of Equations \eqref{eq:1standaardA12bis} and \eqref{eq:1standaardB12bis} we know that $R$ cannot be on both conics $\overline{C_{1}}=0$ and $\overline{C_{2}}=0$ if it is an affine point or a point on $\ell_{\infty}\setminus\{\langle(1,0,0)\rangle,\langle(0,1,0)\rangle\}$. Similarly, if $R=\langle(0,1,0)\rangle$ this point cannot be on both conics since it is not possible that $a_{3}=a_{4}=c_{3}=c_{4}=0$. Recall that $R\neq(1,0,0)$. Hence, we can apply Lemma \ref{cubic} to the cubic $\overline{C}$ and the statement of the theorem follows.

\subsection*{Case A.4}\label{apB:A4}
\textit{In Case A.4 we assume that $\gamma'_{2},\gamma_{0}\gamma'_{2},\delta\in U_{1}$, but $\gamma_{2}\notin U_{1}$}. Hence, we assume that $\dim\left\langle 1,\gamma_{0},\gamma'_{1},\gamma_{0}\gamma'_{1},\gamma_{2}\right\rangle_q=5$, in other words $\left\{1,\gamma_{0},\gamma'_{1},\gamma_{0}\gamma'_{1},\gamma_{2}\right\}$ is an $\F_{q}$-basis for $\F_{q^{5}}$. Note that in this case $U_{2}\leq U_{1}$. Then, there are $a_{i},b_{i},c_{i},d_{i}\in\F_{q}$, $i=1,\dots,5$, such that
\begin{align*}
\gamma'_{2}&=a_{1}+a_{2}\gamma_{0}+a_{3}\gamma'_{1}+a_{4}\gamma_{0}\gamma'_{1}\;,\\
\gamma_{0}\gamma'_{2}&=b_{1}+b_{2}\gamma_{0}+b_{3}\gamma'_{1}+b_{4}\gamma_{0}\gamma'_{1}\;,\\
\delta&=c_{1}+c_{2}\gamma_{0}+c_{3}\gamma'_{1}+c_{4}\gamma_{0}\gamma'_{1}\quad\text{and}\\
\delta\gamma'_{2}+\gamma'_{1}\gamma_{2}&=d_{1}+d_{2}\gamma_{0}+d_{3}\gamma'_{1}+d_{4}\gamma_{0}\gamma'_{1}+d_{5}\gamma_{2}\;.
\end{align*}
\par Considering $\F_{q^{5}}$ as a vector space over $\F_{q}$, Equation \eqref{eq:1standaardA} is equivalent to the following system of equations:
\begin{align}\label{eq:1standaardA14}
\begin{cases}
d_{1}=\mu_{3}\nu_{2}-\mu_{2}\nu_{3}-\nu_{2}a_{1}+\nu_{1}b_{1}+\mu_{3}c_{1}\\
d_{2}=\mu_{1}\nu_{3}-\mu_{3}\nu_{1}-\nu_{2}a_{2}+\nu_{1}b_{2}+\mu_{3}c_{2}\\
d_{3}=\mu_{2}-\nu_{2}a_{3}+\nu_{1}b_{3}+\mu_{3}c_{3}\\
d_{4}=-\mu_{1}-\nu_{2}a_{4}+\nu_{1}b_{4}+\mu_{3}c_{4}\\
d_{5}=\nu_{3}
\end{cases}
\!\Leftrightarrow\ 
\begin{cases}
\mu_{1}=-\nu_{2}a_{4}+\nu_{1}b_{4}+\mu_{3}c_{4}-d_{4}\\
\mu_{2}=\nu_{2}a_{3}-\nu_{1}b_{3}-\mu_{3}c_{3}+d_{3}\\
\nu_{3}=d_{5}\\
d_{1}=\mu_{3}\nu_{2}-\mu_{2}\nu_{3}-\nu_{2}a_{1}+\nu_{1}b_{1}+\mu_{3}c_{1}\\
d_{2}=\mu_{1}\nu_{3}-\mu_{3}\nu_{1}-\nu_{2}a_{2}+\nu_{1}b_{2}+\mu_{3}c_{2}
\end{cases}\!\!\!\!.
\end{align}
It is straightforward to see that there is a one-to-one correspondence between the solutions in $(\mu_{1},\mu_{2},\mu_{3},\nu_{1},\nu_{2},\nu_{3})$ of Equation \eqref{eq:1standaardA14} and the solutions in $(\nu_{1},\nu_{2},\mu_{3})$ of
\begin{align}\label{eq:1standaardA14bis}
&\begin{cases}
d_{1}=\mu_{3}\nu_{2}-(\nu_{2}a_{3}-\nu_{1}b_{3}-\mu_{3}c_{3}+d_{3})d_{5}-\nu_{2}a_{1}+\nu_{1}b_{1}+\mu_{3}c_{1}\\
d_{2}=(-\nu_{2}a_{4}+\nu_{1}b_{4}+\mu_{3}c_{4}-d_{4})d_{5}-\mu_{3}\nu_{1}-\nu_{2}a_{2}+\nu_{1}b_{2}+\mu_{3}c_{2}
\end{cases}\nonumber\\
\Leftrightarrow\quad
&\begin{cases}
-L_{1}(\nu_{1},\nu_{2})\mu_{3}=C_{1}(\nu_{1},\nu_{2})\\
-L_{2}(\nu_{1},\nu_{2})\mu_{3}=C_{2}(\nu_{1},\nu_{2})
\end{cases}
\end{align}
with
\begin{align*}
L_{1}(\nu_{1},\nu_{2})&=\nu_{2}+c_{1}+c_{3}d_{5}\;,\\
L_{2}(\nu_{1},\nu_{2})&=-\nu_{1}+c_{2}+c_{4}d_{5}\;,\\
C_{1}(\nu_{1},\nu_{2})&=(b_{1}+b_{3}d_{5})\nu_{1}-(a_{1}+a_{3}d_{5})\nu_{2}-d_{1}-d_{3}d_{5}\;\text{ and}\\
C_{2}(\nu_{1},\nu_{2})&=(b_{2}+b_{4}d_{5})\nu_{1}-(a_{2}+a_{4}d_{5})\nu_{2}-d_{2}-d_{4}d_{5}\;.
\end{align*}
Given $\nu_{1}$ and $\nu_{2}$, the system of equations in \eqref{eq:1standaardA14bis} has $0$, $1$ or $q$ solutions for $\mu_{3}$. Assume that for $(\nu_{1},\nu_{2})=(\overline{\nu_{1}},\overline{\nu_{2}})$ the system of equations in \eqref{eq:1standaardA14bis} would have $q$ solutions. Then, $L_{1}(\overline{\nu_{1}},\overline{\nu_{2}})=L_{2}(\overline{\nu_{1}},\overline{\nu_{2}})=C_{1}(\overline{\nu_{1}},\overline{\nu_{2}})=C_{2}(\overline{\nu_{1}},\overline{\nu_{2}})=0$. It follows that $\overline{\nu_{1}}=c_{2}+c_{4}d_{5}$ and $\overline{\nu_{2}}=-c_{1}-c_{3}d_{5}$, and we find that
\begin{align}
0&=\left(a_{3}c_{3}+b_{3}c_{4}\right)d^{2}_{5}+\left(a_{1}c_{3}+a_{3}c_{1}+b_{1}c_{4}+b_{3}c_{2}-d_{3}\right)d_{5}+a_{1}c_{1}+b_{1}c_{2}-d_{1}\quad\text{and}\label{eq:1standaardA14qopl1}\\
0&=\left(a_{4}c_{3}+b_{4}c_{4}\right)d^{2}_{5}+\left(a_{2}c_{3}+a_{4}c_{1}+b_{2}c_{4}+b_{4}c_{2}-d_{4}\right)d_{5}+a_{2}c_{1}+b_{2}c_{2}-d_{2}\label{eq:1standaardA14qopl2}\;.
\end{align}
Now, we also have that
\begin{align*}
\gamma_{2}\left(\gamma'_{1}-d_{5}\right)&=d_{1}+d_{2}\gamma_{0}+d_{3}\gamma'_{1}+d_{4}\gamma_{0}\gamma'_{1}-\gamma'_{2}\delta\\
&=d_{1}+d_{2}\gamma_{0}+d_{3}\gamma'_{1}+d_{4}\gamma_{0}\gamma'_{1}-\left(a_{1}+a_{2}\gamma_{0}+a_{3}\gamma'_{1}+a_{4}\gamma_{0}\gamma'_{1}\right)\left(c_{1}+c_{3}\gamma'_{1}\right)\\&\qquad-\left(b_{1}+b_{2}\gamma_{0}+b_{3}\gamma'_{1}+b_{4}\gamma_{0}\gamma'_{1}\right)\left(c_{2}+c_{4}\gamma'_{1}\right)\\
&=\left(d_{1}-a_{1}c_{1}-b_{1}c_{2}\right)+\left(d_{3}-a_{1}c_{3}-a_{3}c_{1}-b_{1}c_{4}-b_{3}c_{2}\right)\gamma'_{1}-\left(a_{3}c_{3}+b_{3}c_{4}\right)\gamma'^{2}_{1}\\&\qquad+\left(d_{2}-a_{2}c_{1}-b_{2}c_{2}\right)\gamma_{0}+\left(d_{4}-a_{2}c_{3}-a_{4}c_{1}-b_{2}c_{4}-b_{4}c_{2}\right)\gamma_{0}\gamma'_{1}\\&\qquad-\left(a_{4}c_{3}+b_{4}c_{4}\right)\gamma_{0}\gamma'^{2}_{1}\;.
\end{align*}
Substituting Equations \eqref{eq:1standaardA14qopl1} and \eqref{eq:1standaardA14qopl2} in this expression, we find that
\begin{align*}
0&=(\gamma'_{1}-d_{5})\left[\gamma_{2}+\left(a_{1}c_{3}+a_{3}c_{1}+b_{1}c_{4}+b_{3}c_{2}-d_{3}\right)+\left(a_{3}c_{3}+b_{3}c_{4}\right)(\gamma'_{1}+d_{5})\right.\\&\qquad\qquad\qquad\left.+\left(a_{2}c_{3}+a_{4}c_{1}+b_{2}c_{4}+b_{4}c_{2}-d_{4}\right)\gamma_{0}+\left(a_{4}c_{3}+b_{4}c_{4}\right)\gamma_{0}(\gamma'_{1}+d_{5})\right].
\end{align*}
Since $\gamma'_{1}\notin\F_{q}$ and $\gamma_{2}\notin U_{1}$ by the assumption, we find a contradiction. So, the system of equations in \eqref{eq:1standaardA14bis} has $0$ solutions or a unique solution in $\mu_{3}$, and the latter occurs if and only if 
\begin{multline}\label{eq:1cubic14}
F(\nu_{1},\nu_{2})=L_{1}(\nu_{1},\nu_{2})C_{2}(\nu_{1},\nu_{2})-L_{2}(\nu_{1},\nu_{2})C_{1}(\nu_{1},\nu_{2})=0\\ \wedge\quad\left(L_{1}(\nu_{1},\nu_{2}),L_{2}(\nu_{1},\nu_{2})\right)\neq(0,0)\;.
\end{multline}
The equation $F(\nu_{1},\nu_{2})=0$ determines a conic $C$ in the $(\nu_{1},\nu_{2})$-plane $\pi\cong\AG(2,q)$. We embed this affine plane in the projective plane $\overline{\pi}\cong\PG(2,q)$ by adding the line at infinity $\ell_{\infty}$ and extend $C$ to the conic $\overline{C}$ by going to a homogeneous equation $\overline{F}(\nu_{1},\nu_{2},\rho)=0$. Analogously, we define the homogeneous functions $\overline{L_{1}}(\nu_{1},\nu_{2},\rho)$ and $\overline{L_{2}}(\nu_{1},\nu_{2},\rho)$.
\par Note that both $\overline{L_{1}}$ and $\overline{L_{2}}$ cannot be identically zero. Moreover, $\overline{L_{1}}=0$ and $\overline{L_{2}}=0$ determine different lines in $\pi$ and their intersection point $R=(c_{2}+c_{4}d_{5},-c_{1}-c_{3}d_{5},1)$ is not on $\ell_{\infty}$. It is clear that $R$ is on the conic $\overline{C}$. Moreover, this conic cannot decompose in two lines (either over $\F_{q}$ or an algebraic extension) through $R$, since for this to happen we should have that $R$ is also on $C_{1}=0$ and $C_{2}=0$, but we have showed before that an affine point cannot be on all four lines $L_{1}=0$, $L_{2}=0$, $C_{1}=0$ and $C_{2}=0$.	
\par We know that the number of points on $\overline{C}$ equals $q+1$ or $2q+1$. Subtracting $R$ and the number of points on $\overline{C}\cap\ell_{\infty}$ we find that the number of solutions of Equation \eqref{eq:1cubic14}, and hence also of Equation \eqref{eq:1standaardA14}, is contained in $\{q-2,q-1,q,2q-2,2q-1\}$.
\par Now we look at Equations \eqref{eq:1standaardB}, \eqref{eq:1standaardC} and \eqref{eq:1standaardD}. It is immediately clear that by the assumption of Case A.4 Equation \eqref{eq:1standaardB} has no solutions, Equation \eqref{eq:1standaardC} has a unique solution and Equation \eqref{eq:1standaardD} has no solutions. Including the point $P_{0}$, we find that $|\Pi\cap\Omega_{2}|$ is contained in $\{q,q+1,q+2,2q,2q+1\}$.

\subsection*{Case A.5}\label{apB:A5}
\textit{In Case A.5 we assume that $\gamma'_{2},\gamma_{0}\gamma'_{2},\delta,\gamma_{2}\in U_{1}$, but $\delta\gamma'_{2}+\gamma'_{1}\gamma_{2}\notin U_{1}$}. By this assumption we have $\dim\left\langle 1,\gamma_{0},\gamma'_{1},\gamma_{0}\gamma'_{1},\delta\gamma'_{2}+\gamma'_{1}\gamma_{2}\right\rangle_q=5$, in other words $\left\{1,\gamma_{0},\gamma'_{1},\gamma_{0}\gamma'_{1},\delta\gamma'_{2}+\gamma'_{1}\gamma_{2}\right\}$ is an $\F_{q}$-basis for $\F_{q^{5}}$. Note that in this case $U_{2}\leq U_{1}$ Then, there are $a_{i},b_{i},c_{i},d_{i}\in\F_{q}$, $i=1,\dots,5$, such that
\begin{align*}
\gamma'_{2}&=a_{1}+a_{2}\gamma_{0}+a_{3}\gamma'_{1}+a_{4}\gamma_{0}\gamma'_{1}\;,\\
\gamma_{0}\gamma'_{2}&=b_{1}+b_{2}\gamma_{0}+b_{3}\gamma'_{1}+b_{4}\gamma_{0}\gamma'_{1}\;,\\
\delta&=c_{1}+c_{2}\gamma_{0}+c_{3}\gamma'_{1}+c_{4}\gamma_{0}\gamma'_{1}\quad\text{and}\\
\gamma_{2}&=d_{1}+d_{2}\gamma_{0}+d_{3}\gamma'_{1}+d_{4}\gamma_{0}\gamma'_{1}\;.
\end{align*}
We mentioned before that $\left\langle P_{0},P_{2}\right\rangle$ is a $(q+1)$-secant if $\gamma_{2}\in U_{2}$ and $\dim U_{2}=3$. In this case, these conditions are fulfilled if and only if $\rk\left(\begin{smallmatrix} a_{3} & b_{3} & d_{3}\\a_{4} & b_{4} & d_{4}\end{smallmatrix}\right)=1$. Suppose that $\rk\left(\begin{smallmatrix} a_{3} & b_{3} & d_{3}\\a_{4} & b_{4} & d_{4}\end{smallmatrix}\right)=0$. This implies that $\gamma_2' \in \F_q$, which in turn implies that $\{1,\gamma_1',\gamma_2'\}$ is not an $\F_q$-independent set. As seen in the intermezzo, this shows that there is a $(q^2+q+1)$-secant. We conclude that 	$\rk\left(\begin{smallmatrix} a_{3} & b_{3} & d_{3}\\a_{4} & b_{4} & d_{4}\end{smallmatrix}\right)=2$. We also note that
\begin{align*}
\delta\gamma'_{2}+\gamma'_{1}\gamma_{2}&=\left(c_{1}+c_{3}\gamma'_{1}\right)\left(a_{1}+a_{2}\gamma_{0}+a_{3}\gamma'_{1}+a_{4}\gamma_{0}\gamma'_{1}\right)\\
&\qquad+\left(c_{2}+c_{4}\gamma'_{1}\right)\left(b_{1}+b_{2}\gamma_{0}+b_{3}\gamma'_{1}+b_{4}\gamma_{0}\gamma'_{1}\right)+\gamma'_{1}\left(d_{1}+d_{2}\gamma_{0}+d_{3}\gamma'_{1}+d_{4}\gamma_{0}\gamma'_{1}\right)\\
&=\left(a_{1}c_{1}+b_{1}c_{2}\right)+\left(a_{2}c_{1}+b_{2}c_{2}\right)\gamma_{0}+\left(a_{1}c_{3}+a_{3}c_{1}+b_{1}c_{4}+b_{3}c_{2}+d_{1}\right)\gamma'_{1}\\&\qquad+\left(a_{2}c_{3}+a_{4}c_{1}+b_{2}c_{4}+b_{4}c_{2}+d_{2}\right)\gamma_{0}\gamma'_{1}+\left(a_{3}c_{3}+b_{3}c_{4}+d_{3}\right)\gamma'^{2}_{1}\\&\qquad+\left(a_{4}c_{3}+b_{4}c_{4}+d_{4}\right)\gamma_{0}\gamma'^{2}_{1}\;.
\end{align*}
so we cannot have that
\begin{align}\label{niet00}
\left(a_{3}c_{3}+b_{3}c_{4}+d_{3},a_{4}c_{3}+b_{4}c_{4}+d_{4}\right)\neq (0,0)
\end{align}
by the assumption that $\delta\gamma'_{2}+\gamma'_{1}\gamma_{2}\notin U_{1}$.
\par It is obvious that Equation \eqref{eq:1standaardA} has no solutions in this case. We look at Equation \eqref{eq:1standaardB}; it is equivalent to the following system of equations:
\begin{align}\label{eq:1standaardB15}
\begin{cases}
-d_{1}=-\mu_{2}-a_{1}\nu_{2}+b_{1}\nu_{1}-c_{1}\mu_{4}\\
-d_{2}=\mu_{1}-a_{2}\nu_{2}+b_{2}\nu_{1}-c_{2}\mu_{4}\\
-d_{3}=-\mu_{4}\nu_{2}-a_{3}\nu_{2}+b_{3}\nu_{1}-c_{3}\mu_{4}\\
-d_{4}=\mu_{4}\nu_{1}-a_{4}\nu_{2}+b_{4}\nu_{1}-c_{4}\mu_{4}\\
\end{cases}
\Leftrightarrow\quad
\begin{cases}
\mu_{1}=a_{2}\nu_{2}-b_{2}\nu_{1}+c_{2}\mu_{4}-d_{2}\\
\mu_{2}=-a_{1}\nu_{2}+b_{1}\nu_{1}-c_{1}\mu_{4}+d_{1}\\
-d_{3}=-\mu_{4}\nu_{2}-a_{3}\nu_{2}+b_{3}\nu_{1}-c_{3}\mu_{4}\\
-d_{4}=\mu_{4}\nu_{1}-a_{4}\nu_{2}+b_{4}\nu_{1}-c_{4}\mu_{4}\\
\end{cases}\;.
\end{align}
It is straightforward to see that there is a one-to-one correspondence between the solutions in $(\mu_{1},\mu_{2},\mu_{4},\nu_{1},\nu_{2})$ of Equation \eqref{eq:1standaardB13} and the solutions in $(\mu_{4},\nu_{1},\nu_{2})$ of
\begin{align}\label{eq:1standaardB15bis}
&\begin{cases}
-d_{3}=-\mu_{4}\nu_{2}-a_{3}\nu_{2}+b_{3}\nu_{1}-c_{3}\mu_{4}\\
-d_{4}=\mu_{4}\nu_{1}-a_{4}\nu_{2}+b_{4}\nu_{1}-c_{4}\mu_{4}\\
\end{cases}
&\Leftrightarrow\quad
&\begin{cases}
(\nu_{2}+c_{3})\mu_{4}=b_{3}\nu_{1}-a_{3}\nu_{2}+d_{3}\\
(-\nu_{1}+c_{4})\mu_{4}=b_{4}\nu_{1}-a_{4}\nu_{2}+d_{4}
\end{cases}\nonumber\\
&&\Leftrightarrow\quad
&\begin{cases}
L_{1}\left(\nu_{1},\nu_{2}\right)\mu_{4}=C_{1}\left(\nu_{1},\nu_{2}\right)\\
L_{2}\left(\nu_{1},\nu_{2}\right)\mu_{4}=C_{2}\left(\nu_{1},\nu_{2}\right)
\end{cases}
\end{align}
with
\begin{align*}
L_{1}(\nu_{1},\nu_{2})&=\nu_{2}+c_{3}\;,\\
L_{2}(\nu_{1},\nu_{2})&=-\nu_{1}+c_{4}\;,\\
C_{1}(\nu_{1},\nu_{2})&=b_{3}\nu_{1}-a_{3}\nu_{2}+d_{3}\;\text{ and}\\
C_{2}(\nu_{1},\nu_{2})&=b_{4}\nu_{1}-a_{4}\nu_{2}+d_{4}\;.
\end{align*}
The system of equations in \eqref{eq:1standaardB15bis} has $0$, $1$ or $q$ solutions for $\mu_{4}$.  Assume that for $(\nu_{1},\nu_{2})=(\overline{\nu_{1}},\overline{\nu_{2}})$ the system of equations in \eqref{eq:1standaardB15bis} would have $q$ solutions. Then, $L_{1}(\overline{\nu_{1}},\overline{\nu_{2}})=L_{2}(\overline{\nu_{1}},\overline{\nu_{2}})=C_{1}(\overline{\nu_{1}},\overline{\nu_{2}})=C_{2}(\overline{\nu_{1}},\overline{\nu_{2}})=0$. It follows that $b_{3}c_{4}+a_{3}c_{3}+d_{3}=0=b_{4}c_{4}+a_{4}c_{3}+d_{4}$, contradicting the observation we made above. So, \eqref{eq:1standaardB15bis} has either 0 solutions or a unique solution in $\mu_{4}$, and the latter occurs iff
\begin{multline}\label{eq:1cubic15}
F(\nu_{1},\nu_{2})=L_{1}(\nu_{1},\nu_{2})C_{2}(\nu_{1},\nu_{2})-L_{2}(\nu_{1},\nu_{2})C_{1}(\nu_{1},\nu_{2})=0\\ \wedge\quad\left(L_{1}(\nu_{1},\nu_{2}),L_{2}(\nu_{1},\nu_{2})\right)\neq(0,0)\quad\wedge\quad\left(C_{1}(\nu_{1},\nu_{2}),C_{2}(\nu_{1},\nu_{2})\right)\neq(0,0)\;.
\end{multline}
Recall for this last condition that $\mu_{4}\in\F^{*}_{q}$. The equation $F(\nu_{1},\nu_{2})=0$ determines a conic $C$ in the $(\nu_{1},\nu_{2})$-plane $\pi\cong\AG(2,q)$. We embed this affine plane in the projective plane $\overline{\pi}\cong\PG(2,q)$ by adding the line at infinity $\ell_{\infty}$ and extend $C$ to the conic $\overline{C}$ by going to a homogeneous equation $\overline{F}(\nu_{1},\nu_{2},\rho)=0$. Analogously, we define the homogeneous functions $\overline{L_{1}}(\nu_{1},\nu_{2},\rho)$, $\overline{L_{2}}(\nu_{1},\nu_{2},\rho)$, $\overline{C_{1}}(\nu_{1},\nu_{2},\rho)$ and $\overline{C_{2}}(\nu_{1},\nu_{2},\rho)$.
\par Note that both $\overline{L_{1}}$ and $\overline{L_{2}}$ cannot be identically zero. Moreover, $\overline{L_{1}}=0$ and $\overline{L_{2}}=0$ determine different lines in $\pi$ and their intersection point $R=\langle(c_{4},-c_{3},1)\rangle$ is not on $\ell_{\infty}$. It is clear that $R$ is on the conic $\overline{C}$. Furthermore, there is precisely one point $R'$ in $\overline{\pi}$ that is on $\overline{C_{1}}=0$ and $\overline{C_{2}}=0$ since $\rk\left(\begin{smallmatrix} b_{3} & -a_{3} & d_{3}\\b_{4} & -a_{4} & d_{4}\end{smallmatrix}\right)=\rk\left(\begin{smallmatrix} a_{3} & b_{3} & d_{3}\\a_{4} & b_{4} & d_{4}\end{smallmatrix}\right)=2$. In other words, $\overline{C_{1}}=0$ and $\overline{C_{2}}=0$ determine non-coinciding lines. Note that $R'\in\ell_{\infty}$ if and only if $a_{4}b_{3}-a_{3}b_{4}=0$. We set $\varepsilon=1$ if $R'$ is affine, and $\varepsilon=0$ if $R'\in\ell_{\infty}$. Furthermore $R\neq R'$ since we showed above that $L_{1}$, $L_{2}$, $C_{1}$ and $C_{2}$ cannot be simultaneously zero.
\par Also, it is impossible that simultaneously the lines $\overline{L_{1}}=0$ and $\overline{C_{1}}=0$ coincide and the lines $\overline{L_{2}}=0$ and $\overline{C_{2}}=0$ coincide in $\overline{\pi}$, since then we would have that $b_{3}=a_{4}=0$ and $a_{3}c_{3}+d_{3}=b_{4}c_{4}+d_{4}=0$, which contradicts \eqref{niet00}. We conclude that $\overline{L_{1}}=0$ and $\overline{C_{1}}=0$ intersect in a point $R_{1}$, or $\overline{L_{2}}=0$ and $\overline{C_{2}}=0$ intersect in a point $R_{2}$. Without loss of generality, we can assume that $R_{1}$ exists; we see that $R_{1}$ is on $\overline{C}$. Since the lines $\overline{L_{1}}=0$ and $\overline{C_{1}}=0$ do not coincide, it follows that the lines $\overline{L_{1}}=0$ and $\overline{C_{1}}=0$ contain at most one point different from $R_{1}$ on $\overline{C}$, the points $R$ and $R'$, respectively.
\par Now note that, if there are two points of $C=\overline{C}\cap\pi$ different from $R_{1}$ on the same line through $R_{1}$ (different from $\overline{L_{1}}=0$ and $\overline{C_{1}}=0$), then these points correspond to the same solution $\overline{\mu}$ for $\mu_{4}$ in \eqref{eq:1standaardB15bis}; hence looking at \eqref{eq:1algbis} with$(\mu_{3},\nu_{3},\nu_{4},\mu_{5},\nu_{5})=(0,1,0,1,0)$, we see that the corresponding rank 2 points of $\Pi$ both have $\varphi=\overline{\mu}\gamma'_{1}+\gamma'_{2}$, so these two points determine a $(q+1)$-secant by Theorem \ref{qplusonesecant}, contradicting the assumption on $\Pi$.
\par Assume that $q>2$ and that $\overline{C}$ decomposes in two lines over $\F_{q}$ (so in $\overline{\pi}$). One of these two lines, say $m$, contains $R_{1}$. Since $\overline{L_{1}}=0$ and $\overline{C_{1}}=0$ contain at most two points of $\overline{C}$, the line $m$ is different from $\overline{L_{1}}=0$ and $\overline{C_{1}}=0$. However, then the line $m$ (and hence $\overline{C}$) contains $q-1\geq2$ affine points, which are obviously on the same line through $R_{1}$, a contradiction. So, if $q>2$, the conic $\overline{C}$ cannot decompose in two lines over $\F_{q}$. The conic $\overline{C}$ also cannot decompose in two lines over a quadratic extension, since $\overline{C}$ contains at least two different points $R$ and $R'$. Hence $\overline{C}$ is a non-degenerate conic if $q>2$ and it contains $q+1$ points, of which $q-1$, $q$ or $q+1$ are affine (on $C$). So, Equation \eqref{eq:1cubic15} and hence also Equation \eqref{eq:1standaardB15} has $q-2-\varepsilon$, $q-1-\varepsilon$ or $q-\varepsilon$ solutions since we must disregard the solutions corresponding to $R$ and $R'$. If $q=2$ the conic $\overline{C}$ contains at least two points, $R$ and $R'$, and hence it contains $q+1=3$ or $2q+1=5$ points, of which $q-1=1$, $q=2$, $q+1=2q-1=3$ or $2q=4$ are affine. So, Equation \eqref{eq:1standaardB15} has $0$, $1-\varepsilon$, $2-\varepsilon$ or $3-\varepsilon$ solutions.
\par Now we look at Equations \eqref{eq:1standaardC} and \eqref{eq:1standaardD}. It is immediately clear that Equation \eqref{eq:1standaardC} has precisely one solution by the assumption of Case A.5. Equation \eqref{eq:1standaardD} has no solutions if and only if $\dim U_{2}=3$ and $\gamma_{2}\notin U_{2}$, so if and only if $a_{3}b_{4}-a_{4}b_{3}=0$; recall that $\rk\left(\begin{smallmatrix} a_{3} & b_{3} & d_{3}\\a_{4} & b_{4} & d_{4}\end{smallmatrix}\right)=2$. It has one solution otherwise. In other words, Equation \eqref{eq:1standaardD} has $\varepsilon$ solutions.
\par We find that Equations \eqref{eq:1standaardA}, \eqref{eq:1standaardB}, \eqref{eq:1standaardC} and \eqref{eq:1standaardD} in total have between $q-1$ and $q+1$ solutions, if $q>2$. Including the point $P_{0}$, we find that $|\Pi\cap\Omega_{2}|$ is contained in $\{q,q+1,q+2\}$. If $q=2$, we find in the same way that $|\Pi\cap\Omega_{2}|$ is contained in $\{1,\dots,5\}$. So, both for $q=2$ and $q>2$ we find that the theorem is true in Case A.5.

\subsection*{Case A.6}\label{apB:A6}
\textit{In Case A.6 we assume that $\gamma'_{2},\gamma_{0}\gamma'_{2},\delta,\gamma_{2},\delta\gamma'_{2}+\gamma'_{1}\gamma_{2}\in U_{1}$}. Note that in this case $U_{2}\leq U_{1}$. There are $a_{i},b_{i},c_{i},d_{i},e_{i}\in\F_{q}$, $i=1,\dots,4$, such that
\begin{align*}
	\gamma'_{2}&=a_{1}+a_{2}\gamma_{0}+a_{3}\gamma'_{1}+a_{4}\gamma_{0}\gamma'_{1}\;,\\
	\gamma_{0}\gamma'_{2}&=b_{1}+b_{2}\gamma_{0}+b_{3}\gamma'_{1}+b_{4}\gamma_{0}\gamma'_{1}\;,\\
	\delta&=c_{1}+c_{2}\gamma_{0}+c_{3}\gamma'_{1}+c_{4}\gamma_{0}\gamma'_{1}\;,\\
	\gamma_{2}&=d_{1}+d_{2}\gamma_{0}+d_{3}\gamma'_{1}+d_{4}\gamma_{0}\gamma'_{1}\quad\text{and}\\
	\delta\gamma'_{2}+\gamma'_{1}\gamma_{2}&=e_{1}+e_{2}\gamma_{0}+e_{3}\gamma'_{1}+e_{4}\gamma_{0}\gamma'_{1}\;.
\end{align*}
	Analogous to the deduction in the beginning of Case A.5, we find that $\rk\left(\begin{smallmatrix} a_{3} & b_{3} & d_{3}\\a_{4} & b_{4} & d_{4}\end{smallmatrix}\right)=2$. Note that we cannot have $a_{2}=a_{4}=0$ or $b_{1}=b_{3}=0$: in both cases we would have that $\{1,\gamma'_{1},\gamma'_{2}\}$ is not a linearly independent set over $\F_{q}$, contradicting a statement from the intermezzo.
	\par Considering now $\F_{q^{5}}$ as a vector space over $\F_{q}$, Equation \eqref{eq:1standaardA} is equivalent to the following system of equations:
	\begin{align}\label{eq:1standaardA16}
	&\begin{cases}
	e_{1}=\mu_{3}\nu_{2}-\mu_{2}\nu_{3}-\nu_{2}a_{1}+\nu_{1}b_{1}+\mu_{3}c_{1}+\nu_{3}d_{1}\\
	e_{2}=\mu_{1}\nu_{3}-\mu_{3}\nu_{1}-\nu_{2}a_{2}+\nu_{1}b_{2}+\mu_{3}c_{2}+\nu_{3}d_{2}\\
	e_{3}=\mu_{2}-\nu_{2}a_{3}+\nu_{1}b_{3}+\mu_{3}c_{3}+\nu_{3}d_{3}\\
	e_{4}=-\mu_{1}-\nu_{2}a_{4}+\nu_{1}b_{4}+\mu_{3}c_{4}+\nu_{3}d_{4}
	\end{cases}\nonumber\\
	\Leftrightarrow\quad
	&\begin{cases}
	\mu_{1}=-\nu_{2}a_{4}+\nu_{1}b_{4}+\mu_{3}c_{4}+\nu_{3}d_{4}-e_{4}\\
	\mu_{2}=\nu_{2}a_{3}-\nu_{1}b_{3}-\mu_{3}c_{3}-\nu_{3}d_{3}+e_{3}\\
	e_{1}=\mu_{3}\nu_{2}-\mu_{2}\nu_{3}-\nu_{2}a_{1}+\nu_{1}b_{1}+\mu_{3}c_{1}+\nu_{3}d_{1}\\
	e_{2}=\mu_{1}\nu_{3}-\mu_{3}\nu_{1}-\nu_{2}a_{2}+\nu_{1}b_{2}+\mu_{3}c_{2}+\nu_{3}d_{2}
	\end{cases}\;.
	\end{align}
	It is straightforward to see that there is a one-to-one correspondence between the solutions in $(\mu_{1},\mu_{2},\mu_{3},\nu_{1},\nu_{2},\nu_{3})$ of Equation \eqref{eq:1standaardA16} and the solutions in $(\nu_{1},\nu_{2},\mu_{3},\nu_{3})$ of
	\begin{align}\label{eq:1standaardA16bis}
	&\begin{cases}
	e_{1}=\mu_{3}\nu_{2}-(\nu_{2}a_{3}-\nu_{1}b_{3}-\mu_{3}c_{3}-\nu_{3}d_{3}+e_{3})\nu_{3}-\nu_{2}a_{1}+\nu_{1}b_{1}+\mu_{3}c_{1}+\nu_{3}d_{1}\\
	e_{2}=(-\nu_{2}a_{4}+\nu_{1}b_{4}+\mu_{3}c_{4}+\nu_{3}d_{4}-e_{4})\nu_{3}-\mu_{3}\nu_{1}-\nu_{2}a_{2}+\nu_{1}b_{2}+\mu_{3}c_{2}+\nu_{3}d_{2}
	\end{cases}\nonumber\\
	\Leftrightarrow\quad
	&\begin{cases}
	-L_{11}(\mu_{3},\nu_{3})\nu_{1}+L_{12}(\mu_{3},\nu_{3})\nu_{2}=C_{1}(\mu_{3},\nu_{3})\\
	-L_{21}(\mu_{3},\nu_{3})\nu_{1}+L_{22}(\mu_{3},\nu_{3})\nu_{2}=C_{2}(\mu_{3},\nu_{3})
	\end{cases}
	\end{align}
	with
	\begin{align*}
	L_{11}(\mu_{3},\nu_{3})&=b_{3}\nu_{3}+b_{1}\;,\\
	L_{12}(\mu_{3},\nu_{3})&=-\mu_{3}+a_{3}\nu_{3}+a_{1}\;,\\
	L_{21}(\mu_{3},\nu_{3})&=-\mu_{3}+b_{4}\nu_{3}+b_{2}\;,\\
	L_{22}(\mu_{3},\nu_{3})&=a_{4}\nu_{3}+a_{2}\;,\\
	C_{1}(\mu_{3},\nu_{3})&=c_{3}\mu_{3}\nu_{3}+d_{3}\nu^{2}_{3}+c_{1}\mu_{3}+(d_{1}-e_{3})\nu_{3}-e_{1}\;\text{ and}\\
	C_{2}(\mu_{3},\nu_{3})&=c_{4}\mu_{3}\nu_{3}+d_{4}\nu^{2}_{3}+c_{2}\mu_{3}+(d_{2}-e_{4})\nu_{3}-e_{2}\;.
	\end{align*}
	Given $\mu_{3}$ and $\nu_{3}$, the system of equations in \eqref{eq:1standaardA16bis} has $0$, $1$, $q$ or $q^{2}$ solutions for $(\nu_{1},\nu_{2})$. Assume that for $(\mu_{3},\nu_{3})=(\overline{\mu},\overline{\nu})$ the system of equations in \eqref{eq:1standaardA16bis} would have $q$ or $q^{2}$ solutions. Then, looking at \eqref{eq:1algbis} with $(\mu_4,\mu_5,\nu_4,\nu_5)=(0,1,1,0)$,  for the $q$ or $q^{2}$ corresponding points, we have $\varphi=\frac{\overline{\mu}-\gamma'_{2}}{\gamma'_{1}-\overline{\nu}}$, so any two of these $q$ points determine a $(q+1)$-secant by Theorem \ref{qplusonesecant}, contradicting the assumption on $\Pi$. So, the system of equations in \eqref{eq:1standaardA16bis} has $0$ solutions or a unique solution in $(\nu_{1},\nu_{2})$, and the former occurs iff
	\begin{align}\label{eq:1cubic16}
	F(\mu_{3},\nu_{3})=L_{11}(\mu_{3},\nu_{3})L_{22}(\mu_{3},\nu_{3})-L_{12}(\mu_{3},\nu_{3})L_{21}(\mu_{3},\nu_{3})=0\;.
	\end{align}
	The equation $F(\nu_{1},\nu_{2})=0$ determines a conic $C$ in the $(\nu_{1},\nu_{2})$-plane $\pi\cong\AG(2,q)$. We embed this affine plane in the projective plane $\overline{\pi}\cong\PG(2,q)$ by adding the line at infinity $\ell_{\infty}$ and extend $C$ to the conic $\overline{C}$ by going to a homogeneous equation $\overline{F}(\nu_{1},\nu_{2},\rho)=0$. Analogously, we define the homogeneous functions $\overline{L_{11}}(\nu_{1},\nu_{2},\rho)$, $\overline{L_{12}}(\nu_{1},\nu_{2},\rho)$, $\overline{L_{21}}(\nu_{1},\nu_{2},\rho)$, $\overline{L_{22}}(\nu_{1},\nu_{2},\rho)$, $\overline{C_{1}}(\nu_{1},\nu_{2},\rho)$ and $\overline{C_{2}}(\nu_{1},\nu_{2},\rho)$.
	\par We denote the number of points on $\overline{C}$ by $N$ and the number of points on $\overline{C}\cap\ell_{\infty}$ by $N_{\infty}$. We find that Equation \eqref{eq:1cubic16} has $N-N_{\infty}$ solutions, and hence Equation \eqref{eq:1standaardA16bis}, has $q^{2}-N+N_{\infty}$ solutions.
	\par Now, we look at Equation \eqref{eq:1standaardB}; it is equivalent to the following system of equations:
	\begin{align}\label{eq:1standaardB16}
	\begin{cases}
	-d_{1}=-\mu_{2}-a_{1}\nu_{2}+b_{1}\nu_{1}-c_{1}\mu_{4}\\
	-d_{2}=\mu_{1}-a_{2}\nu_{2}+b_{2}\nu_{1}-c_{2}\mu_{4}\\
	-d_{3}=-\mu_{4}\nu_{2}-a_{3}\nu_{2}+b_{3}\nu_{1}-c_{3}\mu_{4}\\
	-d_{4}=\mu_{4}\nu_{1}-a_{4}\nu_{2}+b_{4}\nu_{1}-c_{4}\mu_{4}\\
	\end{cases}
	\Leftrightarrow\quad
	\begin{cases}
	\mu_{2}=-a_{1}\nu_{2}+b_{1}\nu_{1}-c_{1}\mu_{4}+d_{1}\\
	\mu_{1}=a_{2}\nu_{2}-b_{2}\nu_{1}+c_{2}\mu_{4}+d_{2}\\
	-d_{3}=-\mu_{4}\nu_{2}-a_{3}\nu_{2}+b_{3}\nu_{1}-c_{3}\mu_{4}\\
	-d_{4}=\mu_{4}\nu_{1}-a_{4}\nu_{2}+b_{4}\nu_{1}-c_{4}\mu_{4}\\
	\end{cases}\;.
	\end{align}
	It is straightforward to see that there is a one-to-one correspondence between the solutions in $(\mu_{1},\mu_{2},\mu_{4},\nu_{1},\nu_{2})$ of Equation \eqref{eq:1standaardB13} and the solutions in $(\mu_{4},\nu_{1},\nu_{2})$ of
	\begin{align}\label{eq:1standaardB16bis}
	&\begin{cases}
	-d_{3}=-\mu_{4}\nu_{2}-a_{3}\nu_{2}+b_{3}\nu_{1}-c_{3}\mu_{4}\\
	-d_{4}=\mu_{4}\nu_{1}-a_{4}\nu_{2}+b_{4}\nu_{1}-c_{4}\mu_{4}
	\end{cases}\nonumber\\
	\Leftrightarrow\quad
	&\begin{cases}
	-b_{3}\nu_{1}+(\mu_{4}+a_{3})\nu_{2}=-c_{3}\mu_{4}+d_{3}\\
	-(\mu_{4}+b_{4})\nu_{1}+a_{4}\nu_{2}=-c_{4}\mu_{4}+d_{4}
	\end{cases}\nonumber\\
	\Leftrightarrow\quad
	&\begin{cases}
	-\overline{L_{11}}\left(-\mu_{4},1,0\right)\nu_{1}+\overline{L_{12}}\left(-\mu_{4},1,0\right)\nu_{2}=\overline{C_{1}}\left(-\mu_{4},1,0\right)\\
	-\overline{L_{12}}\left(-\mu_{4},1,0\right)\nu_{1}+\overline{L_{22}}\left(-\mu_{4},1,0\right)\nu_{2}=\overline{C_{2}}\left(-\mu_{4},1,0\right)
	\end{cases}.
	\end{align}
	The system of equations in \eqref{eq:1standaardB16bis} has $0$, $1$, $q$ or $q^{2}$ solutions for $(\nu_{1},\nu_{2})$. Assume that for $\mu_{4}=\overline{\mu}$ the system of equations in \eqref{eq:1standaardB16bis} would have $q$ or $q^{2}$ solutions. Then, looking at \eqref{eq:1algbis} with $(\mu_{3},\nu_{3},\nu_{4},\mu_{5},\nu_{5})=(0,1,0,1,0)$, we see that for the $q$ or $q^{2}$ corresponding points, we have $\varphi=\overline{\mu}\gamma'_{1}+\gamma'_{2}$, so any two of these $q$ points determine a $(q+1)$-secant by Theorem \ref{qplusonesecant}, contradicting the assumption on $\Pi$. So, \eqref{eq:1standaardB16bis} has either 0 solutions or a unique solution in $(\nu_{1},\nu_{2})$, and the former occurs iff
	\begin{align}\label{eq:1oneindig16}
	0&=\overline{L_{11}}\left(-\mu_{4},1,0\right)\overline{L_{22}}\left(-\mu_{4},1,0\right)-\overline{L_{12}}\left(-\mu_{4},1,0\right)\overline{L_{21}}\left(-\mu_{4},1,0\right)\nonumber\\
	&=\overline{F}(-\mu_{4},1,0)\nonumber\\
	&=-\mu^{2}_{4}+(a_{3}+b_{4})\mu_{4}-(a_{3}b_{4}-a_{4}b_{3})\;.
	\end{align}
	Recall that $\mu_{4}\in\F^{*}_{q}$. Note that the point $(1,0,0)\notin\overline{C}$ and that $(0,1,0)\in\overline{C}\Leftrightarrow a_{3}b_{4}-a_{4}b_{3}=0$. We set $\varepsilon=0$ if $a_{3}b_{4}-a_{4}b_{3}=0$ and $\varepsilon=1$ otherwise. So, Equation \eqref{eq:1oneindig16} has $(q-1)-(N_{\infty}-1+\varepsilon)$ solutions.
	\par Now we look at Equations \eqref{eq:1standaardC} and \eqref{eq:1standaardD}. It is immediately clear that Equation \eqref{eq:1standaardC} has precisely one solution by the assumption of Case A.6. Equation \eqref{eq:1standaardD} has no solutions if and only if $\dim U_{2}=3$ and $\gamma_{2}\notin U_{2}$, so if and only if $a_{3}b_{4}-a_{4}b_{3}=0$; recall that $\rk\left(\begin{smallmatrix} a_{3} & b_{3} & d_{3}\\a_{4} & b_{4} & d_{4}\end{smallmatrix}\right)=2$. It has one solution otherwise. In other words, Equation \eqref{eq:1standaardD} has $\varepsilon$ solutions.
	\par We find that the Equations \eqref{eq:1standaardA}, \eqref{eq:1standaardB}, \eqref{eq:1standaardC} and \eqref{eq:1standaardD} in total have $(q^{2}-N+N_{\infty})+(q-N_{\infty}-\varepsilon)+1+\varepsilon=q^{2}+q+1-N$ solutions. Including the point $P_{0}$, we find that $|\Pi\cap\Omega_{2}|$ equals $q^{2}+q+2-N$. We conclude this case by showing that $\overline{C}$ is a non-degenerate conic, hence that $N=q+1$ and consequently $|\Pi\cap\Omega_{2}|=q^{2}+1$.
	\par The conic $\overline{C}$ is given by the equation
	\begin{align*}
	0&=\overline{F}(\mu_{3},\nu_{3},\rho)\\&=\mu^{2}_{3}-(a_{3}+b_{4})\mu_{3}\nu_{3}+(a_{3}b_{4}-a_{4}b_{3})\nu^{2}_{3}-(a_{1}+b_{2})\mu_{3}\\&\qquad+(a_{1}b_{4}-a_{2}b_{3}+a_{3}b_{2}-a_{4}b_{1})\nu_{3}+(a_{1}b_{2}-a_{2}b_{1})\;.
	\end{align*}
	We distinguish between two cases of degeneracy.
	\begin{itemize}
		\item If $\overline{C}$ decomposes in two lines in $\overline{\pi}$ (over $\F_{q}$), then there exist $k,k'\in\F_{q}$ such that
		\[
		\overline{F}(\mu_{3},\nu_{3},\rho)=\left(\mu_{3}+k\nu_{3}+k'\right)\left(\mu_{3}-(a_{3}+b_{4}+k)\nu_{3}-(a_{1}+b_{2}+k')\right)
		\]
		with $k$ and $k'$ fulfilling
		\begin{align*}
		a_{3}b_{4}-a_{4}b_{3}&=-k(a_{3}+b_{4}+k)\;,\\
		a_{1}b_{2}-a_{2}b_{1}&=-k'(a_{1}+b_{2}+k')\;\text{ and}\\
		a_{1}b_{4}-a_{2}b_{3}+a_{3}b_{2}-a_{4}b_{1}&=-2kk'-k(a_{1}+b_{2})-k'(a_{3}+b_{4})\;,
		\end{align*}
		equivalently
		\begin{align*}
		(k+a_{3})(k+b_{4})&=a_{4}b_{3}\;,\\
		(k'+a_{1})(k'+b_{2})&=a_{2}b_{1}\;\text{ and}\\
		(a_{1}+k')(b_{4}+k)+(a_{3}+k)(b_{2}+k')&=a_{2}b_{3}+a_{4}b_{1}\;.
		\end{align*}
		For brevity of notation, we introduce $\overline{a}=a_{3}+k$, $\overline{b}=b_{4}+k$, $\hat{a}=a_{1}+k'$ and $\hat{b}=b_{2}+k'$. The previous equations can then be rewritten as
		\begin{align*}
		0&=a_{4}b_{3}-\overline{a}\overline{b}\;,\\
		0&=a_{2}b_{3}+a_{4}b_{1}-\overline{a}\hat{b}-\hat{a}\overline{b}\;\text{ and}\\
		0&=a_{2}b_{1}-\hat{a}\hat{b}\;.
		\end{align*}
		These equations in $a_{2},a_{4},b_{1},b_{3},\overline{a},\hat{a},\overline{b},\hat{b}$ are similar to the ones in Equations \eqref{eq:cubicontaard1}, \eqref{eq:cubicontaard2} and \eqref{eq:cubicontaard3}, with $(k,k')$ replacing $(c_{5},-d_{5})$. So, similarly we can derive a contradiction. Hence, $\overline{C}$ does not decompose over $\F_{q}$.
		\item Now we assume that $\overline{C}$ decomposes in two lines not in $\overline{\pi}$, so over a quadratic extension $\F_{q^{2}}$ of $\F_{q}$. Then $\overline{C}$ contains only one point in $\overline{\pi}$. Let $\ell_{i,j}$ be the line with equation $L_{ij}=0$. It is clear that the points $R_{1}=\ell_{1,1}\cap\ell_{1,2}$, $R_{2}=\ell_{1,1}\cap\ell_{2,1}$, $R_{3}=\ell_{2,2}\cap\ell_{1,2}$ and $R_{4}=\ell_{2,2}\cap\ell_{2,1}$ are all on $\overline{C}$ -- note that these points are always well-defined since $\ell_{i,i}$ and $\ell_{j,3-j}$ cannot coincide for any choice of  $i,j\in\{1,2\}$. As $\overline{C}$ contains only one point in $\overline{\pi}$, we must have that $R_{1}=R_{2}=R_{3}=R_{4}$, but $\langle(1,0,0)\rangle$ is a point that is contained in $\ell_{1,1}$ and $\ell_{2,2}$ but surely not contained in $\ell_{1,2}$ and $\ell_{2,1}$. Hence, we must have that $\ell_{1,1}$ and $\ell_{2,2}$ coincide. So $\overline{C}$ is of the form $tL^{2}_{11}-L_{12}L_{21}=0$. Note that $L_{12}$ and $L_{21}$ are not indentically zero. So $\overline{C}$ can only be degenerate if $L_{12}=sL_{21}$ for some $s\in \F_q^*$. This implies that the lines $\ell_{1,2}$ and $\ell_{2,1}$ coincide.
		\par Since $\ell_{1,1}$ and $\ell_{2,2}$ coincide, and also $\ell_{1,2}$ and $\ell_{2,1}$ coincide, we have $a_{1}=b_{2}$ and $a_{3}=b_{4}$, and $k(a_{2},a_{4})=(b_{1},b_{3})$ for some $k\in\F^{*}_{q}$. Recall that $(a_{2},a_{4})\neq(0,0)\neq(b_{1},b_{3})$. So, the conic $\overline{C}$ is given by
		\[
		0=\overline{F}(\mu_{3},\nu_{3},\rho)=k(a_{4}\nu_{3}+a_{2})^{2}-\left(-\mu_{3}+a_{3}\nu_{3}+a_{1}\right)^{2}\;.
		\]
		As $\overline{C}$ contains only one point in $\overline{\pi}$ we have that $k$ is a non-square (and necessarily that $q$ is odd), and since $\overline{C}$ decomposes over $\F_{q^{2}}$, there is a $k'\in\F_{q^{2}}\setminus\F_{q}$ such that $k'^{2}=k$. We find that
		\begin{align}\label{eq:conicontaard}
		(\gamma_{0}-k')(\gamma'_{2}+(k'a_{4}-a_{3})\gamma'_{1}+k'a_{2}-a_{1})&=ka_{2}+a_{1}\gamma_{0}+ka_{4}\gamma'_{1}+a_{3}\gamma_{0}\gamma'_{1}-k'a_{1}\nonumber\\&\qquad-k'a_{2}\gamma_{0}-k'a_{3}\gamma'_{1}-k'a_{4}\gamma_{0}\gamma'_{1}\nonumber\\&\qquad+(k'a_{4}-a_{3})\gamma_{0}\gamma'_{1}+(k'a_{2}-a_{1})\gamma_{0}\nonumber\\&\qquad-k'(k'a_{4}-a_{3})\gamma'_{1}-k'(k'a_{2}-a_{1})\nonumber\\
		&=0\;.
		\end{align}
		It is important to note that $\F_{q^{5}}\cap\F_{q^{2}}=\F_{q}$. Hence, the first factor in \eqref{eq:conicontaard} cannot be zero since $k'\notin\F_{q}$. So, the second factor has to be zero, but then $\gamma'_{2}-a_{3}\gamma'_{1}-a_{1}=0$, a contradiction since $\{1,\gamma'_{1},\gamma'_{2}\}$ is a linearly independent set over $\F_{q}$.
	\end{itemize}
	So, in both cases we have found a contradiction, leading to the conclusion that indeed $N=q+1$ and $|\Pi\cap\Omega_{2}|=q^{2}+1$.
	
\subsection*{Case B.1.2}\label{apB:B1.2}
\textit{In Case B.1.2 we assume that $\gamma'_{1},\gamma'_{2}\notin\left\langle 1,\gamma_{0}\right\rangle$ and that $\delta\in U$, but $\gamma_{2}\notin U$.} Hence, $\left\{1,\gamma_{0},\gamma'_{1},\gamma'_{2},\gamma_{2}\right\}$ is an $\F_{q}$-basis for $\F_{q^{5}}$. There are $c_{i},e_{i}\in\F_{q}$, $i=1,\dots,5$, such that 
\begin{align*}
\delta&=c_1+c_2\gamma_0+c_3\gamma_1'+c_4\gamma_2'\quad\text{and}\\
\delta\gamma'_{2}+\gamma'_{1}\gamma_{2}&=e_1+e_2\gamma_0+e_3\gamma_1'+e_4\gamma_2'+e_5\gamma_{2}\;.
\end{align*}
\par Considering  $\F_{q^{5}}$ as a vector space over $\F_{q}$, Equation \eqref{eq:1standaardA} is equivalent to the following system of equations:
\begin{align}\label{eq:1standaardA212}
\begin{cases}
e_1-c_{1}\mu_{3}=-a_1\mu_1-\nu_3\mu_2+b_1\nu_1+\mu_3\nu_2\\
e_2-c_{2}\mu_{3}=(\nu_3-a_2)\mu_1+(b_2-\mu_3)\nu_1\\
e_3-c_{3}\mu_{3}=-a_3\mu_1+\mu_2\\
e_4-c_{4}\mu_{3}=b_4\nu_1-\nu_2\\
e_5=\nu_3
\end{cases}.
\end{align}
It is straightforward to see that there is a one-to-one correspondence between the solutions in $(\mu_{1},\mu_{2},\mu_{3},\nu_{1},\nu_{2},\nu_{3})$ of Equation \eqref{eq:1standaardA212} and the solutions in $(\mu_{1},\mu_{2},\mu_{3},\nu_{1},\nu_{2})$ of
\begin{align}\label{eq:1standaardA212bis}
\begin{cases}
e_1-c_{1}\mu_{3}=-a_1\mu_1-e_5\mu_2+b_1\nu_1+\mu_3\nu_2\\
e_2-c_{2}\mu_{3}=(e_5-a_2)\mu_1+(b_2-\mu_3)\nu_1\\
e_3-c_{3}\mu_{3}=-a_3\mu_1+\mu_2\\
e_4-c_{4}\mu_{3}=b_4\nu_1-\nu_2
\end{cases}.
\end{align}
Given $\mu_{3}$, the system of equations in \eqref{eq:1standaardA212bis} has $0$, $1$ or at least $q$ solutions for $(\mu_{1},\mu_{2},\nu_{1},\nu_{2})$. Assume that for $\mu_{3}=\overline{\mu}$ the system of equations in \eqref{eq:1standaardA212bis} would have at least $q$ solutions. Then, looking at \eqref{eq:1algbis} with $(\mu_4,\mu_5,\nu_4,\nu_5)=(0,1,1,0)$, we see that for the corresponding points, we have $\varphi=\frac{\overline{\mu}-\gamma'_{2}}{\gamma'_{1}-e_{5}}$, so any two of these at least $q$ points determine a $(q+1)$-secant by Theorem \ref{qplusonesecant}, contradicting the assumption on $\Pi$. So, the system of equations in \eqref{eq:1standaardA212bis} has $0$ solutions or a unique solution in $(\mu_{1},\mu_{2},\nu_{1},\nu_{2})$. The coefficient matrix of the system of equations in \eqref{eq:1standaardA212bis} is
\[
A_{12}=
\begin{pmatrix}
-a_1&-e_{5}&b_1&\mu_3\\
e_{5}-a_2&0&b_2-\mu_3&0\\
-a_3&1&0&0\\
0&0&b_4&-1
\end{pmatrix}\,
\]
with
\begin{align*}
\det\left(A_{12}\right)&=(a_{1}+a_{2}b_{4}+(a_{3}-b_{4})e_{5})\mu_{3}+ (a_{2}b_{1}-b_{2}a_{1}-(b_{1}+a_{3}b_{2})e_{5})\nonumber\\
&=D(\mu_3)\;.
\end{align*}
For a given $\mu_{3}$ the system of equations in \eqref{eq:1standaardA212bis} has a unique solution if $D(\mu_{3})\neq0$ and no solutions otherwise. We show that $D(\mu_{3})=0$ is a non-vanishing linear equation. Assume that it does vanish. Then, we have that $a_{1}+a_{2}b_{4}+(a_{3}-b_{4})e_{5}=0=a_{2}b_{1}-b_{2}a_{1}-(b_{1}+a_{3}b_{2})e_{5}$, and consequently 	\[
(a_{3}-b_{4})(a_{2}b_{1}-a_{1}b_{2})=-(a_{1}+a_{2}b_{4})(b_{1}+a_{3}b_{2})\quad\Leftrightarrow\quad(a_{1}+a_{2}a_{3})(b_{1}+b_{2}b_{4})=0\;,
\]
contradicting the statements in the beginning of Case B.1. So, indeed $D(\nu_{3})=0$ is a non-vanishing linear equation. Consequently, it has at most one solution, and thus the system of equations in \eqref{eq:1standaardA212bis} has $q-1$ or $q$ solutions.
\par Now, we look at Equation \eqref{eq:1standaardB}. However, since $\gamma_{2}\notin U$ by the assumption of this case, it is clear that Equation \eqref{eq:1standaardB} has no solutions. \par We find that the Equations \eqref{eq:1standaardA}, \eqref{eq:1standaardB}, \eqref{eq:1standaardC} and \eqref{eq:1standaardD} in total have $q-1$ or $q$ solutions. Including the point $P_{0}$, we find that $|\Pi\cap\Omega_{2}|$ equals $q$ or $q+1$.

\subsection*{Case B.1.3}\label{apB:B1.3}
\textit{In Case B.1.3 we assume that $\gamma'_{1},\gamma'_{2}\notin\left\langle 1,\gamma_{0}\right\rangle$ and that $\delta,\gamma_{2}\in U$.} There are $c_{i},d_{i}\in\F_{q}$, $i=1,\dots,4$, such that 
	\begin{align*}
	-\gamma_2&=c_1+c_2\gamma_0+c_3\gamma_1'+c_4\gamma_2'\quad\text{and}\\
	\delta&=d_1+d_2\gamma_0+d_3\gamma_1'+d_4\gamma_2'\;.
	\end{align*}
	\par First we look at Equation \eqref{eq:1standaardA}. If $\delta\gamma_2'+\gamma_1'\gamma_2\notin U$, then there are no solutions to this equation. So, we assume that $\delta\gamma_2'+\gamma_1'\gamma_2\in U$. Then, there are $e_{i}\in\F_{q}$, $i=1,\dots,4$, such that
	\[
	\delta\gamma_2'+\gamma_1'\gamma_2=e_1+e_2\gamma_0+e_3\gamma_1'+e_4\gamma_2'\;.
	\]
	Considering $\F_{q^{5}}$ as a vector space over $\F_{q}$, Equation \eqref{eq:1standaardA} is equivalent to the following system of equations:
	\begin{align}\label{eq:1standaardA213}
	\begin{cases}
	e_1-d_1\mu_3+c_1\nu_3=-a_1\mu_1-\nu_3\mu_2+b_1\nu_1+\mu_3\nu_2\\
	e_2-d_2\mu_3+c_2\nu_3=(\nu_3-a_2)\mu_1+(b_2-\mu_3)\nu_1\\
	e_3-d_3\mu_3+c_3\nu_3=-a_3\mu_1+\mu_2\\
	e_4-d_4\mu_3+c_4\nu_3=b_4\nu_1-\nu_2
	\end{cases}.
	\end{align}
	Given $\mu_{3}$ and $\nu_{3}$, the system of equations in \eqref{eq:1standaardA213} has $0$, $1$ or at least $q$ solutions for $(\mu_{1},\mu_{2},\nu_{1},\nu_{2})$. Assume that for $(\mu_{3},\nu_{3})=(\overline{\mu},\overline{\nu})$ the system of equations in \eqref{eq:1standaardA213} would have at least $q$ solutions. Then, looking at \eqref{eq:1algbis} with $(\mu_4,\mu_5,\nu_4,\nu_5)=(0,1,1,0)$, we see that for the corresponding points, we have $\varphi=\frac{\overline{\mu}-\gamma'_{2}}{\gamma'_{1}-\overline{\nu}}$, so any two of these at least $q$ points determine a $(q+1)$-secant by Theorem \ref{qplusonesecant}, contradicting the assumption on $\Pi$. So, the system of equations in \eqref{eq:1standaardA213} has $0$ solutions or a unique solution in $(\mu_{1},\mu_{2},\nu_{1},\nu_{2})$. The coefficient matrix of the system of equations in \eqref{eq:1standaardA213} is
	\[
	A_{13}=
	\begin{pmatrix}
	-a_1&-\nu_3&b_1&\mu_3\\
	\nu_3-a_2&0&b_2-\mu_3&0\\
	-a_3&1&0&0\\
	0&0&b_4&-1
	\end{pmatrix}\,
	\]
	with
	\begin{align*}
	\det\left(A_{13}\right)&=(a_3-b_4)\mu_3\nu_3+(a_1+a_2b_4)\mu_3-(b_1+a_3b_2)\nu_3+a_2b_1-a_1b_2\nonumber\\
	&=D(\mu_{3},\nu_3)\;.
	\end{align*} 
	Given $\mu_{3}$ and $\nu_{3}$ the system of equations in \eqref{eq:1standaardA213} has a unique solution if $D(\mu_{3},\nu_{3})\neq0$ and no solutions otherwise. The  equation $D(\mu_3,\nu_3)=0$ represents a conic $C$ in the $(\mu_{3},\nu_{3})$-plane $\pi\cong\AG(2,q)$. Clearly, $C$ has two points on the line at infinity. One can check that the conic $C$ is singular if and only if $(a_3-b_4)(a_1+a_2a_3)(b_1+b_2b_4)=0$. Now we have seen before that $a_3\neq b_4$, that $a_1+a_2a_3\neq 0$ and that $b_1+b_2b_4\neq 0$. This implies that $C$ is non-singular, and so it has $q-1$ points in $\pi$.
	Consequently, the system of equations in \eqref{eq:1standaardA213} has $q^{2}-q+1$ solutions.
	\par Now, we look at Equation \eqref{eq:1standaardB}; it is equivalent to the following system of equations:
	\begin{align}\label{eq:1standaardB213}
	\begin{cases}
	c_1+d_1\mu_4=-\mu_2+\mu_4a_1\nu_1+\nu_1b_1\\
	c_2+d_2\mu_4=\mu_1+\mu_4a_2\nu_1+\nu_1b_2\\
	c_3+d_3\mu_4=\mu_4a_3\nu_1-\mu_4\nu_2\\
	c_4+d_4\mu_4=b_4\nu_1-\nu_2
	\end{cases}.
	\end{align}
	Recall that $\mu_{4}\in\F^{*}_{q}$. Given $\mu_{4}$, the coefficient matrix of this system of equations in $(\mu_{1},\mu_{2},\nu_{1},\nu_{2})$ is
	\[
	B_{13}=
	\begin{pmatrix}
	0&-1&\mu_4a_1+b_1&0\\
	1&0&\mu_4a_2+b_2&0\\
	0&0&\mu_4a_3&-\mu_4\\
	0&0&b_4&-1
	\end{pmatrix}\;.
	\]
	We find that $\det\left(B_{13}\right)=\mu_4(b_4-a_3)$. Since $a_3\neq b_4$ and $\mu_4\neq 0$, there is a unique solution in $\mu_1,\mu_2,\nu_1,\nu_2$ to the linear system in \eqref{eq:1standaardB213}. Hence, we find exactly $q-1$ solutions to \eqref{eq:1standaardB}. 
	\par We find that the Equations \eqref{eq:1standaardA}, \eqref{eq:1standaardB}, \eqref{eq:1standaardC} and \eqref{eq:1standaardD} in total have either $q-1$ or $q^{2}$ solutions, depending on whether $\delta\gamma_2'+\gamma_1'\gamma_2$ is contained in $U$ or not. Including the point $P_{0}$, we find that $|\Pi\cap\Omega_{2}|$ is either $q$ or $q^{2}+1$.
	
\subsection*{Case B.2.1}\label{apB:B2.1}

\textit{In Case B.2.1 we assume that $\gamma'_{2}\notin\left\langle 1,\gamma_{0}\right\rangle$ but $\gamma'_{1}\in\left\langle 1,\gamma_{0}\right\rangle$ and that $\delta\notin U$.} Hence, $\left\{1,\gamma_{0},\gamma'_{2},\gamma_{0}\gamma'_{1},\delta\right\}$ is an $\F_{q}$-basis for $\F_{q^{5}}$. There are $c_{i},e_{i}\in\F_{q}$, $i=1,\dots,5$, such that 
\begin{align*}
-\gamma_2&=c_1+c_2\gamma_0+c_3\gamma_{0}\gamma_1'+c_4\gamma_2'+c_5\delta\text{ and}\\
\delta\gamma'_{2}+\gamma'_{1}\gamma_{2}&=e_1+e_2\gamma_0+e_3\gamma_{0}\gamma_1'+e_4\gamma_2'+e_5\delta\;.
\end{align*}
\par Considering now $\F_{q^{5}}$ as a vector space over $\F_{q}$, Equation \eqref{eq:1standaardA} is equivalent to the following system of equations:
\begin{align}\label{eq:1standaardA221}
\begin{cases}
e_1+c_1\nu_3=(a_1-\nu_3)\mu_2+b_1\nu_1+\mu_3\nu_2\\
e_2+c_2\nu_3=\nu_3\mu_1+a_2\mu_2+(b_2-\mu_3)\nu_1\\
e_3+c_3\nu_3=-\mu_1\\
e_4+c_4\nu_3=b_4\nu_1-\nu_2\\
e_5+c_5\nu_3=\mu_3
\end{cases}.
\end{align}
It is straightforward to see that there is a one-to-one correspondence between the solutions in $(\mu_{1},\mu_{2},\mu_{3},\nu_{1},\nu_{2},\nu_{3})$ of Equation \eqref{eq:1standaardA221} and the solutions in $(\mu_{1},\mu_{2},\nu_{1},\nu_{2},\nu_{3})$ of
\begin{align}\label{eq:1standaardA221bis}
\begin{cases}
e_1+c_1\nu_3=(a_1-\nu_3)\mu_2+b_1\nu_1+(e_5+c_5\nu_3)\nu_2\\
e_2+c_2\nu_3=\nu_3\mu_1+a_2\mu_2+(b_2-e_5-c_5\nu_3)\nu_1\\
e_3+c_3\nu_3=-\mu_1\\
e_4+c_4\nu_3=b_4\nu_1-\nu_2
\end{cases}.
\end{align}
Given $\nu_{3}$, the system of equations in \eqref{eq:1standaardA221bis} has $0$, $1$ or at least $q$ solutions for $(\mu_{1},\mu_{2},\nu_{1},\nu_{2})$. Assume that for $\nu_{3}=\overline{\nu}$ the system of equations in \eqref{eq:1standaardA221bis} would have at least $q$ solutions. Then, looking at \eqref{eq:1algbis} with $(\mu_4,\mu_5,\nu_4,\nu_5)=(0,1,1,0)$, we see that for the corresponding points, we have $\varphi=\frac{e_{5}+c_{5}\overline{\nu}-\gamma'_{2}}{\gamma'_{1}-\overline{\nu}}$, so any two of these at least $q$ points determine a $(q+1)$-secant by Theorem \ref{qplusonesecant}, contradicting the assumption on $\Pi$. So, the system of equations in \eqref{eq:1standaardA221bis} has $0$ solutions or a unique solution in $(\mu_{1},\mu_{2},\nu_{1},\nu_{2})$. The coefficient matrix of the system of equations in \eqref{eq:1standaardA221bis} is
\[
A_{21}=
\begin{pmatrix}
0&a_1-\nu_3&b_1&e_{5}+c_{5}\nu_3\\
\nu_3&a_2&b_2-e_{5}-c_{5}\nu_3&0\\
-1&0&0&0\\
0&0&b_4&-1
\end{pmatrix}
\]
with
\begin{align*}
\det\left(A_{21}\right)&=c_{5}\nu^{2}_{3}-((a_{1}+a_{2}b_{4})c_{5}+b_{2}-e_{5})\nu_{3}+a_{1}b_{2}-a_{2}b_{1}-(a_{1}+a_{2}b_{4})e_{5}\\
&=D(\nu_3)\;.
\end{align*}
For a given $\nu_{3}$ the system of equations in \eqref{eq:1standaardA221bis} has a unique solution if $D(\nu_{3})\neq0$ and no solutions otherwise. We show that $D(\nu_{3})=0$ is a non-vanishing quadratic equation. Assume that it does vanish. Then, we have that $c_{5}=0$, that $e_{5}=b_{2}$ and that $a_{2}(b_{1}+b_{2}b_{4})=0$, contradicting the statements in the beginning of Case B.2. So, indeed $D(\nu_{3})=0$ is a non-vanishing quadratic equation. Consequently, it has at most two solutions, and thus the system of equations in \eqref{eq:1standaardA221bis} has $q-2$, $q-1$ or $q$ solutions. In particular, if $c_5=0$, this system has $q-1$ or $q$ solutions.
\par Now, we look at Equation \eqref{eq:1standaardB}; it is equivalent to the following system of equations:	
\begin{align}\label{eq:1standaardB221}
\begin{cases}
c_1=-\mu_2-a_1\mu_4\nu_2+b_1\nu_1\\
c_2=\mu_1-a_2\mu_4\nu_2+b_2\nu_1\\
c_3=\mu_4\nu_1\\
c_4=-\nu_2+b_4\nu_1\\
c_5=-\mu_4
\end{cases}.
\end{align}
Recall that $\mu_{4}\in\F^{*}_{q}$. So, the system of equations in \eqref{eq:1standaardB221} has no solutions if $c_{5}=0$. Hence, we assume in the discussion of this system of equations that $c_{5}\neq0$. We can see that there is a one-to-one correspondence between the solutions in $(\mu_{1},\mu_{2},\mu_{4},\nu_{1},\nu_{2})$ of Equation \eqref{eq:1standaardB211} and the solutions in $(\mu_{1},\mu_{2},\nu_{1},\nu_{2})$ of
\begin{align}\label{eq:1standaardB221bis}
\begin{cases}
c_1=-\mu_2+a_1c_5\nu_2+b_1\nu_1\\
c_2=\mu_1+a_2c_5\nu_2+b_2\nu_1\\
c_3=-c_5\nu_1\\
c_4=-\nu_2+b_4\nu_1
\end{cases}.
\end{align}
The coefficient matrix of this system is
\[
B_{21}=
\begin{pmatrix}
0&-1&b_1&c_5a_1\\
1&0&b_2&c_5a_2\\
0&0&-c_5&0\\
0&0&b_4&-1
\end{pmatrix}\;.
\]
Now $\det\left(B_{21}\right)=c_{5}$. As $c_5\neq 0$, we find a unique solution to the system of equations in \eqref{eq:1standaardB221bis}. So, Equation \eqref{eq:1standaardB221} has no solutions if $c_{5}=0$ and a unique solution if $c_{5}\neq0$.
\par We find that the Equations \eqref{eq:1standaardA}, \eqref{eq:1standaardB}, \eqref{eq:1standaardC} and \eqref{eq:1standaardD} in total have between $q-1$ and $q+1$ solutions. Including the point $P_{0}$, we find that $|\Pi\cap\Omega_{2}|$ is contained in $\{q,q+1,q+2\}$.
	
\subsection*{Case B.2.2}\label{apB:B2.2}

\textit{In Case B.2.2 we assume that $\gamma'_{2}\notin\left\langle 1,\gamma_{0}\right\rangle$ but $\gamma'_{1}\in\left\langle 1,\gamma_{0}\right\rangle$ and that $\delta\in U$, but $\gamma_{2}\notin U$.} Hence, $\left\{1,\gamma_{0},\gamma'_{2},\gamma_{0}\gamma'_{1},\gamma_{2}\right\}$ is an $\F_{q}$-basis for $\F_{q^{5}}$. There are $c_{i},e_{i}\in\F_{q}$, $i=1,\dots,5$, such that 
\begin{align*}
\delta&=c_1+c_2\gamma_0+c_3\gamma_{0}\gamma_1'+c_4\gamma_2'\quad\text{and}\\
\delta\gamma'_{2}+\gamma'_{1}\gamma_{2}&=e_1+e_2\gamma_0+e_3\gamma_{0}\gamma_1'+e_4\gamma_2'+e_5\gamma_{2}\;.
\end{align*}
\par Considering $\F_{q^{5}}$ as a vector space over $\F_{q}$, Equation \eqref{eq:1standaardA} is equivalent to the following system of equations:
\begin{align}\label{eq:1standaardA222}
\begin{cases}
e_1-c_{1}\mu_{3}=(a_{1}-\nu_3)\mu_2+b_1\nu_1+\mu_3\nu_2\\
e_2-c_{2}\mu_{3}=\nu_3\mu_1+a_2\mu_{2}+(b_2-\mu_3)\nu_1\\
e_3-c_{3}\mu_{3}=-\mu_1\\
e_4-c_{4}\mu_{3}=b_4\nu_1-\nu_2\\
e_5=\nu_3
\end{cases}.
\end{align}
It is straightforward to see that there is a one-to-one correspondence between the solutions in $(\mu_{1},\mu_{2},\mu_{3},\nu_{1},\nu_{2},\nu_{3})$ of Equation \eqref{eq:1standaardA222} and the solutions in $(\mu_{1},\mu_{2},\mu_{3},\nu_{1},\nu_{2})$ of
\begin{align}\label{eq:1standaardA222bis}
\begin{cases}
e_1-c_{1}\mu_{3}=(a_{1}-e_5)\mu_2+b_1\nu_1+\mu_3\nu_2\\
e_2-c_{2}\mu_{3}=e_5\mu_1+a_2\mu_{2}+(b_2-\mu_3)\nu_1\\
e_3-c_{3}\mu_{3}=-\mu_1\\
e_4-c_{4}\mu_{3}=b_4\nu_1-\nu_2
\end{cases}.
\end{align}
Given $\mu_{3}$, the system of equations in \eqref{eq:1standaardA222bis} has $0$, $1$ or at least $q$ solutions for $(\mu_{1},\mu_{2},\nu_{1},\nu_{2})$. Assume that for $\mu_{3}=\overline{\mu}$ the system of equations in \eqref{eq:1standaardA222bis} would have at least $q$ solutions. Then, looking at \eqref{eq:1algbis} with $(\mu_4,\mu_5,\nu_4,\nu_5)=(0,1,1,0)$, we see that for the corresponding points, we have $\varphi=\frac{\overline{\mu}-\gamma'_{2}}{\gamma'_{1}-e_{5}}$, so any two of these at least $q$ points determine a $(q+1)$-secant by Theorem \ref{qplusonesecant}, contradicting the assumption on $\Pi$. So, the system of equations in \eqref{eq:1standaardA222bis} has $0$ solutions or a unique solution in $(\mu_{1},\mu_{2},\nu_{1},\nu_{2})$. The coefficient matrix of the system of equations in \eqref{eq:1standaardA222bis} is
\[
A_{22}=
\begin{pmatrix}
0&a_1-e_{5}&b_1&\mu_3\\
e_{5}&a_2&b_2-\mu_3&0\\
-1&0&0&0\\
0&0&b_4&-1
\end{pmatrix}\,
\]
with
\begin{align*}
\det\left(A_{22}\right)&=-(a_{1}+a_{2}b_{4}-e_{5})\mu_{3}+ (a_{1}b_{2}-a_{2}b_{1}-b_{2}e_{5})\nonumber\\
&=D(\mu_3)\;.
\end{align*}
For a given $\mu_{3}$ the system of equations in \eqref{eq:1standaardA222bis} has a unique solution if $D(\mu_{3})\neq0$ and no solutions otherwise. We show that $D(\mu_{3})=0$ is a non-vanishing linear equation. Assume that it does vanish. Then, we have that $a_{1}+a_{2}b_{4}-e_{5}=0=a_{1}b_{2}-a_{2}b_{1}-b_{2}e_{5}$, and consequently
\[
0=a_{1}b_{2}-a_{2}b_{1}-b_{2}(a_{1}+a_{2}b_{4})\quad\Leftrightarrow\quad a_{2}(b_{1}+b_{2}b_{4})=0\;,
\]
contradicting the statements in the beginning of Case B.2. So, indeed $D(\nu_{3})=0$ is a non-vanishing linear equation. Consequently, it has at most one solution, and thus the system of equations in \eqref{eq:1standaardA222bis} has $q-1$ or $q$ solutions.
\par Now, we look at Equation \eqref{eq:1standaardB}. However, since $\gamma_{2}\notin U$ by the assumption of this case, it is clear that Equation \eqref{eq:1standaardB} has no solutions. \par We find that the Equations \eqref{eq:1standaardA}, \eqref{eq:1standaardB}, \eqref{eq:1standaardC} and \eqref{eq:1standaardD} in total have $q-1$ or $q$ solutions. Including the point $P_{0}$, we find that $|\Pi\cap\Omega_{2}|$ equals $q$ or $q+1$.

\subsection*{Case B.2.3}\label{apB:B2.3}

\textit{In Case B.2.3 we assume that $\gamma'_{2}\notin\left\langle 1,\gamma_{0}\right\rangle$ but $\gamma'_{1}\in\left\langle 1,\gamma_{0}\right\rangle$ and that $\delta,\gamma_{2}\in U$.} There are $c_{i},d_{i}\in\F_{q}$, $i=1,\dots,4$, such that 
\begin{align*}
-\gamma_2&=c_1+c_2\gamma_0+c_3\gamma_{0}\gamma_1'+c_4\gamma_2'\quad\text{and}\\
\delta&=d_1+d_2\gamma_0+d_3\gamma_{0}\gamma_1'+d_4\gamma_2'\;.
\end{align*}
\par First we look at Equation \eqref{eq:1standaardA}. If $\delta\gamma_2'+\gamma_1'\gamma_2\notin U$, then there are no solutions to this equation. So, we assume that $\delta\gamma_2'+\gamma_1'\gamma_2\in U$. Then, there are $e_{i}\in\F_{q}$, $i=1,\dots,4$, such that
\[
\delta\gamma_2'+\gamma_1'\gamma_2=e_1+e_2\gamma_0+e_3\gamma_{0}\gamma_1'+e_4\gamma_2'\;.
\]
Considering now $\F_{q^{5}}$ as a vector space over $\F_{q}$, Equation \eqref{eq:1standaardA} is equivalent to the following system of equations:
\begin{align}\label{eq:1standaardA223}
\begin{cases}
e_1-d_1\mu_3+c_1\nu_3=(a_1-\nu_3)\mu_2+b_1\nu_1+\mu_3\nu_2\\
e_2-d_2\mu_3+c_2\nu_3=\nu_3\mu_1+a_2\mu_{2}+(b_2-\mu_3)\nu_1\\
e_3-d_3\mu_3+c_3\nu_3=-\mu_1\\
e_4-d_4\mu_3+c_4\nu_3=b_4\nu_1-\nu_2
\end{cases}.
\end{align}
Given $\mu_{3}$ and $\nu_{3}$, the system of equations in \eqref{eq:1standaardA223} has $0$, $1$ or at least $q$ solutions for $(\mu_{1},\mu_{2},\nu_{1},\nu_{2})$. Assume that for $(\mu_{3},\nu_{3})=(\overline{\mu},\overline{\nu})$ the system of equations in \eqref{eq:1standaardA223} would have at least $q$ solutions. Then, looking at \eqref{eq:1algbis} with $(\mu_4,\mu_5,\nu_4,\nu_5)=(0,1,1,0)$, we see that for the corresponding points, we have $\varphi=\frac{\overline{\mu}-\gamma'_{2}}{\gamma'_{1}-\overline{\nu}}$, so any two of these at least $q$ points determine a $(q+1)$-secant by Theorem \ref{qplusonesecant}, contradicting the assumption on $\Pi$. So, the system of equations in \eqref{eq:1standaardA223} has $0$ solutions or a unique solution in $(\mu_{1},\mu_{2},\nu_{1},\nu_{2})$. The coefficient matrix of the system of equations in \eqref{eq:1standaardA213} is
\[
A_{23}=
\begin{pmatrix}
0&a_1-\nu_3&b_1&\mu_3\\
\nu_3&a_2&b_2-\mu_3&0\\
-1&0&0&0\\
0&0&b_4&-1
\end{pmatrix}\,
\]
with 
\begin{align*}
\det\left(A_{23}\right)&=\mu_3\nu_3-(a_1+a_2b_4)\mu_3-b_2\nu_3+a_1b_2-a_2b_1\nonumber\\
&=D(\mu_{3},\nu_3)\;.
\end{align*}
Given $\mu_{3}$ and $\nu_{3}$ the system of equations in \eqref{eq:1standaardA223} has a unique solution if $D(\mu_{3},\nu_{3})\neq0$ and no solutions otherwise. The  equation $D(\mu_3,\nu_3)=0$ represents a conic $C$ in the $(\mu_{3},\nu_{3})$-plane $\pi\cong\AG(2,q)$. Clearly, $C$ has two points on the line at infinity. One can check that the conic $C$ is singular if and only if $a_{2}(b_1+b_2b_4)=0$. Now we have seen before that $a_2\neq 0$ and that $b_1+b_2b_4\neq 0$. This implies that $C$ is non-singular, and so it has $q-1$ points in $\pi$. Consequently, the system of equations in \eqref{eq:1standaardA223} has $q^{2}-q+1$ solutions.
\par Now, we look at Equation \eqref{eq:1standaardB}; it is equivalent to the following system of equations:
\begin{align}\label{eq:1standaardB223}
\begin{cases}
c_1+d_1\mu_4&=-\mu_2+b_1\nu_1-a_1\mu_4\nu_2\\
c_2+d_2\mu_4&=\mu_1+b_2\nu_1-a_2\mu_4\nu_2\\
c_3+d_3\mu_4&=\mu_4\nu_1\\
c_4+d_4\mu_4&=b_4\nu_1-\nu_2
\end{cases}.
\end{align}
Recall that $\mu_{4}\in\F^{*}_{q}$. Given $\mu_{4}$, the coefficient matrix of this system of equations in $(\mu_{1},\mu_{2},\nu_{1},\nu_{2})$ is
\[
B_{23}=
\begin{pmatrix}
0&-1&b_1&-\mu_4a_1\\
1&0&b_2&-\mu_4a_2\\
0&0&\mu_4&0\\
0&0&b_4&-1
\end{pmatrix}\;.
\]
We find that $\det\left(B_{23}\right)=-\mu_4$. Since $\mu_4\neq 0$, there is a unique solution in $\mu_1,\mu_2,\nu_1,\nu_2$ to the linear system in \eqref{eq:1standaardB223}. Hence, we find exactly $q-1$ solutions to \eqref{eq:1standaardB}. 
\par We find that the Equations \eqref{eq:1standaardA}, \eqref{eq:1standaardB}, \eqref{eq:1standaardC} and \eqref{eq:1standaardD} in total have either $q-1$ or $q^{2}$ solutions, depending on whether $\delta\gamma_2'+\gamma_1'\gamma_2$ is contained in $U$ or not. Including the point $P_{0}$, we find that $|\Pi\cap\Omega_{2}|$ is either $q$ or $q^{2}+1$.

}{}

\end{document}